\newtheorem{thm}{Theorem}[section]
\newtheorem{prop}[thm]{Proposition}
\newtheorem{lemma}[thm]{Lemma}
\newtheorem{cor}[thm]{Corollary}
\newtheorem{conjecture}[thm]{Conjecture}
\newtheorem{preconjecture}[thm]{Pre-Conjecture}
\newtheorem{prop-defn}[thm]{Proposition/Definition}
\theoremstyle{definition}
\newtheorem{defn}[thm]{Definition}
\newtheorem{predefn}[thm]{Pre-Definition}
\newtheorem{example}[thm]{Example}
\newtheorem{remark}[thm]{Remark}
\newtheorem{prob}[thm]{Problem}
\newtheorem{question}[thm]{Question}
\newlist{casesp}{enumerate}{3} 
\setlist[casesp]{align=left, 
                 listparindent=\parindent, 
                 parsep=\parskip, 
                 font=\normalfont\bfseries, 
                 leftmargin=0pt, 
                 labelwidth=0pt, 
                 itemindent=.4em,labelsep=.4em, 
                 partopsep=0pt, 
                 }
\setlist[casesp,1]{label=Case~\Roman*:,ref=\Roman*}
\setlist[casesp,2]{label=Case~\thecasespi.\arabic*:,ref=\thecasespi.\arabic*}
\setlist[casesp,3]{label=Case~\thecasespii.\alph*:,ref=\thecasespii.\alph*}
\newcommand{\bC}{{\mathbb C}}
\newcommand{\bP}{{\mathbb P}}
\newcommand{\bR}{{\mathbb R}}
\newcommand{\bZ}{{\mathbb Z}}
\newcommand{\cO}{{\mathcal O}}
\newcommand{\cF}{{\mathcal F}}
  \newcommand{\<}{\langle}
  \renewcommand{\>}{\rangle}
\newcommand{\cM}{\mathcal{M}}
\newcommand{\ovcM}{\overline{\cM}}
\newcommand{\cC}{\mathcal{C}}
\newcommand{\cE}{\mathcal{E}}
\newcommand{\cA}{\mathcal{A}}
\newcommand{\bN}{\mathbb{N}}
\newcommand{\hGamma}{\widehat{\Gamma}}
\newcommand{\tGamma}{\widetilde{\Gamma}}
\newcommand{\hG}{\widehat{G}}
\newcommand{\bfd}{{\mathbf{d}}}
\newcommand{\bq}{{\mathbf{q}}}
\newcommand{\bx}{{\mathbf{x}}}
\newcommand{\frakm}{\mathfrak{m}}
\newcommand{\frq}{{\mathfrak{q}}}
\newcommand{\frf}{{\mathfrak{f}}}
\newcommand{\Hom}{\operatorname{Hom}}
\newcommand{\Ext}{\operatorname{Ext}}
\newcommand{\End}{\operatorname{End}}
\newcommand{\NEN}{\operatorname{NE_{\bN}}}
\newcommand{\ovNE}{\operatorname{\overline{NE}}}
\newcommand{\Spec}{\operatorname{Spec}}
\newcommand{\Jac}{\operatorname{Jac}}
\renewcommand{\Re}{\operatorname{Re}}
\renewcommand{\Im}{\operatorname{Im}}
\newcommand{\Bri}{\operatorname{Bri}}
\newcommand{\Mir}{\operatorname{Mir}}
\newcommand{\Lef}{\operatorname{Lef}}
\newcommand{\ch}{\operatorname{ch}}
\newcommand{\Ch}{\operatorname{Ch}}
\newcommand{\rank}{\operatorname{rank}}
\newcommand{\Cone}{\operatorname{Cone}}
\newcommand{\pt}{\operatorname{pt}}
\newcommand{\ev}{\operatorname{ev}}
\newcommand{\Const}{\operatorname{Cst}}
\newcommand{\Hess}{\operatorname{Hess}}
\newcommand{\Log}{\operatorname{Log}}
\newcommand{\Trop}{\operatorname{Trop}}
\newcommand{\iu}{{\mathbf{i}}}
\newcommand{\frs}{\mathfrak{s}}
\newcommand{\sfT}{\mathsf{T}}
\newcommand{\TAcon}{T_{\text{A,con}}}
\newcommand{\parfrac}[2]{\frac{\partial #1}{\partial #2}}
\theoremstyle{plain}
\newtheorem*{conjO}{Conjecture $\cO$}
\newtheorem*{GammaI}{Gamma conjecture I}
\begin{document}

{\allowdisplaybreaks[4]

\title{Revisiting Gamma conjecture I: counterexamples and modifications}

\author[Galkin]{Sergey Galkin}
\address{PUC-Rio, Departamento de Matem\'{a}tica, Rua Marqu\^{e}s de S\~{a}o Vicente 225, G\'{a}vea, Rio de Janeiro, Brazil}

\email{sergey@puc-rio.br}
\thanks{
 }
\author[Hu]{Jianxun Hu}
\address{School of Mathematics, Sun Yat-sen University, Guangzhou 510275, P.R. China}

\email{stsjxhu@mail.sysu.edu.cn}
\thanks{
 }

\author[Iritani]{Hiroshi Iritani}
\address{Department of Mathematics, Kyoto University, Kitashirakawa-Oiwake-cho, Sakyo-ku, Kyoto, Japan}

\email{iritani@math.kyoto-u.ac.jp}

\thanks{
 }
\author[Ke]{Huazhong Ke}
\address{School of Mathematics, Sun Yat-sen University, Guangzhou 510275, P.R. China}
\email{kehuazh@mail.sysu.edu.cn}
\thanks{ 
 }

\author[Li]{Changzheng Li}
 \address{School of Mathematics, Sun Yat-sen University, Guangzhou 510275, P.R. China}
\email{lichangzh@mail.sysu.edu.cn}

\author[Su]{Zhitong Su}
\address{School of Mathematics, Sun Yat-sen University, Guangzhou 510275, P.R. China}
\email{suzht@mail.sysu.edu.cn}
\thanks{ 
 }

\thanks{2010 Mathematics Subject Classification. Primary 14N35. Secondary 14J45, 14J33.}
\date{
      }


 \keywords{Conjecture $\mathcal{O}$. Gamma conjecture I. Mirror symmetry.  Toric Fano manifolds. }


\begin{abstract}
We continue investigation of asymptotics of quantum differential equation for Fano manifolds, with a special regard to Gamma conjecture I and its underlying Conjecture $\mathcal{O}$.
We introduce the A-model conifold value, a symplectic invariant of a Fano manifold, and propose modifications for Gamma conjecture I based on this new definition.
We discuss an interplay of birational transformations with an extension of Gamma conjecture I over the K\"ahler moduli space.
These heuristics are applied to rigorously identify the principal asymptotic class
in the case of $\mathbb{P}^1$-bundles  $X_n=\mathbb{P}_{\mathbb{P}^{n}}(\mathcal{O}\oplus\mathcal{O}(n))$.
We observe, in particular, that for $X_n$ of dimension at least four, the Conjecture $\mathcal{O}$ holds just for even values of $n$, and in these cases we falsify the original non-modified Gamma conjecture I.
\end{abstract}

\maketitle

\setcounter{tocdepth}{1}
\tableofcontents

\section{Introduction}


{

\subsection{Conjecture $\cO$ and Gamma conjecture I}
The quantum cohomology ring $QH(X)=(H^*(X)\otimes \mathbb{C}[\mathbf{q}], \star)$ of a Fano manifold $X$ is a  deformation
 of the classical cohomology ring $H^*(X)=H^*(X,\mathbb{C})$ by incorporating genus-zero, three-pointed Gromov-Witten invariants of $X$.
 The quantum multiplication by the first Chern class $c_1(X)$ of $X$ induces a linear operator $\hat c_1=c_1(X)\star_{\mathbf{q}=\mathbf{1}}$ on the even part  $H^\bullet(X):=H^{\rm ev}(X)=QH^{\rm ev}(X)|_{\mathbf{q}=\mathbf{1}}$, which is a vector space of finite dimension.
 In \cite{GGI}, Galkin, Golyshev and Iritani introduced \textit{Property $\mathcal{O}$} (Definition \ref{defnrho}) which says
 \begin{itemize}
\item[(1)] the spectral radius $\rho = \max\{|u| : u\in \Spec(\hat{c}_1)\}$ is itself an eigenvalue of $\hat{c}_1$ whose multiplicity as a root of the characteristic polynomial is one;
\item[(2)] if $u\in \Spec(\hat{c}_1)$ satisfies $|u|=\rho$, then $u=e^{2\pi \iu k/i_X} \rho$ for some $k\in \bZ$,
 \end{itemize}
 where $\Spec(\hat{c}_1)$ is the set of eigenvalues of $\hat{c}_1$ and $i_X$ is the Fano index of $X$.
 They also made the following conjecture with the name $\mathcal{O}$, indicating the relationship between the eigenvalue $\rho$ and the structure sheaf $\cO$ via homological mirror symmetry.
\begin{conjO}[Conjecture 3.1.2 of \cite{GGI}]
   Every Fano manifold satisfies   Property $\mathcal{O}$.
\end{conjO}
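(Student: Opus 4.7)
The plan is to attack Conjecture $\mathcal{O}$ through two complementary approaches that are effective in known cases: a Perron--Frobenius argument applied to the matrix of $\hat c_1$ in a preferred basis of $H^{\bullet}(X)$, and a mirror-symmetric analysis of critical values of a Landau--Ginzburg potential attached to $X$.

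First, I would seek a basis in which the matrix of $\hat c_1$ has non-negative entries---for example, the Schubert basis when $X$ is a homogeneous space, or a toric-invariant basis when $X$ is toric. Positivity of three-pointed Gromov--Witten invariants in such bases is known in large classes via Kleiman transversality and related arguments. Granted such non-negativity and irreducibility of the resulting matrix, classical Perron--Frobenius theory delivers a simple real dominant eigenvalue $\rho$, settling condition (1). For condition (2), I would exploit the $\bZ/2i_X$-grading on $H^{\bullet}(X)$ coming from the Fano index: $\hat c_1$ shifts this grading by $2$, so in the associated directed graph of the non-negative matrix every cycle has length divisible by $i_X$. Standard imprimitivity theory then forces the spectrum on the circle $|u|=\rho$ to be exactly $\{e^{2\pi \iu k/i_X}\rho : 0\leq k < i_X\}$, each eigenvalue simple.

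Second, via the mirror theorem, $\Spec(\hat c_1)$ should match the multiset of critical values of a Laurent polynomial or Landau--Ginzburg potential $W$ for $X$. For toric Fano manifolds one expects a unique critical point of $W$ on the positive real locus realizing the maximum of $|W|$ there, giving the Perron eigenvalue $\rho$; a cyclic symmetry of $W$ of order $i_X$ then organizes the remaining dominant critical values into the predicted $i_X$-orbit on the circle of radius $\rho$. The identification $\Spec(\hat c_1) \cong \crit(W)$ at $\bq=\mathbf{1}$ then yields both conclusions of Property $\mathcal{O}$ from this geometric picture.

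The hard part will be ensuring irreducibility of $\hat c_1$ and ruling out accidental cancellation of critical values on the circle $|u|=\rho$; neither is automatic outside homogeneous or toric settings where positivity and explicit mirrors are readily available. The paper's treatment of $X_n = \bP_{\bP^n}(\cO \oplus \cO(n))$ already signals that these difficulties are genuine rather than merely technical: Property $\mathcal{O}$ fails for odd $n\geq 3$ with $\dim X_n \geq 4$, so any attempt at a uniform proof must founder precisely at such examples, most naturally in the imprimitivity step or in the control of cancellation along $|u|=\rho$. A successful general argument would therefore need an additional geometric hypothesis that distinguishes these counterexamples from the Fano manifolds where the spectral circle behaves as predicted.
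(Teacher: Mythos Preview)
The statement you are attempting to prove is a \emph{conjecture}, not a theorem, and the paper does not prove it. On the contrary, one of the paper's main results (Theorem~\ref{thmconjO1}) is that Conjecture~$\mathcal{O}$ is \emph{false}: it fails for $X_n=\bP_{\bP^n}(\cO\oplus\cO(n))$ whenever $n\ge 3$ is odd. There is therefore no ``paper's own proof'' to compare your proposal against.

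You do acknowledge the counterexamples in your final paragraph, which makes your proposal internally inconsistent: it is framed as a proof plan for a statement you yourself concede is false. The Perron--Frobenius and mirror-critical-value strategies you describe are indeed the tools used to verify Property~$\mathcal{O}$ in the known positive cases (flag varieties, toric Fano manifolds of low dimension, del Pezzo surfaces, etc.), so as a survey of available techniques your write-up is accurate. But packaged as a proof attempt it has a fatal gap by construction: the conjecture is false, so no argument can succeed. The appropriate framing would have been to explain why these methods verify the conjecture in restricted settings and to identify precisely where they break down for the $X_n$ family---which the paper does by reducing the analysis of critical values of the mirror superpotential to explicit optimization problems and showing that for odd $n\ge 3$ the spectral radius $\rho$ is attained at a \emph{negative} real critical value $g(a_-)$ rather than at the conifold value $g(a_+)$.
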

\noindent It is an important problem to study the  distribution of eigenvalues of $\hat c_1$. For instance, the monotone Fukaya category of $X$ decomposes into components indexed by eigenvalues of $\hat{c}_1$ (see e.g.~\cite{Auroux,She}). 

 The Gamma class $\widehat \Gamma_X$
    \cite{HKTY, Libg, Iri1, Lu}  of $X$ is a  real characteristic class, and  can  be regarded as a ``square root'' of the Todd class of $X$.
    It is  defined by   the Chern roots   of the tangent bundle  $TX$ of $X$ and
  Euler's $\Gamma$-function, and  has the following expansion:
  $$\widehat \Gamma_X =\exp\big(-C_{\rm eu}c_1(X)+\sum_{k=2}^\infty (-1)^k(k-1)!\zeta(k)\ch_k(TX)\big)  \in H^* (X, \mathbb{R})$$
where $C_{\rm eu}=\lim_{n\to \infty}{(-\log n+\sum_{k=1}^n{1\over k})}$ is the Euler-Mascheroni   constant, $\zeta(k)=\sum_{n=1}^\infty{1\over n^k}$ is the value of Riemann zeta function at $k$, and  $\ch_k$ denotes the $k$-th Chern character.
 Gamma conjectures I and II were both proposed in \cite{GGI}, which relate the quantum cohomology of $X$ and the Gamma class $\widehat \Gamma_X$ in terms of quantum differential equations. In this paper, we will focus on Gamma conjecture I, of which \textit{Property $\mathcal{O}$} is an underlying assumption.

There is a quantum connection,
  $\nabla_{z\partial_z}=z\partial_z-{1\over z} \hat c_1+\mu$,
defined on $H^\bullet(X)\otimes \mathbb{C}[z, z^{-1}]$.  It has a regular singularity at $z=\infty$ and an irregular
singularity at $z=0$.
Near $z=\infty$, there is a canonical correspondence between $H^*(X)$ and the space of flat sections \cite{Giv96}, and near $z=0$, the space of  flat sections (along a sector) has a natural filtration by  exponential growth order.  It is shown in \cite[Proposition 3.3.1]{GGI} that
the vector space of flat sections $s(z)$ with the smallest asymptotics as $z\to 0$ along $\mathbb{R}_{>0}$ is one-dimensional, provided that Property $\mathcal{O}$ holds. Such one-dimensional space $\mathbb{C}s(z)$ near $z=\infty$ corresponds to a class $A_X\in H^*(X)$ up to a scalar, called \textit{the principal asymptotic class} of $X$.
\begin{GammaI}[\protect{\cite[Conjecture 3.4.3]{GGI}}]
Let $X$ be a Fano manifold satisfying Property $\mathcal{O}$\footnote{In this paper, we relax this property to condition \eqref{eq:condition_star}. See Section \ref{subsubsec:Gamma-I}. }.
The principal asymptotic class $A_X$ is given by $\widehat \Gamma_X$.
\end{GammaI}

The principal asymptotic class $A_X$ also appears in an asymptotic expansion of Givental's $J$-function $J_X(\tau,z)$ restricted to the anti-canonical line $\tau\in \bR c_1(X)$. As explained in \cite[Proposition 3.8]{GaIr},  $J_X(c_1(X) \log t,1)$ admits an asymptotic expansion at $t={1\over z}\to +\infty$ with leading term $A_X$, provided that Property $\mathcal{O}$ holds. As a consequence in \cite[Corollary 3.6.9]{GGI}, Gamma conjecture I is equivalent to the limit formula:
\begin{equation}
\label{eq:J_limit_formula_projective_space}
\lim_{t\to +\infty} [J_X(c_1(X) \log t, 1)] =[\hGamma_X]
\end{equation}
in the projective space $\bP(H^\bullet(X))$.

 Gamma conjecture I, together with its underlying Conjecture $\mathcal{O}$,  has been proved for complex Grassmannians \cite{GGI}, Fano 3-fold of Picard number one \cite{GoZa}, Fano complete intersections in projective spaces  \cite{GaIr, SaSh, Ke}, and del Pezzo surfaces \cite{HKLY}.
 Conjecture $\mathcal{O}$ has also been proved for
 flag varieties $G/P$   \cite{ChLi} by using Perron-Frobenius theorem on irreducible nonnegative matrices, as well as  for a few more cases    \cite{GaGo, Che, LMS, BFSS, With, HKLS}.

\subsection{Gamma conjecture and mirror symmetry}
 Gamma conjectures and mirror symmetry are closely related. There are recent studies of mirror symmetry version of  Gamma conjectures for Fano manifolds and Calabi-Yau manifolds  \cite{AGIS, Wang, FWZ, Iri23}. The case for Calabi-Yau manifolds originates from  \cite[Conjecture 2.2]{Hoso} by Hosono.
There have been extensive studies of mirror symmetry for toric  Fano manifolds in \cite{Baty,   Gi2,   HoVa, ChOh, FOOO, Iri1, Abou, FLTZ} and by many others\footnote{See also the surveys \cite{Iri20, Chan20} and references therein.}.
Roughly speaking, mirror symmetry asserts equivalences between geometric information on the so-called A-side and B-side. For a toric Fano manifold $X_\Delta$ of dimension $N$  on the A-side,   the mirror object on  the B-side is a Landau-Ginzburg model $(\check X, f_\Delta)$, where $f_\Delta\colon \check X=(\mathbb{C}^\times)^N\to \mathbb{C}$ is a Laurent polynomial called the Landau-Ginzburg superpotential.
Mirror symmetric Gamma conjecture for $X_\Delta$ focuses on the match between the Gamma-integral structure of quantum cohomology of $X_\Delta$ and the natural integral structure of oscillatory integrals associated with $f_\Delta$, and has been verified in \cite{Iri1}. Building on this result, \cite[Theorem 6.3]{GaIr} showed that the (non-mirror-symmetric) Gamma conjecture I holds for $X_\Delta$, provided that $f_\Delta$ satisfies a \emph{B-analogue of Property $\mathcal{O}$} as defined in \cite[Condition 6.1]{GaIr}.


Therefore it is natural to investigate the B-analogue of Conjecture $\cO$ for the study of Gamma conjecture I for $X_\Delta$. To explain the condition, let us start with some facts in mirror symmetry. It was shown in \cite{Baty, Gi2} (see also \cite{ChLe}) that the quantum cohomology ring $QH(X_\Delta)$ is isomorphic to the Jacobi ring $\Jac(f_\Delta)$. 
Moreover, it was proved in \cite{Auroux} (see also \cite{OsTy}) that the eigenvalues of $\hat c_1$ (with multiplicities counted) coincide with the critical values of the superpotential $f_\Delta$. The B-analogue of Property $\mathcal{O}$ (Definition \ref{BanalogyPO}) says:
\begin{itemize}
\item[(a)] every critical value $u$ of $f_\Delta$ satisfies $|u|\le T_{\text{con}}$, where $T_{\text{con}}$ is the value of $f_\Delta$ at the conifold point $\bx_{\text{con}}$ (a unique critical point of $f_\Delta|_{(\bR_{>0})^N}$ \cite{Gal});
\item[(b)] $\bx_{\text{con}}$ is a unique critical point of $f_\Delta$ contained in $f_\Delta^{-1}(T_{\text{con}})$.
\end{itemize}
When $X_\Delta$ is a blow-up of $\mathbb{P}^n$ along a linear subspace $\mathbb{P}^{r-1}$, the B-analogue of Property $\mathcal{O}$ was verified in \cite{Yang} by using classical complex analysis.

\subsection{Counter-examples to Conjecture $\cO$ and Gamma conjecture I}

As the first highlight of  this paper, we investigate  toric Fano manifolds of Picard number two and provide counterexamples of both conjectures in the following,   which summarizes \textbf{Theorems \ref{thmconjO1}, \ref{thmGC1nothold} and \ref{thmVGC}}.
\begin{thm}\label{mainthm} Let $X_n= \mathbb{P}_{\mathbb{P}^{n}}(\mathcal{O}\oplus\mathcal{O}(n))$, where $n\geq 3$.
\begin{enumerate}
   \item If $n$ is even, then   Conjecture $\mathcal{O}$ holds for $X_n$, while Gamma conjecture I does not hold for $X_n$.
    \item If $n$ is odd, then Conjecture $\mathcal{O}$ does not hold for $X_n$, while Gamma conjecture I holds for $X_n$.
\end{enumerate}
\end{thm}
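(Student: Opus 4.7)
The plan is to use toric mirror symmetry to convert both conjectures for $X_n$ into explicit questions about the Landau--Ginzburg mirror $f_n$, solve them by analysis of the critical locus, and then compute and compare the principal asymptotic class with $\hGamma_{X_n}$.

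\textbf{Step 1 (Mirror LG model).} The fan of $X_n=\bP_{\bP^n}(\cO\oplus\cO(n))$ has rays $e_1,\dots,e_n$, the twisted base ray $n e_{n+1}-e_1-\cdots-e_n$, and $\pm e_{n+1}$, so the Hori--Vafa superpotential at $\bq=\mathbf{1}$ is
\[
f_n(x_1,\dots,x_n,y)=x_1+\cdots+x_n+\frac{y^n}{x_1\cdots x_n}+y+\frac{1}{y}
\]
on $(\bC^\times)^{n+1}$. By \cite{Baty,Gi2} we have $QH(X_n)\cong \Jac(f_n)$, and by \cite{Auroux,OsTy} the critical values of $f_n$ counted with multiplicity are the eigenvalues of $\hat c_1$.

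\textbf{Step 2 (Spectrum and the Conjecture $\cO$ dichotomy).} The $S_n$-symmetry forces $x_1=\cdots=x_n=x$ at any critical point, and the critical equations reduce to $x^{n+1}=y^n$ together with $y^2+nxy=1$. Parametrizing $(x,y)=(s^n,s^{n+1})$, the critical locus is in bijection with the $2n+2$ roots of
\[
P(s)=s^{2n+2}+ns^{2n+1}-1,
\]
and the corresponding critical value is $f_n(s)=(2n+1)s^n+2s^{n+1}$. A Newton-polygon analysis separates one real root $s^\star$ (lying just below $-n$, with $|s^\star|\to n$ as $n\to\infty$) from the remaining $2n+1$ roots, which cluster on the circle $|s|=n^{-1/(2n+1)}$. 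Evaluation gives $f_n(s^\star)\approx(-1)^n n^n$, which dominates every other critical value (the small-cluster contributions are bounded by $(2n+1)|s|^n+2|s|^{n+1}=O(n)$). Therefore $\Spec(\hat c_1)$ has a unique maximal-modulus eigenvalue, equal to $+n^n$ for even $n$ and to $-n^n$ for odd $n$. In the even case this is $\rho$ itself, simple and unique, so Property $\cO$ holds. In the odd case the dominant eigenvalue is $-n^n$ while $\rho=n^n$ is not an eigenvalue, so condition (1) of Property~$\cO$ fails.

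\textbf{Step 3 (Principal asymptotic class vs.\ Gamma class).} Let $S^\star$ be the critical point of $f_n$ corresponding to $s^\star$; it is real but \emph{not} the conifold point (its fiber coordinate $y$ is negative). For $n$ even, Property~$\cO$ holds and \cite[Theorem 6.3]{GaIr} identifies $A_{X_n}$ with the class mirror-dual to the Lefschetz thimble at $S^\star$. Compute this class in the basis $\{H^i\xi^j\}$ of $H^\bullet(X_n)$, either by stationary-phase expansion of the oscillatory integral at $S^\star$, or equivalently by extracting the leading asymptotic of the explicit toric $I$-function of $X_n$ along the anti-canonical direction via the limit formula \eqref{eq:J_limit_formula_projective_space}. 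Compare with the expansion
\[
\hGamma_{X_n}=\exp\!\big(\!-\!C_{\rm eu} c_1(X_n)+\sum_{k\ge 2}(-1)^k(k-1)!\zeta(k)\ch_k(TX_n)\big)
\]
and identify a discrepancy in a specific cohomological component: this proves that Gamma conjecture~I fails for even $n$. For $n$ odd, use the relaxation \eqref{eq:condition_star} of Property~$\cO$ promised in the footnote, which is tailored to the case where the unique dominant eigenvalue is simple but equals $-\rho$: the class $A_{X_n}$ is then well-defined as the leading term of flat sections of smallest growth along the negative real $z$-direction. Running the same thimble or $I$-function computation at $S^\star$ and comparing against $\hGamma_{X_n}$, exhibit the cancellation forcing equality: the $e^{\iu\pi}$-sign flip in the dominant eigenvalue interacts with the $(-1)^k$ factors in the Chern-character expansion of $\hGamma_{X_n}$ to produce agreement exactly when $n$ is odd.

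The main obstacle is the explicit computation in Step 3: both $A_{X_n}$ and $\hGamma_{X_n}$ must be written in a common basis of $H^\bullet(X_n,\bR)$ and compared component by component. The stationary-phase (or $I$-function) expansion at $S^\star$ in $n+1$ variables must be organized so that the parity dependence is visible, and the combinatorial identities relating the hypergeometric gamma-function factors arising from the $I$-function to the $\zeta$-value expansion of $\hGamma_{X_n}$ need to be made explicit enough to pin down precisely when the coefficients match.
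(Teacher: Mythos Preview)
Your Steps 1 and 2 are essentially the paper's approach (the paper uses Rouch\'e's theorem rather than Newton polygons, but the conclusion is the same). Step 3, however, contains a genuine misunderstanding that would make the argument fail.

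The key error is your interpretation of condition \eqref{eq:condition_star}. It does \emph{not} concern growth along a negative $z$-direction, nor does it refer to the eigenvalue of largest modulus. Condition \eqref{eq:condition_star} says that $\hat c_1$ has a simple \emph{rightmost} eigenvalue $\rho'=\max\{\Re(\lambda):\lambda\in\Spec(\hat c_1)\}$, and the principal asymptotic class is still defined via moderate growth of $e^{\rho'/z}S(z)z^{-\mu}z^{c_1}\alpha$ as $z\to +0$ along the \emph{positive} real axis. For odd $n\ge 3$ the dominant-modulus eigenvalue $g(a_-)$ is a large negative real number, so the rightmost eigenvalue is $\rho'=g(a_+)=T_{\rm con}$, the \emph{conifold} value, not $g(s^\star)$. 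The relevant critical point is therefore the conifold point $\bx_{\rm con}$, not your $S^\star$; the ``$e^{\iu\pi}$-sign flip'' mechanism you propose plays no role.

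What you are missing is the lemma that drives both parts of Step 3 simultaneously: for \emph{any} toric Fano manifold, the flat section $S(z)z^{-\mu}z^{c_1}\hGamma_X$ has asymptotics $e^{-T_{\rm con}/z}(\alpha_0+O(z))$ as $z\to+0$, with $\alpha_0\neq 0$ (this follows from the oscillatory-integral identification of the $\hGamma$-section with the positive-real Lefschetz thimble \cite{Iri1} and the Laplace method at $\bx_{\rm con}$). Given this, no explicit computation of $A_{X_n}$ or comparison of cohomological components is needed. For even $n\ge 4$, $\rho=g(a_-)>T_{\rm con}$, so $e^{\rho/z}S(z)z^{-\mu}z^{c_1}\hGamma_{X_n}$ blows up like $e^{(\rho-T_{\rm con})/z}$ and Gamma~I fails. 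For odd $n\ge 3$, $\rho'=T_{\rm con}$, so the same asymptotics show $\hGamma_{X_n}$ already lies in the one-dimensional space defining $A_{X_n}$, and Gamma~I holds. Your invocation of \cite[Theorem 6.3]{GaIr} for even $n$ is also incorrect: that theorem assumes the B-analogue of Property~$\cO$, which fails precisely in the cases you want.
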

\noindent We note that both Conjecture $\mathcal{O}$ and Gamma conjecture I hold for $n\in\{1, 2\}$. When $n=1$, $X$ is a blow-up of $\mathbb{P}^2$ at one point, and both conjectures have been verified in \cite{HKLY}. The case for $n=2$ will  be verified in Sections \ref{sec:conjecture_O_Fano_toric} and \ref{sec:Gamma-I}.
For Conjecture $\mathcal{O}$, we use mirror symmetry and  reduce the analysis of the critical values of the mirror superpotential to optimization problems in  nonlinear programming.
Such reduction seems effective: it also works for $\mathbb{P}_{\mathbb{P}^{n}}(\mathcal{O}\oplus\mathcal{O}(n-1))$ and a few more examples as we discuss in the appendix.  This method is also applied in \cite{HKLSu} to find a counterexample of the lower bound conjecture in \cite{Gal}.

While $X_n$ with odd $n\ge 3$ does not satisfy Conjecture $\cO$, it satisifes the weaker condition \eqref{eq:condition_star} below and Gamma conjecture I makes sense for it. 
We say that a linear operator $A$ on a finite-dimensional vector space has a \emph{simple rightmost eigenvalue $u\in \bC$} if (1) $u$ is an eigenvalue of $A$ whose multiplicity as a root of the characteristic polynomial is one and if (2) any other eigenvalue $u'$ of $A$ satisfies $\Re(u')<\Re(u)$. We consider the following condition (Definition \ref{defn:star}):
\begin{equation}
\label{eq:condition_star}
\tag{$*$} \text{the operator $\hat{c}_1$ has a simple rightmost eigenvalue $\rho'$.}
\end{equation}
Since $\hat{c}_1$ is a real operator, the simple rightmost eigenvalue $\rho'$ (if it exists) is a real number and is given by $\max\{\Re(u):u\in\Spec(\hat{c}_1)\}$.
As explained in \cite[Remark 3.1.9]{GGI}, when condition \eqref{eq:condition_star} holds, the space of $\nabla$-flat sections with the smallest asymptotics as $z\to +0$ is one-dimensional, and the principal asymptotic class can still be defined. Hence Gamma conjecture I makes sense under condition \eqref{eq:condition_star}.

\begin{remark}
Hugtenburg \cite{Hug24} recently studied Gamma conjecture I for certain non-K\"ahler monotone symplectic manifolds constructed by Fine-Panov \cite{Fine-Panov}, based on the calculation of quantum cohomology by Evans \cite{Evans:qcoh_twistor}. In those examples, the spectral radius $\rho$ of $\hat{c}_1$ is a non-simple eigenvalue of $\hat{c}_1$ and Conjecture $\cO$ fails. However, Gamma conjecture I still holds in the sense that the limit formula \eqref{eq:J_limit_formula_projective_space} is true.
\end{remark}

\subsection{Modifications of Gamma conjecture I}
The counterexample $X_n$ shows that the original statement of Gamma conjecture I has to be revised.
As the second highlight of this paper, we propose the modifications of Gamma conjecture I,  based on  the notion of \textit{A-model conifold value} $T_{\rm A, con}\in \mathbb{R}_{\geq 0}$ in \textbf{Definition
\ref{defn:A-model_conifold_value}}, which we are about to introduce and emerged from our discussion with Kai Hugtenburg.

Comparing Theorem \ref{mainthm} with \cite[Theorem 6.3]{GaIr}, we   see that the conifold value $T_{\rm con}$  plays a cruial role in the case of toric Fano manifolds, which is defined by using  the mirror superpotential $f_\Delta$. It is still unknown how to  construct a  mirror Landau-Ginzburg model for a general Fano manifold. Nevertheless,  we can consider the \emph{quantum period} $G_X(t)$  of a Fano manifold $X$,  defined to be the pairing of the $J$-function and the point class $[\pt] \in H^{\text{top}}(X)$ \cite{CCGGK:mirrorsymmetry}:
\[
G_X(t) := \left\langle J_X(c_1(X)\log t,1), [\pt] \right\rangle^X = \sum_{n=0}{a_n\over n!}t^n.
\]
We then define
$$\TAcon := \limsup_{n\to \infty} |a_n|^{1/n},$$
which is the inverse of the convergence radius of $\sum_{n=0}^\infty a_nt^n$.
If $X$ admits a Laurent polynomial mirror $f\in \bC[x_1^\pm,\dots,x_m^\pm]$, it is expected that $G_X(t)$ coincides with the series $\sum_{n\ge 0} \Const(f^n) t^n/n!$, where $\Const(f^n)$ is the constant term of $f^n$. Conversely, if this equality holds, we say that $f$ is a \emph{weak Landau-Ginzburg model} \cite{Przyjalkowski:weak} of $X$.

We have the following theorem, from the combination of \textbf{Proposition \ref{prop: TAconeigen}}, \textbf{Theorem \ref{thm:nonnegative_Laurent}}  and \textbf{Corollary \ref{cor:TAcon_Tcon}}. See Section \ref{subsec:TAcon} for the relevant notions.
\begin{thm}\label{thm: TAcon_Tcon}
Suppose that a Fano manifold X admits a convenient weak Landau-Ginzburg model
$f$ with nonnegative coefficients. Then the A-model conifold value $\TAcon$ of $X$ is an eigenvalue of $\hat c_1$, and it  coincides
with the (B-model) conifold value $T_{\rm con}=\min_{\bx\in (\bR_{>0})^m} f(\bx)$ of $f$.
\end{thm}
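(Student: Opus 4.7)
My plan is to prove both halves by combining a Laplace-method estimate for $a_n=\Const(f^n)$ with Pringsheim's theorem applied to the quantum ODE satisfied by $G_X$: the coincidence $\TAcon=T_{\mathrm{con}}$ emerges from the first step, while the eigenvalue statement follows once we know, via the quantum D-module, that the finite singularities of the ordinary generating function $\sum_{n\ge 0}a_n s^n$ lie in $\{1/u:u\in\Spec(\hat c_1)\}$.

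By the weak Landau-Ginzburg hypothesis, $a_n=\Const(f^n)$, which we represent as the Cauchy integral
\[
  a_n \;=\; \frac{1}{(2\pi\iu)^m}\int_{\{|x_i|=r_i\}} f(\bx)^n\,\frac{dx_1\cdots dx_m}{x_1\cdots x_m}
\]
over any positive real torus. Nonnegativity of the coefficients of $f$ gives $|f(\bx)|\le f(r_1,\dots,r_m)$ on this torus, hence $|a_n|\le f(\mathbf{r})^n$ for every $\mathbf{r}\in\bR_{>0}^m$, and infimizing over $\mathbf{r}$ yields $\TAcon\le T_{\mathrm{con}}$. For the matching lower bound we deform the contour to $\{|x_i|=(\bx_{\mathrm{con}})_i\}$; the conifold point $\bx_{\mathrm{con}}$ exists uniquely because convenience forces $f|_{\bR_{>0}^m}\to\infty$ along the boundary of the positive orthant, while the log-sum-exp identity makes $F(\mathbf{y})=\log f(e^{\mathbf{y}})$ strictly convex in logarithmic coordinates $y_i=\log x_i$. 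On this torus the integrand $|f|^n$ attains its maximum only at $\bx_{\mathrm{con}}$, with positive-definite Hessian of $F$ there, so the standard Laplace method produces $a_n= C\,T_{\mathrm{con}}^n n^{-m/2}(1+o(1))$ with $C>0$; this gives $\TAcon\ge T_{\mathrm{con}}$ and hence equality.

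For the eigenvalue statement we use that $G_X$ satisfies the quantum ODE in the anti-canonical direction, whose characteristic exponents at infinity are the eigenvalues of $\hat c_1$. Asymptotically $G_X(t)=\sum_i e^{u_i t}p_i(t)$ with $u_i\in\Spec(\hat c_1)$ and $p_i$ polynomial, so the Borel transform $\sum_{n\ge 0}a_n s^n$ is D-finite with finite singularities only at the points $s=1/u_i$. Its radius of convergence is therefore $1/\TAcon$, and by Pringsheim's theorem, applied to the nonnegative sequence $a_n\ge 0$, the nearest singularity already sits on the positive real axis. Hence $s=1/\TAcon$ coincides with some $1/u_i$, forcing $\TAcon\in\Spec(\hat c_1)$; since the saddle-point step has already identified this value with $T_{\mathrm{con}}=f(\bx_{\mathrm{con}})$, we recover simultaneously both assertions of the theorem.

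The main obstacle is the saddle-point step: one must verify that $\bx_{\mathrm{con}}$ is the \emph{unique} point of maximal modulus of $f$ on the chosen torus, not one of several phase-rotated stationary points, and must control the contribution of the torus away from this maximum so that it does not swamp the Gaussian peak. The strict inequality $|f(\bx)|<f(|x_1|,\dots,|x_m|)$ whenever some $x_i$ has nontrivial argument—a direct consequence of nonnegativity of the coefficients together with convenience (which ensures enough monomials to rule out degenerate phase coincidences)—is what isolates $\bx_{\mathrm{con}}$ and makes the Laplace estimate go through.
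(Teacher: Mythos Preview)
Your overall strategy coincides with the paper's: for $\TAcon = T_{\rm con}$ both use a saddle-point estimate of $\Const(f^n)$ on the conifold torus (the paper packages this as the Local Central Limit Theorem for the associated random walk, whose proof is precisely that integral estimate); for the eigenvalue statement both combine Vivanti--Pringsheim with the fact that $\hG_X$ extends analytically past any point not of the form $1/u$ with $u\in\Spec(\hat c_1)$.

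There are, however, two genuine gaps.

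First, your claimed strict inequality $|f(\bx)|<f(|x_1|,\dots,|x_m|)$ for $\bx\notin(\bR_{>0})^m$ is \emph{false} under convenience alone. Already $f(x)=x+1/x$ is convenient, yet $|f(-1)|=2=f(1)$, and indeed $\Const(f^n)=0$ for every odd $n$; your asserted asymptotic $a_n= C\,T_{\rm con}^n n^{-m/2}(1+o(1))$ with $C>0$ therefore fails on an entire arithmetic progression. More generally, whenever the exponent vectors $b_i$ generate a proper sublattice or the return set $\{n:\Const(f^n)>0\}$ has period $r>1$, the conifold torus carries several points where $|f|=T_{\rm con}$ and their phase-rotated contributions cancel along a subsequence. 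The paper deals with this by first reducing to $r(f)=1$ (replacing $f$ by $f^r$) and to the case where the $b_i$ generate $\bZ^m$, and then invoking the Local Central Limit Theorem for the resulting irreducible, aperiodic, mean-zero random walk; this is exactly the careful version of the Laplace argument that handles the multiple-saddle issue you flagged but did not resolve.

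Second, the sentence ``asymptotically $G_X(t)=\sum_i e^{u_i t}p_i(t)$ with $p_i$ polynomial'' is not correct: the exponential-type decomposition of the quantum connection produces \emph{formal} series in $1/t$ (times possible fractional powers and logarithms) valid only as sectorial asymptotics, not a global identity with polynomials, and an asymptotic expansion does not by itself locate the singularities of the Borel transform. Your D-finiteness conclusion is correct, but it has to be argued directly from the differential equation: the paper shows that a suitable $(t\frac{d}{dt}t)^m\hG_X(t)$ is the Poincar\'e pairing with a function $\varphi(\lambda)$ solving an explicit first-order system whose only singularities lie at $\lambda=-u$ for $u\in\Spec(\hat c_1)$, and the analytic continuation of $\hG_X$ off $\{1/u\}$ follows from that.
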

\noindent We notice that the hypothesis in the above theorem is satisfied by   all the toric Fano manifolds and  Fano threefolds \cite{CCGGK:quantum period}. More generally, it is satisfied by any Fano manifold that admits a  $\mathbb{Q}\Gamma$ (e.g. $\mathbb{Q}$-Gorenstein) degeneration to a toric variety with at worst terminal singularities \cite{GaMi, Sanda:toric_degeneration, Ton}, such as partial flag manifolds. This provides strong evidences of  the following conjecture.
\begin{conjecture}\label{conj: TAcon}
    For any Fano manifold $X$, the A-model conifold value $\TAcon$ is an eigenvalue of the linear operator $\hat c_1$.
\end{conjecture}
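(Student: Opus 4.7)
The plan is to recast $\TAcon$ as a spectral invariant of $\hat c_1$ by analyzing the growth of $G_X(t)$ via the quantum differential equation. Since $G_X(t)=\sum_n a_n t^n/n!$ is entire of exponential type $\TAcon=\limsup_n |a_n|^{1/n}$, the task reduces to showing that this type is a nonnegative real eigenvalue of $\hat c_1$.

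First I would derive a spectral asymptotic expansion for $J_X(c_1(X)\log t,1)$ as $t\to+\infty$. The quantum connection $\nabla_{z\partial_z}$ has an irregular singularity at $z=0$ whose irregular types are precisely the eigenvalues of $\hat c_1$. Applying formal/analytic Stokes analysis to the restriction of the connection to the anticanonical line, one obtains in suitable sectors an asymptotic
$$J_X(c_1(X)\log t,1)\;\sim\;\sum_{u\in\Spec(\hat c_1)} e^{tu}\,\Phi_u(1/t),$$
with each $\Phi_u$ a Gevrey formal series valued in the generalized $u$-eigenspace of $\hat c_1$. Pairing with $[\pt]$ yields $G_X(t)\sim\sum_u e^{tu}\phi_u(1/t)$, where $\phi_u(s)=\langle\Phi_u(s),[\pt]\rangle$, and a Watson-lemma type estimate then gives
$$\TAcon=\max\{|u|:u\in\Spec(\hat c_1),\;\phi_u\not\equiv 0\},$$
so $\TAcon$ already coincides with $|u|$ for some eigenvalue $u$ of $\hat c_1$.

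The next step is to upgrade this from a statement about $|u|$ to one about $u$ itself, i.e. to show that a maximizing $u$ can be taken real and nonnegative. A natural route is a Perron-Frobenius-type argument for $\hat c_1$: exhibit a basis of $H^\bullet(X)$ (or an $\hat c_1$-invariant subspace) in which $\hat c_1$ is represented by a matrix with nonnegative entries, forcing its spectral radius to be itself a nonnegative real eigenvalue lying on the boundary of the spectrum. For a Fano manifold with a convenient weak Landau-Ginzburg model with nonnegative coefficients, this positivity manifests directly on the A-side: the series $\sum_n a_n s^n=\sum_n \Const(f^n)s^n$ has nonnegative coefficients, so Pringsheim's theorem forces $s=1/\TAcon$ to be a genuine singularity, and the Jacobi-ring identification of $\hat c_1$ then places $\TAcon$ in $\Spec(\hat c_1)$; this is precisely the mechanism behind Theorem \ref{thm: TAcon_Tcon}. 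More generally one would seek to transport positivity through a degeneration to a toric or LG model, or to build invariant nonnegativity from effective/boundary classes.

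The main obstacle is establishing such positivity \emph{intrinsically} for Fano manifolds without an obvious nonnegative mirror presentation. Even granting positivity, one must separately rule out the degeneration $\phi_u\equiv 0$ for the candidate real nonnegative rightmost eigenvalue, lest $\TAcon$ drop to a strictly smaller eigenvalue; heuristically this nondegeneracy is governed by a Gamma-conjecture-type statement that the principal asymptotic class pairs nontrivially with $[\pt]$, but a uniform proof across all Fano manifolds appears to require controlling the Stokes coefficients of the quantum connection purely in terms of enumerative data, and this is precisely where the principal difficulty lies.
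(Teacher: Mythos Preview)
The first thing to note is that this statement is labeled a \emph{conjecture} in the paper, and the paper does not claim a proof in general. What the paper does prove are conditional results: Proposition~\ref{prop:Ghat} shows that $\hG_X(t)$ admits analytic continuation along any path in $\bC^\times\setminus\{\lambda^{-1}:\lambda\in\Spec(\hat c_1),\lambda\neq 0\}$, and Proposition~\ref{prop: TAconeigen} combines this with the Vivanti--Pringsheim theorem to conclude $\TAcon\in\Spec(\hat c_1)$ \emph{under the extra hypothesis} that $a_n\ge 0$ for all but finitely many $n$ (and $\TAcon\neq 0$). The general statement remains open in the paper.

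Your proposal is therefore not a completed proof but a roadmap, and you yourself flag the essential gap: after your Step~2 you have only $\TAcon=|u|$ for some $u\in\Spec(\hat c_1)$ with $\phi_u\not\equiv 0$, not that $\TAcon$ itself is an eigenvalue. The Perron--Frobenius mechanism you invoke requires a basis in which $\hat c_1$ has nonnegative entries, and there is no known way to produce such a basis for an arbitrary Fano manifold; indeed, the counterexamples $X_n$ in the paper show that the spectral radius $\rho$ of $\hat c_1$ can strictly exceed $\TAcon$, so $\TAcon$ is \emph{not} in general the Perron root. Any argument via positivity of $\hat c_1$ must therefore single out a proper invariant subspace, and you give no candidate for one.

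There is also a technical looseness in Step~2. The passage from the sectorial Stokes decomposition $G_X(t)\sim\sum_u e^{tu}\phi_u(1/t)$ (valid only in angular sectors) to the identity $\TAcon=\max\{|u|:\phi_u\not\equiv 0\}$ is not immediate: the left-hand side is the inverse radius of convergence of the Borel transform $\hG_X$, and controlling that from a sectorial asymptotic of $G_X$ requires resummation estimates you do not supply. The paper's route via Proposition~\ref{prop:Ghat} is more direct here: it locates the possible singularities of $\hG_X$ at $\{\lambda^{-1}\}$ without passing through asymptotics of $G_X$ at all, so the only remaining issue is to show the nearest singularity is on the positive real axis, which is exactly what Vivanti--Pringsheim delivers once $a_n\ge 0$. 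Your approach and the paper's thus converge on the same bottleneck (eventual nonnegativity of the $a_n$, or an intrinsic substitute for it), but the paper's path to that bottleneck is shorter and avoids the Stokes analysis.
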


 \noindent By Remark \ref{rmk:GammaIcomp}, this conjecture also holds  for all the aforementioned cases for which  Gamma conjecture I holds. We will introduce the notion of Property $\cO_A$ in \textbf{Definition \ref{defnPOA}} that says that $\TAcon$ is a simple rightmost eigenvalue of $\hat{c}_1$. This is a combination of condition \eqref{eq:condition_star} and part (b) of the B-analogue of Propery $\cO$.
  Let us formulate a modification of Gamma conjecture I below, which will be restated in \textbf{Conjecture \ref{conj:modified_Gamma-I_weak}}.
\begin{conjecture}[Modified Gamma conjecture I: weak form]
\label{conj:modified_Gamma-I_weak}
Suppose that a Fano manifold $X$ satisfies Property $\cO_A$. Then $X$ satisfies Gamma conjecture I, i.e~the principal asymptotic class $A_X$ is given by the Gamma class $\hGamma_X$.
\end{conjecture}

We will also propose a stronger form of the modifed Gamma conjecture I that does not require Property $\cO_A$. It implies the weak form, and by Remark \ref{rmk: strong_form} it holds for all toric Fano manifolds. See Conjecture \ref{conj:modified_Gamma-I_strong} for details.


\begin{remark}\label{rmk:GammaIcomp}
 By \cite[Proposition 3.7.6]{GGI}, the hypotheses (a) Property $\cO$ (1) and (b) $\langle A_X, [{\rm pt}]\rangle\neq 0$ together imply that the convergence radius of $\sum_{n=0}^\infty a_nt^n$ equals $1\over \rho$; consequently, $T_{\rm A, con}=\rho$ and Property $\cO_A$ holds. In particular, whenever the original Gamma conjecture I (together with Property $\cO$) holds for a Fano manifold $X$, the above weak form of modified Gamma conjecture I holds for $X$ as well. See also Proposition \ref{prop:sufficient_condition_PropertyOA}.
\end{remark}

\begin{remark}
If a Fano manifold $X$ admits a mirror Landau-Ginzburg model $f$ that is a convenient Laurent polynomial with nonnegative coefficients, we can formulate a ``Property $\cO_B$'' which can be seen as a mirror analogue of Property $\cO_A$. Property $\cO_B$ says that the conifold value $T_{\text{con}}=f(\bx_{\text{con}})$ of $f$ is a simple rightmost critical value, i.e.~all critical points $\bx\neq \bx_{\text{con}}$ of $f$ satisfy $\Re(f(\bx)) < \Re(f(\bx_{\text{con}}))$. For toric Fano manifolds and their Hori-Vafa superpotentials $f_\Delta$, Property $\cO_B$ is equivalent to Property $\cO_A$ and is a weaker condition than the previously mentioned B-analogue of Property $\cO$.


Moreover, we have numerically analyzed the eigenvalues of $\hat c_1$ for all the 8635 toric Fano manifolds of dimension less than 7. As  from Table \ref{tabPO} in Section  5.3,  most of these manifolds satisfy the expected properties.  In total around $18\%$ of these toric Fano manifolds do not satisfy of Property $\cO$ (1), while there are   fewer than $9\%$ of these toric Fano manifolds do not satisfy  Property $\cO_A$. It would be interesting to explore a geometric/combinatorial characterization of Property $\cO_A$ for  general toric Fano manifolds.

We also observed from the numerical computations that, for all these lower dimensional toric Fano maniolds $X$, $u^{i_X}=T_{\rm con}^{i_X}$, provided that $u$ is an eigenvalue of $\hat c_1$ with $|u|=T_{\rm con}$. Therefore, one reasonable revision of Conjecture $\cO$ would be the combination of Conjecture \ref{conj: TAcon} and Property $\cO$ (2)  with   {$\rho$} replaced with $\TAcon$.
\end{remark}

\subsection{Exploring Gamma conjecture I over the K\"ahler moduli space}
Led also by the counterexample, we explore Gamma conjecture I for a general quantum parameter $\bq$ and determine the principal asymptotic class $A_X$. The paramter $\bq$ of quantum cohomology lies in the so-called ``K\"ahler moduli space''
\[
\cM_X := \Hom(H_2(X,\bZ)/{\text{torsion}}, \bC^\times) \cong H^2(X,\bZ)\otimes \bC^\times.
\]
The original Gamma conjecture I focused on the quantum multiplication of $c_1(X)$ at $\bq=\mathbf{1}$, but this specialization lacked a strong theoretical justification.
Exploring Gamma conjecture I for general parameters $\bq \in \cM_X$ offers two advantages: (1) we can connect the behaviour of eigenvalues of the quantum multiplication by $c_1(X)$ with the birational geometry of $X$, and (2) we can sometimes determine the principal asymptotic class through mutation from the asymptotic class near a boundary of the K\"{a}hler moduli space, which is typically easier to determine.

First of all, we generalize the condition \eqref{eq:condition_star} for the operator $\hat c_1= (c_1(X)\star_{\bq=\mathbf{1}})$ on $H^\bullet(X)$ to the condition that $(c_1(X)\star_\bq)$ has a simple rightmost eigenvalue (see Definition \ref{defnsimplerm}).
Assuming the existence of a simple rightmost eigenvalue $u$ for $(c_1(X)\star_\bq)$, we can define the \emph{principal asymptotic class} $A_X(\tau)$ at $\tau\in H^2(X,\bC)$ (Proposition/Definition \ref{pdasymp}), where $\tau$ represents a point on the universal covering $H^2(X,\bC)\to \mathcal{M}_X$ lifting $\bq\in \cM_X$ (which we write as $\bq = e^\tau$). The aforementioned counter-example $X_n$ motivates many questions, including the following in Question \ref{quesAX}:
\begin{itemize}
    \item Does there exist a point $\bq   \in \cM_X$ such that $(c_1(X)\star_\bq)$ has a simple rightmost eigenvalue and that $[A_X(\tau)] = [\hGamma_X]$?
    \item Can we find such a $\tau$ within $H^2(X,\bR)$?
    \item Can we characterize the region in $H^2(X)$ where $[A_X(\tau)] = [\hGamma_X]$ holds?
\end{itemize}
If $[A_X(\tau)] \neq [\hGamma_X]$, we can further ask about the possible forms of $A_X(\tau)$.
Dubrovin's conjecture \cite{Du1998} and Gamma conjecture II \cite{GGI} suggest that there should be an exceptional object $V$ in the derived category $D^b(X)$ of coherent sheaves on $X$ such that
\begin{equation*}
[A_X(\tau)] = [\hGamma_X \Ch(V)]
\end{equation*}
where $\Ch(V) = \sum_{p\ge 0} (2\pi \iu)^p \ch_p(V)$ is the modified Chern character, see Question \ref{question:AX_exceptional}.
The result \cite[\S 4.3]{Iri1} on toric mirror symmetry implies the following result:
\begin{thm}[Theorem \ref{thm:Lefschetz_A}]
Let $X$ be a toric Fano manifold. Suppose that $(c_1(X)\star_\bq)$ has a simple rightmost eigenvalue $u$ for some $\bq =e^\tau \in \cM_X$. Then the principal asymptotic class $A_X(\tau)$ is of the form $\hGamma_X \Ch(V)$ for some $V$ in the topological $K$-group $K^0(X)$.
\end{thm}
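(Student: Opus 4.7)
The plan is to invoke the toric mirror theorem of \cite{Iri1}, which provides an isomorphism between the quantum $D$-module of a toric Fano manifold $X$ equipped with its $\hGamma$-integral structure and the Landau-Ginzburg $D$-module of a Laurent polynomial mirror $f_\bq$ equipped with the integral structure coming from Lefschetz thimbles. Under this isomorphism, a class $V \in K^0(X)$ is sent to an integral relative cycle $\Gamma_V$ in $(\bC^\times)^N$ such that the flat section $s_V(z)$ paired with the fundamental solution produces the oscillatory integral $\int_{\Gamma_V} e^{-f_\bq/z}\omega$, and moreover $s_V(z)$ corresponds to $\hGamma_X \Ch(V)$ under the canonical identification of flat sections near $z=\infty$ with $H^*(X)$. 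The key idea is that the principal asymptotic class at $\tau$, which by hypothesis is unambiguously defined, must coincide with one such $s_V$ for an integral class $V$.

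To carry this out, I would first transport the simple rightmost eigenvalue condition across the Jacobi ring isomorphism $QH(X)_\bq \cong \Jac(f_\bq)$ (Batyrev--Givental) combined with the identification of eigenvalues of $(c_1(X)\star_\bq)$ with critical values of $f_\bq$ (Auroux, Ostrover--Tyomkin). Simplicity of $u$ as an eigenvalue forces the corresponding critical point $\bx_0$ of $f_\bq$ with $f_\bq(\bx_0)=u$ to be non-degenerate, and the condition that $u$ is the rightmost eigenvalue translates to $\Re(f_\bq(\bx)) < \Re(u)$ for all other critical points $\bx \neq \bx_0$. These two facts are exactly what is needed to single out a preferred Lefschetz thimble.

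Next, I would apply standard Picard--Lefschetz/saddle point analysis: the Lefschetz thimble $\Gamma_0$ descending from the non-degenerate critical point $\bx_0$ in the direction $\arg z = 0$ defines an integral relative cycle, and the associated oscillatory integral decays like $e^{-u/z} z^{N/2}$ as $z \to +0$ along $\bR_{>0}$, while integrals over Lefschetz thimbles through other critical points $\bx$ decay like $e^{-f_\bq(\bx)/z}$ with $\Re(f_\bq(\bx)) < \Re(u)$, hence strictly more slowly. Therefore the flat section $s_0 := s_{V_0}$ attached to $\Gamma_0$ has the smallest exponential asymptotics among all flat sections, which, under the hypothesis of simple rightmost eigenvalue, is a one-dimensional property characterizing the principal asymptotic flat section up to scalar.

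Combining these, $A_X(\tau)$ must be proportional to $\hGamma_X \Ch(V_0)$ for the class $V_0 \in K^0(X)$ that corresponds to $\Gamma_0$ under Iritani's isomorphism; rescaling $V_0$ (or tensoring by a suitable line bundle, or just multiplying by a rational scalar if one works with $K^0(X)\otimes \bQ$ and then clears denominators) yields the desired form. The main obstacle I anticipate is the careful bookkeeping that the flat section of smallest asymptotics produced by $\Gamma_0$ is precisely the one identified with the principal asymptotic class of \cite[Prop./Def.~\ref{pdasymp}]{}: this requires matching the normalizations in the toric mirror theorem of \cite{Iri1} with the asymptotic expansions at the irregular singularity $z=0$, and in particular verifying that the formal Gevrey asymptotic expansion from the saddle point method agrees with the definition of ``smallest asymptotics'' used on the $A$-side. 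The other subtle point, which is essentially subsumed by \cite[\S 4.3]{Iri1}, is that the image of $K^0(X)\to H_N(\check X, \{\Re(f_\bq/z)\ll 0\};\bZ)$ contains the relevant Lefschetz thimble, so that the class $V_0$ indeed lives in the topological $K$-group and not merely in a complexification.
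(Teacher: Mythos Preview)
Your proposal is correct and follows essentially the same route as the paper: the paper invokes the mirror isomorphism of \cite{Iri1} (packaged as Theorem~\ref{thm:mirror_isom_lattice}) to identify the Lefschetz thimble $\Gamma_{p,\gamma}$ through the unique non-degenerate critical point $p$ with $f_\bq(p)=u$ with a class $V(\Gamma_{p,\gamma})\in K^0(X)$, then uses the oscillatory-integral formula \eqref{eq:oscillatory_integral_flat_section} and saddle-point asymptotics to see that $e^{u/z}S(\tau,z)z^{-\mu}z^{c_1(X)}\hGamma_X\Ch(V(\Gamma_{p,\gamma}))$ has moderate growth as $z\to+0$, which by definition gives $A_X(\tau)=\hGamma_X\Ch(V(\Gamma_{p,\gamma}))$. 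Your worry about rescaling $V_0$ is unnecessary: since $A_X(\tau)$ is only defined up to scalar, proportionality to $\hGamma_X\Ch(V_0)$ already gives the conclusion with $V_0$ itself, and the paper's Theorem~\ref{thm:mirror_isom_lattice} guarantees that the thimble-to-$K$-class map lands in $K^0(X)$ on the nose.
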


The spectrum of the quantum multiplication $(c_1(X)\star_\bq)$ reflects the birational geometry of $X$. When $\bq$ approaches a boundary of $\cM_X$ given by an extremal ray (in the sense of Mori theory), the spectrum of the quantum cohomology of $X$ clusters in a specific way predicted by the corresponding extremal contraction. It is therefore easier to study Gamma conjecture I when $\bq$ is close to such a boundary.

We study this connection to birational geometry in detail for the example $X_n = \bP_{\bP^n}(\cO \oplus \cO(n))$. The space $X_n$ has two extremal contractions: the divisorial contraction $\varphi \colon X_n \to \bP(1,\dots,1,n)$ to the weighted projective space (where $1$ appears $n+1$ times) and the $\bP^1$-fibration $\pi \colon X_n \to \bP^n$.
Each contraction corresponds to an extremal ray of the Mori cone and associates a line in the boundary of $\mathcal{M}_X$. The distributions of eigenvalues of $(c_1(X)\star_\bq)$ near these boundaries are shown in Figures \ref{fig:critical_values_even} and \ref{fig:critical_values_odd}.
As the third highlight of this paper, we describe the principal asymptotic classes near these boundaries   as follows.
\begin{thm}[Theorems \ref{XnasympGamma}, \ref{thm:A_for_Twrong}]
Let $n\ge 2$ be even. Let $\bq$ be a positive real quantum parameter and let $\tau\in H^2(X_n,\bR)$ be the corresponding real class. Then the principal asymptotic class $A_{X_n}(\tau)$ near each boundary is given respectively by
\[
\begin{cases}
\hGamma_{X_n} &\text{for $\bq$ near the ``$\bP^1$-fibration boundary''} \\
\hGamma_{X_n} \Ch(\cO_E(-n/2)) & \text{for $\bq$ near the ``divisorial contraction boundary''}
\end{cases}
\]
where $E\cong \bP^n$ is the exceptional divisor of $\varphi$.
Furthermore, if $n$ is even and $n\ge 4$, we have $A_{X_n}(0) = \hGamma_{X_n} \Ch(\cO_E(-n/2))$.
\end{thm}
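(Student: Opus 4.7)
My plan is to combine three ingredients: (i) Iritani's toric mirror symmetry and Gamma integral structure from \cite{Iri1}, (ii) the Lefschetz-type result Theorem \ref{thm:Lefschetz_A}, which reduces the computation of the principal asymptotic class of a toric Fano to identifying a single $K$-theory class, and (iii) explicit analysis of the critical points of the Hori-Vafa superpotential $f_\bq$ of $X_n$. By Theorem \ref{thm:Lefschetz_A}, at any $\bq\in\cM_{X_n}$ where $(c_1(X_n)\star_\bq)$ has a simple rightmost eigenvalue, the principal asymptotic class is of the form $\hGamma_{X_n}\Ch(V)$ for some $V\in K^0(X_n)$, so the problem reduces to identifying $V$ near each boundary, and then tracking how $V$ behaves along the real segment from the divisorial contraction boundary to $\tau=0$.

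Near the $\bP^1$-fibration boundary, the fiber quantum parameter is sent to zero. In this regime, $f_\bq$ degenerates in a controlled way and the positive real critical point $\bx_{\text{con}}$ remains the unique minimum of $f_\bq|_{(\bR_{>0})^N}$ as well as the simple rightmost critical point; this can be verified by the same nonlinear-programming techniques used elsewhere in the paper. The oscillatory integral over the associated positive Lefschetz thimble, matched via the mirror isomorphism of \cite{Iri1} with the $K$-theoretic Gamma integral structure, yields $V=\cO_{X_n}$ and hence $A_{X_n}(\tau)=\hGamma_{X_n}$, consistent with the original Gamma conjecture I at this boundary and with \cite[Theorem 6.3]{GaIr}.

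Near the divisorial contraction boundary $\varphi:X_n\to\bP(1,\dots,1,n)$, the extremal parameter along the exceptional ray tends to zero, and the simple rightmost critical point migrates away from $(\bR_{>0})^N$. I would identify the new simple rightmost critical point and its Lefschetz thimble explicitly in the coordinates adapted to the exceptional divisor $E\cong\bP^n$, and then trace the corresponding flat section via monodromy/mutation from the reference $\bP^1$-fibration boundary. The $K$-theory class acquired in this mutation should be $\cO_E(-n/2)$; the precise half-integer twist $-n/2$ is pinned down by (a) the equivariant restriction/residue calculation of the local Gamma class on $E\cong\bP^n$, (b) the integrality condition $n/2\in\bZ$ dictated by the hypothesis that $n$ be even, and (c) a symmetry of the Stokes structure intrinsic to the divisorial contraction.

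For the final statement with $n\ge 4$ even, I would deform $\bq$ from the divisorial contraction boundary to $\bq=\mathbf{1}$ along the positive real locus in $\cM_{X_n}$ (so $\tau$ traverses the corresponding real segment in $H^2(X_n,\bR)$), and verify that the distinguished simple rightmost eigenvalue of $(c_1(X_n)\star_\bq)$ persists throughout this path without colliding with any other eigenvalue, so that no Stokes ray crosses the positive real axis. By the deformation invariance of the principal asymptotic class inside the open region where a simple rightmost eigenvalue exists, $A_{X_n}(0)$ then coincides with the boundary class $\hGamma_{X_n}\Ch(\cO_E(-n/2))$. The main obstacle I anticipate is precisely this wall-crossing analysis: proving for every even $n\ge 4$ that no collision of critical values of $f_\bq$ happens along the chosen path, in sharp contrast to $n=2$ where a crossing does occur and the principal asymptotic class reverts to $\hGamma_{X_n}$. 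A secondary delicate point is the exact determination of the twist $\cO_E(-n/2)$, which requires a careful residue/thimble calculation to match the Gamma integral structure on the exceptional divisor.
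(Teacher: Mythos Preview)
Your overall architecture is right and matches the paper: reduce via Theorem \ref{thm:Lefschetz_A} to identifying a $K$-class, get $\cO_{X_n}$ near the $\pi$-boundary from the positive-real thimble, and reach the $\varphi$-boundary class by monodromy/mutation starting from the $\pi$-picture. But there is a genuine gap in your wall-crossing step, and your mechanism for pinning down the twist is not the one that actually works.

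\textbf{The wall-crossing gap.} You plan to show that along the positive real path from the $\varphi$-boundary to $\bq=\mathbf{1}$ (for even $n\ge 4$) ``no collision of critical values'' occurs, ``in sharp contrast to $n=2$.'' This is false: for \emph{every} even $n$ the two real critical values $T_+$ and $T_-$ must collide somewhere on the positive real locus, since they swap order between the $\pi$- and $\varphi$-pictures (cf.~Figure \ref{fig:critical_values_even}). The difference between $n=2$ and $n\ge 4$ is only which of $T_\pm$ is rightmost at $\bq=\mathbf{1}$ (Proposition \ref{estimate1}), not whether a collision happens. The paper does not avoid the collision; it shows the collision is \emph{harmless}. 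Namely, after computing $V(\Gamma_-)=-[\cO_E(-m)]$ in the $\pi$-picture, one proves $\cO_E(-m)$ is orthogonal (for the Euler pairing) to $\cO$ and to all $\cO(-jD_1)$ with $|j|\le m-1$. By the Picard-Lefschetz formula (or \eqref{eq:Euler_intersection_pairings}) this orthogonality makes the mutation at the $T_\pm$-collision trivial, and Lemma \ref{lem:real_critical_values} rules out any other critical value touching the real axis. Hence $V(\Gamma_-)$ is constant along the whole positive real locus, and Theorem \ref{thm:Lefschetz_A} applies wherever $T_-$ happens to be the simple rightmost eigenvalue.

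\textbf{The twist.} Your proposed ``residue/symmetry'' determination of $-n/2$ is too vague and is not how the paper proceeds. The paper obtains it by explicit monodromy and iterated right mutation in $D^b(X_n)$: monodromy in $(q_1,q_2)$ sends $\Gamma_\bR$ to thimbles with $K$-classes $\cO(-kD_1)$ and $\cO(-H+mD_1)$ (via \eqref{eq:flatsection_monodromy}); then a direct cohomological computation (using the exact sequence $0\to\cO\to\cO(E)\to\cO_E(E)\to 0$ and Kodaira vanishing on $E\cong\bP^n$) yields
\[
R_{\cO(-D_1)}\cdots R_{\cO(-mD_1)}\,\cO(-H+mD_1)=\cO_E(-m)[-1].
\]
This single lemma delivers both the twist $-m=-n/2$ and the orthogonality needed for the wall-crossing argument above; neither comes from a residue or symmetry heuristic.
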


In this theorem, we derive the principal asymptotic class near the ``divisorial contraction boundary'' from the one near the ``$\bP^1$-fibration boundary'' through a sequence of mutations.



 \subsection*{History and organization of the paper}

In the early stages of this project, J.~Hu, H.~Ke, C.~Li, and Z.~Su attempted to prove Conjecture O and Gamma Conjecture I for toric Fano manifolds of Picard number two. However, they discovered counterexamples. Later, S.~Galkin and H.~Iritani joined the project to analyze these counterexamples and identify principal asymptotic classes over the K\"ahler moduli space.

 This paper is organized as follows. In Section \ref{sec:preliminaries}, we describe the precise statements of Conjecture $\mathcal{O}$ and Gamma conjecture I, and review basic facts of mirror symmetry for Fano manifolds. In Section \ref{sec:conjecture_O_Fano_toric}, {we study  Conjecture $\mathcal{O}$ for  $\mathbb{P}_{\mathbb{P}^{n}}(\mathcal{O}\oplus\mathcal{O}(n))$,} by a reduction to optimization problems in  nonlinear programming.
In Section \ref{sec:Gamma-I}, we investigate Gamma conjecture I for $\mathbb{P}_{\mathbb{P}^{n}}(\mathcal{O}\oplus\mathcal{O}(n))$. In Section \ref{sec:towards}, we discuss possible modifications of Gamma conjecture I.
In Sections \ref{sec:Gamma_Kaehler} and \ref{sec:example_Kaehler}, we discuss the Gamma conjecture I over the K\"ahler moduli space, demonstrating a close relationship to the birational geometry of the Fano manifold. In Section \ref{sec:Gamma_Kaehler}, we introduce the principal asymptotic class $A_X(\tau)$ of a Fano manifold $X$ for a general $\tau\in H^2(X,\bC)$; in Section \ref{sec:example_Kaehler} we compute it for the example $\bP_{\bP^n}(\cO\oplus \cO(n))$ on certain regions of the K\"ahler moduli space.
Finally in the appendix, {we study  Conjecture $\mathcal{O}$ for  $\mathbb{P}_{\mathbb{P}^{n}}(\mathcal{O}\oplus\mathcal{O}(n-1))$.}

 \subsection*{Acknowledgements}
The authors would like to thank Kai Hugtenburg for very helpful discussions on modified Gamma conjecture I. The definition of the A-model conifold value, which emerged from our discussions with him, is key to the formulation of modified Gamma conjecture I.
The authors would also like to thank Kwok Wai Chan, Bohan Fang, ChunYin Hau, King-Leung Lee, Jiayu Song and Heng Xie for helpful discussions, and to thank Zhi Chen for collecting the toric data from \cite{Obro2} for toric Fano manifolds of dimensions less than 7.
Z.~Su  would like to thank Xiaowei Wang for constant encouragement. J.~Hu, C.~Li and H.~Ke are  supported in part by the National Key R \&  D Program of China No.~2023YFA1009801.
S.~Galkin is supported by CNPq grant PQ 315747, and Coordena\c{c}\~{a}o de Aperfei\c{c}oamento de Pessoal de N\'{i}vel Superior-Brasil (CAPES)-Finance Code 001.
H.~Iritani is supported by JSPS grant 21H04994 and 23H01073.
H.~Ke is also supported in part by NSFC Grant 12271532.

\section{Preliminaries}
\label{sec:preliminaries}
\subsection{Gamma conjecture I for Fano manifolds}
In this section, we briefly review Conjecture $\cO$ and Gamma conjecture I for Fano manifolds following \cite{GGI, GaIr} and \cite[Section 2.1]{HKLY}.
We also note that we can consider Gamma conjecture I under the weaker condition \eqref{eq:condition_star} than Conjecture $\cO$.

\subsubsection{Quantum cohomology} We refer to \cite{CoKa} for background material on Gromov-Witten invariants and quantum cohomology.

Let $X$ be a Fano manifold, namely a compact complex manifold $X$ whose anticanonical line bundle is ample. Let $\overline{\mathcal{M}}_{0, k}(X, \mathbf{d})$ denote the moduli stack of $k$-pointed genus-zero stable maps $(f: C\to X; p_1, ..., p_k)$ of class $\mathbf{d}\in H_2(X,\mathbb{Z})$, which has a coarse moduli space $\overline{M}_{0, k}(X, \mathbf{d})$.
Let $[\overline{M}_{0, k}(X, \mathbf{d})]^{\rm virt}$ be the virtual fundamental class of $\overline{\mathcal{M}}_{0, k}(X, \mathbf{d})$, which is of complex degree $\dim X-3+\int_{\mathbf{d}} c_1(X)+k$ in the Chow group $A_*(\overline{\mathcal{M}}_{0, k}(X, \mathbf{d}))$.
Given classes $\gamma_1, ..., \gamma_k\in H^*(X)=H^*(X, \mathbb{C})$ and nonnegative integers $a_i$ for $1\leq i\leq k$, we have the following  associated gravitational correlator
  $$\langle \psi^{a_1}\gamma_1,..., \psi^{a_k}\gamma_k\rangle_{0,k,\mathbf{d}}:=\int_{[\overline{M}_{0, k}(X, \mathbf{d})]^{\rm virt}}\prod_{i=1}^k\big(c_1(\mathcal{L}_i)^{a_i}\cup \ev_i^*(\gamma_i)\big).$$
  Here $\mathcal{L}_i$ denotes the line bundle on $\overline{\mathcal{M}}_{0, k}(X, \mathbf{d})$ whose fiber over the stable map  $(f: C\to X; p_1, ..., p_k)$ is the cotangent space $T^*_{p_i}C$, and $\ev_i$ denotes the $i$-th  evaluation map. 

The (small) quantum product of $\alpha,\beta\in H^*(X)$ is defined by
\begin{equation}
\label{eq:small_quantum_product}
\alpha\star_\mathbf{q}\beta:=\sum_{\mathbf{d}\in H_2(X,\mathbb{Z})}\sum_{i} \langle \alpha,\beta,\phi_i\rangle_{0,3,\mathbf{d}}\phi^i\mathbf{q}^{\mathbf{d}}.
\end{equation}
Here $\{\phi_i\}$ is a homogeneous basis of $H^*(X)$, $\{\phi^i\}$ is its dual basis in $H^*(X)$ with respect to the Poincar\'e pairing; the quantum parameter $\mathbf{q}$ lies in the ``K\"ahler moduli space''
\[
\mathcal{M}_X:=\mathrm{Hom}(H_2(X,\bZ)/\mathrm{torsion},\bC^\times),
\]
and the monomial function $\mathbf{q}^\mathbf{d}\colon \cM_X \to \bC$ is the evaluation at $\mathbf{d}\in H_2(X,\bZ)$. Note that $\mathbf{q}^\mathbf{d}$ has a non-zero coefficient in \eqref{eq:small_quantum_product} only if $\mathbf{d}$ is effective. By the Fano condition, there are only finitely many $\mathbf{d}$ such that $\mathbf{q}^\mathbf{d}$ has a non-zero coefficient. The quantum product $\star_\mathbf{q}$ is a family of products on $H^*(X)$ with parameter $\mathbf{q}$, and it is a deformation of the cup product:
\[
\lim\limits_{\mathbf{q}\to0}\alpha\star_\mathbf{q}\beta=\alpha\cup\beta.
\]


\subsubsection{Conjecture $\cO$ and condition \eqref{eq:condition_star}} Consider the even part of the cohomology $H^\bullet(X):=H^{\rm even}(X)$ and the finite-dimensional $\mathbb{C}$-algebra $QH^\bullet(X)=
   (H^\bullet(X), \bullet)$ with the product defined by $\alpha\bullet \beta:=\alpha\star_{\mathbf{q}=\mathbf{1}}\beta$, namely by specialization of the quantum parameter to the constant function with value $1$. Let $\hat c_1$ denote the linear operator induced by the first Chern class:
      $$\hat c_1: H^\bullet(X)\longrightarrow H^\bullet(X); \, \beta\mapsto c_1(X)\bullet\beta.$$

 \begin{defn}[Property $\cO$]\label{defnrho} For a Fano manifold $X$, we denote by $\rho=\rho(\hat c_1)$ the spectral radius of the linear operator $\hat c_1$, namely
    \[
    \rho:=\max\{|\lambda| : \lambda\in \Spec(\hat c_1)\}
    \]
    where $\Spec(\hat c_1):=\{\lambda : \lambda\in \bC  \mbox{ is an eigenvalue of } \hat c_1\}$.
 We say that $X$ satisfies \textbf{Property $\cO$} if the following two conditions are satisfied.
 \begin{enumerate}
   \item $\rho$ is a simple root of the characteristic polynomial of $\hat{c}_1$ (in particular $\rho \in \Spec(\hat{c}_1)$);
   \item for any $\lambda\in \Spec(\hat c_1)$ with $|\lambda|=\rho$, we have $\lambda^{i_X}=\rho^{i_X}$, where $i_X$ denotes the Fano index of $X$ defined by $i_X:=\max\{k\in \bZ~:~ {c_1(X)\over k}\in H^2(X,\mathbb{Z})\}$.
 \end{enumerate}
 \end{defn}
 \begin{conjO}[Conjecture 3.1.2 of \cite{GGI}]Every Fano manifold satisfies Property $\cO$.
   \end{conjO}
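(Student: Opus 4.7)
The plan is to attack this universal statement by a Perron-Frobenius strategy: for each Fano manifold $X$, exhibit a basis of $H^\bullet(X)$ in which $\hat{c}_1$ is represented by a nonnegative, irreducible matrix, and then control its imprimitivity index to match $i_X$. Part (1) of Property $\cO$ (simplicity of the spectral radius $\rho$) is exactly the content of the classical Perron-Frobenius theorem once nonnegativity and irreducibility are in hand; part (2) (the constraint $\lambda^{i_X}=\rho^{i_X}$ on eigenvalues of modulus $\rho$) is a statement about the cyclic block decomposition of such a matrix. This is the route used successfully for flag varieties $G/P$ in \cite{ChLi}, and I would aim to extend it to arbitrary Fano manifolds.

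Concretely, I would proceed as follows. First, produce a distinguished basis of $H^\bullet(X)$: for $G/P$ one uses the Schubert basis, so for general $X$ I would look for an analog via (a) classes of strata of a $\bQ$-Gorenstein degeneration of $X$ to a toric Fano with terminal singularities, (b) classes pulled back from an extremal contraction of $X$ combined with a relative Schubert-type construction on the fibers, or (c) a full exceptional collection in $D^b(X)$, hoping that the corresponding dual cohomology basis exhibits nonnegative structure constants at $\bq=\mathbf{1}$. Second, verify nonnegativity of the entries of $\hat{c}_1$ in this basis, ideally through a quantum Chevalley-type formula combined with a degeneration that reduces the summed three-point invariants $\sum_{\bfd}\langle c_1(X), e_i, e^j\rangle_{0,3,\bfd}$ at $\bq = \mathbf{1}$ to the toric or flag case. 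Third, establish irreducibility by showing that for any two basis elements there is a chain of nonzero matrix entries of $\hat{c}_1$ connecting them, which should follow from connectedness of the Mori cone together with the ampleness of $c_1(X)$. Fourth, match the imprimitivity index to $i_X$ by using the natural $\bZ/i_X\bZ$-grading on $H^\bullet(X)$ induced by the primitive class $c_1(X)/i_X\in H^2(X,\bZ)$: since $\hat{c}_1$ shifts this grading by one, the Perron blocks are forced to be the $i_X$ graded pieces.

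The principal obstacle, by a wide margin, is step two. Individual Gromov-Witten numbers $\langle c_1(X), e_i, e^j\rangle_{0,3,\bfd}$ are nonnegative when $e_i, e^j$ are effective, but after summing over all $\bfd\in H_2(X,\bZ)$ and specializing to $\bq=\mathbf{1}$ there is no a priori positivity mechanism in a geometrically natural basis: the specialization collapses distinct curve classes, and signs hidden in the dual basis $\{\phi^i\}$ can in principle generate cancellations. I would therefore pursue, in parallel, a spectral perturbation approach on the K\"ahler moduli space $\cM_X$: interpret $\hat{c}_1$ as the endpoint of the continuous family $(c_1(X)\star_\bq)$, and try to track the rightmost eigenvalue from a boundary of $\cM_X$ where the operator degenerates to a limit for which Property $\cO$ can be read off (for instance a large-radius limit, or the boundary associated to an extremal contraction), along a path terminating at $\bq = \mathbf{1}$. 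The delicate analytic point is to rule out collisions of the spectral radius with other eigenvalues along the path, and it is exactly here that I expect the obstruction to proving the conjecture in this universal form to concentrate.
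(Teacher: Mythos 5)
The statement you have been asked to address is \emph{Conjecture} $\cO$, not a theorem that the paper proves. In fact, one of the paper's central results is that Conjecture $\cO$ is \emph{false}. Theorem \ref{thmconjO1} shows that for $X_n=\bP_{\bP^n}(\cO\oplus\cO(n))$ with $n\geq 3$ odd, Property $\cO$ fails: by the mirror-symmetry computation in Proposition \ref{estimate1}, the eigenvalues of $\hat{c}_1$ are the critical values $g(t)$ over the roots $t$ of $h(t)=1$, the unique eigenvalue of largest modulus is $g(a_-)$, and for odd $n\geq 3$ one has $g(a_-)\in(-n^n,-n^n+1)$, which is negative. Hence the spectral radius $\rho=|g(a_-)|=-g(a_-)>0$ is not itself an eigenvalue, and Part (1) of Property $\cO$ fails. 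No proof of Conjecture $\cO$ for all Fano manifolds can succeed, because the claim is not true.

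Your Perron--Frobenius strategy therefore cannot be carried out in full generality, and it breaks at precisely the step you flagged as the ``principal obstacle.'' If $\hat{c}_1$ were representable by a nonnegative irreducible matrix, the Perron--Frobenius theorem would force $\rho$ to be an eigenvalue; for $X_n$ with odd $n\geq 3$ it is not. So no basis of $H^\bullet(X_n)$ of the kind you seek exists, and the lack of an ``a priori positivity mechanism'' after summing over $\bfd$ and specializing $\bq=\mathbf{1}$ is not a technical nuisance to be overcome but a reflection of a genuine sign change in the spectrum. Your alternative spectral-perturbation approach, tracking the rightmost eigenvalue as $\bq$ moves across $\cM_X$, is exactly what the paper does in Sections \ref{sec:Gamma_Kaehler} and \ref{sec:example_Kaehler}, and it confirms your worry: as $\bq$ moves from the $\bP^1$-fibration boundary towards $\bq=\mathbf{1}$, the conifold eigenvalue $T_+$ and the second real eigenvalue $T_-$ change relative magnitude (for even $n\geq 4$ they actually collide; for odd $n\geq 3$, $T_-$ stays negative but $|T_-|$ overtakes $T_+$), so the rightmost real eigenvalue and the spectral radius part ways at $\bq=\mathbf{1}$. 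The correct conclusion drawn in the paper is that Conjecture $\cO$ should be modified rather than proved: the authors replace it with condition \eqref{eq:condition_star}, the A-model conifold value $\TAcon$, Property $\cO_A$ (Definition \ref{defnPOA}), and the modified Gamma conjecture I (Conjectures \ref{conj:modified_Gamma-I_strong} and \ref{conj:modified_Gamma-I_weak}). If you wish to pursue a positive result in the direction of your proposal, the thing to aim at is Conjecture \ref{conj: TAcon} (that $\TAcon$ is always an eigenvalue of $\hat{c}_1$), not the original Conjecture $\cO$.
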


Conjecture $\cO$ has proven true for any flag variety $G/P$  in \cite{ChLi} by using Perron-Frobenius theorem based on a remark due to Kaoru Ono.

We also recall condition \eqref{eq:condition_star} from the Introduction, which is a weaker version of Property $\cO$. As remarked in \cite[Remark 3.1.9]{GGI}, we can study Gamma conjecture I under this weaker condition.
\begin{defn}[Condition ($*$)]
\label{defn:star}
We say that $\hat{c}_1$ has a \emph{simple rightmost eigenvalue} $u\in \bC$ if
\begin{itemize}
    \item[(1)] $u$ is a simple root of the characteristic polynomial of $\hat{c}_1$ and
    \item[(2)] any other eigenvalues $u'$ of $\hat{c}_1$ satisfies $\Re(u')<\Re(u)$.
\end{itemize}
We say that $X$ satisfies \emph{condition \eqref{eq:condition_star}} if $\hat{c}_1$ has a simple rightmost eigenvalue. In this case, the simple rightmost eigenvalue is given by
\[
\rho':=\max\{\Re(u) : u\in \Spec(\hat{c}_1)\}
\]
as $\hat{c}_1$ is a real operator. Note that $\rho' = \rho$ when Property $\cO$ holds.
\end{defn}

\begin{remark}
Here we regard $\hat{c}_1$ as an operator on the even part $H^\bullet(X)$. By the arguments in \cite{SaSh, GaIr}, we find that Property $\cO$ or condition \eqref{eq:condition_star} remains equivalent even if we consider $\hat{c}_1$ as an opeartor on the entire cohomology group $H^*(X)$ or on the smaller part $\bigoplus_{p} H^{p,p}(X)\subset H^\bullet(X)$.
\end{remark}

\subsubsection{Gamma conjecture I}
\label{subsubsec:Gamma-I}
There is a quantum connection on the trivial $H^\bullet(X)$-bundle over $\mathbb{P}^1$, given by
  \[
  \nabla_{z\partial_z}=z\partial_z-{1\over z}(c_1(X)\bullet )+\mu.
  \]
  Here $z$ is an inhomogeneous co-ordinate on $\mathbb{P}^1$ and $\mu\in \End(H^\bullet(X))$ is the Hodge grading operator defined by $\mu(\phi)=(p-{\dim X\over 2})\phi$ for $\phi\in H^{2p}(X)$.
  The quantum connection is a meromorphic connection, which is logarithmic at $z=\infty$ and irregular singular at $z=0$. The space of flat sections can be identified with the cohomology group $H^\bullet(X)$ via the fundamental solution
\[
S(z)z^{-\mu}z^{c_1}
\]
where $S \colon \bP^1 \setminus \{0\} \to \End(H^\bullet(X))$ is a holomorphic function given by
\[
S(z) \alpha = \alpha + \sum_i \sum_{\bfd \neq 0} \left\<\frac{\alpha}{-z-\psi},\phi^i\right\>_{0,2,\bfd}  \phi_i
\]
and $c_1\in \End(H^\bullet(X))$ is the cup product operator by $c_1(X)$.
Here, $\alpha/(-z-\psi)$ should be expanded in the series $\sum_{n\ge 0} \alpha \psi^n (-z)^{-n-1}$.
See \cite[Proposition 2.3.1]{GGI} for detailed explanations.
\begin{prop-defn}[\protect{\cite[Proposition 3.3.1, Remark 3.1.9]{GGI}}]
\label{prop-defn:AX}
If $X$ satisfies Property $\cO$, or even more weakly, if $X$ satisfies condition \eqref{eq:condition_star}, then the space
\begin{align*}
\{\alpha\in H^\bullet(X): e^{\rho'/z}S(z)z^{-\mu}z^{c_1}(\alpha)\mbox{ has moderate growth as }z\to+0\}
\end{align*}
is a one-dimensional linear subspace of $H^\bullet(X)$, where `moderate growth' means that $\|e^{\rho'/z}S(z)z^{-\mu}z^{c_1}(\alpha)\|=O(z^{-m})$ as $z\to +0$ for some $m\in \bN$.
The \emph{principal asymptotic class} $A_X$ is a generator of this space defined up to scalar.
    \end{prop-defn}

The Gamma class \cite{Libg, Lu, Iri1}  is a real characteristic class defined for an almost complex manifold. It is defined by Chern roots $\delta_1,\ldots, \delta_n$ of the tangent bundle $TX$ of $X$ and
  Euler's $\Gamma$-function  $\Gamma(x)=\int_0^\infty e^tt^{x-1}dt$, and has the following expansion:
  $$\widehat \Gamma_X:=\prod_{i=1}^n\Gamma(1+\delta_i)=\exp\big(-C_{\rm eu}c_1(X)+\sum_{k=2}^\infty (-1)^k(k-1)!\zeta(k)\ch_k(TX)\big)  \in H^\bullet (X, \mathbb{R})$$
where $C_{\rm eu}$ is the  Euler-Mascheroni  constant, $\zeta(k)=\sum_{n=1}^\infty{1\over n^k}$ is the value of Riemann zeta function at $k$, and  $\ch_k$ denotes the $k$-th Chern character.

While the original Gamma Conjecture I in \cite[Conjecture 3.4.3]{GGI} requires Property $\cO$, we consider it here under the weaker assumption \eqref{eq:condition_star}.

\begin{GammaI}
Let $X$ be a Fano manifold satisfying condition \eqref{eq:condition_star}. Then the principal asymptotic class $A_X$ is given by the Gamma class $\widehat \Gamma_X$.
\end{GammaI}

The principal asymptotic class also appears in an asymptotic expansion of Givental's $J$-function.
Givental's $J$-function is a cohomology-valued function in $(\tau,z)\in H^\bullet(X)\times \bC^\times$ given by descendant Gromov-Witten invariants
\begin{equation}
\label{eq:J-function}
J_X(\tau,z) =
1+ \frac{\tau}{z} + \sum_{i} \sum_{(n,\bfd)\neq (0,0)} \left\<1,\tau,\cdots,\tau,\frac{\phi^i}{z-\psi}\right\>_{0,n+2,\bfd} \phi_i
\end{equation}
where $\phi^i/(z-\psi)$ should be expanded in the series $\sum_{n\ge 0} \phi^i \psi^n z^{-n-1}$.
The restriction to the anticanonical line $\tau=c_1(X) \log t$ and $z=1$ is given in terms of the fundamental solution $S(z)z^{-\mu}z^{c_1(X)}$ as follows:
\[
J_X(c_1(X) \log t, 1) = z^{\frac{\dim X}{2}}\big(S(z)z^{-\mu}z^{c_1}\big)^{-1}1
\]
with $t=z^{-1}$ (see also \cite[Remark 2.2]{HKLY}).

The following result follows from the same argument in \cite[Proposition 3.8]{GaIr}, which is based on \cite[Propositions 3.6.2, 3.2.1]{GGI}.
\begin{prop}
\label{propJexpansion}
  For any Fano manifold $X$ satisfying condition \eqref{eq:condition_star}, Givental's $J$-function $J_X(c_1(X) \log t,1)$ has an asymptotic expansion of the form
   \begin{equation}\label{expandJ}
      J_X(c_1(X) \log t, 1)\sim C t^{-{\dim X\over 2}} e^{\rho' t}(A_X+\alpha_1 t^{-1} +\alpha_2 t^{-2}+\cdots)
   \end{equation}
  as $t\to +\infty$ on the positive real line, where $C$ is a non-zero constant and $\alpha_i\in H^\bullet(X)$.
\end{prop}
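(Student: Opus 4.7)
The strategy is to translate the large-$t$ behavior of $J_X(c_1(X)\log t, 1)$ into a small-$z$ asymptotic analysis of a flat section of the quantum connection $\nabla$ near its irregular singularity at $z=0$, and then to mirror the argument of \cite[Proposition 3.8]{GaIr} (which rests on \cite[Propositions 3.2.1 and 3.6.2]{GGI}) with Property $\cO$ replaced by the weaker condition \eqref{eq:condition_star}. Concretely, the identity
\[
J_X(c_1(X)\log t, 1) = z^{\frac{\dim X}{2}}\bigl(S(z)z^{-\mu}z^{c_1}\bigr)^{-1}(1), \qquad z = 1/t,
\]
recalled just above Proposition \ref{propJexpansion}, reduces the claim to producing an asymptotic expansion of $\bigl(S(z)z^{-\mu}z^{c_1}\bigr)^{-1}(1)$ as $z\to +0$ along the positive real axis.

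The next step is to invoke the Hukuhara--Turrittin decomposition of $\nabla$ at $z=0$. On a sufficiently narrow sector around $\bR_{>0}$, the sheaf of $\nabla$-flat sections splits as a direct sum indexed by eigenvalues $u\in\Spec(\hat c_1)$, each summand consisting of sections asymptotic to $e^{-u/z}$ times an algebraic factor of the form $z^{-\mu}z^{c_1}\cdot(\text{power series in }z)$, with possible logarithmic corrections on non-semisimple Jordan blocks. Condition \eqref{eq:condition_star} guarantees that the $u=\rho'$-summand is one-dimensional, free of logarithmic corrections, and---by the very definition of $A_X$ in Proposition/Definition \ref{prop-defn:AX}---spanned by the flat section $S(z)z^{-\mu}z^{c_1}(A_X)$; moreover it decays the slowest among all summands as $z\to +0$. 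Dualizing this picture, the vector $\bigl(S(z)z^{-\mu}z^{c_1}\bigr)^{-1}(1)$ inherits a matching asymptotic expansion whose leading piece is $e^{\rho'/z}$ times a formal power series in $z$ with values in $\bC A_X$, while the remaining eigenvalue sectors contribute exponentially smaller corrections absorbed into the tail. Substituting $z=1/t$, the prefactor $z^{\dim X/2}$ becomes $t^{-\dim X/2}$ and the claimed expansion \eqref{expandJ} follows.

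The main obstacle is to verify that the proof of \cite[Proposition 3.8]{GaIr}, formally phrased under Property $\cO$, survives under the weaker hypothesis \eqref{eq:condition_star}. A careful reading shows that the only input actually used in that proof is the existence of a simple, rightmost eigenvalue of $\hat c_1$, which is exactly \eqref{eq:condition_star}; the periodicity statement Property $\cO$(2) plays no role in the asymptotic expansion itself. This relaxation is already anticipated in \cite[Remark 3.1.9]{GGI}. Finally, the non-vanishing of the leading constant $C$ is automatic: after fixing a representative of the one-dimensional class $[A_X]$, the scalar $C$ coincides with the coefficient recovered by projecting $1$ onto the $\rho'$-summand via the Hukuhara--Turrittin splitting, and it cannot vanish without contradicting the one-dimensionality of the dominant component of $\bigl(S(z)z^{-\mu}z^{c_1}\bigr)^{-1}(1)$.
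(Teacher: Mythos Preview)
Your proposal is correct and aligns with the paper's approach: the paper itself merely records that the result follows from the argument of \cite[Proposition 3.8]{GaIr} (based on \cite[Propositions 3.2.1, 3.6.2]{GGI}) with Property $\cO$ relaxed to condition \eqref{eq:condition_star}, exactly as you outline. One small imprecision: your justification for $C\neq 0$ does not quite work---if the $\rho'$-component of the dual flat section vanished, this would not contradict one-dimensionality of anything, it would simply yield a subdominant expansion---but since the nonvanishing is established in the cited references (via the Laplace integral representation and the nondegeneracy of the self-pairing of the simple $\rho'$-eigenvector), this does not affect the validity of your overall strategy.
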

As a consequence, an equivalent description of Gamma conjecture I is given in terms of the $J$-function $J_X(c_1(X)\log t,1)$ as follows (cf.~\cite[Corollray 3.6.9]{GGI}):
\begin{cor} 
Let $X$ be a Fano manifold satisfying condition \eqref{eq:condition_star}.
Then Gamma conjecture I holds for $X$ if and only if
\begin{align*}
[\hGamma_X] = \lim_{t\to \infty} [J_X(c_1(X) \log t, 1)].
\end{align*}
holds in the projective space $\bP(H^\bullet(X))$ of the cohomology group.
\end{cor}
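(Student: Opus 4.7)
\medskip

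\noindent\textbf{Proof proposal.} The plan is to deduce the corollary directly from the asymptotic expansion in Proposition \ref{propJexpansion}, which under condition \eqref{eq:condition_star} gives
\[
J_X(c_1(X)\log t,1) \sim C\, t^{-\dim X/2}\, e^{\rho' t}\bigl(A_X + \alpha_1 t^{-1} + \alpha_2 t^{-2} + \cdots\bigr)
\]
as $t\to+\infty$, with $C\neq 0$ and $A_X\in H^\bullet(X)$ the principal asymptotic class from Proposition/Definition \ref{prop-defn:AX}. Since $A_X$ is by construction a generator of a one-dimensional subspace, it is in particular non-zero, so $[A_X]\in\bP(H^\bullet(X))$ is well-defined; and for $t$ sufficiently large, $J_X(c_1(X)\log t,1)$ is non-zero as well, so its projective class makes sense.

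\medskip

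\noindent\textbf{Key step: the limit in $\bP(H^\bullet(X))$ equals $[A_X]$.} The scalar prefactor $C\,t^{-\dim X/2}e^{\rho' t}$ disappears when passing to $\bP(H^\bullet(X))$, so
\[
\bigl[J_X(c_1(X)\log t,1)\bigr] = \bigl[A_X + \alpha_1 t^{-1} + \alpha_2 t^{-2} + \cdots\bigr]
\]
for large $t$. As $t\to +\infty$, the vector on the right converges in $H^\bullet(X)$ to the non-zero vector $A_X$, so by continuity of the projectivization map $H^\bullet(X)\setminus\{0\}\to\bP(H^\bullet(X))$ at $A_X$,
\[
\lim_{t\to +\infty}\bigl[J_X(c_1(X)\log t,1)\bigr] = [A_X]
\]
in $\bP(H^\bullet(X))$. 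A small subtlety is that Proposition \ref{propJexpansion} gives an \emph{asymptotic} expansion rather than a convergent one, so strictly speaking I would truncate the series after the constant term, writing $J_X(c_1(X)\log t,1) = C\,t^{-\dim X/2}e^{\rho' t}(A_X + R(t))$ with $\|R(t)\|=O(t^{-1})$; the argument then carries through verbatim.

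\medskip

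\noindent\textbf{Concluding the equivalence.} Combining the display above with the definition of Gamma conjecture I — namely $[A_X]=[\hGamma_X]$ in $\bP(H^\bullet(X))$ — gives immediately
\[
\text{Gamma conjecture I holds} \iff [A_X]=[\hGamma_X] \iff \lim_{t\to+\infty}\bigl[J_X(c_1(X)\log t,1)\bigr]=[\hGamma_X],
\]
which is the claim. The only genuine work is Proposition \ref{propJexpansion}, which is cited; the corollary itself is then essentially a one-line consequence once one observes that projectivization kills the exponential/polynomial prefactor and is continuous at $A_X\neq 0$. There is no real obstacle; if any care is needed, it is in making the asymptotic-versus-convergent distinction explicit, which is handled by the truncation remark above.
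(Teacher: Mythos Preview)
Your proposal is correct and follows exactly the approach the paper intends: the corollary is stated as an immediate consequence of Proposition \ref{propJexpansion} (with a reference to \cite[Corollary 3.6.9]{GGI}), and your argument spells out precisely that deduction, including the careful handling of the asymptotic expansion via truncation.
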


\subsection{Mirror symmetry for toric Fano manifolds}
\label{subsec:MS_Fano_toric}

We refer readers to \cite{CLS} for detailed account of toric varieties.
A toric variety $X$ of complex dimension $N$ is an algebraic variety containing an algebraic torus as an open dense subset, such that the action of the torus on itself extends to the whole variety. Geometric information of $X=X_\Sigma$ can be  determined by the combinatorics of its associated {\em fan} $\Sigma$ in $\bR^N$. Here we always assume $X_\Sigma$ to be smooth and Fano, namely    $\Sigma=\Sigma(\Delta)$ is the normal fan of a convex polytope $\Delta$ with nice properties. Thus $X_\Sigma$ is also dentoed as $X_\Delta$ alternatively.

Let $f=f_\Sigma=f_\Delta$ denote the Landau-Ginzburg superpotential mirror to the toric Fano manifold $X_\Sigma=X_\Delta$. It is a Laurent polynomial, which can be  read off immediately from $\Sigma$. Precisely, we denote by $b_1,...,b_m\in\bZ^N$ the primitive generators of rays in $\Sigma$. Then the Picard number of $X_\Sigma$ is equal to $m-N$, and $f$  is given by
\begin{align}\label{Laurent}
f: (\bC^\times)^N\rightarrow \bC;\, \mathbf{x} \mapsto f(\mathbf{x})=\mathbf{x}^{b_1}+\cdots+\mathbf{x}^{b_m},
\end{align}
where $\mathbf{x}:=(x_1,...,x_N)$ and for $b_i=(b_{i1},...,b_{iN})$, $\mathbf{x}^{b_i}:=x_1^{b_{i1}}\cdots x_N^{b_{iN}}$.

Mirror symmetry between quantum cohomology of $X_\Sigma$ and oscillatory integrals of $f$ has been proved in \cite{Gi1,Gi2,Iri1}. As a remarkable property, there exists an isomorphism $\Psi$ of $\mathbb{C}$-algebras between   quantum cohomology ring  and   Jacobian ring,
\[
\Psi\colon QH(X)|_{\mathbf{q}=1}\xrightarrow{\cong} \Jac(f)=\bC[x_1^{\pm 1},...,x_N^{\pm 1}]/(x_1\partial_{x_1}f,...,x_N\partial_{x_N}f).
\]
Here we set $\mathbf{q}=\mathbf{1}$ to introduce $f$ without introducing any deformations. Moreover, we have
\begin{prop}[\protect{\cite[Corollary G]{OsTy}}]\label{eigncrit}
We have $\Psi(c_1(X))=[f]$. Specifically, the eigenvalues of $\hat c_1$, including their multiplicities, coincide with the critical values of  $f$.
\end{prop}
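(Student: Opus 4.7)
The plan is to deduce both assertions from the explicit form of the mirror isomorphism $\Psi$ combined with a standard Artinian decomposition of $\Jac(f)$. Under the Givental--Iritani toric mirror theorem \cite{Gi1, Gi2, Iri1}, the quantum cohomology $QH(X_\Sigma)|_{\mathbf{q}=\mathbf{1}}$ admits a Batyrev-type presentation in which the toric prime divisor classes $D_1,\ldots,D_m$ corresponding to the rays $b_1,\ldots,b_m$ generate the ring, and $\Psi$ sends $D_i$ to the monomial $[\mathbf{x}^{b_i}]\in\Jac(f)$. Since $X_\Sigma$ is smooth, the anticanonical class decomposes as $c_1(X_\Sigma)=D_1+\cdots+D_m$, and hence
\begin{equation*}
\Psi(c_1(X))=\sum_{i=1}^m[\mathbf{x}^{b_i}]=[f],
\end{equation*}
which is the first assertion.

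For the eigenvalue statement I would invoke the fact that $\Psi$ is an isomorphism of finite-dimensional $\bC$-algebras, so the operator $\hat c_1$ is conjugate via $\Psi$ to multiplication by $[f]$ on $\Jac(f)$. Because $\Jac(f)$ is Artinian, the structure theorem for Artinian rings yields a decomposition
\begin{equation*}
\Jac(f)\cong\prod_{\mathbf{x}_0}R_{\mathbf{x}_0},
\end{equation*}
indexed by the maximal ideals of $\Jac(f)$, with each $R_{\mathbf{x}_0}$ a local Artinian $\bC$-algebra. These maximal ideals are in bijection with the simultaneous zeros of $x_1\partial_{x_1}f,\ldots,x_N\partial_{x_N}f$ inside $(\bC^\times)^N$, i.e., with the critical points of $f$.

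Finally, on each local factor $R_{\mathbf{x}_0}$ the element $[f]-f(\mathbf{x}_0)$ lies in the unique maximal ideal and is therefore nilpotent, so multiplication by $[f]$ on $R_{\mathbf{x}_0}$ has $f(\mathbf{x}_0)$ as its unique eigenvalue with algebraic multiplicity equal to $\dim_{\bC}R_{\mathbf{x}_0}$ (the Milnor number of $f$ at $\mathbf{x}_0$). Assembling the local contributions yields the characteristic polynomial
\begin{equation*}
\det\bigl(t\cdot\mathrm{id}-\hat c_1\bigr)=\prod_{\mathbf{x}_0\in\operatorname{Crit}(f)}\bigl(t-f(\mathbf{x}_0)\bigr)^{\dim_{\bC}R_{\mathbf{x}_0}},
\end{equation*}
which is exactly the matching of critical values (with Milnor multiplicities) to eigenvalues of $\hat c_1$ (with algebraic multiplicities). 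The only non-trivial obstacle in this plan is the precise calibration of $\Psi$ so that $\Psi(D_i)=[\mathbf{x}^{b_i}]$ on the nose; once this normalization is granted by the cited mirror theorems, the rest is formal commutative algebra.
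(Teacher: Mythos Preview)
Your proposal is correct and follows essentially the same logic as the paper: the paper states the proposition as a citation of \cite{OsTy} without giving a proof, and merely remarks afterward that ``eigenvalues of the linear operator on $\Jac(f)$ induced by the function multiplication by $f$ are precisely the critical values of $f$,'' which is exactly your Artinian decomposition argument. You go a bit further by deriving $\Psi(c_1(X))=[f]$ from the divisor-to-monomial correspondence $\Psi(D_i)=[\mathbf{x}^{b_i}]$, whereas the paper simply attributes this normalization to \cite{OsTy}; your derivation is standard and correct, and you rightly flag that this calibration is the one substantive input.
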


Here we notice that eigenvalues of the linear operator on $\Jac(f)$ induced by the function multiplication by $f$  are precisely the critical values of $f$.

We can use oscillatory integrals of $f$ to get information of quantum cohomology of $X_\Sigma$. For example, it follows from \cite[Theorem 4.14]{Iri1} that
\begin{align}
\<{1},S(z)z^{-\mu}z^{c_1}\widehat \Gamma_X\>^X=z^{-\frac{N}{2}}\int_{(\bR_{>0})^N}e^{-\frac{f(\mathbf{x})}{z}}\frac{dx_1\cdots dx_N}{x_1\cdots x_N}.
\end{align}
The restriction $f|_{(\bR_{>0})^N}$ is a real function on $(\bR_{>0})^N$ that admits a global minimum at a unique point $\mathbf{x}_{\rm con}\in(\bR_{>0})^N$ \cite{Gal,GGI,GaIr}. Such point $\mathbf{x}_{\rm con}$ is called the {\it conifold point} of $f$. Let $T_{\rm con}:=f(\mathbf{x}_{\rm con})$.

\begin{defn}[\protect{\cite[Condition 6.1]{GaIr}}]\label{BanalogyPO}
    We say that $X_\Sigma$ satisfies the \emph{B-analogue of Property $\cO$}, if its mirror superpotential $f=f_\Sigma$ satisfies the following two conditions:
  \begin{enumerate}
        \item[  {\upshape(a)}] every critical value $u$ of $f$ satisfies $|u|\leq T_{\rm con}$;
        \item[  {\upshape(b)}] the conifold point $\mathbf{x}_{\rm con}$ is the unique critical point of $f$ contained in $f^{-1}(T_{\rm con})$.
    \end{enumerate}
\end{defn}

\begin{prop}[\protect{\cite[Theorem 6.3]{GaIr}}]
Suppose that  a toric Fano manifold $X_\Sigma$ satisfies the B-analogue of Property $\cO$. Then $X_\Sigma$ satisfies Gamma conjecture I.
\end{prop}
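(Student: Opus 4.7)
My plan is to deduce Gamma conjecture I for $X_\Sigma$ from the B-analogue of Property $\cO$ via Hori--Vafa mirror symmetry, in three steps.

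\textbf{Step 1} (reduction to condition \eqref{eq:condition_star}). By Proposition \ref{eigncrit}, the spectrum of $\hat c_1$ coincides with the critical values of $f = f_\Sigma$. Condition (a) gives $\rho = T_{\rm con}$, and combined with the fact that the conifold point is a non-degenerate minimum of $f|_{(\bR_{>0})^N}$ \cite{Gal}, condition (b) makes $T_{\rm con}$ a simple eigenvalue of $\hat c_1$. For any other critical value $u'$, condition (a) gives $|u'| \le T_{\rm con}$; if $\Re(u') = T_{\rm con}$ then $u' = T_{\rm con}$, contradicting $u' \neq T_{\rm con}$. Hence $\Re(u') < T_{\rm con}$ and condition \eqref{eq:condition_star} holds with $\rho' = T_{\rm con}$, so the principal asymptotic class $A_X$ of Proposition/Definition \ref{prop-defn:AX} is well defined.

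\textbf{Step 2} (identification of the cycle). Iritani's mirror theorem \cite{Iri1} promotes the pairing identity displayed just above Definition \ref{BanalogyPO} to an isomorphism of $D$-modules that, at the level of flat sections, identifies $S(z)z^{-\mu}z^{c_1}\hGamma_X$ with the rapid-decay homology class of $(\bR_{>0})^N$ inside $((\bC^\times)^N, \{\Re(f/z) \gg 0\})$. The non-degenerate critical points of $f$ furnish a Lefschetz thimble basis $\{\Gamma_p\}$ of this homology. Writing $(\bR_{>0})^N = \sum_p n_p \Gamma_p$ and applying stationary phase along each $\Gamma_p$ gives
\begin{align*}
z^{-N/2}\int_{(\bR_{>0})^N} e^{-f(\bx)/z}\, \frac{dx_1\cdots dx_N}{x_1\cdots x_N} = \sum_p n_p\, c_p\, e^{-u_p/z}\bigl(1 + O(z)\bigr)
\end{align*}
with $c_p \ne 0$ for every $p$. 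On the other hand, since $f|_{(\bR_{>0})^N}$ is real with a unique non-degenerate minimum at $\bx_{\rm con}$, the classical Laplace method applied directly to this integral produces $C e^{-T_{\rm con}/z}(1+O(z))$ with $C \ne 0$. Condition (a) together with Step 1 force $\Re(u_p) \le T_{\rm con}$, with equality only for $p = \bx_{\rm con}$; comparing exponentials therefore yields $n_p = 0$ for all $p \neq \bx_{\rm con}$. Thus $(\bR_{>0})^N$ is a non-zero multiple of $\Gamma_{\bx_{\rm con}}$ in rapid-decay homology.

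\textbf{Step 3} (conclusion). Under the mirror isomorphism, the Lefschetz thimble $\Gamma_{\bx_{\rm con}}$ corresponds to a flat section of the quantum connection whose $H^\bullet(X)$-valued size is of order $e^{-T_{\rm con}/z}$ times a polynomial in $z^{\pm 1}$ as $z \to +0$; this is the non-degenerate vector-valued stationary phase expansion, applied component by component. Therefore $e^{T_{\rm con}/z} S(z) z^{-\mu} z^{c_1} \hGamma_X$ has moderate growth, so Proposition/Definition \ref{prop-defn:AX} forces $\hGamma_X \in \bC A_X$, which is Gamma conjecture I for $X_\Sigma$.

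The main obstacle is Step 2: making the Lefschetz thimble decomposition of $(\bR_{>0})^N$ rigorous in the rapid-decay homology and knowing each scalar $c_p$ is non-zero. Both depend on Morse-theoretic control of $\Re(f/z)$ at the non-compact boundary of $(\bR_{>0})^N$, which is supplied by the convenience of the Newton polytope and the positivity of the Laurent coefficients of $f_\Sigma$ (both automatic for toric Fano manifolds): these ensure $\Re(f) \to +\infty$ along every escape, so that $(\bR_{>0})^N$ represents a genuine rapid-decay class. Once these foundations are laid, the remaining asymptotic analysis is routine.
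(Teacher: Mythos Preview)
Your argument is correct in outline but more roundabout than the paper's (which, per the remark after Theorem~\ref{thm-gammaclassandconifoldpoint}, essentially follows \cite[Theorem~6.3]{GaIr}). The paper bypasses your Step~2 entirely: it applies the Laplace method directly to each scalar pairing
\[
\langle \phi, S(z)z^{-\mu}z^{c_1}\hGamma_X\rangle^X = z^{-N/2}\int_{(\bR_{>0})^N} e^{-f(\bx)/z}\varphi(\bx,-z)\,\frac{d\bx}{\bx},
\]
using only that $f|_{(\bR_{>0})^N}$ has a unique non-degenerate minimum at $\bx_{\rm con}$. This already yields the full vector-valued expansion $e^{T_{\rm con}/z}S(z)z^{-\mu}z^{c_1}\hGamma_X\sim\alpha_0+O(z)$ with $\alpha_0\neq0$ for \emph{every} toric Fano manifold; the B-analogue enters only afterwards (your Step~1) to give $T_{\rm con}=\rho'$ and hence $[\hGamma_X]=[A_X]$.

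Your Step~2, deducing $(\bR_{>0})^N\propto\Gamma_{\bx_{\rm con}}$ by comparing the asymptotics of a single integral, is therefore unnecessary. It also has a soft spot: the passage from ``$\Re(u_p)<T_{\rm con}$ for $p\neq\bx_{\rm con}$'' to ``$n_p=0$'' tacitly assumes that distinct exponential contributions $n_p c_p e^{-u_p/z}$ cannot cancel, which requires either Hukuhara--Turrittin uniqueness on a sector of width $>\pi$ or an almost-periodic argument when several $u_p$ share a real part --- neither of which you supply. In fact the identification $(\bR_{>0})^N=\Gamma_{\bx_{\rm con}}$ is a direct geometric statement (the positive-real locus is exactly the stable manifold of the unique minimum of $f|_{(\bR_{>0})^N}$; see Example~\ref{exa:positive_real_thimble}) valid without any hypothesis on the other critical values, so what you flag as the ``main obstacle'' dissolves once one takes the paper's shorter path.
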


\section{Conjecture $\mathcal{O}$ for $X_n$}
\label{sec:conjecture_O_Fano_toric}
In the rest of this paper, we will mainly investigate the example
$$X_n:=\mathbb{P}_{\mathbb{P}^n}(\mathcal{O}\oplus \mathcal{O}(n)).$$
Noting $\mathbb{P}_{\mathbb{P}^n}(\mathcal{O}\oplus \mathcal{O}(n))\cong \mathbb{P}_{\mathbb{P}^n}(\mathcal{O}\oplus \mathcal{O}(-n))$, we  always assume $n\geq 1$.
In this section,
 we  analyze the eigenvalues of $\hat c_1$ for $X_n$ via mirror symmetry. As we will show in    Theorem \ref{thmconjO1}, Conjecture $\mathcal{O}$ does not hold for $X_n$ when $n>1$ is odd.

\subsection{Notations}
The variety $X_n$ is a  toric Fano manifold of Picard number two and of Fano index one. Let $e_i:=(0, \ldots, 0, 1, 0, \ldots, 0)$ (with $1$ in the $i$th position), $1\le i\le n+1$ denote the standard basis vectors of $\mathbb{R}^{n+1}$. There are exactly $(n+3)$ primitive ray generators $b_i$ of the associated fan of the toric variety $X_n$, given by
\begin{equation}\label{torbb}
    \begin{aligned}
          b_i&=e_i,\quad \mbox{for } 1\leq i\leq n+1;\quad  b_{n+2}=-\sum_{i=1}^ne_i+ne_{n+1}, \quad
   b_{n+3}=-e_{n+1}.
    \end{aligned}
\end{equation}

\noindent Therefore the mirror superpotential $f$ defined on $(\mathbb{C}^\times)^{n+1}$ in \eqref{Laurent} becomes
\begin{equation}\label{eqnd} f(\textbf{x})=x_1+x_2+\cdots+x_n+x_{n+1}+\frac{x_{n+1}^n}{x_1x_2\cdots x_{n}}+\frac{1}{x_{n+1}}.
\end{equation}

We adopt the following notation to analyze the critical values of $f$.
\begin{align*}
    D_R&:=\{t\in \mathbb{C}:|t|\leq R\},\quad  \partial D_R:=\{t\in \mathbb{C}:|t|= R\},\quad \mathring{D}_R:= D_R\setminus\partial D_R;\\
    N_{[a,b]}&:=\{t\in \mathbb{C}: a\le|t|\le b\},\quad \mbox{where }  b>a>0.
\end{align*}
\noindent Namely  $D_R$ is the closed disc in $\mathbb{C}$ of radius $R>0$ centred at the origin, with  $\partial D_R$ being its boundary; $N_{[a,b]}$ is  an annulus in $\mathbb{C}$.

\subsection{Critical values of the superpotential}
\label{subsec:Example_Xn}
 We  assume $n\ge 2$, since  $X_1$ is the blow-up of $\mathbb{P}^2$ at a point, and has been studied in \cite{HKLY}.

Critical points $\mathbf{x}=(x_1, \ldots, x_{n+1})$ of $f$ are solutions to the system of equations  $\partial_{x_i}f=0, \, 1\le i\le n+1$. The number of critical points (with multiplicities counted) is equal to   $\dim H^*(X_\Delta)=2n+2$.
One can show that
\begin{equation*}
    x_1=x_2=\cdots=x_n=:x \in \mathbb{C}^\times.
\end{equation*}
Denote  $y:=x_{n+1}\in\bC^\times$.  Then the system of equations $\frac{\partial f}{\partial x_i}=0$ can be reduced to
\begin{equation}\label{eqnnn}
    \begin{cases}
        1-\dfrac{y^n}{x^{n+1}}=0,\\
        1+n\dfrac{y^{n-1}}{x^n}-\dfrac{1}{y^2}=0,
    \end{cases}\quad (x,y)\in(\bC^\times)^2.
\end{equation}
Clearly, each $t\in \mathbb{C}^\times$ gives a solution $(t^{n}, t^{n+1})$ to the first equation in \eqref{eqnnn}, and conversely, every solution $(x, y)$ to this equation admits such parametrization by simply taking $t={y\over x}$. Therefore the system
\eqref{eqnnn} is equivalent to
\begin{equation}\label{contr}
    t^{2n+2}+nt^{2n+1}-1=0.
\end{equation}
Consequently, the  critical values of $f$ (with multiplicity counted) are precisely given by
\begin{equation*}
    g(t):=n t^n+t^{n+1}+t^n+\frac{1}{t^{n+1}}
\end{equation*}
with respect to the  $2n+2$ roots of \eqref{contr} .

Now let us move to   the following optimization problem in nonlinear programming.
\begin{prob}\label{NLP1}
  \begin{equation*}
  \begin{array}{ll@{}ll}
\textnormal{maximize}  & \big|&t^{n+1}+( n+1)t^n+\frac{1}{t^{n+1}}\big| &\\
\\
\textnormal{subject to}&     &t^{2n+2}+nt^{2n+1}=1,\quad t\in \mathbb{C}^\times.
\end{array}
  \end{equation*}
  \end{prob}
\noindent The above problem aims to find one $t_0$
such that $|g(t_0)|=\max|g(t)|$ among those $t$ with the constraint. Clearly, this is quite closely related with Property $\mathcal{O}$. Indeed, $X_\Delta$ satisfies part (1) of  Property $\mathcal{O}$ if and only if the aforementioned  $t_0$ is unique and satisfies $g(t_0)\in \mathbb{R}_{\geq 0}$.
Denote
\begin{equation}\label{defhg}
h(t):=t^{2n+2}+nt^{2n+1}, \quad \Tilde{g}(t):=t^n+\frac{2}{t^{n+1}}, \quad \check g (t):=2 t^{n+1}+(2n+1)t^n.
\end{equation}
Notice  $h(t)=1$ if and only if $t^{n+1}+nt^n=\frac{1}{t^{n+1}}$. Whenever this holds,  we have
  $$g(t)=\Tilde{g}(t)=\check g (t).$$
  To maximize  $|g(t)|$, we analyze  the distribution of the roots of $h(t)-1$ as follows.

 One can check that $h(t)-1$ and $h'(t)$ have no common roots, implying that they are coprime. So every root of $h(t)-1$ has multiplicity one.
 Viewed as a real function, $h(t)$
 is decreasing on $(-\infty,-\frac{n(2n+1)}{2n+2})$ and increasing on $(-\frac{n(2n+1)}{2n+2},\infty)$, possessing exactly two critical points at $t=0$ and $t=-\frac{n(2n+1)}{2n+2}$.
 Moreover, $-n<-\frac{n(2n+1)}{2n+2}<0,\, h(-n)=h(0)=0<1$, $h(1)=1+n>1$ and $h(-n-1)=(n+1)^{2n+1}>1$.
 Therefore
$h(t)-1$ has exactly two real roots $a_+$ and $a_-$ with
 $$a_+\in (0,1),\qquad  a_-\in (-n-1,-n).$$
  \noindent Moreover, by examining the signs of $h'(a_+)$ and $h'(a_-)$, we see that they are both simple roots. Noting that  the critical points of $f$ are of the form $(t^n, \ldots, t^n, t^{n+1})$, we have
\begin{prop}\label{prop-conifoldvalue}
The conifold point of $f$ is $\mathbf{x}_{\rm con}:=(a_+^n, ..., a_+^n, a_+^{n+1})$, and
  $g(a_+)$ is the critical value $T_{\rm con}$ of $f$ at $\mathbf{x}_{\rm con}$.
\end{prop}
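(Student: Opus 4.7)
The plan is to combine the parametrization of critical points developed earlier in the section with the general existence-and-uniqueness result for the conifold point cited from \cite{Gal}. By \cite{Gal} (see also \cite{GGI, GaIr}), the restriction $f|_{(\bR_{>0})^{n+1}}$ admits a global minimum attained at a unique critical point $\mathbf{x}_{\rm con}$. Hence to identify $\mathbf{x}_{\rm con}$, it suffices to locate the unique critical point of $f$ that lies in $(\bR_{>0})^{n+1}$.

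First I would note that the reduction performed above gives a bijection between critical points of $f$ in $(\bC^\times)^{n+1}$ and roots $t\in \bC^\times$ of $h(t)=1$, sending $t$ to the critical point $(t^n,\dots,t^n,t^{n+1})$. Since the parameter $t$ is recovered from the critical point as $t = x_{n+1}/x_1$, a critical point lies in $(\bR_{>0})^{n+1}$ if and only if the associated $t$ is a positive real root of $h(t)-1$.

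Next I would invoke the monotonicity analysis of $h$ on $\bR$ already carried out: $h$ is strictly decreasing on $(-\infty, -\tfrac{n(2n+1)}{2n+2})$ and strictly increasing on $(-\tfrac{n(2n+1)}{2n+2},\infty)$, so $h(t)-1$ has exactly two real roots, $a_-\in(-n-1,-n)$ and $a_+\in(0,1)$. In particular $a_+$ is the unique positive real root. Combined with the previous step and the uniqueness assertion of \cite{Gal}, this forces
\begin{equation*}
\mathbf{x}_{\rm con}=(a_+^n,\dots,a_+^n,a_+^{n+1}).
\end{equation*}

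Finally I would compute the critical value by substituting directly into \eqref{eqnd}:
\begin{equation*}
f(\mathbf{x}_{\rm con}) = n\,a_+^n + a_+^{n+1} + \frac{a_+^{n(n+1)}}{a_+^{n^2}} + \frac{1}{a_+^{n+1}} = n\,a_+^n + a_+^{n+1} + a_+^n + \frac{1}{a_+^{n+1}} = g(a_+),
\end{equation*}
yielding $T_{\rm con}=g(a_+)$. There is no serious obstacle here: the argument is essentially an assembly of facts already established in the subsection, the mildest care being needed to verify that the $t$-parametrization of critical points is genuinely a bijection and that positivity of the coordinates $(x,y)$ is equivalent to positivity of $t$.
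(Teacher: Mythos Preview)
Your proof is correct and follows exactly the reasoning the paper intends: the proposition is stated there without a formal proof, as an immediate consequence of the preceding parametrization of critical points by roots of $h(t)-1$ and the identification of $a_+$ as the unique positive real root. Your write-up simply makes explicit the two observations the paper leaves implicit, namely that $t=y/x$ recovers the parameter bijectively and that positivity of the coordinates is equivalent to $t>0$.
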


  Recall the following Rouch\'e's theorem in classical complex analysis (see, e.g., \cite[Theorem 6.24]{GKR}):
\begin{prop}[Rouch\'e's theorem]
    For two functions $f_1$ and $f_2$ holomorphic in a disk  $D_{R}$, if $|f_2|<|f_1|$ on $\partial D_R$, then $f_1$ and $f_1+f_2$ have the same number of zeros (counted with multiplicity) in $D_R$.
\end{prop}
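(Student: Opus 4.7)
The plan is to reduce the statement to the argument principle via a homotopy argument. First I would note that the strict inequality $|f_2(z)|<|f_1(z)|$ on $\partial D_R$ forces both $f_1$ and $f_1+f_2$ to be nonvanishing there: if $f_1(z_0)=0$ for some $z_0\in \partial D_R$, then $|f_2(z_0)|<|f_1(z_0)|=0$ is absurd, and similarly $(f_1+f_2)(z_0)=0$ would give $|f_1(z_0)|=|f_2(z_0)|$, contradicting the strict inequality. Hence both zero counts inside $D_R$ are finite and well defined.

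Next I would introduce the linear homotopy $F_t(z):=f_1(z)+tf_2(z)$ for $t\in[0,1]$ and observe that the same inequality yields
\[
|F_t(z)|\ge |f_1(z)|-t|f_2(z)|>0 \qquad \text{for every } z\in\partial D_R,\ t\in[0,1].
\]
Consequently the integral
\[
N(t):=\frac{1}{2\pi\iu}\oint_{\partial D_R}\frac{F_t'(z)}{F_t(z)}\,dz
\]
is well defined. By the argument principle, $N(t)$ equals the number of zeros of $F_t$ inside $D_R$ counted with multiplicity, so it takes nonnegative integer values. On the other hand, the uniform lower bound above together with the compactness of $\partial D_R$ makes the integrand jointly continuous in $(t,z)$, so $N(t)$ is continuous in $t\in[0,1]$. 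A continuous integer-valued function on a connected interval must be constant, hence $N(0)=N(1)$, which is precisely the claim that $f_1$ and $f_1+f_2$ have the same number of zeros in $D_R$.

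As a variant I would mention the ratio formulation: setting $g:=(f_1+f_2)/f_1$, a meromorphic function on a neighborhood of $\overline{D_R}$, the hypothesis becomes $|g-1|<1$ on $\partial D_R$, so $g(\partial D_R)$ lies in the open disk $\{w\in\bC:|w-1|<1\}$, which misses the origin. Thus the winding number of $g\circ \partial D_R$ about $0$ is zero, and the argument principle applied to the meromorphic function $g$ yields (zeros of $f_1+f_2$) $-$ (zeros of $f_1$) $=0$ inside $D_R$. No essential obstacle is anticipated; the only point requiring care is the uniform lower bound on $|F_t(z)|$, which is exactly what legitimises both the well-definedness and the continuity of $N(t)$.
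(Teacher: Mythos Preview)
Your proof is correct and is the standard argument-principle/homotopy proof of Rouch\'e's theorem. Note, however, that the paper does not actually supply a proof of this proposition: it is simply recalled as a classical fact with a citation to \cite[Theorem 6.24]{GKR}, so there is no ``paper's own proof'' to compare against.
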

\begin{prop}\label{diskofroots}
      In $D_1$, $h(t)-1$ has exactly $2n+1$ roots with  multiplicity counted.
\end{prop}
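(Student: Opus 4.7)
The plan is to apply Rouch\'e's theorem on the unit circle $\partial D_1$ to compare $h(t) - 1$ with the monomial $nt^{2n+1}$, which has a zero of multiplicity $2n+1$ at the origin and no other zeros. As a preliminary, I would first verify that $h(t) - 1$ has no zero on $\partial D_1$: writing $h(t) = t^{2n+1}(t+n)$, on the unit circle we get $|h(t)| = |t + n| \ge n - 1 \ge 1$ with equality only at $t = -1$, and a direct computation gives $h(-1) = -(n-1) \ne 1$. This ensures that the counts in the closed disk $D_1$ and in the open disk $\mathring{D}_1$ coincide.

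Next, I would set $f_1(t) := nt^{2n+1}$ and $f_2(t) := t^{2n+2} - 1$, so that $h(t) - 1 = f_1 + f_2$. On $\partial D_1$ one has $|f_1| = n$ and $|f_2| \le 2$. For $n \ge 3$ this yields $|f_2| < |f_1|$ strictly on $\partial D_1$, and the version of Rouch\'e's theorem stated in the excerpt applies directly, giving that $h(t) - 1$ and $f_1$ have the same number of zeros in $\mathring{D}_1$, namely $2n + 1$ counted with multiplicity.

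The hard part will be the borderline case $n = 2$, where $|f_2| \le 2 = |f_1|$ and the inequality degenerates to equality at the $12$th roots of $-1$. To handle it, I would invoke the symmetric form of Rouch\'e's theorem: if $|f - g| < |f| + |g|$ on the boundary, then $f$ and $g$ have the same number of zeros inside. Taking $f = h - 1$ and $g = nt^{2n+1}$, equality in the triangle inequality $|f - g| \le |f| + |g|$ forces $h(t) - 1 = -\lambda\, n t^{2n+1}$ for some $\lambda \ge 0$, that is, $t^{2n+1}\bigl(t + n(1+\lambda)\bigr) = 1$; taking moduli on $\partial D_1$ yields $|t + n(1+\lambda)| = 1$, which for $n \ge 2$ forces $t = -1$ and $n(1+\lambda) = 2$, hence $\lambda = 0$, returning to the already-excluded identity $h(-1) = 1$. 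An equivalent route, avoiding the symmetric form, is to apply the stated Rouch\'e theorem on $\partial D_{1+\epsilon}$ for small $\epsilon > 0$, using that $\phi(r) := r + r^{-(2n+1)}$ satisfies $\phi(1) = 2 \le n$ and $\phi'(1) = 1 - (2n+1) < 0$, so $\phi(1+\epsilon) < n$ holds for small $\epsilon$; the absence of zeros on $\partial D_1$ then transfers the count from $D_{1+\epsilon}$ down to $D_1$, completing the argument.
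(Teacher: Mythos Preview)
Your proof is correct and follows essentially the same approach as the paper: the same decomposition $f_1 = nt^{2n+1}$, $f_2 = t^{2n+2}-1$, direct Rouch\'e on $\partial D_1$ for $n\ge 3$, and Rouch\'e on $\partial D_{1+\epsilon}$ for $n=2$. Your treatment is slightly more thorough in that you explicitly verify there are no zeros on $\partial D_1$ (the paper leaves this implicit), and for $n=2$ you offer the symmetric Rouch\'e variant as an alternative to the slightly-larger-disk argument; both routes are valid and the paper uses only the latter, checking $(1+\epsilon)^6+1<2(1+\epsilon)^5$ directly rather than via the derivative of $\phi$.
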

\begin{proof}
We write $h(t)-1=f_1(t)+f_2(t)$ with $f_1(t)=nt^{2n+1}$ and $f_2(t)=t^{2n+2}-1$.

When $n>2$, we have
\[|f_2(t)|=|t^{2n+2}-1|\le |t|^{2n+2}+1=2<n=|nt^{2n+1}|=|f_1(t)|,\quad \forall t\in \partial D_1.\]
By Rouch\'e's theorem, $h(t)-1$ has $2n+1$ roots in $D_1$.

\par
When $n=2$,
we have
\[|f_2(t)|\le (1+\epsilon)^{6}+1<2(1+\epsilon)^{5}=|f_1(t)|,\quad \forall t\in \partial D_{1+\epsilon},\]
provided that $0<\epsilon<{2\over 3}$ (where we notice $ (1+\epsilon)^{6}+1-2(1+\epsilon)^{5}$ is strictly decreasing in $[0, {2\over 3})$). Therefore $h(t)-1$ has 5 roots in $D_1$ by taking
$\epsilon\to +0$.
\end{proof}

\begin{lemma}
\label{easy} For any Laurent polynomial $p(t)$ in $\mathbb{R}_{\geq 0}[t, t^{-1}]$, we have
    \begin{equation*}
        \max_{t\in\partial D_R}|p(t)|=p(R).
    \end{equation*}
\end{lemma}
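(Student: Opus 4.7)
The plan is to use the triangle inequality together with nonnegativity of the coefficients, observing that the upper bound is attained at the real positive point $t = R$.

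First I would write $p(t) = \sum_{k \in S} a_k t^k$ for a finite set $S \subset \bZ$ of exponents and coefficients $a_k \in \bR_{\geq 0}$. Parametrizing a point on the circle as $t = Re^{i\theta}$ with $\theta \in \bR$, I would apply the triangle inequality to obtain
\begin{equation*}
|p(Re^{i\theta})| = \left|\sum_{k \in S} a_k R^k e^{ik\theta}\right| \leq \sum_{k \in S} a_k R^k \big|e^{ik\theta}\big| = \sum_{k \in S} a_k R^k = p(R),
\end{equation*}
where in the last equality I use that $R > 0$ so each $R^k$ is a well-defined positive real number.

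Second, I would observe that the upper bound is realized by the specific choice $\theta = 0$, i.e. $t = R \in \partial D_R$, for which $p(R) = \sum_{k \in S} a_k R^k$ is itself a nonnegative real number equal to $|p(R)|$. Combining the two observations gives $\max_{t \in \partial D_R}|p(t)| = p(R)$, as required.

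There is no real obstacle here; the only mild subtlety is to note that the hypothesis $a_k \geq 0$ is what guarantees equality in the triangle inequality at $t = R$ (without this, one would only obtain $\max_{t\in\partial D_R} |p(t)| \leq \sum_k |a_k| R^k$ with equality possibly at a different point of the circle). The statement and its short proof are completely elementary, and I would present it as a two-line argument in the paper.
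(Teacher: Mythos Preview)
Your proof is correct and follows essentially the same approach as the paper: apply the triangle inequality using nonnegativity of the coefficients to bound $|p(t)|\le \sum_k a_k R^k = p(R)$ on $\partial D_R$, and note that this bound is attained at $t=R$.
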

\begin{proof}
    Write $p(t)=\sum_i u_it^i$, where $u_i\in \mathbb{R}_{\geq 0}$ for all $i$. We have
    \[p(t)|_{t=R}\le \max\limits_{t\in\partial D_R}|p(t)|\le  \sum\limits_{i} u_i|t|^i\Big|_{t\in \partial D_R}=\sum\limits_{i}u_iR^i=p(R).\]
 Therefore the statement follows.
\end{proof}
\begin{cor}\label{cordisroot}
    All the roots of $h(t)-1$ but $a_-$ are in the annulus $N_{[a_+,1]}$.
\end{cor}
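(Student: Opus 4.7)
The goal is to establish that, apart from the real root $a_- \in (-n-1,-n)$, every root of $h(t)-1$ lies in the closed annulus $N_{[a_+,1]}$. Since $|a_-|>n\geq 2>1$ places $a_-$ outside $D_1$, and since Proposition \ref{diskofroots} accounts for exactly $2n+1$ of the $2n+2$ roots inside $D_1$, it will be enough to verify the single statement that $h(t)-1$ has no root in the open disk $\mathring{D}_{a_+}$.

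To verify this, my plan is to exploit the nonnegative-coefficient structure of $h(t)=t^{2n+2}+nt^{2n+1}$ through Lemma \ref{easy}. For any $R>0$, the lemma yields
\[
\max_{t\in\partial D_R}|h(t)| \;=\; h(R) \;=\; R^{2n+2}+nR^{2n+1}.
\]
Specializing to $R=a_+$ gives $\max_{t\in\partial D_{a_+}}|h(t)| = h(a_+)=1$. Since $h$ is a nonconstant entire function, the maximum modulus principle then upgrades this bound to a strict one on the interior, namely $|h(t)|<1$ for every $t$ with $|t|<a_+$. In particular $h(t)-1\neq 0$ on $\mathring{D}_{a_+}$, which is exactly what I need.

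Putting the pieces together: the $2n+1$ roots of $h(t)-1$ that Proposition \ref{diskofroots} places in $D_1$ must all lie in $D_1\setminus \mathring{D}_{a_+}=N_{[a_+,1]}$, and the remaining root is $a_-$, of modulus greater than $1$. Hence every root other than $a_-$ belongs to $N_{[a_+,1]}$, as claimed.

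I do not anticipate a serious obstacle here; the proof is essentially a one-line application of the maximum modulus principle on top of Lemma \ref{easy} and the root count already established. The only small subtlety is the need to invoke the strict form of the maximum modulus principle (using that $h$ is nonconstant) in order to conclude $|h(t)|<1$ strictly on the interior, rather than just $\leq 1$.
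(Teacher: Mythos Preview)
Your proof is correct and follows essentially the same approach as the paper: apply Lemma \ref{easy} at radius $a_+$ to get $\max_{|t|=a_+}|h(t)|=h(a_+)=1$, invoke the (strict) maximum modulus principle to exclude roots in $\mathring D_{a_+}$, and then combine Proposition \ref{diskofroots} with $|a_-|>1$ to conclude. Your write-up is slightly more explicit about why the remaining root outside $D_1$ must be $a_-$, but the argument is the same.
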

\begin{proof}
    By Lemma \ref{easy}, we have  $1=h(a_+)=\max_{t\in\partial D_{a_+}}|h(t)|$.
    By the maximum modulus principle, $|h(t)|<1$ for any $t\in \mathring D_{a_+}$. Hence the interior  $\mathring D_{a_+}$ contains no roots of  $h(t)-1$. By Proposition \ref{diskofroots} and noting $a_-\in(-n-1,-n)$,   the statement follows.
\end{proof}

Recall that the functions $g$ and  $\tilde g$ in \eqref{defhg} take the same value at the roots of $h-1$.

\begin{lemma} \label{maxonann1} $\Tilde{g}(a_+)=\max_{t\in N_{[a_+,1]}}|\Tilde{g}(t)|>\tilde{g}(1).$
\end{lemma}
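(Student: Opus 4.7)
The plan is to combine the maximum modulus principle on the annulus with Lemma \ref{easy}, reducing the problem to a one-variable real calculus comparison.

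First, I would observe that $\tilde{g}(t) = t^n + 2 t^{-(n+1)}$ is holomorphic on $\bC^\times$, so by the maximum modulus principle applied to the annular domain $N_{[a_+,1]}$, the function $|\tilde{g}|$ attains its maximum on the boundary, namely on $\partial D_{a_+} \cup \partial D_1$.

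Next, since $\tilde{g}$ is a Laurent polynomial with nonnegative real coefficients (coefficients $1$ and $2$), Lemma \ref{easy} applies on each circle of radius $r > 0$ and gives
\begin{equation*}
\max_{t \in \partial D_r} |\tilde{g}(t)| = \tilde{g}(r) = r^n + \frac{2}{r^{n+1}}.
\end{equation*}
Hence the maximum of $|\tilde{g}|$ on $N_{[a_+,1]}$ equals $\max\{\tilde{g}(a_+),\,\tilde{g}(1)\}$, and it remains to show $\tilde{g}(a_+) > \tilde{g}(1) = 3$.

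For this, consider the real function $\phi(r) := r^n + 2 r^{-(n+1)}$ on $(0,\infty)$. A direct computation gives $\phi'(r) = n r^{n-1} - 2(n+1) r^{-(n+2)}$, whose unique positive zero is $r_0 = \bigl(2(n+1)/n\bigr)^{1/(2n+1)}$. Since $r_0 > 1$, the function $\phi$ is strictly decreasing on $(0, r_0]$, and in particular on $(0,1]$. Because $a_+ \in (0,1)$, this yields $\phi(a_+) > \phi(1) = 3$, which is precisely $\tilde{g}(a_+) > \tilde{g}(1)$. Combining with the previous step gives the desired equality and strict inequality.

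There is no real obstacle here; the argument is a straightforward application of Lemma \ref{easy} plus elementary calculus. The only point requiring any care is verifying that the critical radius $r_0$ of $\phi$ lies strictly above $1$ (equivalently $2(n+1) > n$), which holds for all $n \geq 1$ and ensures monotonicity on the full interval $(0,1]$.
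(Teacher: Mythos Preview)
Your proof is correct and follows essentially the same approach as the paper: apply the maximum modulus principle on the annulus, use Lemma \ref{easy} to reduce to the two radii $a_+$ and $1$, and then compare $\tilde{g}(a_+)$ with $\tilde{g}(1)$ via the sign of the derivative on $(0,1)$. The paper simply asserts that $\tilde{g}'(x)<0$ on $(0,1)$, whereas you locate the critical radius $r_0=(2(n+1)/n)^{1/(2n+1)}>1$ explicitly, but the argument is otherwise identical.
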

 \begin{proof}
  Notice that $\Tilde{g}(t)$ is holomorphic on $N_{[a_+, 1]}$ and $\Tilde{g}(t)\in\mathbb{R}_{\geq 0}[t, t^{-1}]$. By the maximum modulus principle and Lemma \ref{easy}, we have
\begin{equation*}
    \max_{t\in N_{[a_+,1]}}|\Tilde{g}(t)|=\max_{t\in \partial D_{a_+}\cup \partial D_1}|\Tilde{g}(t)|=\max \{\Tilde{g}(a_+),\Tilde{g}(1)\}=\Tilde{g}(a_+).
\end{equation*}
Here we have $\Tilde{g}(1)<\Tilde{g}(a_+)$ by noting  $\Tilde{g}'(x)=(x^n+\frac{2}{x^{n+1}})'<0$ for any
  $x\in (0, 1)$.
 \end{proof}

The nonlinear programming Problem \ref{NLP1} can be answered   by the following proposition.

\begin{prop} \label{estimate1} Let $n\geq 2$. Take any $\alpha\in \mathbb{C}\setminus\{a_-, a_+\}$ with $h(\alpha)=1$. We have
 \begin{enumerate}
 \item $|g(\alpha)|<g(a_+)<2n+3$.
    \item $n^n-1<(-1)^ng(a_-)<n^n$.
    \item If $n\ge 3$,  then $|g(a_-)|>g(a_+)$; if  $n=2$,   $g(a_+)>|g(a_-)|$.
\end{enumerate}
\end{prop}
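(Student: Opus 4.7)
\medskip

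\noindent\textbf{Proof proposal.} For part (1), the plan is to combine Corollary \ref{cordisroot} with Lemma \ref{maxonann1} and a careful application of the maximum modulus principle. Since $\alpha \neq a_-$ is a root of $h-1$, Corollary \ref{cordisroot} places $\alpha$ in the closed annulus $N_{[a_+,1]}$, and since $h(\alpha)=1$ we have $g(\alpha)=\tilde g(\alpha)$. Lemma \ref{maxonann1} then yields $|g(\alpha)|\le \tilde g(a_+)=g(a_+)$, and to upgrade this to a strict inequality I would argue by cases: if $\alpha$ lies in the open interior of the annulus, strict inequality is immediate from the maximum modulus principle (since $\tilde g$ is not constant); on $\partial D_1$ we have $|\tilde g(\alpha)|\le \tilde g(1)=3<\tilde g(a_+)$ by the proof of Lemma \ref{maxonann1}; and on $\partial D_{a_+}$ equality $|\tilde g(\alpha)|=\tilde g(a_+)$ forces the two monomials $\alpha^n$ and $2/\alpha^{n+1}$ to share an argument, i.e.~$(2n+1)\arg(\alpha)\equiv 0\pmod{2\pi}$, whereupon the reality condition $h(\alpha)=1$ forces $e^{i\arg(\alpha)}\in\mathbb R$ and then $\arg(\alpha)=0$, contradicting $\alpha\neq a_+$. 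The upper bound $g(a_+)<2n+3$ then follows from $g(a_+)=\check g(a_+)=2a_+^{n+1}+(2n+1)a_+^n<2+(2n+1)$ using $a_+\in(0,1)$.

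For part (2), set $b:=-a_-\in(n,n+1)$; the equation $h(a_-)=1$ rewrites as $b^{2n+1}(b-n)=1$, so the small quantity $b-n=1/b^{2n+1}$ controls everything. Using $g(a_-)=\tilde g(a_-)$ (since $h(a_-)=1$), a direct computation gives $(-1)^n g(a_-)=b^n-2/b^{n+1}$. For the upper bound I plan to factor
\[
b^n-n^n=(b-n)\sum_{k=0}^{n-1}b^{n-1-k}n^k\le (b-n)\cdot n b^{n-1}=\frac{n}{b^{n+2}},
\]
and then observe $n/b^{n+2}<2/b^{n+1}$ because $b>n\ge n/2$, so $b^n-2/b^{n+1}<n^n$. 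The lower bound is softer: $b^n>n^n$ and $2/b^{n+1}<2/n^{n+1}\le 1/4<1$ for $n\ge 2$, giving $b^n-2/b^{n+1}>n^n-1$.

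For part (3), I would combine parts (1) and (2). When $n=2$, I will extract the improved lower bound $g(a_+)=a_+^n+2a_+ +2n>2n=4$, obtained by rewriting $h(a_+)=1$ as $1/a_+^{n+1}=a_++n$ and substituting into $g(a_+)=\tilde g(a_+)$; combined with $|g(a_-)|<n^n=4$ from (2), this yields $g(a_+)>|g(a_-)|$. When $n\ge 3$ the inequalities from (1) and (2) already separate: $|g(a_-)|>n^n-1$ and $g(a_+)<2n+3$, and the elementary estimate $n^n>2n+4$ for $n\ge 3$ gives $|g(a_-)|>g(a_+)$.

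The main technical obstacle is the strict inequality in part (1) on the inner boundary $\partial D_{a_+}$: the bound from Lemma \ref{maxonann1} is tight there, so one must use the reality constraint $h(\alpha)=1$ together with the equality case in $|\alpha^n+2/\alpha^{n+1}|\le a_+^n+2/a_+^{n+1}$ to rule out competing roots of $h-1$ on this circle. The rest of the proof is then essentially bookkeeping with the relations $a_+^{n+1}(a_++n)=1$ and $b^{2n+1}(b-n)=1$.
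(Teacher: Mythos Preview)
Your approach to parts (1) and (2) is sound and, for part (2), genuinely different from the paper's. The paper handles (2) by comparing derivatives on the real interval $[-n-1,-n]$: it shows $0<-\check g'(x)<-h'(x)$ there and integrates from $a_-$ to $-n$ to trap $\check g(a_-)-\check g(-n)$ between $0$ and $h(a_-)-h(-n)=1$. Your direct algebraic manipulation with $b=-a_-$ and the identity $b-n=1/b^{2n+1}$ achieves the same bounds without calculus. For part (1), your equality-case analysis via $\tilde g$ is correct in spirit; the paper instead runs the equality case through $\check g$ (from $|\check g(\alpha)|=\check g(a_+)$ with $|\alpha|=a_+$ one gets $|2\alpha+2n+1|=2|\alpha|+(2n+1)$, directly forcing $\alpha\in\mathbb R_{>0}$). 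Note also that your step ``$e^{i\arg\alpha}\in\mathbb R$ hence $\arg\alpha=0$'' overlooks $\arg\alpha=\pi$; you should remark that $h(-a_+)=a_+^{2n+1}(n-a_+)\ne 1$ to dispose of that case.

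There is, however, a genuine error in part (3) for $n=2$. The relation $h(a_+)=1$ reads $a_+^{2n+2}+na_+^{2n+1}=1$, and dividing by $a_+^{n+1}$ gives
\[
\frac{1}{a_+^{n+1}}=a_+^{n+1}+na_+^n=a_+^n(a_++n),
\]
\emph{not} $a_++n$ as you wrote. Your formula $g(a_+)=a_+^n+2a_++2n$ is therefore wrong; the correct substitution simply recovers $\check g(a_+)=2a_+^{n+1}+(2n+1)a_+^n$, which does not obviously exceed $2n$ without further input. To get $g(a_+)>4$ when $n=2$ you need a numerical lower bound on $a_+$: the paper checks $h(4/5)<1$, hence $a_+>4/5$, and then uses monotonicity of $\check g$ on $\mathbb R_{>0}$ to get $\check g(a_+)>\check g(4/5)=2(4/5)^3+5(4/5)^2>4$. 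Your $n\ge 3$ argument is fine.
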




\begin{proof}
Recall that the functions $g, \tilde g, \check g$ in \eqref{defhg} take the same value at the roots of $h-1$.
It follows from Corollary \ref{cordisroot} and Lemma \ref{maxonann1} that
 $|g(\alpha)|=|\Tilde{g}(\alpha)|\leq \Tilde{g}(a_+)$, with equality only if  $\alpha\in  \partial D_{a_+}$.
 Tracking the inequalities in the proof of Lemma \ref{easy} with respect to $\check g(t)$, we see that $|\check g(\alpha)|=|\Tilde{g}(\alpha)|=\Tilde{g}(a_+)=\check{g}(a_+)$ only if
 $|2\alpha+2n+1|=2|\alpha|+ 2n+1$, implying $\alpha\in \mathbb{R}_{>0}$. However, this contradicts to the fact that $a_+$ is the only positive root of $h(t)-1$. Hence $|g(\alpha)|<g(a_+)$.
      Since $a_+\in (0,1)$, we have   $g(a_+)=\check g(a_+)<\check g(1)=2n+3$ (by noting that $\check g$ is strictly increasing in $\mathbb{R}_{>0}$).
     Hence, statement (1) holds.
      \par
   Assume  $n$ to be odd. Viewing $h, g, \check{g}$ as   real functions on  $[-n-1,-n]\subset \mathbb{R}$, we have  \[0<-\check g'(x)=-((2n+2)x+2n^2+n)x^{n-1}<-((2n+2)x+2n^2+n)x^{2n}=-h'(x).\]
  Hence,  $g(a_-)=\check g(a_-)>\check g(-n)= -n^n$ and  $g(a_-)<\check g(-n)+1=-n^n+1$, by noting
  \[\check g(a_-)-\check g(-n)=\int_{a_-}^{-n}-\check g'(x) dx<\int_{a_-}^{-n}-h'(x) dx=h(a_-)-h(-n)=1-0=1.\]
  The argument for even $n$ is the same. Thus statement (2) holds.

   For $n\geq 3$,  $|g(a_-)|>n^n-1>2n+3>g(a_+)$.  For $n=2$, we have
   $$g(a_+)=\check g(a_+)>\check g(\tfrac{4}{5})=2(\tfrac{4}{5})^3+5(\tfrac{4}{5})^2
   >2^2>g(a_-)>2^2-1=3.$$
   Therefore statement (3) holds.
   \end{proof}

The following proposition serves as the conclusion of this section. Recall $n\geq 1$.
\begin{thm}\label{thmconjO1}
  Conjecture $\mathcal{O}$ holds for  $X_n$ if and only if either $n$ is even or $n=1$.
\end{thm}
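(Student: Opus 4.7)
The plan is to reduce Property $\mathcal{O}$ for $X_n$ to the critical-value analysis developed in this section, and then split into cases by the parity of $n$. By Proposition \ref{eigncrit}, the eigenvalues of $\hat{c}_1$, counted with multiplicity, coincide with the critical values $g(\alpha)$ of the superpotential $f$ as $\alpha$ ranges over the $2n+2$ (simple) roots of $h(t)-1 = t^{2n+2}+nt^{2n+1}-1$. In particular, the algebraic multiplicity of an eigenvalue $v$ equals $\#\{\alpha \in h^{-1}(1) : g(\alpha) = v\}$. Among these roots, only $a_+ \in (0,1)$ and $a_- \in (-n-1,-n)$ are real. Since $i_{X_n} = 1$, Property $\mathcal{O}$ reduces to the single requirement that the spectral radius $\rho$ is itself an eigenvalue with algebraic multiplicity one, and is the unique eigenvalue of modulus $\rho$.

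The case $n=1$ is already handled in \cite{HKLY}, so I assume $n \ge 2$ and invoke Proposition \ref{estimate1}. When $n=2$, parts (3) and (1) give $g(a_+) > |g(a_-)|$ and $|g(\alpha)| < g(a_+)$ for every other root $\alpha$, so $\rho = g(a_+) > 0$ is uniquely attained and simple; Property $\mathcal{O}$ holds. When $n \ge 4$ is even, part (2) shows $g(a_-) = (-1)^n g(a_-) \in (n^n-1, n^n) \subset \mathbb{R}_{>0}$; combined with parts (1) and (3), this gives $g(a_-) > g(a_+) > |g(\alpha)|$ for every other root, so $\rho = g(a_-)$ is again a simple positive eigenvalue uniquely of maximal modulus, and Property $\mathcal{O}$ holds.

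When $n \ge 3$ is odd, part (2) gives $-g(a_-) = (-1)^n g(a_-) \in (n^n-1, n^n)$, so $g(a_-) < 0$; parts (1) and (3) still give $|g(a_-)| > g(a_+) > |g(\alpha)|$ for every other root $\alpha$. Thus the spectral radius equals $\rho = |g(a_-)| = -g(a_-) > 0$, yet the unique eigenvalue of modulus $\rho$ is $g(a_-) < 0 \ne \rho$, so $\rho$ itself does not lie in $\Spec(\hat{c}_1)$ and Property $\mathcal{O}$ (1) fails. The substance of the argument is the strict inequality $|g(a_-)| > g(a_+)$ for $n \ge 3$ furnished by Proposition \ref{estimate1}(3); once that and the sign information from Proposition \ref{estimate1}(2) are in hand, the case split is mechanical, and I do not anticipate any further obstacle beyond carefully tracking signs via the parity of $n$.
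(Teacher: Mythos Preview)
Your proof is correct and follows the same approach as the paper: reduce to critical values via Proposition \ref{eigncrit}, use the Fano index one to collapse the two parts of Property $\mathcal{O}$ into a single check, and then read off the answer from Proposition \ref{estimate1}. Your write-up is a bit more explicit in separating the cases $n=2$, $n\ge 4$ even, and $n\ge 3$ odd, but the underlying argument is identical.
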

\begin{proof}
  If $n=1$, then  Conjecture $\mathcal{O}$ holds by \cite{HKLY}.
 Assume $n\geq 2$ now. By Proposition \ref{eigncrit}, Property $\mathcal{O}$ (1) holds if and only if $n$ is even.
 Notice that $X_n$ has Fano index one, and that the set $\{|f(\mathbf{x})|: \mathbf{x} \mbox{ is a critical point of } f \}$ achieves the maximum at $g(t)$ for a unique root $t$ of $h(t)-1$ by Proposition \ref{estimate1}. Thus Property $\mathcal{O}$ (2) holds as well, provided that Property $\mathcal{O}$ (1) holds.
\end{proof}





\section{Gamma conjecture I for $X_n$}
\label{sec:Gamma-I}

In this section, we will show in  {Theorem  \ref{thmGC1nothold}} that Gamma conjecture I does not hold for $X_n=\mathbb{P}_{\mathbb{P}^n}(\cO\oplus\cO(n))$ when $n\geq 4$ is even.

\subsection{Gamma class and conifold point}

In this section, we follow the notation in Section \ref{subsec:MS_Fano_toric}, and single out a result implicitly  contained
 in the proof of \cite[Theorem 6.3]{GaIr} in terms of Theorem \ref{thm-gammaclassandconifoldpoint}.

Recall that for an $N$-dimensional toric Fano manifold $X$, its mirror superpotential $f$ is given by \eqref{Laurent}.
The restriction $f|_{(\bR_{>0})^N}\colon (\bR_{>0})^N \to \bR$ to the positive-real locus is proper, bounded below and strictly convex in the sense that the matrix $(\partial_{\log x_i} \partial_{\log x_j} f(\bx))_{i,j}$ is positive definite for every $\bx \in (\bR_{>0})^N$. Therefore it attains a global minimum at a unique point $\mathbf{x}_{\text{con}} \in(\bR_{>0})^N$ called the \emph{conifold point} \cite{Gal}; $\bx_{\text{con}}$ is clearly a non-degenerate critical point of $f$.


\begin{thm}\label{thm-gammaclassandconifoldpoint}
    There are classes $\alpha_0,\alpha_1,\alpha_2,\dots \in H^\bullet(X)$ such that $\alpha_0\neq 0$ and that we have the asymptotic expansion
    \begin{align*}
        e^{T_{\rm con}/z}S(z)z^{-\mu}z^{c_1}\hGamma_X \sim \alpha_0 + \alpha_1 z + \alpha_2 z^2 + \cdots
    \end{align*}
    as $z \to 0$ along the angular sector $|\arg z|<\pi/2+\epsilon$ for some $\epsilon>0$.
\end{thm}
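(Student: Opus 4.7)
The plan is to leverage Iritani's mirror theorem \cite{Iri1}, which underlies the oscillatory integral identity displayed just before this theorem and which extends from the fundamental class $\mathbf{1}$ to the entire cohomology: for every $\alpha \in H^\bullet(X)$ there is a Laurent polynomial $\phi_\alpha \in \bC[x_1^{\pm 1},\dots,x_N^{\pm 1}]$ representing $\Psi(\alpha) \in \Jac(f)$ such that
\begin{align*}
\langle \alpha,\, S(z) z^{-\mu} z^{c_1} \hGamma_X \rangle^X \;=\; z^{-N/2} \int_{(\bR_{>0})^N} e^{-f(\bx)/z}\, \phi_\alpha(\bx)\, \frac{dx_1 \cdots dx_N}{x_1 \cdots x_N}.
\end{align*}
Running $\alpha$ over a basis and then invoking the non-degeneracy of the Poincar\'e pairing will convert scalar asymptotic expansions of these integrals into a cohomology-valued asymptotic expansion of $S(z)z^{-\mu}z^{c_1}\hGamma_X$.

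Next I would apply the Laplace method at the conifold point $\bx_{\rm con}$. Since $f|_{(\bR_{>0})^N}$ is proper, bounded below, and strictly convex in the logarithmic coordinates $\log x_i$, the point $\bx_{\rm con}$ is the unique global minimum and is a non-degenerate critical point with positive-definite Hessian $H = (\partial_{\log x_i} \partial_{\log x_j} f)(\bx_{\rm con})$. Standard Laplace asymptotics then yield, for any smooth $\phi$ of at most polynomial growth in $\log x_i$,
\begin{align*}
\int_{(\bR_{>0})^N} e^{-f/z}\, \phi\, \frac{d\bx}{x_1 \cdots x_N} \;\sim\; e^{-T_{\rm con}/z}\, (2\pi z)^{N/2} \det(H)^{-1/2} \bigl(\phi(\bx_{\rm con}) + c_1(\phi)\, z + c_2(\phi)\, z^2 + \cdots\bigr),
\end{align*}
uniformly for $z \to 0$ in the sector $|\arg z| < \pi/2 + \epsilon$ for some $\epsilon > 0$: the strict log-convexity makes $(\bR_{>0})^N$ a representative of the Lefschetz thimble descending from $\bx_{\rm con}$, and outside any fixed neighbourhood of $\bx_{\rm con}$ the integrand decays like $e^{-(T_{\rm con}+\delta)\Re(1/z)}$ for some $\delta > 0$, which is enough to absorb a small rotation of $z$. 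Combined with the mirror formula and multiplied by $e^{T_{\rm con}/z}$, the factors $z^{-N/2}$ and $z^{N/2}$ cancel, and we obtain scalar expansions
\begin{align*}
\langle \alpha,\, e^{T_{\rm con}/z} S(z) z^{-\mu} z^{c_1} \hGamma_X \rangle^X \;\sim\; p_0(\alpha) + p_1(\alpha)\, z + p_2(\alpha)\, z^2 + \cdots,
\end{align*}
with each $p_k$ a linear functional of $\alpha$.

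Finally, the non-degeneracy of the Poincar\'e pairing promotes each $p_k$ to a cohomology class $\alpha_k \in H^\bullet(X)$, giving the desired expansion $e^{T_{\rm con}/z} S(z) z^{-\mu} z^{c_1} \hGamma_X \sim \sum_{k \ge 0} \alpha_k z^k$. The main obstacle is showing $\alpha_0 \neq 0$, equivalently $p_0 \not\equiv 0$. Up to a non-zero constant, $p_0(\alpha)$ equals the evaluation of $\phi_\alpha$ at $\bx_{\rm con}$ (divided by the monomial $x_1 \cdots x_N$); since $\bx_{\rm con}$ is a non-degenerate critical point of $f$, the corresponding maximal ideal of $\Jac(f)$ is a direct summand and the evaluation $\phi \mapsto \phi(\bx_{\rm con})$ is surjective onto $\bC$. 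Picking $\alpha$ to be the class whose mirror image is (a representative of) the idempotent supported at $\bx_{\rm con}$ produces $p_0(\alpha) \neq 0$, which forces $\alpha_0 \neq 0$ and completes the argument.
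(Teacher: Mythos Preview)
Your approach is essentially the paper's: pair against a basis via Iritani's oscillatory-integral identity, apply Laplace at $\bx_{\rm con}$, and extend past $|\arg z|=\pi/2$ by tilting the thimble. Two small points: the mirror integrand from \cite[Proposition~4.8]{Iri1} is $\varphi(x,-z)\in\bC[x_1^\pm,\dots,x_N^\pm,z]$ rather than a $z$-independent Laurent polynomial (this does not affect the Laplace expansion, only the exact bookkeeping of higher coefficients), and for $\alpha_0\neq 0$ the paper takes the simpler test class $\phi=1$ (so $\varphi=1$) to get $\langle 1,\alpha_0\rangle^X=(2\pi)^{N/2}\det(H)^{-1/2}\neq 0$ directly, in place of your idempotent argument.
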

\begin{proof}
It follows from the argument in \cite[Section 4.3.1]{Iri1} that
    \begin{align*}
        \<\phi,S(z)z^{-\mu}z^{c_1}\hGamma_X\>^X=z^{-\frac{N}{2}}\int_{(\bR_{>0})^N} e^{-f(x)/z}\varphi(x,-z)\frac{dx_1\cdots dx_N}{x_1\cdots x_N},
    \end{align*}
    where $z>0$, $\phi\in H^\bullet (X)$, and $\varphi(x,z)\in\bC[x_1^\pm,...,x_N^\pm,z]$ is such that the class $[e^{f(x)/z}\varphi(x,z)\frac{dx_1\cdots dx_N}{x_1\cdots x_N}]$ corresponds to $\phi$ under the mirror isomorphism in \cite[Proposition 4.8]{Iri1}.  In particular, when $\phi=1$, we have $\varphi=1$.

    Note that $f|_{(\bR_{>0})^N}$ is a real-valued function. So for $z\to 0$ along the sector $|\arg z|<\pi/2$, one can apply the Laplace method (see e.g. \cite{Fed}) to get the asymptotic expansion
    \begin{align*}
        \<\phi,S(z)z^{-\mu}z^{c_1}\hGamma_X\>^X \sim \frac{(2\pi)^{\frac{N}{2}}}{\sqrt{\det\Big(\partial_{\log x_i}\partial_{\log x_j}f(\bx_{\text{con}})\Big)}} e^{-T_{\rm con}/z}\Big[\varphi(\bx_{\text{con}},z)+ O(z) \Big].
    \end{align*}
This implies that there exist classes $\alpha_0, \alpha_1,\alpha_2, \dots \in H^\bullet(X)$ such that
\begin{align*}
e^{T_{\rm con}/z}S(z)z^{-\mu}z^{c_1}\hGamma_X \sim \alpha_0 + \alpha_1 z + \alpha_2 z^2 + \cdots
\end{align*}
as $z\to 0$ along the angular sector $|\arg z|<\pi/2$.
Now we use $\phi=1$ and $\varphi=1$ to get
\begin{align*}
\<1,\alpha_0\>^X=\lim\limits_{z\to +0}\<1,e^{T_{\rm con}/z}S(z)z^{-\mu}z^{c_1}\hGamma_X\>^X=\frac{(2\pi)^{\frac{N}{2}}}{\sqrt{\det\Big(\partial_{\log x_i}\partial_{\log x_j}f(\bx_{\text{con}})\Big)}} \neq0,
\end{align*}
implying that $\alpha_0$ is non-zero.

In fact, the above asymptotic expansion holds on an angular sector of the form $|\arg z|<\pi/2+\epsilon$ for some small $\epsilon>0$. We can see this by bending the Lefschetz thimble $(\bR_{>0})^N$ of $f(z)$ to the one associated with the vanishing path $f(\bx_{\text{con}}) + \bR_{\ge 0} e^{\iu \theta}$ with $\theta \in [-\epsilon,\epsilon]$. We refer to Section \ref{subsec:Gamma-flat-sections_thimbles} for an exposition on the Lefschetz thimbles associated with vanishing paths. Since the Lefschetz thimble $(\bR_{>0})^N$ contains no critical points other than $\bx_{\text{con}}$ in its closure (it is already closed), we can bend it for a small angle $\epsilon$ without changing its relative homology class (see also Example \ref{exa:positive_real_thimble}).
\end{proof}

\begin{remark}
 The arguments here are essentially those in \cite[proof of Theorem 6.3]{GaIr}. However, the assumption here is weaker, since we do not impose any condition on $T_{\rm con}$.
\end{remark}

\begin{thm}\label{thm-gammaclassandeigenvector}
    The non-zero class $\alpha_0 \in H^\bullet(X)$ in Theorem \ref{thm-gammaclassandconifoldpoint} is an eigenvector of $\hat{c}_1$ with eigenvalue $T_{\rm con}$.
\end{thm}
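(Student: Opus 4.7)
The plan is to exploit the flatness of the section $s(z):= S(z)z^{-\mu}z^{c_1}\hGamma_X$ under the quantum connection and extract the eigenvalue equation from the leading term of its asymptotic expansion.

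First, I would recall that, by the general properties of the fundamental solution (see e.g.~\cite[Proposition~2.3.1]{GGI}), the section $s(z) = S(z)z^{-\mu}z^{c_1}\hGamma_X$ is a $\nabla$-flat section for the quantum connection $\nabla_{z\partial_z} = z\partial_z - z^{-1}(c_1(X)\bullet) + \mu$. Hence
\begin{equation*}
z\partial_z s(z) - \tfrac{1}{z}\,c_1(X)\bullet s(z) + \mu\, s(z) = 0.
\end{equation*}

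Next, I would substitute the asymptotic expansion from Theorem~\ref{thm-gammaclassandconifoldpoint}, namely $s(z) \sim e^{-T_{\rm con}/z}(\alpha_0 + \alpha_1 z + \alpha_2 z^2 + \cdots)$ on the open sector $|\arg z|<\pi/2+\epsilon$. Since this expansion holds on a genuinely open angular sector, Poincar\'e asymptotics allow us to differentiate term-by-term (one can derive this from Cauchy's integral formula applied on a smaller sub-sector). Computing $z\partial_z$ of the exponential prefactor produces a pole of order one in $z$:
\begin{equation*}
z\partial_z s(z) \sim e^{-T_{\rm con}/z}\Bigl(\tfrac{T_{\rm con}}{z}(\alpha_0 + \alpha_1 z + \cdots) + (\alpha_1 z + 2\alpha_2 z^2 + \cdots)\Bigr).
\end{equation*}

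Plugging into the flatness equation and multiplying through by $e^{T_{\rm con}/z}$, the coefficient of $z^{-1}$ in the resulting asymptotic identity is
\begin{equation*}
T_{\rm con}\,\alpha_0 - c_1(X)\bullet \alpha_0 = 0,
\end{equation*}
which is exactly $\hat{c}_1\,\alpha_0 = T_{\rm con}\,\alpha_0$. Since $\alpha_0 \neq 0$ by Theorem~\ref{thm-gammaclassandconifoldpoint}, this proves the claim; matching higher-order coefficients in $z$ would moreover give recursive formulas for the subsequent $\alpha_k$'s, but they are not needed here.

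The only delicate point is the term-by-term differentiation of the asymptotic expansion, but this is standard provided the expansion is valid on an open sector of angular width greater than zero around $\bR_{>0}$, which is guaranteed by the $|\arg z|<\pi/2+\epsilon$ statement in Theorem~\ref{thm-gammaclassandconifoldpoint}. Apart from this routine justification, the argument is a direct two-line computation from the flatness equation.
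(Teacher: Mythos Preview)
Your proof is correct and follows essentially the same approach as the paper: both use the flatness equation for $s(z)=S(z)z^{-\mu}z^{c_1}\hGamma_X$, substitute the asymptotic expansion from Theorem~\ref{thm-gammaclassandconifoldpoint}, differentiate term-by-term on the open sector (the paper cites \cite[Theorem~8.8]{Wasow} for this, you invoke Cauchy's formula), and read off the eigenvalue equation from the $z^{-1}$ coefficient. The only cosmetic difference is that the paper first rewrites the flatness equation for $\tilde{s}(z)=e^{T_{\rm con}/z}s(z)$ before extracting the leading term, whereas you work with $s(z)$ directly.
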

\begin{proof}
    Recall that $s(z):=S(z)z^{-\mu}z^{c_1}\hGamma_X$ is $\nabla$-flat, i.e.
    \begin{align*}
        z\partial_zs(z)= z^{-1} \hat{c}_1s(z)-\mu s(z).
    \end{align*}
    Theorem \ref{thm-gammaclassandconifoldpoint} implies there exist $\alpha_0,\alpha_1,\alpha_2,...\in H^\bullet(X)$ such that
    \begin{align*}
        e^{T_{\text{con}}/z} s(z)\sim \alpha_0 +\alpha_1z+\alpha_2z^2+\cdots
    \end{align*}
    as $z\to 0$ in the angular sector $|\arg z|<\pi/2$.
    Observe that $\tilde{s}(z) = e^{T_{\text{con}}/z} s(z)$ satisfies the differential equation
    \[
z\partial_z \tilde{s}(z) = z^{-1} (\hat{c}_1-T_{\text{con}}) \tilde{s}(z) - \mu \tilde{s}(z)
    \]
    Using the fact that the differentiation commutes with asymptotic expansion on a complex region (see \cite[Theorem 8.8]{Wasow}), we obtain
    \[
0 = (\hat{c}_1 - T_{\text{con}}) \alpha_0
    \]
    from the leading term. The conclusion follows.
\end{proof}

\subsection{Gamma conjecture I for $X_n$}

Following the notation in Section \ref{sec:conjecture_O_Fano_toric}, we recall that $T_{\rm con}=g(a_+)$ for $X_n$.

\begin{thm}
    When $n\in\{1,2\}$, Gamma conjecture I holds for $X_n$.
\end{thm}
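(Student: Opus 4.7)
The plan is to dispose of the two cases separately by reducing each to results already in hand. The case $n=1$ is immediate: $X_1$ is the blow-up of $\bP^2$ at a point, which is a del Pezzo surface, and Gamma conjecture I for del Pezzo surfaces was established in \cite{HKLY}. So nothing new is needed there.

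For $n=2$, the strategy is to combine the critical-value analysis of Section \ref{sec:conjecture_O_Fano_toric} with Theorem \ref{thm-gammaclassandconifoldpoint}. First, Theorem \ref{thmconjO1} tells us that Conjecture $\cO$ holds for $X_2$, so condition \eqref{eq:condition_star} is satisfied with $\rho'=\rho$, and the principal asymptotic class $A_{X_2}$ is well-defined by Proposition/Definition \ref{prop-defn:AX}. Next, I would identify $T_{\rm con}$ with $\rho$. By Proposition \ref{eigncrit}, the eigenvalues of $\hat c_1$ are precisely the values $g(t)$ as $t$ ranges over the roots of $h(t)=1$. Proposition \ref{estimate1}(1) gives $|g(\alpha)|<g(a_+)=T_{\rm con}$ for every non-real root $\alpha$, and Proposition \ref{estimate1}(3) gives $g(a_+)>|g(a_-)|$ exactly in the case $n=2$. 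Together these yield $\rho=T_{\rm con}$.

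With these identifications in place, Theorem \ref{thm-gammaclassandconifoldpoint} produces an asymptotic expansion
\[
e^{T_{\rm con}/z}\, S(z) z^{-\mu} z^{c_1}\, \hGamma_{X_2} \sim \alpha_0 + \alpha_1 z + \alpha_2 z^2 + \cdots
\]
as $z\to +0$ with $\alpha_0\ne 0$. Since $\rho'=\rho=T_{\rm con}$, this shows that $e^{\rho'/z}S(z)z^{-\mu}z^{c_1}\hGamma_{X_2}$ is bounded (hence of moderate growth) as $z\to +0$. By Proposition/Definition \ref{prop-defn:AX}, the space of classes with this property is one-dimensional and is spanned by $A_{X_2}$; therefore $[A_{X_2}]=[\hGamma_{X_2}]$ in $\bP(H^\bullet(X_2))$, which is Gamma conjecture I.

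The proof is essentially an assembly of tools already present in the paper, so I do not anticipate a serious obstacle. The only delicate point is that the identification $\rho=T_{\rm con}$ uses the inequality $g(a_+)>|g(a_-)|$, which by Proposition \ref{estimate1}(3) holds precisely at $n=2$ among even values of $n$; this is exactly the borderline phenomenon that separates the positive result here from the counterexamples established for even $n\geq 4$ in Theorem \ref{thmGC1nothold}.
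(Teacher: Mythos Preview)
Your proposal is correct and follows essentially the same approach as the paper. The paper's proof for $n=2$ cites \cite[Theorem 6.3]{GaIr} directly after verifying (via Proposition \ref{estimate1}(3)) that $T_{\rm con}$ is the spectral radius; you instead invoke the internal Theorem \ref{thm-gammaclassandconifoldpoint} (which is extracted from the proof of that same result in \cite{GaIr}) and argue via Proposition/Definition \ref{prop-defn:AX}, which amounts to the same thing.
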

\begin{proof}
    Note that  $X_1$ is the blow-up of $\bP^2$ at one point, which is a del Pezzo surface. This case   was proved in \cite{HKLY}.

    When $n=2$, from Proposition \ref{estimate1} (3), we see that $T_{\rm con}$ is the spectral radius of $\hat{c}_1$, and it is of multiplicity one. So the statement follows from \cite[Theorem 6.3]{GaIr}.
\end{proof}

\begin{thm}\label{thmGC1nothold}
    When $n\geq3$ is even, Gamma conjecture I does not hold for $X_n$.
\end{thm}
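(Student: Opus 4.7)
The strategy is to show that, although Property $\cO$ holds for even $n\geq 4$ by Theorem \ref{thmconjO1}, the rightmost eigenvalue of $\hat c_1$ is not $T_{\rm con}=g(a_+)$ but the strictly larger value $g(a_-)$, whereas $\hGamma_{X_n}$ gives rise to a flat section whose exponential decay rate as $z\to +0$ is governed by $T_{\rm con}$. This mismatch forces $\hGamma_{X_n}$ out of the one-dimensional principal asymptotic subspace, falsifying Gamma conjecture I.

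First I would verify condition \eqref{eq:condition_star} and identify the rightmost eigenvalue. By Proposition \ref{eigncrit} the eigenvalues of $\hat c_1$ (with multiplicities) are the critical values $g(t)$ of $f$ at the $2n+2$ simple roots of $h(t)-1$. For even $n$, Proposition \ref{estimate1}(2) gives $g(a_-)\in (n^n-1,n^n)$, so $g(a_-)$ is real and positive; combined with Proposition \ref{estimate1}(1) and (3), it is the unique eigenvalue of maximum modulus and is simple (since $a_-$ is a simple root of $h(t)-1$). Setting $\rho':=g(a_-)$, condition \eqref{eq:condition_star} holds, so the principal asymptotic class $A_{X_n}$ is well-defined by Proposition/Definition \ref{prop-defn:AX}. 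The key point is that
\[
\rho' - T_{\rm con} = g(a_-)-g(a_+) > 0.
\]

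Next I would invoke Theorem \ref{thm-gammaclassandconifoldpoint} applied to the toric Fano manifold $X_n$: the flat section $s(z):=S(z)z^{-\mu}z^{c_1}\hGamma_{X_n}$ satisfies an asymptotic expansion
\[
e^{T_{\rm con}/z}\, s(z) \;\sim\; \alpha_0+\alpha_1 z+\alpha_2 z^2+\cdots
\]
as $z\to +0$ with $\alpha_0\neq 0$. Multiplying by $e^{(\rho'-T_{\rm con})/z}$ yields
\[
e^{\rho'/z}\,s(z)\;\sim\;e^{(\rho'-T_{\rm con})/z}\,(\alpha_0+\alpha_1 z+\cdots),
\]
and because $\rho'-T_{\rm con}>0$ and $\alpha_0\neq 0$, the right-hand side blows up exponentially as $z\to +0$; in particular, it is not $O(z^{-m})$ for any $m\in\bN$. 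Hence $\hGamma_{X_n}$ does not lie in the one-dimensional subspace characterized in Proposition/Definition \ref{prop-defn:AX}, so $[A_{X_n}]\neq [\hGamma_{X_n}]$ in $\bP(H^{\bullet}(X_n))$.

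I do not foresee a serious obstacle: the hard analytic input (the Laplace-method asymptotic at the conifold point, packaged as Theorem \ref{thm-gammaclassandconifoldpoint}) and the spectral analysis of $\hat c_1$ via $h(t)-1$ and $g(t)$ (Proposition \ref{estimate1}) are both available. The only conceptual step is to recognize that toric mirror symmetry forces the Gamma-class flat section to decay at the conifold rate $T_{\rm con}$, while for even $n\ge 4$ the actual rightmost eigenvalue $g(a_-)$ overtakes $T_{\rm con}$, and any such strict inequality is enough to disqualify $\hGamma_{X_n}$ as the principal asymptotic class.
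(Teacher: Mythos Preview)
Your proposal is correct and follows essentially the same approach as the paper: both use Proposition \ref{estimate1} to show that the spectral radius $\rho=g(a_-)$ strictly exceeds $T_{\rm con}=g(a_+)$ for even $n\ge 4$, then invoke Theorem \ref{thm-gammaclassandconifoldpoint} to conclude that $e^{\rho/z}S(z)z^{-\mu}z^{c_1}\hGamma_{X_n}=e^{(\rho-T_{\rm con})/z}\cdot e^{T_{\rm con}/z}S(z)z^{-\mu}z^{c_1}\hGamma_{X_n}$ fails to have moderate growth as $z\to+0$. Your write-up is in fact slightly more explicit than the paper's in checking simplicity of the rightmost eigenvalue and in invoking $\alpha_0\neq 0$, but the underlying argument is identical.
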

\begin{proof}
   From Proposition \ref{estimate1}, we see that the spectral radius $\rho$ of $\hat{c}_1$, given by $-g(a_-)$,  is strictly larger than $T_{\rm con}$. Note that
   \begin{align*}
       e^{\frac{\rho}{z}}S(z)z^{-\mu}z^{c_1}\hGamma_{X_n}=e^{\frac{\rho-T_{\rm con}}{z}}\cdot e^{\frac{T_{\rm con}}{z}}S(z)z^{-\mu}z^{c_1}\hGamma_{X_n}.
   \end{align*}
   It follows from Theorem \ref{thm-gammaclassandconifoldpoint} that $e^{\frac{\rho}{z}}S(z)z^{-\mu}z^{c_1}\hGamma_{X_n}$ does not have moderate growth as $z\to+0$. Therefore the principal asymptotic class $A_{X_n}$ of $X_n$ is not given by its Gamma class.
\end{proof}

\begin{remark}
We shall determine $A_{X_n}$  in Section \ref{subsec:identification_A}.
\end{remark}

When $n\geq3$ is odd, Conjecture $\cO$ does not hold for $X_n$ (Proposition \ref{thmconjO1}), so we cannot talk about Gamma conjecture I for $X_n$. However, Proposition \ref{estimate1} shows that $X_n$ satisfies condition \eqref{eq:condition_star}, i.e.~$\rho' = \max \{\Re(\lambda) : \lambda \in \Spec(\hat{c}_1)\}$ is a simple eigenvalue of $\hat{c}_1$ and that any other eigenvalue has strictly smaller real part. Therefore we can talk about Gamma conjecture I in Section \ref{subsubsec:Gamma-I}.

\begin{thm}\label{thmVGC}
    When $n\geq3$ is odd, Gamma conjecture I holds for $X_n$.
\end{thm}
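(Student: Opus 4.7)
The plan is to show that for odd $n\geq 3$, the conifold value $T_{\mathrm{con}}=g(a_+)$ is precisely the simple rightmost eigenvalue $\rho'$ of $\hat{c}_1$, and then to invoke Theorem \ref{thm-gammaclassandconifoldpoint} to identify $\hGamma_{X_n}$ as the principal asymptotic class.

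First I would verify condition \eqref{eq:condition_star} and pinpoint $\rho'$. By Proposition \ref{eigncrit}, the eigenvalues of $\hat{c}_1$ are exactly the critical values $g(t)$ of the mirror superpotential $f$, where $t$ ranges over the $2n+2$ roots of $h(t)-1$; the multiplicity of an eigenvalue $\lambda$ equals the number of roots $t$ with $g(t)=\lambda$. For odd $n\geq 3$, Proposition \ref{estimate1}(2) gives $g(a_-)\in(-n^n,\,-n^n+1)$, so $g(a_-)$ is a \emph{negative} real number, while $g(a_+)=T_{\mathrm{con}}>0$. Proposition \ref{estimate1}(1) implies $|g(\alpha)|<g(a_+)$ for every other root $\alpha\notin\{a_+,a_-\}$ of $h-1$, hence $\Re(g(\alpha))\leq |g(\alpha)|<g(a_+)$. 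Combined with $\Re(g(a_-))=g(a_-)<0<g(a_+)$, this shows
\[
\rho'=\max\{\Re(\lambda):\lambda\in\Spec(\hat c_1)\}=g(a_+)=T_{\mathrm{con}},
\]
and that $\rho'$ is attained only at the simple root $a_+$, so it is a simple eigenvalue. Thus $X_n$ satisfies condition \eqref{eq:condition_star} with $\rho'=T_{\mathrm{con}}$, and the principal asymptotic class $A_{X_n}$ is defined via Proposition/Definition \ref{prop-defn:AX}.

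Next I would apply Theorem \ref{thm-gammaclassandconifoldpoint} (available since $X_n$ is toric Fano): there exist classes $\alpha_0,\alpha_1,\alpha_2,\ldots\in H^\bullet(X_n)$ with $\alpha_0\neq 0$ such that
\[
e^{T_{\mathrm{con}}/z}\,S(z)z^{-\mu}z^{c_1}\hGamma_{X_n}\ \sim\ \alpha_0+\alpha_1 z+\alpha_2 z^2+\cdots
\]
as $z\to 0$ along the sector $|\arg z|<\pi/2+\epsilon$. Since $\rho'=T_{\mathrm{con}}$, the expression $e^{\rho'/z}\,S(z)z^{-\mu}z^{c_1}\hGamma_{X_n}$ has moderate growth (indeed, remains bounded) as $z\to +0$. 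By Proposition/Definition \ref{prop-defn:AX} the space of such classes is one-dimensional, so it must be spanned by $\hGamma_{X_n}$; hence $[A_{X_n}]=[\hGamma_{X_n}]$, which is Gamma conjecture I for $X_n$.

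The only real content is the first step, namely checking that $g(a_-)$, despite having the largest modulus among all critical values, has \emph{strictly smaller real part} than $T_{\mathrm{con}}$; this is the point where the parity of $n$ decisively intervenes, and it is the mechanism by which Gamma conjecture I is rescued even though Conjecture $\mathcal{O}$ fails. Given Proposition \ref{estimate1} and Theorem \ref{thm-gammaclassandconifoldpoint}, no further hard estimates are required.
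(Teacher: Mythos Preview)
Your proof is correct and follows essentially the same approach as the paper: establish $\rho'=T_{\rm con}=g(a_+)$ via Proposition \ref{estimate1} (using that for odd $n$ the large-modulus critical value $g(a_-)$ is negative), then invoke Theorem \ref{thm-gammaclassandconifoldpoint} to conclude that $\hGamma_{X_n}$ lies in the one-dimensional principal asymptotic line. The paper's own proof is the same argument compressed into two sentences; you have simply unpacked the details.
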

\begin{proof}
    From Proposition \ref{estimate1}, we see that $\rho'=T_{\rm con}$. Now the statement follows from Theorem \ref{thm-gammaclassandconifoldpoint}.
\end{proof}

\section{Modifications of Gamma conjecture I}
\label{sec:towards}
Our Theorems \ref{thmGC1nothold} and \ref{thmVGC} indicate that the original statement of Gamma conjecture I has to be revised.
In this section, we propose   modifications of Gamma conjecture I in strong and weak forms in Conjectures \ref{conj:modified_Gamma-I_strong} and \ref{conj:modified_Gamma-I_weak} respectively. Here we keep the specialization
$\mathbf{q}=\mathbf{1}$.

\subsection{Regularized quantum periods and A-model conifold value}
\label{subsec:TAcon}
We introduce the \emph{A-model conifold value} $\TAcon\in \bR_{\ge 0}$ that plays an important role in a modification of the Gamma conjecture I. We will prove in the next section that $\TAcon$ equals the conifold value associated with a weak Landau-Ginzburg model provided that the potential has nonnegative coefficients. The definition of $\TAcon$ emerged from our discussion with Kai Hugtenburg about a modified Gamma conjecture I. We thank him for allowing us to include this discovery in the present paper.

The \emph{quantum period} $G_X(t)$ of a Fano manifold $X$ is defined to be the pairing of the $J$-function and the point class $[\pt] \in H^{\text{top}}(X)$ \cite{CCGGK:mirrorsymmetry}:
\[
G_X(t) = \left\langle J_X(c_1(X)\log t,1), [\pt] \right\rangle^X = 1 + \sum_{c_1\cdot \bfd\ge 2} \left\langle [\pt] \psi^{c_1\cdot \bfd -2} \right \rangle_{0,1,\bfd} t^{c_1 \cdot \bfd}.
\]
The \emph{regularized quantum period} $\hG_X(t)$ is defined to be its Borel transform:
\begin{equation}
\label{eq:Ghat}
\hG_X(t) = \sum_{n=0}^\infty a_n t^n \quad \text{if we write $G_X(t) = \sum_{n=0}^\infty \frac{a_n}{n!} t^n$}.
\end{equation}
The argument in \cite[Lemma 3.7.4]{GGI} shows the following.
\begin{prop}
\label{prop:Ghat}
The function $\hG_X(t)$ has a positive radius of convergence. Moreover, $\hG_X(t)$ can be analytically continued along any paths in
\[
\bC^\times \setminus \{\lambda^{-1} :\lambda \neq 0, \lambda \in \Spec(\hat{c}_1)\}.
\]
\end{prop}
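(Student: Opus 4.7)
The plan is to combine an exponential-type bound on $G_X(t)$ with the formal identity
\[
\hat G_X(t) \;=\; \int_0^\infty e^{-\tau}\, G_X(t\tau)\, d\tau
\qquad\Bigl(\text{use } \int_0^\infty e^{-\tau}\tau^n\,d\tau=n!\Bigr),
\]
and then to deform the contour in $\tau$ to analytically continue $\hat G_X$.

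First I would show that $G_X(t)$ extends to an entire function of exponential type at most $\rho=\rho(\hat c_1)$. The identification of the $J$-function with the fundamental solution recalled in Section~\ref{subsubsec:Gamma-I} gives $J_X(c_1(X)\log t,1)=t^{-\dim X/2}\bigl(S(1/t)\,t^{\mu}t^{-c_1}\bigr)^{-1}\!\cdot 1$, which realizes it as a (rescaled) evaluation of an entire flat section of the quantum connection at $z=1/t$. The irregular singularity of the quantum connection at $z=0$ forces such flat sections to admit, in any fixed sector, asymptotic expansions of the form $\sum_{\lambda\in\Spec(\hat c_1)} e^{\lambda/z}(\cdots)$; hence along any ray $\arg t=\theta$ one has
\[
\bigl\|J_X(c_1(X)\log t,1)\bigr\| \;\le\; C(1+|t|)^M \exp\!\bigl(|t|\max_{\lambda}\Re(\lambda e^{\iu\theta})\bigr) \;\le\; C(1+|t|)^M e^{\rho|t|}.
\]
Pairing with $[\pt]$ preserves this estimate, and Cauchy's estimate applied to $G_X(t)=\sum (a_n/n!)t^n$ gives $|a_n|\le C_\epsilon(\rho+\epsilon)^n$ for every $\epsilon>0$. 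Consequently $\hat G_X(t)=\sum a_n t^n$ has radius of convergence at least $1/\rho>0$.

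For the analytic continuation, I would rotate the contour of integration to a ray $\tau = re^{\iu\theta}$, $r\ge 0$:
\[
\hat G_X(t) \;=\; e^{\iu\theta}\int_0^\infty e^{-re^{\iu\theta}}\, G_X(tre^{\iu\theta})\, dr.
\]
The directional growth bound above shows that the rotated integral converges absolutely iff $\cos\theta>\max_{\lambda\ne 0}\Re(\lambda t e^{\iu\theta})$, equivalently $\Re\bigl(e^{\iu\theta}(1-\lambda t)\bigr)>0$ for every nonzero $\lambda\in\Spec(\hat c_1)$; since the integrand $e^{-\tau}G_X(t\tau)$ is entire in $\tau$, Cauchy's theorem then ensures that the value is independent of $\theta$ throughout any open range in which this condition holds. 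For any $t\in\bC^\times\setminus\{\lambda^{-1}:\lambda\ne 0,\ \lambda\in\Spec(\hat c_1)\}$ the finite set $\{1-\lambda t\}$ lies in an open half-plane, so an admissible $\theta$ exists; by continuously choosing $\theta=\theta(t)$ along a path avoiding the forbidden finite set one obtains the desired analytic continuation.

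The main obstacle will be to establish the directional growth bound on $J_X(c_1(X)\log t,1)$ uniformly in $\arg t$, in particular along Stokes directions where several eigenvalues of $\hat c_1$ contribute the same maximal real growth rate and the asymptotic expansion may acquire extra logarithmic/polynomial corrections. A clean route is to invoke Ramis--Sibuya multisummability for the irregular singular point of the quantum connection at $z=0$, or, when a weak Landau--Ginzburg model is available (e.g.\ for toric Fanos or threefolds), to substitute this abstract estimate by an explicit oscillatory-integral representation of the $J$-function along the lines of Section~\ref{subsec:MS_Fano_toric}; either way the polynomial correction $(1+|t|)^M$ is absorbed harmlessly into the Cauchy estimate.
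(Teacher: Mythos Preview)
Your Laplace-integral approach is genuinely different from the paper's. The paper does not pass through an integral representation at all: instead it observes that, for $m$ large enough, the function $\bigl(t\frac{d}{dt}t\bigr)^m\hat G_X(t)$ can be written as $\langle\varphi(-t^{-1}),1\rangle$ where $\varphi(\lambda)$ is a homology-valued solution of the explicit first-order linear system
\[
\Bigl(\partial_\lambda+(\lambda+\hat c_1^{\,\mathrm t})^{-1}\bigl(1-\tfrac{\dim X}{2}+m-\mu^{\mathrm t}\bigr)\Bigr)\varphi(\lambda)=0,
\]
whose only singularities are at $\{-\lambda:\lambda\in\Spec(\hat c_1)\}$. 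Analytic continuation then follows immediately from standard ODE theory.

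Even granting the sectorial growth bound (which is indeed the delicate analytic input), your continuation argument has a gap. The assertion that ``the finite set $\{1-\lambda t\}$ lies in an open half-plane'' for every $t\in\bC^\times\setminus\{\lambda^{-1}\}$ is false: it fails exactly when $1/t$ lies in the convex hull of the nonzero eigenvalues. For $X=\bP^2$ the spectrum is $\{3,3\omega,3\omega^2\}$ with $\omega=e^{2\pi\iu/3}$; at $t=1$ the three points $-2,\ 1-3\omega,\ 1-3\omega^2$ form a triangle containing the origin, so no admissible $\theta$ exists. More generally, whenever $0$ is in the convex hull of $\Spec(\hat c_1)\setminus\{0\}$ (as happens for $\bP^n$, for $X_n$, and for most Fano manifolds), every sufficiently large $|t|$ lies in this bad region. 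There the integrand $e^{-\tau}G_X(t\tau)$ decays along \emph{no} ray to infinity, so no rotation of the contour yields a convergent integral, and a path that enters this region cannot be handled by your scheme of continuously varying $\theta(t)$. Your method thus produces $\hat G_X$ only on the proper open set $\{t:1/t\notin\operatorname{conv}(\Spec(\hat c_1)\setminus\{0\})\}$; crossing into its complement requires a separate argument, for instance deriving the linear ODE satisfied by $\hat G_X$ --- which is precisely the route the paper takes.
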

\begin{proof}
As shown in the proof of \cite[Lemma 3.7.4]{GGI}, the function
\[
\left(t \frac{d}{dt} t\right)^m\hG_X(t) = t^m \sum_{n=0}^\infty \Gamma(1+n+m) \frac{a_n}{n!} t^{n}
\]
for sufficiently large $m$ can be presented as the pairing $\langle \varphi(-t^{-1}), 1\rangle$, where $\varphi(\lambda)$ is a certain \emph{homology}-valued function satisfying the differential equation
\[
\left( \parfrac{}{\lambda} + (\lambda + (\hat{c}_1)^{\text{t}})^{-1} \left(1-\frac{\dim X}{2}+m - \mu^{\text{t}}\right) \right) \varphi(\lambda) =0
\]
where the superscript ``$\text{t}$'' stands for the transpose (we set $\nu = 1-\frac{\dim X}{2} + m$ in the notation of \emph{loc.~cit}; the assumption $\nu\notin \bZ$ there is unnecessary). The differential equation has singularities only at points of the form $-\lambda$ with $\lambda \in \Spec(\hat{c}_1)$. This implies the desired conclusion.
\end{proof}

\begin{defn}
\label{defn:A-model_conifold_value}
The \emph{A-model conifold value} $\TAcon$ is defined to be the inverse of the convergence radius of $\hG_X(t)$, i.e.~$\TAcon := \limsup_{n\to \infty} |a_n|^{1/n}$ with notation as in \eqref{eq:Ghat}. It is a non-negative real number.
\end{defn}

\begin{remark}
We define $\TAcon$ to be zero if the radius of convergence of $\hG_X(t)$ is infinite. However, we anticipate that $\TAcon$ is always positive (unless $X$ is a point).
\end{remark}

As extensive numerical calculations in the Fanosearch programme suggest (see \cite{Coates-Kasprzyk:database}), it is plausible that the coefficients $a_n$ of $\hG_X(t)$ are nonnegative integers for all (but finitely many) $n$. Furthermore, we anticipate that
\begin{equation}
\label{eq:suplim_replacedby_lim}
\TAcon = \lim_{n\to \infty} (a_{rn})^{1/(rn)}
\end{equation}
where $r$ is (expected to be) the Fano index of $X$ and the limsup has been replaced with the limit. For example, they hold for toric Fano manifolds (see Example \ref{exa:toric_weak_LG} below). Under these assumptions, we can derive several conclusions about $\TAcon$.

\begin{prop}\label{prop: TAconeigen}
Suppose that $a_n \ge 0$ for all but finitely many $n$ and that $\TAcon\neq 0$. Then $\TAcon \in \Spec(\hat{c}_1)$.
\end{prop}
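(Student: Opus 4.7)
The plan is to apply the Pringsheim--Vivanti theorem (a power series with nonnegative real coefficients has a singularity at the positive real point of its circle of convergence) and combine it with the analytic continuation statement of Proposition \ref{prop:Ghat} to conclude that $\TAcon$ must arise as $\lambda^{-1}$ for some nonzero $\lambda\in \Spec(\hat{c}_1)$.

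First I would reduce to a series with all coefficients nonnegative. Write $N$ for an integer such that $a_n\ge 0$ for every $n\ge N$ and decompose
\[
\hG_X(t) = P(t) + R(t), \qquad P(t)=\sum_{n=0}^{N-1} a_n t^n,\quad R(t)=\sum_{n\ge N} a_n t^n.
\]
Since $P(t)$ is a polynomial, $R(t)$ has the same radius of convergence $1/\TAcon$ as $\hG_X(t)$, and the singularities of $\hG_X$ and $R$ on any open set avoiding the origin coincide. By hypothesis $\TAcon>0$, so this radius is finite.

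Next I would invoke the Pringsheim--Vivanti theorem for $R(t)$: the coefficients of $R$ are nonnegative real numbers and the radius of convergence is $1/\TAcon$, hence $t_0 = 1/\TAcon>0$ is a singular point of $R(t)$ (equivalently, $R$ cannot be analytically continued to any open neighbourhood of $t_0$). Consequently $t_0$ is also a singular point of $\hG_X(t)$.

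Finally I would appeal to Proposition \ref{prop:Ghat}: $\hG_X(t)$ admits analytic continuation along paths in $\bC^\times \setminus \{\lambda^{-1} : 0\ne\lambda \in \Spec(\hat{c}_1)\}$. Since $t_0>0$ is a singular point of $\hG_X$, it must lie in the exceptional set, i.e.\ $t_0 = \lambda^{-1}$ for some nonzero $\lambda\in \Spec(\hat{c}_1)$. Rearranging gives $\TAcon = \lambda \in \Spec(\hat{c}_1)$, as desired. The main (and only) subtlety is the invocation of Pringsheim's theorem in the ``all but finitely many'' setting, which is handled cleanly by the polynomial/tail splitting above; the rest is immediate from the cited results.
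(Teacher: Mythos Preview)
Your proof is correct and follows the same approach as the paper's: apply the Vivanti--Pringsheim theorem to see that $t_0=\TAcon^{-1}$ is a singular point of $\hG_X$, then conclude from Proposition~\ref{prop:Ghat} that $\TAcon\in\Spec(\hat c_1)$. The paper's version is terser and does not spell out the polynomial/tail splitting, but your extra care there is sound and harmless.
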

\begin{proof} By the Vivanti-Pringsheim theorem, $\hG_X(t)$ cannot be extended to a holomorphic function in a neighbourhood of $t=\TAcon^{-1}$. The conclusion follows by Proposition \ref{prop:Ghat}.
\end{proof}

\begin{prop}
Suppose that $a_n\ge 0$ for all but finitely many $n$ and that $\TAcon\neq 0$. Then $\TAcon$ gives the exponential growth order of the quantum period $G_X(t)$ as $t\to \infty$ along the positive real line, i.e.~
\[
\TAcon= \limsup_{t\to +\infty} \frac{\log G_X(t)}{t}.
\]
If moreover $\TAcon = \lim_{n\to \infty} (a_{rn})^{1/(rn)}$ for some $r\in \bZ_{\ge 1}$, then the limsup in the above formula can be replaced with the limit.
\end{prop}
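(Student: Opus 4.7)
The plan is to establish matching upper and lower bounds for $\log G_X(t)/t$, with the value $\TAcon$ arising as the saddle point of the one-term estimate $G_X(t) \gtrsim a_n t^n/n!$ after applying Stirling. The upper bound is immediate from $\TAcon$ being the inverse radius of convergence of $\hG_X$. The lower bound uses a carefully chosen index $n$ near $\TAcon t$; the sharper liminf assertion in the second part comes from upgrading a subsequence bound to a uniform bound, made possible by the hypothesis that the limit (not just limsup) along the arithmetic progression $\{rn\}$ exists. Write $G_X(t) = P(t) + \widetilde{G}(t)$, where $P$ is the polynomial collecting the finitely many terms with $a_n<0$ and $\widetilde{G}(t) = \sum_{n\ge N_0} (a_n/n!)\, t^n$ has nonnegative coefficients.

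\textbf{Upper bound.} Fix $\epsilon>0$. By the Cauchy--Hadamard formula, $|a_n| \le (\TAcon+\epsilon)^n$ for $n \ge N_\epsilon$, hence
\[
|G_X(t)| \;\le\; Q_\epsilon(t) + \sum_{n \ge N_\epsilon} \frac{(\TAcon+\epsilon)^n}{n!}\, t^n \;\le\; Q_\epsilon(t) + e^{(\TAcon+\epsilon)t}
\]
for a polynomial $Q_\epsilon$. This gives $\limsup_{t\to+\infty} \log G_X(t)/t \le \TAcon+\epsilon$, and letting $\epsilon \to 0$ yields the upper bound $\TAcon$.

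\textbf{Lower bound (limsup case).} By definition of $\TAcon$ as a limsup, for any $\epsilon \in (0,\TAcon)$ there is a subsequence $n_k \to \infty$ with $a_{n_k} \ge (\TAcon - \epsilon)^{n_k}$ (the eventual nonnegativity of $a_n$ allows us to drop absolute values). Set $t_k := n_k/\TAcon$. From $G_X(t_k) \ge \widetilde{G}(t_k) - |P(t_k)| \ge (a_{n_k}/n_k!)\, t_k^{n_k} - |P(t_k)|$ and Stirling,
\[
\frac{a_{n_k}}{n_k!}\, t_k^{n_k} \;\ge\; \frac{1+o(1)}{\sqrt{2\pi n_k}}\left(\frac{(\TAcon-\epsilon)\, e}{\TAcon}\right)^{n_k}.
\]
Since $(\TAcon-\epsilon)e/\TAcon > 1$ for small $\epsilon$, this term dominates the polynomial correction $|P(t_k)|$; taking logarithms and dividing by $t_k = n_k/\TAcon$ gives
\[
\frac{\log G_X(t_k)}{t_k} \;\longrightarrow\; \TAcon + \TAcon\log(1 - \epsilon/\TAcon) \;=\; \TAcon - \epsilon + O(\epsilon^2).
\]
Sending $\epsilon \to 0$ produces a sequence $t_k \to \infty$ with $\liminf_k \log G_X(t_k)/t_k \ge \TAcon$, yielding the claimed equality for the limsup.

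\textbf{Upgrade to a limit.} Under the stronger hypothesis $\TAcon = \lim_{n\to\infty} (a_{rn})^{1/(rn)}$, the lower estimate $a_{rn} \ge (\TAcon - \epsilon)^{rn}$ holds for \emph{all} $n \ge N_\epsilon$. Given any large $t$, pick $n=n(t) \ge N_\epsilon$ so that $rn$ is the element of $r\bZ$ closest to $\TAcon t$; then $|rn - \TAcon t| \le r/2$, so $rn/(\TAcon t) = 1 + O(1/t)$. The same Stirling bound applied at this $n$ gives, after taking logs and dividing by $t$,
\[
\frac{\log G_X(t)}{t} \;\ge\; \TAcon\bigl(1 + \log(1 - \epsilon/\TAcon)\bigr) + o(1) \;=\; \TAcon - \epsilon + O(\epsilon^2) + o(1),
\]
as $t \to +\infty$. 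Thus $\liminf_{t\to+\infty} \log G_X(t)/t \ge \TAcon - O(\epsilon)$; letting $\epsilon \to 0$ promotes the limsup to a genuine limit.

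\textbf{Main obstacle.} The delicate point is the saddle-point matching: the one-term lower bound is sharp only when $n$ is chosen close to $\TAcon t$, so in the limit case one must show that the coarse discretization $n \in r\bZ \cap [N_\epsilon,\infty)$ still reaches within $O(1)$ of $\TAcon t$. This is precisely the bounded-gap property of the arithmetic progression, and the negative-coefficient remainder $P(t)$ is only polynomial, so it is absorbed into the exponentially large main term for all large $t$.
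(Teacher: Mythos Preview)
Your proof is correct and follows essentially the same strategy as the paper's: an upper bound from Cauchy--Hadamard plus a single-term lower bound evaluated via Stirling at an index near the saddle point. The paper makes the slightly cleaner choice $t_k = n_k/(\TAcon-\epsilon)$ (which yields exactly $\TAcon-\epsilon$ instead of $\TAcon + \TAcon\log(1-\epsilon/\TAcon)$) and, for the limit upgrade, invokes eventual monotonicity of $G_X$ to compare $G_X(t)$ with $G_X(rn/(\TAcon-\epsilon))$ rather than redoing the Stirling estimate with the $O(1)$ offset, but these are cosmetic differences.
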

\begin{proof}
We write $T:= \TAcon$ in the proof for simplicity. Since $T>0$, there exists some $n_0\in \bZ_{\ge 0}$ such that $a_{n_0}>0$ and $a_k \ge 0$ for all $k\ge n_0$. When $t\ge 0$, we have that
\[
G_X(t) \ge \sum_{n=0}^{n_0} \frac{a_n}{n!} t^n = \text{(polynomial of degree $< n_0$)} + \frac{a_{n_0}}{n_0!} t^{n_0}
\]
and the right-hand side diverges to infinity as $t\to +\infty$. Therefore $\log G_X(t)$ is a well-defined real number for $t\gg 0$. Moreover, if $P(t)$ is a polynomial of $t$, we have
\[
\frac{\log(G_X(t) + P(t))}{t} = \frac{\log G_X(t)}{t}  + \frac{\log (1+ \frac{P(t)}{G_X(t)})}{t}
\]
and $\lim_{t\to +\infty} (\log (1+ \frac{P(t)}{G_X(t)}))/t =0$. Hence $\limsup_{t\to\infty} (\log G_X(t))/t$ does not change by adding a polynomial to $G_X(t)$. Thus we may assume that $a_n\ge 0$ for all $n$.

Since $T =\limsup_{n\to \infty} a_n^{1/n}$, for any $\epsilon>0$ with $\epsilon<T$, there exists $n_0\in \bZ_{\ge 0}$ such that $a_n \le (T+\epsilon)^n$ for all $n\ge n_0$ and $a_n \ge (T-\epsilon)^n$ for infinitely many $n$. Then we have
\begin{align}
\label{eq:estimate_from_above}
\begin{split}
\limsup_{t\to +\infty} \frac{\log G_X(t)}{t}
& = \limsup_{t\to +\infty} \frac{\log \left( \sum_{n=n_0}^\infty \frac{a_n}{n!} t^n\right) }{t} \\
& \le \limsup_{t\to +\infty} \frac{\log \left(\sum_{n=n_0}^\infty \frac{(T+\epsilon)^n}{n!} t^n\right)}{t} \\
& = \limsup_{t\to +\infty} \frac{\log (e^{(T+\epsilon)t})}{t} = T+\epsilon.
\end{split}
\end{align}
Also, for each $n$ such that $a_n\ge (T-\epsilon)^n$, we have by setting $t= n/(T-\epsilon)$,
\[
\frac{\log G_X(t)}{t} \ge \frac{ \log (\frac{a_n}{n!} t^n)}{t} \ge \frac{1}{t} \log \left(\frac{((T-\epsilon) t)^n}{n!}\right)
= (T-\epsilon) \frac{1}{n} \log \frac{n^n}{n!}
\]
Using the fact that this holds for infinitely many $n$ and the Stirling approximation $\log\frac{n^n}{n!} = n+O(1)$ as $n\to \infty$, we obtain
\begin{equation}
\label{eq:estimate_from_below}
\limsup_{t\to +\infty} \frac{\log G_X(t)}{t} \ge T-\epsilon.
\end{equation}
Since \eqref{eq:estimate_from_above} and \eqref{eq:estimate_from_below} hold for all $\epsilon>0$, the first statment follows.

The latter statement follows by a slight modification of the above argument. By the assumption $T= \lim_{n\to \infty} (a_{rn})^{1/(rn)}$, for any $\epsilon>0$ with $\epsilon<T$, there exists $n_0\in \bZ_{\ge 0}$ such that $a_{rn}\ge (T-\epsilon)^{rn}$ for all $n\ge n_0$. Let $t_0$ be such that $t_0\ge \frac{r n_0}{T-\epsilon}$ and that $G_X(t)$ is positive and monotonically increasing on $[t_0,\infty)$. Such $t_0$ exists because we know that $G_X'(t)>0$ for all sufficiently large $t$ by the same argument as in the first paragraph of the proof. Let $t\ge t_0+{r\over T-\epsilon}$ and choose $n\ge n_0$ such that $\frac{rn}{T-\epsilon} \le t \le \frac{r(n+1)}{T-\epsilon}$. Then
\[
\frac{\log G_X(t)}{t} \ge \frac{\log G_X(\frac{rn}{T-\epsilon})}{\frac{r(n+1)}{T-\epsilon}} \ge
\frac{\log\left(\frac{a_{rn}}{(rn)!} \left(\frac{rn}{T-\epsilon}\right)^{rn} \right)}{\frac{r(n+1)}{T-\epsilon}}
\ge \frac{T-\epsilon}{r(n+1)} \log \left( \frac{(rn)^{rn}}{(rn)!} \right)
\]
Since $n\to \infty$ as $t\to +\infty$, it follows (again from the Stirling approximation) that
\[
\liminf_{t\to +\infty} \frac{\log G_X(t)}{t} \ge T-\epsilon.
\]
This holds for all $\epsilon>0$ and the conclusion follows.
\end{proof}

\begin{remark}
If $\TAcon=0$, we claim that $\lim_{t\to +\infty} (\log G_X(t))/t = 0$, even without the nonnegativity assumption for $a_n$. Note that the positivity of $G_X(t)$ for sufficiently large $t$ is not ensured in this case. In fact, for each $\epsilon>0$, there exists $n_0$ such that $|a_n|\le \epsilon^n$ for all $n\ge n_0$. Then we have
\[
\frac{|\log G_X(t)|}{t} \le \frac{\pi + \log |G_X(t)|}{t} \le \frac{\pi+ \log \left(\sum_{n<n_0} |a_n| t^n + e^{\epsilon t} \right)}{t}
\]
where we choose the branch of $\log G_X(t)$ so that $\Im(\log G_X(t)) \in \{0,\pi\}$.
Taking the limit $t\to +\infty$, we find that $\limsup_{t\to +\infty} |\log G_X(t)|/t \le \epsilon$. This implies the claim.
\end{remark}

\subsection{A-model conifold value and weak Landau-Ginzburg model}
\label{subsec:TAcon_weakLG}
We study the relationship between the A-model conifold value $\TAcon$ and the original conifold value $T_{\text{con}}$ associated with a Landau-Ginzburg mirror (as appeared in Section \ref{subsec:MS_Fano_toric} for toric Fano manifolds).

\begin{defn}[\cite{Przyjalkowski:weak}]
A Laurent polynomial $f(\bx)\in \bC[x_1^\pm,\dots,x_m^\pm]$ (where $m$ is not necessarily the dimension of $X$) is called a \emph{weak Landau-Ginzburg model} of $X$ if the regularized quantum period of $X$ is given by the constant term series of $f(\bx)$:
\begin{equation}
\label{eq:hG_constanttermseries}
\hG_X(t) = \sum_{n=0}^\infty \Const(f^n) t^n = \frac{1}{(2\pi \iu)^m}\int_{(S^1)^m} \frac{1}{1-tf(\bx)} \frac{dx_1\cdots dx_m}{x_1\cdots x_m}
\end{equation}
where $\Const(f^n)$ means the constant term of the Laurent polynomial $f(\bx)^n$.
\end{defn}

If $f(\bx) = \sum_{i=1}^N c_i \bx^{b_i}$ is a weak Landau-Ginzburg model and if $\varphi \colon \bR^m \to \bR$ is a linear function such that $\varphi(b_i) \ge 0$ for all $i$, then the truncated function $\tilde{f}(\bx) = \sum_{i:\varphi(b_i) = 0} c_i \bx^{b_i}$ is also a weak Landau-Ginzburg model, since $\Const(f^n) = \Const(\tilde{f}^n)$. Thus, by reducing the number of variables if necessary, we may assume that a weak Landau-Ginzburg model $f$ is either zero\footnote{A zero weak Landau-Ginzburg model, should it exist, is not convenient since the Newton polytope is empty.} or \emph{convenient}, i.e.~the Newton polytope of $f(\bx)$ contains the origin in its interior. Henceforth, we shall always assume convenience for non-zero weak Landau-Ginzburg models.

Let $f(\bx)\in \bR_{\ge 0}[x_1^\pm,\dots,x_m^\pm]$ be a convenient Laurent polynomial with \emph{non-negative} coefficients. The \emph{conifold point} $\bx_{\text{con}}\in (\bR_{>0})^m$ of $f$ and its value $T_{\text{con}} = f(\bx_{\text{con}})>0$ are defined similarly to Section \ref{subsec:MS_Fano_toric}, i.e.~$\bx_{\text{con}}\in (\bR_{>0})^m$ is a unique critical point of $f|_{(\bR_{>0})^m}$ and $f|_{(\bR_{>0})^m}$ takes the minimum value $T_{\text{con}} = \min_{\bx \in (\bR_{>0})^m} f(\bx)$ at $\bx_{\text{con}}$.

We note the following properties for nonnegative Laurent polynomials.
\begin{lemma}[{\cite[Lemma 3.13]{GaIr}}]
\label{lem:limit_constantterms}
Let $f(\bx)$ be a convenient Laurent polynomial with nonnegative coefficients. Let $r=r(f)\ge 1$ be a unique natural number such that $\Const(f^n)=0$ if $r\nmid n$ and that $\Const(f^{rn})>0$ for all but finitely many $n\ge 0$. Then the limit $\lim_{n\to \infty} (\Const(f^{rn}))^{1/(rn)}$ exists.
\end{lemma}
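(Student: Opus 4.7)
The plan is to realize $\Const(f^{rn})$ as a supermultiplicative sequence of positive reals and apply Fekete's lemma. Writing $f^n = \sum_\alpha c^{(n)}_\alpha \bx^\alpha$ with $c^{(n)}_\alpha \ge 0$ by hypothesis, the convolution identity
\[
\Const(f^{n+m}) \;=\; \sum_\alpha c^{(n)}_\alpha c^{(m)}_{-\alpha} \;\ge\; c^{(n)}_0 c^{(m)}_0 \;=\; \Const(f^n)\,\Const(f^m)
\]
shows that the set $S := \{n \ge 0 : \Const(f^n) > 0\}$ is closed under addition. Convenience of $f$ forces $S \ne \emptyset$: an interior rational convex combination $0 = \sum_i \lambda_i b_i$ of exponents $b_i$ of $f$ can be cleared to an integral relation $0 = \sum_i m_i b_i$ with $m_i \in \bZ_{\ge 0}$ not all zero, which makes the constant monomial appear with strictly positive coefficient in $f^{\sum_i m_i}$. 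Setting $r := \gcd(S)$, the Sylvester--Frobenius theorem applied to a finite subset of $S$ of gcd $r$ shows that $r\bZ_{\ge 0} \setminus S$ is finite. This both verifies existence and pins down uniqueness of the $r$ described in the lemma.

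With $a_n := \Const(f^{rn})$, there exists $n_0$ such that $a_n > 0$ for all $n \ge n_0$. Setting $b_n := \log a_n$, the supermultiplicativity above translates into the superadditivity $b_{n+m} \ge b_n + b_m$ valid for $n,m \ge n_0$, so Fekete's lemma (in its superadditive form) yields
\[
\lim_{n \to \infty} \frac{b_n}{n} \;=\; \sup_{n \ge n_0} \frac{b_n}{n} \;\in\; (-\infty, +\infty].
\]
To confirm finiteness of this limit, I would invoke the elementary upper bound $\Const(f^n) \le f(1,\dots,1)^n$, which holds because the sum of all (nonnegative) coefficients of $f^n$ equals $f(1,\dots,1)^n$. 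This gives $b_n/n \le r \log f(1,\dots,1) < \infty$, so $\lim_{n \to \infty} (\Const(f^{rn}))^{1/(rn)} = \exp\bigl(\tfrac{1}{r} \lim_{n \to \infty} b_n/n\bigr)$ exists as a finite nonnegative real.

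The argument is essentially elementary; the only mildly delicate point is that Fekete's lemma asks for the sequence to be defined and positive for all indices, which we arrange by passing to the tail $n \ge n_0$ after identifying $r$ as $\gcd(S)$. The geometric motivation, though not logically needed here, is that $r$ is expected to coincide with the Fano index of $X$ when $f$ is a weak Landau-Ginzburg model of $X$, which is precisely the periodicity reflected in the refined limit \eqref{eq:suplim_replacedby_lim}.
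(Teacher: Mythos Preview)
Your proof is correct and follows the same approach the paper intends: the paper's proof simply refers to \cite[Lemma 3.13]{GaIr} and notes that the existence of $r(f)$ comes from the set $\{n>0:\Const(f^n)>0\}$ being nonempty and closed under addition, which is exactly the supermultiplicativity/Fekete argument you carry out in full. Your write-up is more detailed than the paper's own, including the explicit upper bound $\Const(f^n)\le f(1,\dots,1)^n$ to guarantee finiteness of the limit.
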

\begin{proof}
The argument is identical to \cite[Lemma 3.13]{GaIr}, although the result there is stated under different assumptions. The existence of $r(f)$ follows from the fact that the set $\{n>0 : \Const(f^n)>0\}$ is nonempty and closed under addition.
\end{proof}

In particular, if a Fano manifold $X$ admits a weak Landau-Ginzburg model $f$ which is convenient and has only nonnegative coefficients, then the A-model conifold value of $X$ satisfies formula  \eqref{eq:suplim_replacedby_lim} for some $r$ divisible by the Fano index.

\begin{thm}
\label{thm:nonnegative_Laurent}
Let $f$ be a convenient Laurent polynomial with nonnegative coefficients. Let $r = r(f)$ be the integer as in Lemma \ref{lem:limit_constantterms} and let $T_{\text{con}} = \min_{\bx \in (\bR_{>0})^m} f(\bx)$ be the conifold value. Then we have
\[
\lim_{n\to\infty} (\Const(f^{rn}))^{1/(rn)} = T_{\text{con}}.
\]
In particular, the convergence radius of the constant term series $\hG(t) := \sum_{n=0}^\infty \Const(f^n) t^n$ equals $T_{\text{con}}^{-1}$. Furthermore, $\hG(t)$ can be extended to a multi-valued analytic function on a punctured neighbourhood of $t=T_{\text{con}}^{-1}$ and has nontrivial monodromy at $T_{\text{con}}^{-1}$.
\end{thm}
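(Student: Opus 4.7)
The plan is to deduce all four claims from a careful Laplace-type analysis of the integral representation
\[
\Const(f^n) = \frac{1}{(2\pi)^m}\int_{[0,2\pi]^m} \phi(\theta)^n\,d\theta,
\]
where $\phi(\theta) := f(\bx_{\rm con}\cdot e^{\iu\theta}) = \sum_i \lambda_i e^{\iu \langle b_i,\theta\rangle}$ with $\lambda_i := c_i \bx_{\rm con}^{b_i}\ge 0$, obtained by integrating over the positive-real torus $\{|x_j|=x_{\rm con,j}\}$ and using contour-independence of the constant term. Since $\sum_i \lambda_i = f(\bx_{\rm con}) = T_{\rm con}$, the triangle inequality gives $|\phi(\theta)|\le T_{\rm con}$, and hence $|\Const(f^n)|\le T_{\rm con}^n$, yielding $\limsup_n \Const(f^{rn})^{1/(rn)}\le T_{\rm con}$ immediately.

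For the matching lower bound I would analyse the set $H' := \{\theta : |\phi(\theta)| = T_{\rm con}\}$. The equality case of the triangle inequality shows that $\theta\in H'$ iff all $\langle b_i,\theta\rangle$ (for $\lambda_i>0$) are congruent modulo $2\pi$ to a common value $\alpha(\theta)$; convenience forces the differences $b_i-b_j$ to span $\bQ^m$, so $H'$ is a finite subgroup of $(\bR/2\pi\bZ)^m$, and $\chi(\theta_0):=e^{\iu\alpha(\theta_0)}$ defines a character $\chi\colon H'\to S^1$ of some order $r$. A direct calculation gives the translation formula $\phi(\theta_0+\theta') = \chi(\theta_0)\phi(\theta')$, and near $\theta=0$ one has
\[
\phi(\theta) = T_{\rm con} - \tfrac{1}{2}Q(\theta) + O(|\theta|^3),
\]
where the linear term is absent because $\sum_i\lambda_i b_i = 0$ (the critical-point equation for $f$ in logarithmic coordinates at $\bx_{\rm con}$), and $Q(\theta):= \sum_i\lambda_i\langle b_i,\theta\rangle^2$ is positive-definite by convenience. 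Laplace's method applied in a neighbourhood of each $\theta_0\in H'$, together with the character-orthogonality identity
\[
\sum_{\theta_0\in H'}\chi(\theta_0)^n = \begin{cases} |H'| & \text{if }r\mid n,\\ 0 & \text{otherwise,}\end{cases}
\]
then yields $\Const(f^{rn}) = C\, T_{\rm con}^{rn}(rn)^{-m/2}(1+o(1))$ with an explicit positive constant $C$ determined by $Q$ and $|H'|$. Comparing with the defining property of $r(f)$ in Lemma \ref{lem:limit_constantterms} identifies this $r$ with $r(f)$, and the limit formula together with the assertion that $\hG(t)$ has convergence radius $T_{\rm con}^{-1}$ follow at once.

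For the analytic continuation I would use the period representation \eqref{eq:hG_constanttermseries} to view $\hG(t)$ as an integral of a rational $m$-form over the cycle $(S^1)^m$. The family of hypersurfaces $Y_t := \{\bx\in(\bC^\times)^m: tf(\bx)=1\}$ is smooth precisely away from the inverses of critical values of $f$, so standard Gauss-Manin theory yields that $\hG(t)$ extends to a multi-valued holomorphic function on $\bC^\times$ minus these critical inverses, in particular to a punctured neighbourhood of $T_{\rm con}^{-1}$. Since $\Const(f^n)\ge 0$ for all $n$, the Vivanti-Pringsheim theorem forces $T_{\rm con}^{-1}$ to be an actual singularity of $\hG$. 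Finally, the local model near the nondegenerate critical point $\bx_{\rm con}$ is the quadratic singularity $f(\bx)-T_{\rm con} = Q(\bx-\bx_{\rm con})/2+\cdots$, and the associated period integral is a standard Picard-Lefschetz pencil whose monodromy acts as a transvection along the vanishing cycle through $\bx_{\rm con}$. The Laplace asymptotic above computes (up to normalisation) the period of this vanishing cycle to be the nonzero constant $C$, so the transvection acts nontrivially on $\hG$.

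The principal obstacle is the multi-point Laplace analysis of the second step: verifying that $H'$ is a finite abelian group from the convenience hypothesis, controlling the Gaussian integration uniformly in each of the neighbourhoods, and especially matching the combinatorial periodicity $r$ that emerges from the character sum with the period $r(f)$ characterised abstractly in Lemma \ref{lem:limit_constantterms}. Once this is in place, both the convergence radius assertion and the monodromy statement reduce to standard facts about periods of algebraic families and Picard-Lefschetz monodromy at a Morse singularity.
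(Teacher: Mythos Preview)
Your approach to the limit formula is essentially the paper's approach in unpacked form. The paper cites the Local Central Limit Theorem for random walks (with step distribution $p_i=\lambda_i/T_{\rm con}$) to obtain the asymptotic $\Const(f^n)/T_{\rm con}^n\sim C n^{-m/2}$; as the paper itself notes in a remark, the proof of the Local CLT is precisely the Laplace estimate of $\int_{(S^1)^m_{\rm con}} f^n\omega$ that you sketch. Your treatment of general $r$ via the translation identity $\phi(\theta_0+\theta')=\chi(\theta_0)\phi(\theta')$ and character orthogonality is correct and replaces the paper's reduction to $r=1$ by passing to $f^r$.

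For the analytic continuation your invocation of Gauss--Manin is too casual: the family $Y_t\subset(\bC^\times)^m$ is not proper, so you must control what happens at infinity. The paper handles this by citing Duistermaat--van~der~Kallen, who construct a smooth compactification on which $f$ extends and the logarithmic volume form extends holomorphically, so that $\hG(t)$ becomes an honest period of a compact family and continues along paths avoiding inverses of the \emph{compactified} critical values.

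Your monodromy argument has a genuine gap. The Picard--Lefschetz transvection sends $\hG\mapsto\hG\pm(\sigma\cdot V_{\rm con})\varpi$, where $\sigma$ is the residuated integration cycle, $V_{\rm con}$ the vanishing cycle at $\bx_{\rm con}$, and $\varpi$ the period of $V_{\rm con}$. You need both factors nonzero. You never address $\sigma\cdot V_{\rm con}$, and your claim that ``the Laplace asymptotic computes the period of the vanishing cycle'' is incorrect: the Laplace asymptotic computes the integral over the torus $(S^1)^m_{\rm con}$, not over the vanishing sphere $V_{\rm con}\subset f^{-1}(1/t)$. Moreover, your argument tacitly assumes $\bx_{\rm con}$ is the unique critical point in $f^{-1}(T_{\rm con})$, which is not given. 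The paper avoids all of this with a cleaner argument: Duistermaat--van~der~Kallen show that every singularity of $\hG$ at an inverse critical value is milder than a simple pole, so if the monodromy at $T_{\rm con}^{-1}$ were trivial the singularity would be removable; but Vivanti--Pringsheim forces $T_{\rm con}^{-1}$ to be a genuine singularity, a contradiction. (The paper does remark that a Picard--Lefschetz argument also works, supplying the missing intersection number $\sigma\cdot V_{\rm con}=1$ by identifying $\sigma$ and $V_{\rm con}$ as a tropical torus fibre and the positive-real locus respectively.)
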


\begin{cor}
\label{cor:TAcon_Tcon}
Suppose that a Fano manifold $X$ admits a weak Landau-Ginzburg model $f$ with nonnegative coefficients.
Then the A-model conifold value $\TAcon$ of $X$ coincides with the conifold value $T_{\text{con}} = \min_{\bx\in (\bR_{>0})^m} f(\bx)$ of $f$.
\end{cor}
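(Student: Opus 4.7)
The statement is essentially a direct corollary of Theorem~\ref{thm:nonnegative_Laurent} combined with the defining property of a weak Landau-Ginzburg model, so the plan is very short. First, I would invoke the discussion following the definition of weak Landau-Ginzburg model to reduce to the case when $f$ is convenient: if the given $f \in \bR_{\ge 0}[x_1^{\pm},\dots,x_m^{\pm}]$ is not convenient, then by truncating the monomials lying on a supporting hyperplane of the Newton polytope through the origin, we can replace $f$ by a convenient Laurent polynomial $\tilde f$ with $\Const(f^n) = \Const(\tilde f^n)$ for all $n$, and with the same conifold value (since such truncation does not affect $f|_{(\bR_{>0})^m}$ along its minimum). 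Thus without loss of generality $f$ itself is convenient.

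Next, by the defining equality \eqref{eq:hG_constanttermseries} of a weak Landau-Ginzburg model, the regularized quantum period coincides with the constant term series:
\[
\hG_X(t) \;=\; \sum_{n=0}^{\infty} \Const(f^n)\, t^n.
\]
By Definition~\ref{defn:A-model_conifold_value}, the A-model conifold value $\TAcon$ is the reciprocal of the radius of convergence of the left-hand side. Applying Theorem~\ref{thm:nonnegative_Laurent} to the right-hand side, which is exactly the constant term series of a convenient Laurent polynomial with nonnegative coefficients, we conclude that its radius of convergence equals $T_{\mathrm{con}}^{-1}$. Equating the two reciprocals yields $\TAcon = T_{\mathrm{con}}$, which is the desired identity.

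The main obstacle, if any, is really absorbed into Theorem~\ref{thm:nonnegative_Laurent} itself: the delicate step is knowing that the limit $\lim_{n\to\infty}(\Const(f^{rn}))^{1/(rn)}$ exists and equals $T_{\mathrm{con}}$, rather than merely being bounded above by it via the Laplace/saddle-point heuristic $\Const(f^n) \sim C\, T_{\mathrm{con}}^n/n^{m/2}$ obtained from the Cauchy integral on the real torus $\bx = \bx_{\mathrm{con}}\cdot (S^1)^m$. Granted that theorem, the corollary itself is a one-line unwinding of definitions, with the only substantive input being the reduction to the convenient case.
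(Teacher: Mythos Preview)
Your approach is essentially the same as the paper's: reduce to the convenient case, then invoke Theorem~\ref{thm:nonnegative_Laurent} and the identification $\hG_X(t)=\sum_n \Const(f^n)t^n$. Two small points deserve tightening. First, the reduction step is not a single truncation: one keeps the monomials on the supporting hyperplane through the origin, then \emph{reduces the number of variables}, and may need to iterate; the end result is either a convenient Laurent polynomial in fewer variables or the zero polynomial. The paper handles the latter edge case separately (if $f=0$ then $\hG_X(t)=1$, so $\TAcon=0=T_{\text{con}}$), and you should too. Second, your parenthetical that ``truncation does not affect $f|_{(\bR_{>0})^m}$ along its minimum'' is the right idea but imprecisely stated: when $f$ is not convenient the minimum over $(\bR_{>0})^m$ is typically not attained; what is true (and what you need) is that $\inf_{(\bR_{>0})^m} f=\inf \tilde f$, seen by letting the truncated variables tend to $0$ along the direction dual to the supporting hyperplane.
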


\begin{proof}[Proof of Theorem \ref{thm:nonnegative_Laurent}]
Let $m$ be the number of variables of $f(\bx)$ and we write $f(\bx) = \sum_i c_i \bx^{b_i}$ with $c_i>0$ and $b_i \in \bZ^m$ for all $i$.
It suffices to prove the special case where $r(f)=1$. The general case then follows by applying this special case to the Laurent polynomial $f^r$. Hence, without loss of generality, we may assume that $r(f) = 1$. The convenience of $f$ implies that the vectors $b_i$ span $\bR^m$. We may also assume that the vectors $b_i$ generate $\bZ^m$ by changing the lattice.

We use a random walk interpretation for $\Const(f^n)$ due to Galkin \cite{Galkin:Split}. We consider the random walk on the lattice $\bZ^m$, which starts at $0$ and at each step moves by the vector $b_i$ with probability $p_i := c_i \bx_{\text{con}}^{b_i}/T_{\text{con}}$. Note that we have $\sum_i p_i = 1$ by the definition of $T_{\text{con}}$. Moreover, since $\bx_{\text{con}}$ is a critical point, we have $\sum_i p_i b_i = 0$, i.e.~the random walk has mean zero. Note that the quantity
\[
\frac{\Const(f^n)}{T_{\text{con}}^n} = \sum_{b_{i_1} + \cdots + b_{i_n} = 0} \frac{c_{i_1} \cdots c_{i_n}}{T_{\text{con}}^n}
= \sum_{b_{i_1} + \cdots + b_{i_n} = 0} p_{i_1} \cdots p_{i_n}
\]
is the $n$-step return probability. By the above assumptions, this random walk is \emph{irreducible}, (i.e.~it can reach any lattice point with positive probability) and \emph{aperiodic} (i.e.~the probability of returning to the origin in $n$ steps is positive for all sufficiently large $n$). Therefore, we can apply the Local Central Limit Theorem \cite[Theorem 2.3.5]{Lawler-Limic:random_walk} which holds for mean-zero, irreducible, aperiodic random walks with finite second and third moments. It follows that there exists $c>0$ such that
\[
\left|\frac{\Const(f^n)}{T_{\text{con}}^n} - \frac{1}{(2\pi n)^{m/2} \sqrt{\Hess(f)(\bx_{\text{con}})}} \right| \le \frac{c}{n^{(m+1)/2}}
\]
holds for all $n$, where $\Hess(f)(\bx_{\text{con}})$ is the logarithmic Hessian $\det(\partial_{\log x_i} \partial_{\log x_j} f(\bx_{\text{con}}))$ of $f$ at $\bx_{\text{con}}$, which is denoted by $\det \Gamma$ in \cite[(2.2)]{Lawler-Limic:random_walk}. In particular, as $n\to \infty$,
\[
\frac{\Const(f^n)}{T_{\text{con}}^n} = \frac{c'}{n^{m/2}} \left(1+ O\left(\frac{1}{\sqrt{n}}\right)\right)
\]
for some constant $c'>0$. This implies $\lim_{n\to \infty} (\Const(f^n))^{1/n} = T_{\text{con}}$.

Duistermaat and van der Kallen \cite{DvdK:constantterms} studied the constant term series $\hG(t)$ of a convenient Laurent polynomial $f$ with complex coefficients. They constructed a smooth compactification $M$ of $(\bC^\times)^m$ such that $f$ extends to a map $f\colon M \to \bP^1$ and that $\frac{dx_1\cdots dx_m}{x_1\cdots x_m}$ extends to a holomorphic $m$-form $\omega$ on $M\setminus f^{-1}(\infty)$ in \cite[Theorem 4]{DvdK:constantterms}. They then showed that $\hG(t)$ can be written as a period integral of the holomorphic $(m-1)$-form $\omega/df$ on $f^{-1}(1/t)$
\begin{equation}
\label{eq:period_of_sigma}
\hG(t) = \frac{1}{(2\pi \iu)^{m-1}}\int_{\sigma(1/t) \subset f^{-1}(1/t)} \frac{\omega}{t \cdot df}
\end{equation}
by ``residuating'' the integral \eqref{eq:hG_constanttermseries} along an $(m-1)$-cycle $\sigma(1/t)$. It follows that $\hG(t)$ can be analytically continued along any path avoiding the inverses of critical values of compactified $f$ \cite[Section 4]{DvdK:constantterms}. This is a phenomenon mirror to Proposition \ref{prop:Ghat}.
Furthermore, they showed in \cite[Lemma 8]{DvdK:constantterms} that the singularities of $\hG(t)$ at inverse critical values are milder than simple poles, i.e.~$\lim_{t\to t_c} (t-t_c) \hG(t)=0$ if $1/t_c$ is a critical value of $f$ and $t$ approaches $t_c$ with a fixed angle.

We apply their results to the present case. Since $\hG(t)$ is a power series with positive coefficients, the Vivanti-Pringsheim theorem implies that it cannot be extended to a holomorphic function around $t= 1/T_{\text{con}}$. But the singularity of $\hG(t)$ at $t=1/T_{\text{con}}$ is milder than a simple pole. If $\hG(t)$ has trivial monodromy around $t=1/T_{\text{con}}$, then it extends to a holomorphic function around $t=1/T_{\text{con}}$, contradicting the Vivanti-Prinsgheim theorem. Therefore, $\hG(t)$ must have \emph{nontrivial} monomodromy around $t=1/T_{\text{con}}$.
\end{proof}

\begin{proof}[Proof of Corollary \ref{cor:TAcon_Tcon}]
As discussed above, we may assume that $f$ is either convenient or zero.
If $f$ is convenient, the conclusion follows from Theorem \ref{thm:nonnegative_Laurent}. If $f=0$, then $\hG_X(t) = 1$ and $\TAcon = T_{\text{con}} = 0$.
\end{proof}

\begin{remark}
(1) The proof of the Local Central Limit Theorem in \cite{Lawler-Limic:random_walk} is based on the estimate (Laplace method) of the integral $\Const(f^n) =\frac{1}{(2\pi\iu)^m} \int_{(S^1)^m_{\text{con}}} f(\bx)^n \omega$, where the integration cycle $(S^1)^m_{\text{con}}$ is the $(S^1)^m$-orbit of $\bx_{\text{con}}$. Alternatively, we can directly argue that a sufficiently high derivative $(\frac{d}{dt})^k \hG(t) =k! (2\pi\iu)^{-m} \int_{(S^1)^m_{\text{con}}} f(\bx)^k (1-tf(\bx))^{-(k+1)} \omega$ diverges as $t\nearrow T_{\text{con}}^{-1}$.

(2) The cycle $\sigma(1/t) \subset f^{-1}(1/t)$ appearing in the proof can be chosen to be a torus $(S^1)^{m-1}$. In fact, for small enough $|t|>0$, we can choose it as an approximate fibre of the map $\Log_{|t|} \colon f^{-1}(1/t) \to \bR^m, \bx \mapsto (\log_{|t|} |x_1|,\dots,\log_{|t|} |x_m|)$ whose image converges to the tropical amoeba (the corner locus of $\Trop(1-tf)(\bx) = \min(\{b_i \cdot \bx +1\}_{i} \cup \{0\})$) as $t\to 0$. On the other hand, the vanishing cycle $V_{\text{con}}\subset f^{-1}(1/t)$ associated with $\bx_{\text{con}}$ is given by the positive real locus $(\bR_{>0})^m \cap f^{-1}(1/t)$ for $0<t<T_{\text{con}}^{-1}$. Thus, if $\bx_{\text{con}}$ is the only critical point of $f$ within $f^{-1}(T_{\text{con}})$ (including the compactified locus), the period integral \eqref{eq:period_of_sigma} over $\sigma(1/t)$ has a nontrivial monodromy at $t= T_{\text{con}}^{-1}$ due to Picard-Lefschetz theory, since $\sigma(1/t)$ and $V_{\text{con}}$ intersects at a single point transversely. This again provides an alternative proof.

When $f$ is mirror to a Fano manifold $X$, then $f^{-1}(1/t)$ should be compactified to a Calabi-Yau manifold that is mirror to an anticanonical hypersurface $Y$ of $X$. The cycles $\sigma(1/t)$ and $V_{\text{con}}$ correspond to a fibre or section of a conjectural SYZ fibration on $f^{-1}(1/t)$ respectively. They should be mirror to $\cO_{\pt}$ and $\cO_Y$ respectively. The intersection number $V_{\text{con}} \cdot \sigma(1/t)$ should mirror to $\chi(\cO_Y,\cO_{\pt})=1$. By the Picard-Lefschetz argument, the monodromy around $t=T_{\text{con}}^{-1}$ acts as $\hG(t)\to \hG(t) \pm (2\pi\iu)^{-(m-1)}\varpi(t)$, where $\varpi(t)$ is the period $\int_{V_{\text{con}}} \frac{\omega}{t\cdot df}$ of $V_{\text{con}}$. After analytic continuation near $t=0$, we expect that $\varpi(t)$ has the asymptotics $\sim \int_Y t^{-c_1(X)} \hGamma_Y$ (as $t\to 0$), see \cite[Theorem 4.6]{Iri23}.

(3) It follows by the triangle inequality that the cycle $(S^1)^m_{\text{con}} = (S^1)^m \cdot \bx_{\text{con}}$ maps\footnote{This also shows that $\hG(t)=(2\pi\iu)^{-m} \int_{(S^1)^m_{\text{con}}} (1-tf(\bx))^{-1} \omega$ converges for $|t|<T_{\text{con}}^{-1}$.} to the disc $\{|z|\le T_{\text{con}}\}$ by the map $f$. This implies that a Lefschetz thimble $\Gamma$ associated with a critical value $u$ with $|u|>T_{\text{con}}$ and a vanishing path avoiding the disc $\{|z|\le T_{\text{con}}\}$ (see Section \ref{subsec:Gamma-flat-sections_thimbles}) does not intersect with $(S^1)^m_{\text{con}}$. If $f$ is mirror to a Fano manifold $X$, then $(S^1)^m_{\text{con}}$ should be mirror to the skyscraper sheaf $\cO_{\pt}\in D^b(X)$. Therefore, by mirror symmetry, the exceptional object $E\in D^b(X)$ mirror to such a Lefschetz thimble $\Gamma$ should be orthogonal to $\cO_{\pt}$. In particular, if $\hat{c}_1$ has a simple rightmost eigenvalue $T>T_{\text{con}} = \TAcon$ (as in our counter-examples $X_{2n}$ with $n\ge 2$), the principal asymptotic class $A_X$ should be orthogonal to $[\pt]$ with respect to the Poincar\'e pairing. This is because we expect that $A_X$ should be given by $\hGamma_X \Ch(E)$ for the exceptional object $E$ mirror to the Lefschetz thimble associated with the vanishing path $T+\bR_{\ge 0}$. In fact, the principal asymptotic class $A_{X_{2n}}$ in Theorem \ref{thm:A_for_Twrong} (with $n\ge 2$) satisfies $\langle A_{X_{2n}}, [\pt]\rangle^{X_{2n}} =0$.
\end{remark}

\begin{example}
\label{exa:toric_weak_LG}
The Landau-Ginzburg potential $f$ mirror to a toric Fano variety $X$ introduced in Section \ref{subsec:MS_Fano_toric} is a weak Landau-Ginzburg model, see e.g.~\cite[Section 4.4.4]{Iri1}. As can be seen from the discussion there, the coefficients $a_n$ of $\hG_X(t)=\sum_{n=0}^\infty a_n t^n$ are the nonnegative integers
\[
a_n = \sum_{\substack{\bfd\in H_2(X,\bZ) \\  c_1 \cdot \bfd =n, D_i\cdot \bfd\ge 0 (\forall i)}} \frac{n!}{\prod_{i=1}^m (D_i\cdot \bfd)!}
\]
where $\{D_1,\dots,D_m\}$ is the set of prime toric divisors of $X$. Since $\{\bfd : D_i\cdot \bfd \ge 0 (\forall i)\}$ is a full-dimensional cone in $H_2(X,\bR)$, it follows that $a_{rn}>0$ for sufficiently large $n$, with $r$ being the Fano index of $X$. Hence we have $\TAcon = \lim_{n\to\infty} (a_{rn})^{1/(rn)}$ by Lemma \ref{lem:limit_constantterms} and $\TAcon$ coincides with the conifold value $T_{\text{con}}$ of $f$ by Corollary \ref{cor:TAcon_Tcon}.
\end{example}

\subsection{Proposal for a modified Gamma conjecture I}
\label{subsec:modified_Gamma_conjecture_I}

In this section, we propose a modification of the Gamma conjecture I. There are strong and weak versions.

\begin{conjecture}[Modified Gamma conjecture I: strong form]
\label{conj:modified_Gamma-I_strong}
Let $X$ be a Fano manifold.
\begin{itemize}
\item[$(1)$] There exist numbers $T\in \bC$ and $\epsilon>0$ such that the flat section associated with the Gamma class $\hGamma_X$ multiplied by $e^{T/z}$
\begin{align*}
e^{T/z} S(z)z^{-\mu}z^{c_1}\hGamma_X
\end{align*}
is of moderate growth as $z \to 0$ in the angular sector $|\arg z|<\pi/2 + \epsilon$.
\item[$(2)$] The number $T$ coincides with the A-model conifold value $\TAcon$ in Definition \ref{defn:A-model_conifold_value}.
\end{itemize}
\end{conjecture}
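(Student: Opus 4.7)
The plan is to establish the two parts in sequence: first prove the existence of $T$ in Part~(1) via the structure theory of the quantum connection at its irregular singularity $z=0$, then identify $T$ with $\TAcon$ in Part~(2) through the quantum period $G_X(t)$. The toric case serves as the blueprint: Theorem~\ref{thm-gammaclassandconifoldpoint} establishes Part~(1) with $T=T_{\rm con}$, and Corollary~\ref{cor:TAcon_Tcon} identifies $T_{\rm con}$ with $\TAcon$, so the conjecture holds there.

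For Part~(1), I would invoke the Hukuhara--Turrittin--Levelt structure theorem. The quantum connection has formal exponential factors $\{e^{-u/z}\}_{u\in \Spec(\hat c_1)}$ at $z=0$, so on any Stokes sector every flat section admits a unique decomposition $\sum_{u} e^{-u/z}\,\xi_u(z)$ with each $\xi_u$ of moderate growth. Applying this to $s_\Gamma(z):=S(z)z^{-\mu}z^{c_1}\widehat\Gamma_X$ on a sector $|\arg z|<\pi/2+\epsilon$, define $T$ to be the value $-u$ of maximal real part among those with $\xi_u\not\equiv 0$. By construction $e^{T/z}s_\Gamma(z)$ has moderate growth on that sector. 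The real structure of $\nabla$ together with the symmetry of the sector about $\bR_{>0}$ should force $T\in\bR$, and hence $T$ is a real eigenvalue of $\hat c_1$.

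For Part~(2), I would use the fundamental solution identity $J_X(c_1(X)\log t,1)=z^{\dim X/2}(S(z)z^{-\mu}z^{c_1})^{-1}\cdot 1$ with $t=z^{-1}$, together with the Poincar\'e pairing, to rewrite
\[
G_X(t) = \langle J_X(c_1(X)\log t,1),[\pt]\rangle^X
\]
as a pairing involving flat sections of $\nabla$ and its dual. Definition~\ref{defn:A-model_conifold_value} identifies $\TAcon$ as the exponential growth rate of $G_X(t)$ along $\bR_{>0}$ (as sharpened by Proposition~\ref{prop:Ghat} and the Vivanti--Pringsheim argument). On the other hand, Part~(1) asserts that $s_\Gamma(z)$ has dominant exponential behaviour $e^{-T/z}=e^{-Tt}$. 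Matching these two descriptions yields $T=\TAcon$, provided $[\pt]$ pairs nontrivially with the Gamma-class flat section; the mirror picture in \cite[Section 4]{Iri1}, where $[\pt]$ pairs with $\widehat\Gamma_X$ through an oscillatory integral, motivates this nondegeneracy.

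The main obstacle is spectral-locating $\widehat\Gamma_X$: showing that it projects nontrivially onto the generalized eigenspace of $\hat c_1$ for $\TAcon$ and \emph{trivially} onto any eigenspace with strictly larger real eigenvalue. The counter-examples $X_{2n}$ with $n\geq 2$ show that $T$ can be strictly less than the spectral radius $\rho$ or the rightmost eigenvalue $\rho'$: there, $\widehat\Gamma_X$ lies in the eigenspace for $T_{\rm con}$ rather than the one for $|g(a_-)|$. Thus the conjecture encodes a precise assertion about where the Gamma class sits in the spectral decomposition of $\hat c_1$. For toric Fano manifolds this follows from the oscillatory-integral proof of Theorem~\ref{thm-gammaclassandconifoldpoint} together with Corollary~\ref{cor:TAcon_Tcon}; for a general Fano manifold lacking a Laurent polynomial mirror, additional input seems necessary --- for instance a $\bQ\Gamma$-degeneration to a toric variety (reducing to the toric case, as hinted in the paper for partial flag manifolds) or a Fukaya-theoretic characterization of the conifold branch via monotone Lagrangian tori with Maslov index two.
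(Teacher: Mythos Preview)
The statement is a \emph{conjecture}, not a theorem, and the paper does not claim a proof in general; Remark~\ref{rmk: strong_form}(1) verifies it only for toric Fano manifolds via Theorem~\ref{thm-gammaclassandconifoldpoint} together with Example~\ref{exa:toric_weak_LG}. Your proposal reads as though Part~(1) follows from the Hukuhara--Turrittin decomposition alone, and this is exactly where it fails.

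The step ``by construction $e^{T/z}s_\Gamma(z)$ has moderate growth on that sector'' is incorrect on a sector of opening angle \emph{greater than} $\pi$. Writing $s_\Gamma(z)=\Phi_I^{-1}\bigl(\bigoplus_u e^{-u/z}\psi_u(z)\bigr)$ as in the proof of Proposition~\ref{prop:TAcon_eigenvalue}, the product $e^{T/z}s_\Gamma(z)$ is of moderate growth on the whole sector $|\arg z|<\pi/2+\epsilon$ only if each $e^{(T-u)/z}\psi_u(z)$ is; but for any $u\neq T$ one can find $\theta$ with $|\theta|<\pi/2+\epsilon$ and $\Re\bigl((T-u)e^{-\iu\theta}\bigr)>0$, forcing $\psi_u=0$. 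Thus Part~(1) is \emph{equivalent} to the assertion that $s_\Gamma$ lies in a single exponential component $e^{T/z}\otimes R_T$ of the sectorial decomposition --- the Hukuhara--Turrittin theorem only provides the decomposition, it cannot select such a $T$, and choosing the extremal $u$ among those with $\psi_u\neq 0$ does not help when two or more components are present. Your closing paragraph locates roughly the right obstacle but understates it: the requirement is not merely that $\widehat\Gamma_X$ project trivially onto eigenspaces with strictly larger real part (that would only control the ray $z\to+0$), but that it belong to one component outright. This is precisely what the oscillatory integral over $(\bR_{>0})^N$ achieves in the toric case, and why the general statement remains conjectural.
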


\begin{remark} \label{rmk: strong_form}
(1) All toric Fano manifolds satisfy Conjecture \ref{conj:modified_Gamma-I_strong}. This follows from Theorem \ref{thm-gammaclassandconifoldpoint} and the fact that the A-model conifold values $\TAcon$ of toric Fano manifolds coincide with the conifold values $T_{\text{con}}$ of their mirrors, see Example \ref{exa:toric_weak_LG}.

(2) If $X$ satisfies Gamma conjecture I (assuming condition \eqref{eq:condition_star}) and if $\TAcon$ is a simple rightmost eigenvalue of $\hat{c}_1$, then $X$ satisfies Conjecture \ref{conj:modified_Gamma-I_strong}.

(3) Note that it is a strong condition to require asymptocic behaviour on a sector of angle greater than $\pi$ centred at $\arg z =0$. As we shall see in the proof of Proposition \ref{prop:TAcon_eigenvalue}, this asymptotic condition on such a wide sector forces the flat section $S(z) z^{-\mu} z^{c_1} \hGamma_X$ to belong to a single exponential component of the sectorial decomposition $\Phi_I$ in \eqref{eq:analytic_lift}. Furthermore, the centering of the sector at $\arg z=0$ implies that this flat section does not undergo mutation at this angle, despite the potential existence of a real eigenvalue $u$ of $\hat{c}_1$ with $u > \TAcon$.

(4) Assuming the Sanda-Shamoto style conjecture \cite[Conjecture 2.13]{Iritani:Gamma_quantum} and adopting the notation there, the modified Gamma conjecture I (strong form) asserts that the subspaces $V_{\TAcon}^{\pm \epsilon}\subset K^0(X)$ contain the class $[\cO_X]$ for all sufficiently small $\epsilon>0$.
\end{remark}

Recently, Pomerleano-Seidel \cite{Pomerleano-Seidel:qconn} and Chen \cite{ZihongChen:exp} proved that the quantum connection of a Fano manifold is always of (unramified) exponential type. Using this significant result, we can establish the following two propositions.

\begin{prop}
\label{prop:TAcon_eigenvalue}
If Conjecture \ref{conj:modified_Gamma-I_strong}(1) holds, then the number $T$ therein is an eigenvalue of $\hat{c}_1$. Therefore, Conjectures \ref{conj:modified_Gamma-I_strong} (1) and (2) together imply that $\TAcon$ is an eigenvalue of $\hat{c}_1$.
\end{prop}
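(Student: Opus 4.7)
The plan is to exploit the recently established exponential-type property of the quantum connection at $z=0$ together with the standard Hukuhara--Turrittin asymptotic theory. The key point is that having moderate growth after multiplication by $e^{T/z}$ on a sector of angular width strictly greater than $\pi$ forces a flat section to have pure exponential behaviour $e^{-T/z}$, and such purity is possible only when $T$ lies in the exponent set $\Spec(\hat c_1)$.

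Invoking the Pomerleano--Seidel and Chen theorem cited just before the proposition, the quantum connection is of unramified exponential type at $z=0$ with exponents equal to the eigenvalues of $\hat c_1$. By Hukuhara--Turrittin, on every sufficiently narrow sub-sector $I$ there is an analytic basis $\{s_{u,j}^I\}$ of flat sections, indexed by $u\in\Spec(\hat c_1)$ and a multiplicity label $j$, with $s_{u,j}^I(z)\sim e^{-u/z}p_{u,j}^I(z)$, where $p_{u,j}^I$ has moderate growth on $I$ (it arises as a flat section of a regular-singular connection after twisting by $e^{-u/z}$). I will argue by contradiction: suppose $T\notin\Spec(\hat c_1)$, and set $s(z):=S(z)z^{-\mu}z^{c_1}\hGamma_X$, which is nonzero because $\hGamma_X$ begins with $1\in H^0(X)$ and the fundamental solution is invertible.

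Cover the sector $J:=\{|\arg z|<\pi/2+\epsilon\}$ by finitely many narrow sub-sectors $I_1,\ldots,I_N$ containing no Stokes lines and on each $I_k$ expand
\[
s(z)=\sum_{u\in\Spec(\hat c_1),\,j} c_{u,j}^{I_k}\,s_{u,j}^{I_k}(z).
\]
Multiplying by $e^{T/z}$ term by term yields a sum of expressions $c_{u,j}^{I_k}e^{(T-u)/z}p_{u,j}^{I_k}(z)$. Since the exponentials $e^{(T-u)/z}$ with distinct $u$ are asymptotically independent on a sub-sector free of Stokes lines, moderate growth of the total sum forces moderate growth of each individual summand; because $p_{u,j}^{I_k}$ is bounded below by some polynomial in $|z|$ as a non-vanishing flat section of a regular-singular connection, this is possible only when $u=T$ or when the whole sub-sector $I_k$ lies inside the closed half-plane $\{z:\Re((T-u)/z)\le 0\}$, equivalently in a closed arc of directions of length exactly $\pi$.

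The key numerical input is $|J|=\pi+2\epsilon>\pi$: for any $u\neq T$, no closed arc of length $\pi$ contains $J$, so some sub-sector $I_k$ escapes the corresponding half-plane and thereby forces $c_{u,j}^{I_k}=0$ for all $j$. Letting $u$ range over $\Spec(\hat c_1)$ and combining these vanishings across sub-sectors, $s$ has trivial Hukuhara--Turrittin expansion on each $I_k$, hence $s\equiv 0$, contradicting $s\neq 0$. The main technical obstacle is the careful handling of the Stokes phenomenon, since the coefficients $c_{u,j}^{I_k}$ transform nontrivially across Stokes lines; the conceptually cleanest packaging is via the Stokes filtration formalism of Malgrange and Sabbah, which identifies the subspace of flat sections with $e^{T/z}s$ moderate on $J$ as an intersection of local filtration subspaces $V_{\le T}(\theta)$ whose dimension equals the multiplicity of $T$ in $\Spec(\hat c_1)$, which is zero by assumption. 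The second assertion of the proposition is then immediate by combining part $(1)$ with the equality $T=\TAcon$ from part $(2)$.
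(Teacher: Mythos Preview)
Your overall strategy---exponential type plus Hukuhara--Turrittin plus the width-greater-than-$\pi$ observation---is exactly the paper's approach, and your final Stokes-filtration paragraph is a correct proof. However, the main argument as written has a gap at the ``combining these vanishings across sub-sectors'' step. You establish that for each $u\neq T$ there exists \emph{some} sub-sector $I_{k(u)}$ on which $c_{u,j}^{I_{k(u)}}=0$; but since different $u$'s may single out different sub-sectors and the coefficients $c_{u,j}^{I_k}$ genuinely jump across Stokes lines, this does not yield $c_{u,j}^{I_k}=0$ for all pairs $(u,k)$, so the conclusion ``$s$ has trivial expansion on each $I_k$'' does not follow from what you have written. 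You flag this yourself and gesture at the Stokes-filtration fix, which is indeed valid, but the body of the argument should not pretend to stand on its own.

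The paper sidesteps the bookkeeping entirely by invoking the stronger form of Hukuhara--Turrittin (as in Hertling--Sevenheck, Lemma~8.3): the analytic lift $\Phi_I$ of the formal decomposition exists on a \emph{single} sector $I$ of angle greater than $\pi$ (one can take $I=\{-\pi/2<\arg z<\pi/2+\epsilon\}$). Writing $s(z)=\Phi_I^{-1}(\bigoplus_u e^{-u/z}\psi_u(z))$ once and for all on $I$, moderate growth of $e^{T/z}s(z)$ on all of $I$ forces each $e^{(T-u)/z}\psi_u(z)$ to have moderate growth on $I$; and since for any $u\neq T$ some direction $\theta\in I$ satisfies $\Re((T-u)e^{-\iu\theta})>0$, the regular-singular lower bound on $\psi_u$ forces $\psi_u=0$. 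No Stokes transitions enter because the decomposition is global on $I$. This is both shorter and avoids the gap; you should replace the narrow-sub-sector covering by this single wide-sector lift.
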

\begin{proof}
The quantum connection $\nabla$ is said to be of \emph{exponential type}\footnote{Hertling and Sevenheck \cite{Hertling-Sevenheck:nilpotent} called it ``require no ramification''.} \cite{Malgrange:DE_book, Hertling-Sevenheck:nilpotent, KKP:Hodge} if it admits a formal decomposition
\begin{equation}
\label{eq:formal_decomposition}
(H^\bullet(X) \otimes \bC[\![z]\!], \nabla) \cong \bigoplus_{u\in \Spec(\hat{c}_1)} (e^{u/z} \otimes R_u)\otimes_{\bC\{z\}} \bC[\![z]\!]
\end{equation}
where $R_u$ is a finite free $\bC\{z\}$-module equipped with a regular singular connection (with poles of order at most two at $z=0$) and $e^{u/z}$ denotes the rank one connection $(\bC\{z\}, d+ d(u/z) = d - u z^{-2} dz)$, see \cite[Lemma 8.2]{Hertling-Sevenheck:nilpotent}.
Let $\cA$ be the sheaf of $\bC\{z\}$-modules over $S^1$ defined as follows \cite[Section 8]{Hertling-Sevenheck:nilpotent}. For an open interval $I\subset S^1$, we define $\cA(I)$ to be the set of holomorphic functions $f(z)$ on $\{0<|z|<\delta, z/|z| \in I\}$ (for some $\delta>0$ depending on $f$) admitting an asymptotic expansion
\[
f(z) \sim a_0 + a_1 z + a_2 z^2 + \cdots
\]
as $z\to 0$ in the sector $I$. Let $\cA_I$ denote the restriction of $\cA$ to $I$.
By the Hukuhara-Turrittin theorem (see \cite[Lemma 8.3, Lemma 8.4]{Hertling-Sevenheck:nilpotent} in this context), the formal decomposition \eqref{eq:formal_decomposition} uniquely lifts to an analytic decomposition over an open sector $I\subset S^1$ of angle greater than $\pi$.
\begin{equation}
\label{eq:analytic_lift}
\Phi_I \colon
(H^\bullet(X) \otimes \cA_I, \nabla) \cong \bigoplus_{u\in \Spec(\hat{c}_1)} (e^{u/z} \otimes R_u) \otimes_{\bC\{z\}} \cA_I.
\end{equation}
By \cite[Lemma 8.3]{Hertling-Sevenheck:nilpotent}, we can choose $I$ to be an angular sector of the form $|\arg z|<\frac{\pi}{2} + \epsilon$ for some $\epsilon>0$ if $u_1-u_2\notin \bR_{>0}$ for all $u_1, u_2 \in \Spec(\hat{c}_1)$; otherwise we can choose $I$ to be an angular sector of the form $-\frac{\pi}{2} < \arg z < \frac{\pi}{2} + \epsilon$ for some $\epsilon>0$. In either case, we can choose $I=\{-\frac{\pi}{2}<\arg z < \frac{\pi}{2}+\epsilon\}$ for some $\epsilon>0$.
Using the analytic decomposition, any flat section $s(z)$ of $\nabla$ can be written as
\[
s(z) = \Phi_I^{-1} \left( \bigoplus_u e^{-u/z} \psi_u(z) \right)
\]
over a region of the form $\{0<|z|<\delta, z/|z| \in I\}$, where $\psi_u(z) \in R_u\otimes \cA_I$ is a flat section of $R_u$ over the sector $I$. Let $s(z)$ be the flat section $S(z) z^{-\mu} z^{c_1} \hGamma_X$ and write it in the above form. Conjecture \ref{conj:modified_Gamma-I_strong}(1) then implies that
\[
e^{(T-u)/z} \psi_u(z)
\]
is of moderate growth as $z\to 0$ in the sector $-\frac{\pi}{2} <\arg z < \frac{\pi}{2} + \epsilon$ by choosing a smaller $\epsilon$ if necessary. Suppose that $\psi_u(z) \neq 0$ for some $u\neq T$. Then we can find an angle $\theta$ with $-\frac{\pi}{2} <\theta < \frac{\pi}{2} + \epsilon$ such that $\Re((T - u)/e^{\iu\theta})>0$. Since $R_u$ is regular singular, its flat section $\psi_u(z)$ satisfies an estimate of the form $\|\psi_u(z) \| \ge C |z|^N$ for some $C>0$, $N\geq 0$ and sufficiently small $|z|$, with respect to a local holomorphic frame of $R_u$ near $z=0$. This contradicts the moderate growth of $\|e^{(T-u)/z} \psi_u(z)\|$ along the ray $\bR_{>0} e^{\iu\theta}$. Therefore, we conclude that $\psi_u(z) = 0$ for all $u\neq T$. Hence we must have $T\in \Spec(\hat{c}_1)$ and
\begin{equation}
\label{eq:flat_section_T-component}
S(z) z^{-\mu} z^{c_1} \hGamma_X = s(z) = \Phi_I^{-1} \left(e^{-T/z} \psi_T(z)\right).
\end{equation}
The last equation will be used in the proof of the next proposition.
\end{proof}

\begin{prop}
\label{prop:modified_Gamma_I-asymptotic_expansion}
If Conjecture \ref{conj:modified_Gamma-I_strong}(1) holds and if the number $T$ there is a simple eigenvalue of $\hat{c}_1$, then we have the asymptotic expansion
\[
e^{T/z} S(z)z^{-\mu}z^{c_1}\hGamma_X \sim \alpha_0 + \alpha_1 z + \alpha_2 z^2 + \cdots
\]
as $z\to 0$ in an angular sector of the form $|\arg z|<\frac{\pi}{2}+\epsilon$ for some $\epsilon>0$, where $\alpha_0,\alpha_1,\dots $ lie in $H^\bullet(X)$ and $\alpha_0$ is a non-zero eigenvector of $\hat{c}_1$ with eigenvalue $T$.
\end{prop}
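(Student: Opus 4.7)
My plan is to build on the setup in the proof of Proposition \ref{prop:TAcon_eigenvalue}: from equation \eqref{eq:flat_section_T-component} we already have
\[
s(z) := S(z) z^{-\mu} z^{c_1} \hGamma_X = \Phi_I^{-1}\bigl(e^{-T/z}\psi_T(z)\bigr)
\]
on a sector $I$ of the form $\{-\pi/2 < \arg z < \pi/2 + \epsilon\}$, where $\psi_T(z)$ is a non-zero flat section of the regular singular summand $R_T$, which now has rank one because $T$ is a simple eigenvalue. Since $R_T$ is rank one, its flat sections form a one-dimensional local system determined, up to scalar, by a single residue parameter $\mu_T \in \bC$: they take the form $z^{-\mu_T} h(z)$ with $h(z)$ a holomorphic germ satisfying $h(0) \neq 0$.

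The key computation is to identify $\mu_T$ and show $\mu_T = 0$. Choose a $T$-eigenvector $v_T \in H^\bullet(X)$ and lift it to a formal frame $\tilde{v}_T = v_T + z v^{(1)} + z^2 v^{(2)} + \cdots$ of $R_T$ over $\bC[\![z]\!]$. Matching the constant term in the identity $\nabla^{(T)}_{z\partial_z} \tilde{v}_T \equiv \mu_T \tilde{v}_T \pmod{z}$, where $\nabla^{(T)}_{z\partial_z} = z\partial_z - (\hat{c}_1 - T)/z + \mu$ is the connection obtained after extracting $e^{-T/z}$, yields
\[
\mu v_T - (\hat{c}_1 - T) v^{(1)} = \mu_T v_T,
\]
so $\mu_T$ is precisely the scalar such that $\mu v_T - \mu_T v_T$ lies in the image of $\hat{c}_1 - T$.

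Next I would show $\mu_T = 0$ using the Frobenius structure. The operator $\hat{c}_1$ is self-adjoint with respect to the Poincar\'e pairing $\langle\cdot,\cdot\rangle^X$, so its generalized eigenspaces for distinct eigenvalues are pairwise orthogonal; together with non-degeneracy of the pairing, this forces $\langle v_T, v_T\rangle^X \neq 0$ and identifies the algebraic splitting $H^\bullet(X) = \bC v_T \oplus (\hat{c}_1-T) H^\bullet(X)$ with the pairing-orthogonal decomposition. Hence
\[
\mu_T = \frac{\langle v_T, \mu v_T\rangle^X}{\langle v_T, v_T\rangle^X}.
\]
On the other hand, $\mu$ acts as $p - \dim X/2$ on $H^{2p}(X)$ while $\langle\cdot,\cdot\rangle^X$ pairs $H^{2p}$ with $H^{2\dim X - 2p}$; combining the symmetry of the pairing with these degree considerations, one checks that $\mu$ is anti-symmetric: $\langle \mu\alpha, \beta\rangle^X = -\langle\alpha, \mu\beta\rangle^X$. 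In particular $\langle v_T, \mu v_T\rangle^X = -\langle v_T, \mu v_T\rangle^X = 0$, and hence $\mu_T = 0$.

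With $\mu_T = 0$, the flat section $\psi_T(z)$ is holomorphic at $z = 0$, say $\psi_T(z) = h(z) \tilde{v}_T(z)$ with $h(0) \neq 0$. Applying the analytic Hukuhara-Turrittin lift $\Phi_I^{-1}\colon R_T \otimes \cA_I \to H^\bullet(X) \otimes \cA_I$, the asymptotic expansion of $\tilde{v}_T$ transports to one for $e^{T/z} s(z)$:
\[
e^{T/z} s(z) = \Phi_I^{-1}(\psi_T(z)) \sim \alpha_0 + \alpha_1 z + \alpha_2 z^2 + \cdots
\]
as $z \to 0$ in $|\arg z| < \pi/2 + \epsilon$, with $\alpha_0 = h(0) v_T \neq 0$ a $T$-eigenvector of $\hat{c}_1$. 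The main obstacle, I expect, is the residue computation $\mu_T = 0$: it requires first recognizing the residue on $R_T$ as a purely algebraic projection coefficient and then combining the self-adjointness of $\hat{c}_1$ with the anti-symmetry of $\mu$ to make $\langle v_T, \mu v_T\rangle^X$ vanish. Once this is established, the conclusion follows routinely from the Hukuhara-Turrittin structure theorem.
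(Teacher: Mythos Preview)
Your proposal is correct and follows essentially the same route as the paper: both use the rank-one nature of $R_T$ to reduce the flat section $\psi_T$ to the form $Cz^c$, then show the exponent $c$ (your $\mu_T$) vanishes via the self-adjointness of $\hat{c}_1$ (giving $\langle v_T,v_T\rangle^X\neq 0$) and the anti-symmetry of $\mu$ (giving $\langle v_T,\mu v_T\rangle^X=0$). The paper derives the relation $(\mu+c)\alpha_0\in\Im(\hat{c}_1-T)$ by plugging the asymptotic expansion back into the flatness equation, whereas you read it off from the formal frame of $R_T$; these are the same computation.

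One small point you glossed over: the sector $I$ from the proof of Proposition~\ref{prop:TAcon_eigenvalue} is one-sided, of the form $\{-\frac{\pi}{2}<\arg z<\frac{\pi}{2}+\epsilon\}$, not the symmetric sector $|\arg z|<\frac{\pi}{2}+\epsilon$ you claim at the end. The paper closes this gap by observing that one can equally choose $I=\{-\frac{\pi}{2}-\epsilon<\arg z<\frac{\pi}{2}\}$ in the Hukuhara--Turrittin lift, so the expansion holds on both sectors and hence on their union.
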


\begin{proof}
We follow the notation in the proof of Proposition \ref{prop:TAcon_eigenvalue}. Comparing the order-two pole parts of the connection in the formal decomposition \eqref{eq:formal_decomposition}, we find that $\rank R_T =1$. Here we used the fact that the order-two pole part of the regular singular connection on $R_u$ is nilpotent. The following argument is similar to the proof of \cite[Proposition 3.2.1(3)]{GGI}. The rank-one regular singular connection $R_T$ can be presented in the form $(\bC\{z\}, d-c \frac{dz}{z})$ for some $c\in\bC$. Under this presentation, the flat section $\psi_T(z)$ of $R_T$ appearing in \eqref{eq:flat_section_T-component} is of the form $C z^c$ for some $C\neq 0$. Therefore the flat section $s(z) = S(z) z^{-\mu} z^{c_1} \hGamma_X$ admits an asymptotic expansion of the form
\begin{equation}
\label{eq:expansion_s}
s(z) \sim e^{-T/z} z^c (\alpha_0 + \alpha_1 z + \alpha_2 z^2 + \cdots)
\end{equation}
along the angular sector $I$, where $\alpha_0,\alpha_1,\dots \in H^\bullet(X)$ and $\alpha_0 \neq 0$. Arguing as in the proof of Theorem \ref{thm-gammaclassandeigenvector}, we find that the flatness $\nabla s(z) =0$ implies
\[
(T- \hat{c}_1) \alpha_0 =0, \quad (\mu+c) \alpha_0 = (\hat{c}_1 - T) \alpha_1.
\]
Thus $\alpha_0$ is a non-zero eigenvector of $\hat{c}_1$ of eigenvalue $T$. From the second equation, we obtain
\[
c \langle\alpha_0,\alpha_0\rangle^X = \langle \alpha_0, (\mu+c) \alpha_0 \rangle^X = \langle \alpha_0, (\hat{c}_1- T)\alpha_1\rangle^X = \langle (\hat{c}_1-T) \alpha_0, \alpha_1\rangle^X = 0
\]
where we use the skew-symmetry of $\mu$ with respect to $\langle \cdot,\cdot \rangle^X$.
On the other hand, we claim that $\langle \alpha_0, \alpha_0 \rangle^X \neq 0$. Since $T$ is a simple eigenvalue, we have $H^\bullet(X)= \bC \alpha_0 \oplus \Im(T-\hat{c}_1)$. Since $\alpha_0$ is orthogonal to $\Im(T-\hat{c}_1)$ for the Poincar\'e pairing, we must have $\langle \alpha_0, \alpha_0\rangle^X \neq 0$. It now follows that $c=0$.

In the proof of Proposition \ref{prop:TAcon_eigenvalue}, it is also possible to choose $I$ to be a sector of the form $-\frac{\pi}{2} - \epsilon < \arg z < \frac{\pi}{2}$ for some $\epsilon>0$. Thus the asymptotic expansion \eqref{eq:expansion_s} with $c=0$ holds on a sector of the form $|\arg z|<\frac{\pi}{2} + \epsilon$. The conclusion follows.
\end{proof}

\begin{remark}
Note that the conclusion of Proposition \ref{prop:modified_Gamma_I-asymptotic_expansion} holds for toric Fano manifolds (see Theorems \ref{thm-gammaclassandconifoldpoint}, \ref{thm-gammaclassandeigenvector}) despite the potential non-simplicity of the eigenvalue $\TAcon$.
\end{remark}

\begin{defn}[Property $\mathcal{O}_A$]\label{defnPOA}
We say that a Fano manifold $X$ satisfies \emph{Property $\cO_A$} if $\TAcon$ is a simple rightmost eigenvalue of $\hat{c}_1$.
\end{defn}

If $X$ satisfies Property $\cO_A$, then it satisfies condition \eqref{eq:condition_star}. We also note that
\begin{prop}
\label{prop:sufficient_condition_PropertyOA}
Assume that condition \eqref{eq:condition_star} is satisfied and $\langle A_X, [\pt]\rangle^X \neq 0$. Then Property $\cO_A$ holds under either of the following conditions.
\begin{itemize}
\item[(1)] Part (1) of Property $\cO$ (Definition \ref{defnrho}) holds.
\item[(2)] We have $a_n\ge 0$ for all but finitely many $n$, where $a_n$ is as in \eqref{eq:Ghat}.
\end{itemize}
\end{prop}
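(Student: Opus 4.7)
The plan is to show, in both cases, that $\TAcon$ equals the simple rightmost eigenvalue $\rho'$ provided by condition \eqref{eq:condition_star}; this immediately yields Property $\cO_A$. In both cases I would start from Proposition \ref{propJexpansion} and pair with $[\pt]$ to obtain the common asymptotics
\begin{equation*}
G_X(t) \,\sim\, C\,\langle A_X,[\pt]\rangle^X\, t^{-\dim X/2}\, e^{\rho' t}\bigl(1 + O(t^{-1})\bigr),\qquad t\to +\infty,
\end{equation*}
whose leading coefficient is nonzero by hypothesis.

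For case (1), the first step is to reconcile $\rho$ with $\rho'$. Since $\rho$ is itself an eigenvalue of $\hat c_1$ (Property $\cO$ (1)), the rightmost property of $\rho'$ gives $\Re(\rho')\ge \Re(\rho)=\rho$, while the spectral radius bound yields $\Re(\rho')\le |\rho'|\le \rho$. Together these force $\Re(\rho')=\rho$, and simplicity/uniqueness of the rightmost eigenvalue then forces $\rho'=\rho$. Once this identification is in place, the equality $\TAcon=\rho$ is exactly the content of \cite[Proposition~3.7.6]{GGI} already recalled in Remark \ref{rmk:GammaIcomp}, so Property $\cO_A$ follows.

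For case (2), the strategy is to read off $\TAcon$ from the exponential growth of $G_X(t)$ on the positive real axis via the formula $\TAcon = \limsup_{t\to+\infty}\log G_X(t)/t$ established in Section \ref{subsec:TAcon}. Writing $G_X(t)=P(t)+Q(t)$ where $P$ collects the finitely many terms with possibly negative $a_n$ and $Q$ is the power series of the eventually nonnegative tail (hence nonnegative and nondecreasing on $\bR_{>0}$), one sees that $G_X(t)\to +\infty$ unless $Q\equiv 0$, which would reduce $G_X$ to a polynomial and force $X$ to be a point (a trivial case in which Property $\cO_A$ is automatic). Excluding that case, the asymptotics above force $\rho'>0$: indeed $\rho'<0$ would give $G_X(t)\to 0$, and $\rho'=0$ with $\dim X\ge 1$ would give $G_X(t)\sim C't^{-\dim X/2}\to 0$, both contradicting $G_X(t)\to +\infty$. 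Thus $G_X(t)>0$ for large $t$ and $\log G_X(t)/t\to \rho'>0$; in particular $\TAcon\ne 0$, so the hypothesis of the growth-order formula is satisfied, and it yields $\TAcon=\rho'$.

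The delicate step, and the one requiring care, is the verification that $\TAcon\ne 0$ in case (2) before invoking the growth-order formula: that formula is established in Section \ref{subsec:TAcon} only under the nondegeneracy assumption $\TAcon\ne 0$, and ruling out $\TAcon=0$ is precisely where the eventual nonnegativity of $(a_n)$ is used jointly with the detailed leading-order asymptotics of Proposition \ref{propJexpansion}. Everything else is bookkeeping.
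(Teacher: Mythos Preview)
Your argument is correct, and for case~(1) it coincides with the paper's: both invoke \cite[Proposition~3.7.6]{GGI}, with your version making the identification $\rho=\rho'$ explicit (which follows already from $\Re(\rho')=\rho$ and $|\rho'|\le\rho$, without appealing to simplicity).

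For case~(2) you take a genuinely different route. The paper does not use the growth-order proposition from Section~\ref{subsec:TAcon}; instead it invokes the Laplace-transform identity \cite[(3.7.5)]{GGI}
\[
\int_0^\infty t^{\dim X/2} G_X(t)\,e^{-t/\kappa}\,dt \;=\; \kappa^{\dim X/2}\sum_{n\ge 0}\Gamma\!\bigl(1+n+\tfrac{\dim X}{2}\bigr)\frac{a_n}{n!}\,\kappa^n,
\]
notes that the left side converges precisely for $\Re(1/\kappa)>\rho'$ and diverges as $\kappa\nearrow 1/\rho'$, and then applies Vivanti--Pringsheim to the eventually nonnegative right side (which has the same convergence radius as $\hG_X$) to conclude $\TAcon=\rho'$. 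Your approach is more economical given what the paper has already established: you simply feed the asymptotics of $G_X$ back into the growth-order formula. The paper's route, on the other hand, is self-contained and does not rely on that earlier proposition.

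One point to tighten: the inference ``in particular $\TAcon\ne 0$'' from $\log G_X(t)/t\to\rho'>0$ is not a tautology, and your closing paragraph flags it without actually closing it. The missing ingredient is the remark immediately following the growth-order proposition in Section~\ref{subsec:TAcon}, which shows (without any sign hypothesis on the $a_n$) that $\TAcon=0$ forces $\lim_{t\to+\infty}\log G_X(t)/t=0$; this contradicts $\rho'>0$. Equivalently, argue directly: $\TAcon=0$ means $|a_n|\le\epsilon^n$ eventually for every $\epsilon>0$, so $|G_X(t)|\le(\text{polynomial})+e^{\epsilon t}$ and $\limsup_{t}\log|G_X(t)|/t\le 0$. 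Note that the eventual nonnegativity of $(a_n)$ is used earlier, to force $\rho'>0$, rather than at this final step.
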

\begin{proof}
Suppose that (1) holds. Proposition 3.7.6 in \cite{GGI} says that $\TAcon$ equals the spectral radius $\rho$ of $\hat{c}_1$ under these assumptions (Property $\cO$ was assumed there, but only Part (1) was used in the argument). Property $\cO$(1) implies that $\TAcon$ is a rightmost simple eigenvalue.

Next, suppose that (2) holds. Let $\rho'\in \bR$ be the simple rightmost eigenvalue of $\hat{c}_1$. As discussed in the proof of \cite[Lemma 3.7.6]{GGI}, $\langle A_X, [\pt]\rangle^X \neq 0$ implies that the limit
\begin{equation}
\label{eq:exponential_growth_GX}
\lim_{t\to +\infty} e^{-\rho't} t^{\frac{\dim X}{2}} G_X(t)
\end{equation}
exists and is nonzero. (Note that while Property $\cO$ was assumed in the proof, only condition \eqref{eq:condition_star} was necessary in the argument). Since $a_0=1$ and $a_n \ge 0$ for all but finitely many $n$, we have $\lim_{t\to \infty} |t^{\frac{\dim X}{2}} G_X(t)|=\infty$ (unless $X$ is a point, in which case Property $\cO_A$ holds). This, together with the existence of the limit \eqref{eq:exponential_growth_GX}, forces that $\rho'>0$ (unless $X$ is a point).
Recall the following identity from \cite[(3.7.5)]{GGI}:
\[
\int_0^\infty t^{\frac{\dim X}{2}} G_X(t) e^{-t/\kappa} dt = \kappa^{\frac{\dim X}{2}} \sum_{n=0}^\infty \Gamma(1+n+\tfrac{\dim X}{2}) \frac{a_n}{n!} \kappa^n
\]
which holds for sufficiently small $\kappa>0$. Note that the right-hand side has the same convergence radius as $\hG_X(t)$.
The left-hand side is convergent if $\Re(1/\kappa)>\rho'$ and diverges if $\kappa$ approaches $1/\rho'$ from the left. Therefore, since the coefficients of the power series in the right-hand side is nonnegative (except for finitely many terms), Vivanti-Pringsheim theorem implies that the convergence radius of the right-hand side is exactly $1/\rho'$. Consequently, the convergence radius of $\hG_X(t)$ is also $1/\rho'$, and thus $\TAcon=\rho'$.
\end{proof}

We formulate a weak form of the modified Gamma conjecture I. 

\begin{conjecture}[Modified Gamma conjecture I: weak form]
\label{conj:modified_Gamma-I_weak}
Suppose that a Fano manifold $X$ satisfies Property $\cO_A$. Then $X$ satisfies Gamma conjecture I, i.e~the principal asymptotic class $A_X$ is given by the Gamma class $\hGamma_X$.
\end{conjecture}

By the definition (Proposition/Definition \ref{prop-defn:AX}) of the principal asymptotic class, the strong form of the modified Gamma conjecture I (Conjecture \ref{conj:modified_Gamma-I_strong}) immediately implies the weak form (Conjecture \ref{conj:modified_Gamma-I_weak}). In particular, toric Fano manifolds also satisfy the weak form of the conjecture.

\begin{remark}
    We anticipate that $\TAcon$ is an eigenvalue of $\hat{c}_1$ and coincides with the conifold value $T_{\text{con}}$ determined by the mirror Landau-Ginzburg potential $f$, as discussed in Sections \ref{subsec:TAcon}-\ref{subsec:TAcon_weakLG}. In particular, this coincidence does holds for all toric Fano manifolds.
By numerical computations for mirrors of low-dimensional toric Fano manifolds, we have obtained \footnote{Table \ref{tabPO} was obtained by using Mathematica with all versions 13.0, 10.0 and 8.0 involved. Since the numerical computation may contain errors due to  the command ``NSolve", this table should be used for reference only.} Table \ref{tabPO}.

\begin{center}
\begin{table}[h]
    \caption{Eigenvalue properties of $\hat c_1$ for low-dimensional toric Fano manifolds}\label{tabPO}
    \begin{tabular}{ccccc}
  \toprule
  \multirow{2}{*}{Dimension} & Number of toric &  Property $\cO$(1) & Condition \eqref{eq:condition_star} & Property $\cO_A$  \\
  & Fano manifolds & fails & fails & fails \\
  \hline
  1 & 1 & 0 & 0 & 0 \\

  2 & 5 & 0 & 0 & 0 \\

  3 & 18  & 0 & 0 & 0 \\

   4 & 124  &  8 & 0 & 4 \\

  5 & 866 & 111 & 9 & 44 \\

  6 & 7622 & 1409 & 245 & 626 \\
  \bottomrule
\end{tabular}
\end{table}
\end{center}
As we can see from Table \ref{tabPO}, most of the toric Fano manifolds $X_\Sigma$ of dimension $\le 6$ satisfy Property $\mathcal{O} (1)$ and Property $\mathcal{O}_A$. Almost all of them statisfy condition \eqref{eq:condition_star}.
It will be desirable to give a combinatorical  characterization on those toric Fano manifolds that satisfy Property $\cO_A$.

\end{remark}

To end this section, we draw Figure \ref{fig:relationsnew}, illustrating the logical relationship among various versions of Gamma conjecture I and the relevant conditions. In particular, we remind that the strong form of Gamma conjecture I is a statement for Fano manifolds without any further conditions, while the weak form is a statement under the additional condition of Property $\cO_A$.

\begin{figure}[htbp]
\[
\xymatrix{
& &
*+[F]\txt{$X$ is smooth toric Fano}
\ar@{=>}[dll]
\ar@{=>}[d]|{\text{with Property } \cO_A}
& &
*+[F]\txt{convenient weak LG with positive coe.}
\ar@/^6pt/@{=}!<50pt,0pt>;!<65pt,-15pt>\ar@/^0pt/@{=}!<65pt,-15pt>;!<65pt,-85pt>\ar@/^4.8pt/@{=}!<65pt,-85pt>;!<52pt,-95pt>
\\
*+[F]\txt{modified $\hat{\Gamma}$-I:\\strong form}
\ar@{<}!<20pt,-5pt>;!<18pt,-5pt>
\ar@{=>}[d]
\ar@/_6pt/@{=}!<-25pt,-10pt>;!<-40pt,-25pt>\ar@/_0pt/@{=}!<-40pt,-25pt>;!<-40pt,-60pt>\ar@/_6pt/@{=}!<-40pt,-60pt>;!<-25pt,-75pt>\ar@/_0pt/@{=}!<-30pt,-75pt>;!<280pt,-75pt>\ar@/_6pt/@{=}!<280pt,-75pt>;!<295pt,-60pt>\ar@{=>}!<295pt,-60pt>;!<295pt,-58pt>
& &
*+[F]\txt{$X$ satisfies $\hat{\Gamma}$-I}
\ar@/_0pt/@{=}!<-40pt,-5pt>;!<-100pt,-5pt>
\ar@{}[d]|+
\ar@/_6pt/@{--}!<-25pt,12pt>;!<-40pt,-3pt>\ar@/_0pt/@{--}!<-40pt,0pt>;!<-40pt,-45pt>\ar@/_6pt/@{--}!<-40pt,-50pt>;!<-25pt,-65pt>\ar@/_0pt/@{--}!<-25pt,-65pt>;!<25pt,-65pt>\ar@/_6pt/@{--}!<25pt,-65pt>;!<40pt,-50pt>\ar@/_0pt/@{--}!<40pt,-50pt>;!<40pt,0pt>\ar@/_6pt/@{--}!<40pt,-3pt>;!<25pt,12pt>\ar@/_0pt/@{--}!<25pt,12pt>;!<-25pt,12pt>
& &
*+[F]\txt{Condition \eqref{eq:condition_star} }
\ar@{.}[ll]_(0.48){\text{\scriptsize underlying}}^(0.48){\text{\scriptsize assumption}}
\\
*+[F]\txt{modified $\hat{\Gamma}$-I:\\weak form}
& &
*+[F]\txt{Property $\cO_A$}
 \ar@{.}[ll]_(0.48){\text{\scriptsize underlying}}^(0.48){\text{\scriptsize assumption}}
 \ar@{=>}[rr]
 \ar@{=>}[urr]
& &
*+[F]\txt{$T_{\rm A,\mathrm{con}}$ is an eigenvalue}
\ar@{=>}!<55pt,0pt>;!<50pt,0pt>
\\
}
\]
\caption{A logical relationship among various conditions.}\label{fig:relationsnew}
\end{figure}

\if0

\begin{conjecture}[weak Gamma conjecture I]
\label{conj-asymptoticsofgammaclass}
    For a Fano manifold $X$, there exist classes $\alpha_0,\alpha_1,\alpha_2,\dots \in H^\bullet(X)$ such that $\alpha_0$ is a non-zero eigenvector of $\hat{c}_1$ with eigenvalue $T>0$, and
    \begin{align*}
       e^{T/z} S(z)z^{-\mu}z^{c_1}\hGamma_X\sim \alpha_0 + \alpha_1 z + \alpha_2 z^2 + \cdots
    \end{align*}
    as $z \to 0$ in the angular sector $|\arg z|<\pi/2 + \epsilon$ for some $\epsilon>0$.
\end{conjecture}

One can verify that if such an infinite tuple $(T,\alpha_0,\alpha_1,...)$ exists, then it is unique.

This is a weak version of Gamma conjecture I. In fact, if $X$ satisfies condition \eqref{eq:condition_star} and the variant of the Gamma conjecture I, then we can show that such an asymptotic expansion holds with $T$ being the simple rightmost eigenvalue of $\hat{c}_1$, using the Laplace integral representation for the flat section (with the smallest asymptotics) appearing in \cite[Proposition 3.4.8]{GGI}. Moreover, by Theorems \ref{thm-gammaclassandconifoldpoint} and \ref{thm-gammaclassandeigenvector}}, all toric Fano manifolds satisfy this conjecture, with $T=T_{\text{con}}$.

\begin{remark}
The above conjecture is based on the expectation that the structure sheaf $\cO$ corresponds to a Lefschetz thimble associated with a \emph{nondegenerate} critical point under homological mirror symmetry. However, this expectation may not be always true. A potentially more appropriate formulation is the following `even weaker' Gamma conjecture I: \emph{$e^{T/z} S(z) z^{-\mu} z^{c_1} \hGamma_X$ is of moderate growth as $z\to 0$ within the sector $|\arg z|<\pi/2 + \epsilon$}. We thank Kai Hugtenburg for pointing out this possibility.
\end{remark}

From the viewpoint of mirror symmetry together with the evidence for Conjecture \ref{conj-asymptoticsofgammaclass} (e.g.~Theorem \ref{thm-gammaclassandeigenvector}), there should be the notion of Property $\mathcal{O}_A$, the incomplete statement of which is given as follows.
\begin{predefn}[Property $\mathcal{O}_A$]\label{defPOA}
We say that a Fano manifold $X$ satisfies \emph{Property $\cO_A$} if the following holds:
   \begin{enumerate}
        \item[  {\upshape(a)}] $X$ satisfies condition \eqref{eq:condition_star}, i.e.~$\hat{c}_1$ has a simple rightmost eigenvalue; in this case the rightmost eigenvalue is necessarily $\rho' = \max\{\Re(\lambda): \lambda \in \Spec(\hat{c}_1)\}$;
        \item[  {\upshape(b)}] $X$ satisfies a \emph{as-yet-unidentified} hypothesis (see below for elaboration) on the eigenvector of $\hat c_1$ with eigenvalue $\rho'$, which corresponds to the conifold point condition in the case of toric Fano manifolds.
    \end{enumerate}
\end{predefn}
\noindent In particular for a toric Fano manifold $X$, the expecting Property $\cO_A$ should be equivalent to Property $\cO_B$.
In view of Proposition \ref{prop:Property_OB_GammaI}, we can expect the following formulation, as a modification of the original Gamma conjecture I.
\begin{preconjecture}[Modified Gamma conjecture I]\label{thmMGC}
    Suppose that a Fano manifold $X$ satisfies Property $\cO_A$. Then $X$ satisfies the variant of Gamma conjecture I, i.e~the principal asymptotic class $A_X$ is given by the Gamma class $\hGamma_X$.
\end{preconjecture}

   Here we propose two candidates of the expecting Property $\mathcal{O}_A$(b), inspired from some (numerically) computational examples as well as the Perron-Frobenius theory of non-negative matrices.
 \bigskip

   \noindent (cb1)\quad  \textit{The accompanying (companion) matrix of the characteristic polynomial of $\hat c_1$   admits a unique non-negative eigenvector  $\mathbf{v}_+$; moreover, $\rho'$ is the corresponding eigenvalue of $\mathbf{v}_+$.}
 \bigskip

    \noindent (cb2)\quad    \textit{Take a basis $\{\phi_i\}$ of $H^\bullet(X)$ such that the associated matrix of the operator $c_1(X)\cup$ is non-negative. The associated matrix of  $\hat c_1$ with respect to the basis $\{\phi_i\}$  admits a unique non-negative eigenvector  $\mathbf{v}_+$; moreover, $\rho'$ is the corresponding eigenvalue of $\mathbf{v}_+$.}
 \bigskip

We warn that the existence and the uniqueness in the candidates are not guaranteed yet. For (cb2), the existence of the expected basis $\{\phi_i\}$ could be ensured by the hard Lefschetz $\mathfrak{sl}_2$ action on $H^*(X)$, but we still need to verify that the formulation is independent of the choices of such bases. Apart form this, (cb2) holds for any flag variety $G/P$ and toric Fano surfaces. By numerical computations, (cb1) holds for all del Pezzo surfaces. We would like to point out that the formulation of Property $\mathcal{O}_B$(b) is not quite intrinsic yet. We need to have certain positivity for the superpotential $f$, in order to define the conifold point in general. The candidates are proposed in  attempt to match the positivity. We hope to explore this more carefully in the future.

\begin{figure}[htbp]
\[
\xymatrix{
& &
*+[F]\txt{Property $\cO_B$}
\ar@{=>}@/_20pt/[dll]
\ar@{=>}[d]
& &
*+[F]\txt{
B-analogue of \\ Property $\cO$}
\ar@{=>}[ll]
\ar@{=>}[dd]
\\
*+[F]\txt{Variant of \\ Gamma Conjecture I}
& &
*+[F]\txt{Condition \eqref{eq:condition_star} \\
= Property $\cO_B$(a) \\
= Property $\cO_A$(a)}
 \ar@{.}[ll]_(0.48){\text{\scriptsize underlying}}^(0.48){\text{\scriptsize assumption}}
& &
\\
& &
*+[F]\txt{\scriptsize as-yet-unidentified \\ Property $\cO_A$}
\ar@{.>}@/^20pt/[ull]^{\text{\bf ?}}
\ar@{=>}[u]
\ar@{.}[rr]^{\text{\scriptsize no relation}}
&&
*+[F]\txt{Property $\cO$(1)}
\ar@{=>}[ull]
\\
}
\]
\caption{A logical relationship between various conditions.}
\end{figure}
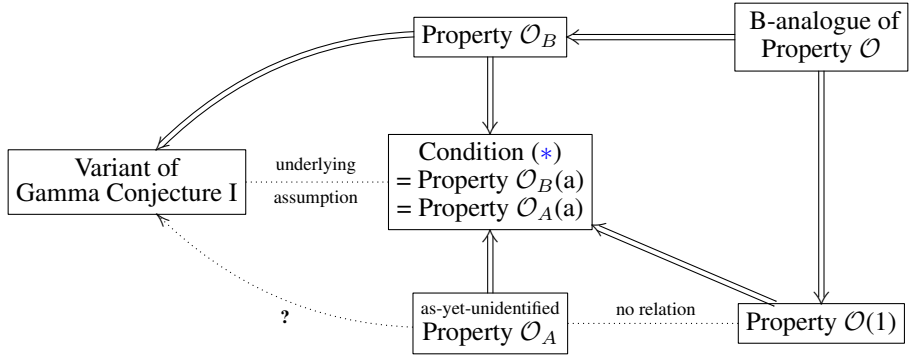

\fi

\section{Gamma conjecture I over the K\"ahler moduli space}
\label{sec:Gamma_Kaehler}

Recall that the small quantum product $\star$ is parametrized by $\bq$, which lies in the ``K\"ahler moduli space'' $H^2(X,\bZ)\otimes \bC^\times$. While both Conjecture $\cO$ and Gamma conjecture I focus on the quantum multiplication at $\bq=\mathbf{1}$, no theoretical justification (beyond empirical evidence) existed for this specific specialization. In this section, we explore these conjectures for general values of $\bq$.

We will also demonstrate that varying $\bq$ is closely related to birational geometry of the Fano manifold. When $\bq$ varies and approaches a boundary point of the K\"ahler moduli space, the spectrum of quantum cohomology assembles in a specific way, reflecting the geometry of the corresponding extremal contraction; such phenomena have been observed in \cite{Gonzalez-Woodward:tmmp, Iritani:discrepant, Lee-Lin-Wang:flip, Iritani-Koto:projective, Iritani:monoidal}. In the next Section \ref{sec:example_Kaehler}, we shall investigate this phenomenon in more details for the example $X_n=\bP_{\bP^n}(\cO\oplus \cO(n))$.


\subsection{Quantum cohomology and extremal contractions}
As mentioned above, the small quantum product $\star$ of a Fano manifold $X$ is parametrized by a point $\bq$ in the \emph{K\"ahler moduli space}
\[
\cM_X := \Hom(H_2(X,\bZ)/{\text{torsion}}, \bC^\times) \cong H^2(X,\bZ)\otimes \bC^\times
\]
where the monomial $\bq^\bfd$ in the quantum product $\star$ is identified with the function $\cM_X \to \bC^\times$ given by the evaluation at $\bfd\in H_2(X,\bZ)$.
We also introduce a linear coordinate $\tau \in H^2(X,\bC)$ on $\cM_X$ related to $\bq$ via
\[
\bq^\bfd = e^{\tau\cdot \bfd}
\]
for $\bfd\in H_2(X,\bZ)$. The space $H^2(X,\bC)=H^{1,1}(X)$ of $\tau$ is identified with the universal covering of $\cM_X$.
Let $\NEN(X)\subset H_2(X,\bZ)/{\text{torsion}}$ denote the monoid generated by effective curves. This monoid partially compactifies $\cM_X$:
\[
\cM_X \hookrightarrow \ovcM_X :=\Hom(\NEN(X),\bC^\times)
\]
where the coordinate ring of $\ovcM_X$ is the monoid ring $\bC[\NEN(X)]$. The small quantum product $\star$ extends to a family of supercommutative product structures over $\ovcM_X$: we call $\ovcM_X$ the \emph{partially compactified K\"ahler moduli space}.

Let $\ovNE(X)\subset H_2(X,\bR)$ be the closure of the cone generated by $\NEN(X)$; it is called the Mori cone. An \emph{extremal ray} $R=\bR_{\ge 0} \bfd$ is a one-dimensional face of the cone $\ovNE(X)$ generated by a class $\bfd$ with $c_1(X)\cdot \bfd>0$. By the Contraction Theorem \cite[Theorem 3.1.2]{KMM:MMP}, an extremal ray $R$ associates a contraction $f\colon X\to Y$ to a normal projective variety $Y$ such that a curve in $X$ contracts to a point in $Y$ if and only if its homology class lies in $R$. We call $f$ an \emph{extremal contraction}. The extremal ray $R$ also defines a ring homomorphism
\[
\bC[\NEN(X)] \to \bC[R_\bN], \quad \bq^\bfd \mapsto
\begin{cases}
\bq^\bfd & \text{if $\bfd \in R$} \\
0 & \text{otherwise}
\end{cases}
\]
where $R_\bN = R\cap \NEN(X)$. This defines a line
\[
\cC_f :=\Spec \bC[R_\bN] \subset \ovcM_X
\]
in the boundary of the partially compactified K\"ahler moduli space. We call $\cC_f$ the \emph{$f$-boundary} of $\ovcM_X$.

We want to consider the limit where the parameter $\bq\in \cM_X$ approaches $\cC_f$. Given an ample class $\omega_Y$ on the base $Y$ of the contraction, we can consider the flow $s\mapsto \bq_s$ on $\cM_X$ given by
\[
\bq_s= e^{-s f^*\omega_Y} \bq  \qquad \text{(or more precisely, $\bq_s^\bfd = e^{-s f^*\omega_Y \cdot \bfd} \bq^\bfd$).}
\]
As $s\to \infty$, the flow $\bq_s$ converges to a point in the $f$-boundary $\cC_f$. In terms of the linear coordinate $\tau\in H^2(X,\bC)$, the same flow is given by
\[
s\mapsto \tau_s = \tau - s f^* \omega_Y.
\]
Under the $s\to \infty$ limit, the small quantum product of $X$ at $\bq_s$ approaches the \emph{$f$-exceptional quantum product} $\star_{\text{exc}}$ defined by
\[
\alpha\star_{\text{exc}} \beta =\sum_i \sum_{\bfd \in R_\bN} \langle \alpha, \beta, \phi^i\rangle_\bfd \bq^\bfd \phi_i.
\]
This defines a family of supercommutative rings over $\cC_f$. A similar product has been defined by Ruan \cite{Ruan:crepant} in connection with his famous crepant resolution conjecture.
The $f$-exceptional quantum product, which is easier to compute, leads to a decomposition of the original quantum cohomology. When $\bq$ is sufficiently close to a boundary point $\overline{\bq} \in \cC_f=\bC$, the quantum cohomology ring $QH(X)|_\bq=(H^*(X),\star_\bq)$ decomposes into subrings indexed by maximal ideals $\frakm$ of $(H^{\text{ev}}(X),\star_{\text{exc},\overline{\bq}})$ (i.e. points of the $0$-dimensional scheme $\Spec (H^{\text{ev}}(X), \star_{\text{exc},\overline{\bq}})$) for the non-zero complex number $\overline{\bq}$:
\begin{equation}
\label{eq:decomp_QH}
QH(X)\bigr|_{\bq} \cong \bigoplus_{\frakm} A_\frakm.
\end{equation}
\begin{question}
Can we identify each factor $A_\frakm$ with quantum cohomology of another space in a natural way?
\end{question}
Examples (see below) show that $A_\frakm$ can arise as the (big) quantum cohomology of the base space $Y$, the flip $X^\dagger \to Y$ of $f$, or a space related to the exceptional set of $f$.

\begin{remark}
The $f$-boundary $\cC_f$ and $f$-exceptional quantum product $\star_{\text{exc}}$ can be defined beyond the Fano case. For the degree reason, $\star_{\text{exc}}$ is always convergent (polynomial) even if the original quantum product $\star$ is not known to be convergent. In general, the decomposition \eqref{eq:decomp_QH} makes sense for $\bq$ in the formal neighbourhood of $\overline{\bq}\in \cC_f$.
\end{remark}

\begin{example}[{\cite[Theorem 5.5]{Gonzalez-Woodward:tmmp}, \cite[Theorem 1.1]{Iritani:discrepant}, \cite[Theorem 0.1]{Lee-Lin-Wang:flip}}]
Let $X$ be a smooth toric Deligne-Mumford (DM) stack given as a GIT quotient of a vector space and let $X\dasharrow X^\dagger$ be a flip (or a discrepant transformation) arising from variation of GIT quotients. Then $QH(X)$ contains $QH(X^\dagger)$ as a direct summand, possibly after deforming $QH(X^\dagger)$ to big quantum cohomology.
\end{example}

\begin{example}[{\cite[Theorem 5.1]{Iritani-Koto:projective}}]
Let $X=\bP(V)$ be the projectivization of a vector bundle $V$ of rank $r$ over a smooth base $Y$. The $\bP^{r-1}$-fibration $X \to Y$ gives rise to a decomposition
$QH(X) \cong \bigoplus_{i=1}^r QH(Y)_{\sigma_i}$, where $QH(Y)_{\sigma_i}$ stands for the big quantum cohomology with parameter $\sigma_i \in H^*(Y)$. This can be viewed as a quantum cohomology analogue of the Leray-Hirsch theorem.
\end{example}

\begin{example}[{\cite[Theorem 1.1]{Iritani:monoidal}}]
\label{exa:blowup}
Let $\widetilde{X}$ be the blowup of a smooth projective variety $X$ along a smooth centre $Z$ and let $r$ be the codimension of $Z$ in $X$. The blowdown morphism $\widetilde{X} \to X$ gives rise to a decomposition $QH(\widetilde{X}) \cong QH(X)_\tau \oplus \bigoplus_{i=1}^{r-1} QH(Z)_{\sigma_i}$ where $\tau \in H^*(X)$, $\sigma_i\in H^*(Z)$ stand for the big quantum cohomology parameters.
\end{example}

\subsection{Principal asymptotic class for general $\bq$}
\label{subsec:A_general_q}
We discuss the principal asymptotic class for general $\bq\in \cM_X=H^2(X,\bZ)\otimes \bC^\times$ for a Fano manifold $X$. The discussion here is parallel to Section \ref{subsubsec:Gamma-I}.

\begin{defn}\label{defnsimplerm} Let $\bq\in \cM_X$. We say that $(c_1(X)\star_\bq)$ has a \emph{simple rightmost eigenvalue $u\in \bC$} if
\begin{itemize}
\item[(1)]
$u$ is a simple root of the characteristic polynomial of $(c_1(X)\star_\bq)$ and
\item[(2)]
any other eigenvalues $u'$ of $(c_1(X)\star_\bq)$ satisfy $\Re(u') < \Re(u)$.
\end{itemize}
\end{defn}
This condition relaxes Property $\cO$ in several ways: we do not require $u$ to be positive real or have the largest norm among eigenvalues; also we do not require the condition involving the Fano index $i_X$. As remarked in \cite[Remark 3.1.9]{GGI}, it enables us to define the principal asymptotic class.

\begin{remark}
(1) When $\bq$ is real, eigenvalues of $(c_1(X)\star_\bq)$ are invariant under complex conjugation and a simple rightmost eigenvalue $u$  (if any) is necessarily real. In particular when $\mathbf{q}=\mathbf{1}$, the notion of a simple rightmost eigenvalue coincides with condition \eqref{eq:condition_star}. 

(2) For every $\bq\in \cM_X$, the set of eigenvalues of the small quantum product $(c_1(X)\star_\bq)$ is invariant under mutliplication by $i_X$th roots of unity. This is because the complex Euler flow $\bC \ni s\mapsto \bq_s = e^{s c_1(X)} \bq$ dilates the spectrum of $(c_1(X)\star_\bq)$ by $e^s$ and $\bq_s = \bq$ for $s = 2\pi \iu/i_X$. This implies that the argument of a non-zero rightmost eigenvalue lies in the interval $[-\pi/i_X, \pi/i_X].$
\end{remark}

We consider the quantum connection
\begin{equation}
\label{eq:quantum_connection_q}
\nabla_{z\partial_z}\bigr|_\bq = z\partial_z - \frac{1}{z}( c_1(X)\star_\bq) + \mu
\end{equation}
on the trivial $H^*(X)$-bundle over $\bP^1$. This has a fundamental solution of the form (due to Dubrovin; see \cite[Remark 2.3.2]{GGI} and references therein)
\[
S(\tau,z) z^{-\mu} z^{c_1(X)}
\]
where $\tau\in H^2(X,\bC)$ is a lift of $\bq$ (i.e.~$\bq = e^\tau$) and $S(\tau,z)$ is a $GL(H^*(X))$-valued holomorphic function of $(\tau,z) \in H^2(X,\bC)\times (\bP^1\setminus \{0\})$. The map $\alpha \mapsto S(\tau,z) z^{-\mu} z^{c_1(X)} \alpha$ identifies $H^*(X)$ with the space of multi-valued flat sections for $\nabla_{z\partial_z}|_\bq$.
This fundamental solution is characterized by the condition that
\begin{itemize}
\item it is also flat with respect to the quantum connection in the $\tau$-direction and;
\item $S(0,z)$ coincides with $S(z)$ in Section \ref{subsubsec:Gamma-I}.
\end{itemize}
More precisely, in terms of descendant Gromov-Witten invariants, $S(\tau,z)$ is given by
\[
S(\tau,z) \alpha= e^{-\tau/z}\alpha +\sum_i \sum_{\bfd \neq 0} \left\langle  \frac{e^{-\tau/z}\alpha}{-z-\psi},\phi^i\right\rangle_{0,2,\bfd} e^{\tau\cdot \bfd} \phi_i.
\]
The following result follows by the same argument as in \cite[Proposition 3.3.1]{GGI}.
\begin{prop-defn}\label{pdasymp} Suppose that $(c_1(X)\star_\bq)$ has a simple rightmost eigenvalue $u$. Then the space
\[
\{ \alpha \in H^*(X) : \text{$\|e^{u/z} S(\tau,z) z^{-\mu} z^{c_1(X)} \alpha\|=O(z^{-m})$ for some $m\in \bN$ as $z\to +0$}\}
\]
is one-dimensional, where $\tau\in H^2(X,\bC)$ is a lift of $\bq$. The \emph{principal asymptotic class} $A_X(\tau)$ at $\tau$ is a generator of this vector space defined up to constant. Moreover the limit
\[
\lim_{z\to+0} e^{u/z} S(\tau,z) z^{-\mu} z^{c_1(X)} A_X(\tau)
\]
exists and is a  $u$-eigenvector of $(c_1(X)\star_\bq)$.
\end{prop-defn}

\begin{remark}
\label{rem:AX_choice_of_lift}
The class $A_X(\tau)$ depends on the choice of a lift $\tau$ of $\bq$. We have $A_X(\tau+2\pi \iu\xi) = e^{2\pi \iu \xi} A_X(\tau)$ for $\xi \in H^2(X,\bZ)$ since $S(\tau+2\pi \iu \xi,z) = S(\tau,z) e^{-2\pi \iu \xi/z}$.
\end{remark}

The principal asymptotic class also arises as a limit of the $J$-function.
The restriction of the $J$-function \eqref{eq:J-function} to $\tau\in H^2(X)$ is given by
\begin{align*}
J_X(\tau,z) & = S(\tau,z)^{-1} 1= e^{\tau/z} \left(1 +\sum_i \sum_{\bfd \neq 0}
\left\langle \frac{\phi^i}{z(z-\psi)} \right\rangle_{0,1,\bfd} e^{\tau\cdot \bfd} \phi_i \right).
\end{align*}
\begin{prop}
\label{prop:limit_formula_general_q}
Suppose that $(c_1(X)\star_\bq)$ has a simple rightmost eigenvalue $u$ for $\bq=e^\tau\in \cM_X$. Then we have the asymptotic expansion
\[
J_X(\tau+c_1(X)\log t, 1) \sim C t^{-\frac{\dim X}{2}} e^{ut} (A_X(\tau) + \alpha_1 t^{-1} + \alpha_2 t^{-2} + \cdots)
\]
as $t\to +\infty$ on the positive real line, where $C\neq 0$ is a constant and $\alpha_i \in H^*(X)$. In particular, we have $\lim_{t\to +\infty} [J(\tau+ c_1(X) \log t,1)] = [A_X(\tau)]$ in $\bP(H^*(X))$.
\end{prop}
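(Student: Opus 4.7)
The approach is to adapt the proof of Proposition~\ref{propJexpansion} (the case $\tau = 0$, established in \cite[Proposition 3.8]{GaIr}) to arbitrary $\tau \in H^2(X,\bC)$. The central input is the flatness of $S(\tau,z)$ in both the $z$- and $\tau$-directions, which converts the $\tau$-shift by $c_1(X)\log t$ into a $z$-asymptotic problem at $z = 1/t \to 0^+$.

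First, I would establish the identity
\[
J_X(\tau + c_1(X)\log t,\, 1) \;=\; z^{\dim X/2}\, \bigl(S(\tau,z)\, z^{-\mu}\, z^{c_1(X)}\bigr)^{-1} 1, \qquad z = 1/t,
\]
generalizing the formula recalled in Section~\ref{subsubsec:Gamma-I} from $\tau = 0$. This identity follows from the characterization of $S(\tau,z)\, z^{-\mu}\, z^{c_1(X)}$ as a fundamental solution of the quantum connection in both variables, combined with the divisor axiom of Gromov-Witten invariants, which absorbs the shift of $\tau$ along $c_1(X)\log t$ into the factor $z^{c_1(X)}$.

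Second, I would invoke the Laplace-type representation of flat sections from \cite[Propositions 3.4.8, 3.6.2]{GGI}, adapted to $\tau \neq 0$. By Proposition/Definition~\ref{pdasymp}, the flat section $S(\tau,z) z^{-\mu} z^{c_1(X)} A_X(\tau)$ is characterized by the slowest exponential decay $e^{-u/z}$ as $z \to +0$, and the Stokes decomposition at $z = 0$ separates the space of flat sections into components indexed by the eigenvalues $u'$ of $(c_1(X)\star_\bq)$, each with factor $e^{-u'/z}$ times a regular-singular series. Expressing $1 \in H^*(X)$ in this asymptotic basis and substituting $z = 1/t$, the component attached to $u$ yields a leading contribution of the form $C\, t^{-\dim X/2} e^{ut}(A_X(\tau) + \alpha_1 t^{-1} + \cdots)$, while all other components decay strictly faster since $\Re(u') < \Re(u)$ for $u' \neq u$, and can be absorbed into the asymptotic remainder.

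The main obstacle is showing that $C \neq 0$, equivalently, that $1 \in H^*(X)$ projects non-trivially onto the one-dimensional $u$-line in the Stokes decomposition at $z = 0$. Since $u$ is a simple eigenvalue of $(c_1(X)\star_\bq)$, the spectral projector $\pi_u$ onto the $u$-generalized eigenspace can be written as a polynomial in $c_1(X)\star_\bq$ and is therefore a primitive idempotent in the Frobenius algebra $(H^\bullet(X), \star_\bq)$; the Frobenius property then gives $\langle \pi_u, \pi_u\rangle^X = \langle 1, \pi_u \star_\bq \pi_u\rangle^X = \langle 1, \pi_u\rangle^X$, which is non-zero because $\pi_u$ is a non-zero idempotent in a Frobenius algebra. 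Matching this with the Stokes identification at $z = 0$ forces the coefficient of $1$ on the $u$-line to be a non-zero scalar multiple of $A_X(\tau)$, yielding $C \neq 0$. The projective convergence $[J_X(\tau + c_1(X) \log t, 1)] \to [A_X(\tau)]$ in $\bP(H^*(X))$ is then an immediate consequence of the leading term dominating all others.
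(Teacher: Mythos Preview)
Your approach is correct and matches the paper's proof, which simply cites \cite[Proposition 3.8]{GaIr} and \cite[Propositions 3.6.2, 3.2.1]{GGI} and records the key identity $t^{\dim X/2} J_X(\tau+c_1(X)\log t,1) = z^{-c_1(X)} z^\mu J_X(\tau,z)$ with $t=z^{-1}$ (equivalent to your identity via $J_X(\tau,z)=S(\tau,z)^{-1}1$). Your explicit treatment of $C\neq 0$ via the Frobenius idempotent $\pi_u(1)$ is a correct elaboration of what is left implicit in the paper's citation; the non-degeneracy $\langle \pi_u(1),\pi_u(1)\rangle^X\neq 0$ follows from the self-adjointness of $(c_1(X)\star_\bq)$ with respect to the Poincar\'e pairing, which makes the eigenspace decomposition orthogonal.
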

\begin{proof}
This follows by the same argument as the $\tau=0$ case in \cite[Proposition 3.8]{GaIr}, which is based on \cite[Propositions 3.6.2, 3.2.1]{GGI}. The argument there shows that
\[
z^{-c_1(X)} z^\mu J_X(\tau,z)
\sim C e^{u/z} (A_X(\tau) + \alpha_1 z + \alpha_2 z^2+ \cdots)
\]
as $z\to +0$. The conclusion follows from the identity
\[
t^{\frac{\dim X}{2}} J_X(\tau+c_1(X)\log t,1) = z^{-c_1(X)} z^\mu J_X(\tau,z)
\]
with $t= z^{-1}$.
\end{proof}

\begin{cor}
\label{cor:A_easy_properties}
The principal asymptotic class $A_X(\tau)$, when it is defined, depends only on the class $[\tau]$ in $H^2(X,\bC)/\bR c_1(X)$ up to a constant multiple. If $\tau\in H^2(X,\bR)$, $A_X(\tau)$ is a complex multiple of a real cohomology class.
\end{cor}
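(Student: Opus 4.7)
Both parts follow from the limit formula of Proposition~\ref{prop:limit_formula_general_q},
\[
[A_X(\tau)] = \lim_{t\to+\infty}[J_X(\tau+c_1(X)\log t,1)] \quad\text{in } \bP(H^*(X,\bC)),
\]
combined with elementary manipulations; I do not anticipate any serious technical obstacle.

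For the first claim, fix $s_0\in\bR$ and set $\tau'=\tau+s_0c_1(X)$. Under the hypothesis that both $A_X(\tau)$ and $A_X(\tau')$ are defined, I apply the limit formula to $\tau'$ and use the tautological identity $\tau'+c_1(X)\log t = \tau + c_1(X)\log(te^{s_0})$:
\[
[A_X(\tau')] = \lim_{t\to+\infty}[J_X(\tau+c_1(X)\log(te^{s_0}),1)] = \lim_{t''\to+\infty}[J_X(\tau+c_1(X)\log t'',1)] = [A_X(\tau)],
\]
where the substitution $t''=te^{s_0}$ is a bijection of $\bR_{>0}$ onto itself because $s_0$ is real; in particular $t\to+\infty$ if and only if $t''\to+\infty$. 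Equality of projective classes is by definition equality up to a nonzero scalar, giving the first assertion.

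For the second claim, when $\tau\in H^2(X,\bR)$ the values $\bq^\bfd=e^{\tau\cdot\bfd}$ lie in $\bR_{>0}$, so all structure constants of $\star_\bq$ are real when expressed in a fixed real basis of $H^*(X)$. The explicit formula for the $J$-function (display in Section~\ref{subsec:A_general_q}) then shows that $J_X(\tau+c_1(X)\log t,1)$ is a real cohomology class for every $t>0$, so its projective image lies in the closed real locus $\bP(H^*(X,\bR))\subset \bP(H^*(X,\bC))$. The limit $[A_X(\tau)]$ therefore also lies in this real locus, which is precisely the statement that $A_X(\tau)$ is a complex scalar multiple of a real cohomology class.

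The only point that requires any care is ensuring the reparametrization $t''=te^{s_0}$ preserves the positive-real approach to infinity; this is exactly why the shift direction is restricted to $\bR c_1(X)$ rather than to $\bC c_1(X)$ (a purely imaginary shift would instead correspond to the integral monodromy relation of Remark~\ref{rem:AX_choice_of_lift}).
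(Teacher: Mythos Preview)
Your proof is correct and matches the paper's intended approach: the corollary is stated immediately after Proposition~\ref{prop:limit_formula_general_q} with no separate proof, so the paper expects exactly the argument you give via the limit formula. One small addition you could make for the first claim is to note that the hypothesis ``both defined'' is automatic, since the real Euler flow $\tau\mapsto\tau+s_0c_1(X)$ dilates the spectrum of $(c_1(X)\star_\bq)$ by the positive real factor $e^{s_0}$ (see the paper's remark on the Euler flow), and hence preserves the simple-rightmost-eigenvalue condition.
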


Gamma conjecture I states that $[A_X(0)] = [\hGamma_X]$ in $\bP(H^*(X))$. In Theorem \ref{thmGC1nothold}, we show that $[A_{X_n}(0)] \neq [\hGamma_{X_n}]$ for even $n\ge 4$. This motivates the following question:
\begin{question}\label{quesAX}
Does there exist a point $\bq = e^\tau \in \cM_X$ such that $(c_1(X)\star_\bq)$ has a simple rightmost eigenvalue and that $[A_X(\tau)] = [\hGamma_X]$? Can we find such a $\tau$ within $H^2(X,\bR)$? Can we characterize the region in $H^2(X)$ where $[A_X(\tau)] = [\hGamma_X]$ holds?
\end{question}

Even if $[A_X(\tau)] \neq [\hGamma_X]$, $A_X(\tau)$ should be a special class: we can conjecture that $A_X(\tau)$ comes from an exceptional object in the derived category. In light of Dubrovin's conjecture \cite[Conjecture 4.2.2]{Du1998} and Gamma conjecture II \cite{GGI}, we ask the following:

\begin{question}
\label{question:AX_exceptional}
When the principal asymptotic class $A_X(\tau)$ is defined,
does there exist a $K$-class $V\in K^0(X)$ of topological vector bundles such that $[A_X(\tau)] = [\hGamma_X \Ch(V)]$?  Here $\Ch(V) = \sum_{k\ge 0} (2\pi \iu)^k \ch_k(V)$ is the $(2\pi\iu)$-modified Chern character. Does $V$ satisfy $\chi(V,V) =1$? Does it come from an exceptional object in the derived category of coherent sheaves?
\end{question}

If the answer to this question is affirmative at a real $\tau\in H^2(X,\bR)$, then Corollary \ref{cor:A_easy_properties} implies that $\ch(V)$ lies either in $\bigoplus_{k\ge 0} H^{4k}(X)$ or in $\bigoplus_{k\ge 0} H^{4k+2}(X)$ (i.e.~$\Ch(V)$ is real or purely imaginary).


\begin{remark}
Here we focused on the \emph{principal} asymptotic class. However, our scope can be broadened to consider (1) \emph{higher} asymptotic classes associated with a simple eigenvalue of $(c_1(X)\star_\bq)$ (see Gamma conjecture II of \cite{GGI}), or even (2)  asymptotic \emph{subspaces} associated with a not necessarily simple eigenvalue of $(c_1(X)\star_\bq)$ (coined as ``Gamma conjecture III'' by the authors of \cite{GGI}, see \cite{SaSh, Iritani:Gamma_quantum}).
\end{remark}

\subsection{Semisimple case}
In this section, we briefly discuss the case where the (big) quantum cohomology is generically semisimple. Note that toric Fano manifolds have generically semisimple (small and big) quantum cohomology.

When the big quantum cohomology is convergent at $\tau \in H^*(X)$, we can similarly define the principal asymptotic class $A_X(\tau)$. For this we need to replace $(c_1(X)\star_\bq)$ in the quantum connection \eqref{eq:quantum_connection_q} with the Euler multiplication $(E\star_\tau)$ (we have $E= c_1(X)$ for $\tau \in H^2(X)$) and use the fundamental solution $S(\tau,z)$ extended to $\tau\in H^*(X)$.

Suppose that the big quantum cohomology of $X$ is generically semisimple. Around a semisimple point $\tau_0\in H^*(X)$, the eigenvalues $u_1,\dots, u_s$ of $(E\star_\tau)$ form a coordinate system in a neighbourhood of $\tau=\tau_0$ \cite[Lecture 3]{Dubrovin:Painleve}.
We can move the eigenvalues $u_1,\dots,u_s$ as long as we can solve the corresponding Riemann-Hilbert problem. In fact, the big quantum cohomology Frobenius manifold can be extended to an open dense subset\footnote{the complement of an analytic hypersurface} of the universal covering $(\bC^s \setminus \text{(big diagonal)})\sptilde $ of the configuration space of $s$ points $u_1,\dots,u_s$ \cite[Theorem 4.7]{Dubrovin:Painleve}.
On the semisimple locus, the condition that $(E\star_\tau)$ has a simple rightmost eigenvalue fails along  real-codimension one walls and the big quantum cohomology parameter space is divided into chambers. The principal asymptotic class $A_X(\tau)$ stays constant in a chamber, and the dominant asymptotic classes alternate when $\tau$ crosses a wall. Note that this is not a mutation we discuss in \S\ref{subsec:identification_A} below.

When the big quantum product is semisimple for generic $\tau\in H^*(X,\bC)$, the same holds also for generic \emph{real} parameters $\tau \in H^*(X,\bR)$; the Euler multiplication $(E\star_\tau)$ has a mutually distinct simple eigenvalues away from a real-codimension one hypersurface in $H^*(X,\bR)$. On the real locus $H^*(X,\bR)$, we observe two types of open chambers. At a generic $\tau\in H^*(X,\bR)$, we have either
\begin{itemize}
\item[(i)] $(E\star_\tau)$ has a simple rightmost eigenvalue (which is automatically real), or
\item[(ii)] $(E\star_\tau)$ has a complex-conjugate pair of two simple eigenvalues $u,\overline{u}$ such that all other eigenvalues $u'$ satisfy $\Re(u') < \Re(u)$.
\end{itemize}
The locus where both conditions fail is of real-codimension 1 or greater. In fact, if, for some $\tau\in H^*(X,\bR)$, $(E\star_\tau)$ has only simple eigenvalues $u_1,\dots,u_s$ and these conditions fail, then either (a) there are three eigenvalues $u_1,u_2,u_3$ such that $u_1 = \overline{u}_1$, $u_3 = \overline{u}_2$ and $\Re(u_1) = \Re(u_2) = \Re(u_3)$ or (b) there are four eigenvalues $u_1,u_2,u_3,u_4$ such that $u_2=\overline{u}_1$, $u_4= \overline{u}_3$ and $\Re(u_1) = \Re(u_2) = \Re(u_3) = \Re(u_4)$. Such a point $\tau$ belongs to the real-codimension one locus given by the intersection of $H^*(X,\bR)$ with the complex hypersurface (a) $u_1=(u_2+u_3)/2$ or (b) $u_1+u_2 = u_3+ u_4$.
When $\tau\in H^*(X,\bR)$ lies in a chamber of type-(ii) with $u= x+ \iu y$, we expect an oscillating asymptotics
\begin{align*}
z^{-c_1} z^\mu J_X(\tau,z)
& \sim e^{x/z} (\Re(e^{iy/z} A) + O(z))  \\
& = e^{x/z} (\cos(y/z) A_1 - \sin(y/z) A_2 + O(z))
\end{align*}
as $z\to +0$, for some $A=A_1 + \iu A_2 \in H^*(X,\bC)$. If $\tau \in H^2(X,\bR)$, the left hand side equals $t^{\dim X/2} J_X(\tau+c_1(X) \log t, 1)$ with $t= 1/z$. The class $A$ here can be viewed as a substitute of the principal asymptotic class in a type-(ii) chamber.

\begin{example}
Let $f\colon \widetilde{X}\to X$ be a blowup of $X$ at a point. If $\dim X \ge 4$ and a positive real parameter $\bq=e^\tau\in \cM_{\widetilde{X}}$ is close to the $f$-boundary, then the point $\tau \in H^2(\widetilde{X},\bR)$ lies in a type-(ii) chamber. See Remark \ref{rem:eigenvalues_blowup}.
\end{example}

\section{Gamma conjecture I for $X_n$ over the K\"ahler moduli space}
\label{sec:example_Kaehler}

We examine the counterexamples $X_n=\bP_{\bP^n}(\cO\oplus \cO(n))$ to Conjecture $\cO$/Gamma conjecture I  from a birational geometry perspective.
The space $X_n$ has two extremal contractions: the divisorial contraction $\varphi \colon X_n \to \bP(1^{n+1},n)$ to the weighted projective space and the $\bP^1$-fibration $\pi \colon X_n \to \bP^n$.
Each contraction corresponds to an extremal ray of the Mori cone and associates a line in the boundary of the K\"ahler moduli space. We observe that the principal asymptotic class equals the Gamma class $\hGamma_{X_n}$ when $\bq$ is close to the ``$\pi$-boundary''. However, when $\bq$ is close to the ``$\varphi$-boundary'' and $n$ is even, it deviates from $\hGamma_{X_n}$. We also determine the principal asymptotic class in the latter region (that includes $\bq=\mathbf{1}$) using monodromy and mutation argument.

\subsection{Quantum cohomology of $X_n$ over the K\"ahler moduli space}
Let us start with the geometric construction of $X_n$.  It is a toric variety defined by the weight matrix
\begin{equation}
\label{eq:weight_matrix}
\begin{pmatrix}
1 & 1 & \cdots & 1 & 0 & -n \\
0 & 0 & \cdots & 0 & 1 & 1
\end{pmatrix}
\end{equation}
of size $2\times (n+3)$. This matrix defines a $(\bC^\times)^2$-action on $\bC^{n+3}$ and $X_n$ arises as a GIT quotient of this action. Columns of the matrix represent the classes of toric divisors $D_1,\dots,D_{n+1}, D_{n+2}, D_{n+3}$ of $X_n$ in $H^2(X_n,\bZ) \cong \Hom((\bC^\times)^2,\bC^\times)$, where $D_i$ is the zero-set $\{z_i=0\}$ of the $i$th homogeneous coordinate in $X_n= \bC^{n+3}/\!/(\bC^\times)^2$. They form the divisor diagram in Figure \ref{fig:divisor_diagram}.
We have two GIT chambers (for stability conditions):  the right chamber (the first quadrant) gives rise to $X_n$ and the left chamber (spanned by $D_{n+2}, D_{n+3}$) gives rise to the weighted projective space
\[
Y_n := \bP(1^{n+1},n) = \bP(\underbrace{1,\dots,1}_{\text{$n+1$ times}}, n)
\]
as GIT quotients. The birational map $X_n \dasharrow Y_n$ is an example of toric discrepant transformations studied in \cite{Iritani:discrepant}.
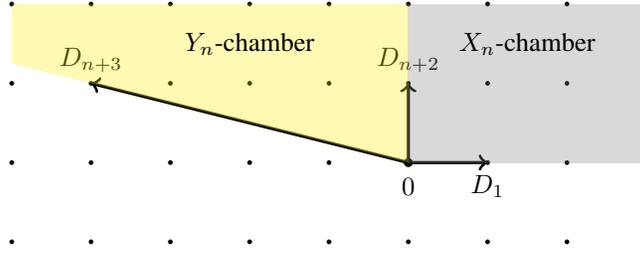
\begin{figure}[htbp]
\centering
\begin{tikzpicture}[x=3pt, y=3pt]

\fill (-50,0) circle [radius = 0.3];
\fill (-40,0) circle [radius = 0.3];
\fill (-30,0) circle [radius = 0.3];
\fill (-20,0) circle [radius = 0.3];
\fill (-10,0) circle [radius = 0.3];
\fill (0,0) circle [radius=0.3];
\fill (10,0) circle [radius =0.3];
\fill (20,0) circle [radius = 0.3];
\fill (30,0) circle [radius = 0.3];

\fill (-50,10) circle [radius = 0.3];
\fill (-40,10) circle [radius = 0.3];
\fill (-30,10) circle [radius = 0.3];
\fill (-20,10) circle [radius = 0.3];
\fill (-10,10) circle [radius = 0.3];
\fill (0,10) circle [radius=0.3];
\fill (10,10) circle [radius =0.3];
\fill (20,10) circle [radius = 0.3];
\fill (30,10) circle [radius = 0.3];

\fill (-50,20) circle [radius = 0.3];
\fill (-40,20) circle [radius = 0.3];
\fill (-30,20) circle [radius = 0.3];
\fill (-20,20) circle [radius = 0.3];
\fill (-10,20) circle [radius = 0.3];
\fill (0,20) circle [radius=0.3];
\fill (10,20) circle [radius =0.3];
\fill (20,20) circle [radius = 0.3];
\fill (30,20) circle [radius = 0.3];

\fill (-50,30) circle [radius = 0.3];
\fill (-40,30) circle [radius = 0.3];
\fill (-30,30) circle [radius = 0.3];
\fill (-20,30) circle [radius = 0.3];
\fill (-10,30) circle [radius = 0.3];
\fill (0,30) circle [radius=0.3];
\fill (10,30) circle [radius =0.3];
\fill (20,30) circle [radius = 0.3];
\fill (30,30) circle [radius = 0.3];

\draw[->, very thick] (0,10) -- (0,20);
\draw [->, very thick] (0,10) -- (10,10);
\draw [->, very thick] (0,10) -- (-40,20);

\filldraw (0,10) circle [radius =0.5];
\draw (0,7) node {$0$};
\draw (10,7) node {$D_{1}$};
\draw (0,23) node {$D_{n+2}$};
\draw (-40,23) node {$D_{n+3}$};

\fill [opacity=0.3, gray] (30,10) -- (0,10) -- (0,30) -- (30,30);
\fill [opacity=0.3, yellow] (0,10) -- (-50,22.5) -- (-50,30) -- (0,30);

\draw (15,25) node {$X_n$-chamber};
\draw (-20,25) node {$Y_n$-chamber};

\end{tikzpicture}
\caption{The divisor diagram for $X_n$ (for $n=4$) and the two GIT chambers.}
\label{fig:divisor_diagram}
\end{figure}

The $X_n$-chamber (spanned by $D_1,D_{n+2}$) is identified with the ample cone of $X_n$. Its dual cone is the Mori cone $\ovNE(X_n)$ that has two extremal rays. Let $q_1$, $q_2$ denote the quantum variables corresponding to the primitive generators of the extremal rays, which are dual to $(D_1,D_{n+2})$. These extremal rays respectively associate the contractions
\[
\varphi \colon X_n \to \overline{Y}_n, \quad \pi \colon X_n \to \bP^n
\]
where $\varphi$ is the divisorial contraction (contracting $D_{n+3}$) to the coarse moduli space $\overline{Y}_{n}$ of $Y_n$ and $\pi$ is the $\bP^1$-fibration. The map $\varphi$ is given by
\[
\varphi([z_1,\dots,z_{n+1},z_{n+2},z_{n+3}]) = [ z_1 z_{n+3}^{1/n},\dots,  z_{n+1} z_{n+3}^{1/n}, z_{n+2}].
\]
The variable $q_1$ represents the class of a line in the exceptional divisor $D_{n+3}\cong \bP^{n}$ and $q_2$ represents the class of a fiber of $\pi \colon X_n \to \bP^n$.

The K\"ahler moduli space $\cM_{X_n}$ of $X_n$ is $(\bC^\times)^2$ with coordinates $\bq=(q_1,q_2)$. The partially compactified K\"ahler moduli space $\ovcM_{X_n}$ is $\bC^2$.
The $\varphi$- and $\pi$-boundaries of $\ovcM_{X_n}$ are given respectively by
\[
\cC_\varphi = \text{$q_1$-axis} = \{q_2=0\}, \qquad \cC_\pi= \text{$q_2$-axis} = \{q_1=0\}.
\]

In Sections \ref{sec:preliminaries} and \ref{sec:conjecture_O_Fano_toric}, we studied the quantum cohomology of $X_n$ using mirror symmetry at the specialization $\mathbf{q}=(1,1)$. Let us consider general $\mathbf{q}$ now. The Landau-Ginzburg mirror is a family of functions parametrized by $(q_1,q_2)\in \cM_{X_n}=(\bC^\times)^2$ and is given by
\[
f_{q_1,q_2} = x_1+\cdots + x_{n+1} + y_1 + y_2
\]
where $(x_1,\dots,x_{n+1},y_1,y_2)$ is required to satisfy the constraints
\begin{equation*}
x_1\cdots x_{n+1} y_2^{-n} =q_1, \qquad y_1y_2 = q_2.
\end{equation*}
These constraints can be read off from rows of the weight matrix \eqref{eq:weight_matrix}. Choosing $(x_1,\dots,x_n,y_2)$ as independent coordinates, we can rewrite $f_{q_1,q_2}$ as the Laurent polynomial
\begin{equation}
\label{eq:LG_polynomial}
f_{q_1,q_2} = x_1 + \cdots + x_n + q_1 \frac{y_2^n}{x_1\cdots x_n} + y_2+ \frac{q_2}{y_2}.
\end{equation}
Setting $q_1=q_2=1$ gives the mirror Landau-Ginzburg model in formula \eqref{eqnd}.
We have a ring isomorphism
\[
QH(X_n)\bigr|_{q_1,q_2} \cong \Jac(f_{q_1,q_2}) \cong \bC[x_1,y_2]/(x_1^{n+1}-q_1 y_2^n, y_2(y_2+nx_1)-q_2)
\]
that sends $c_1(X_n)$ to the class of $f_{q_1,q_2}$. This ring isomorphism extends to $\ovcM_{X_n}=\bC^2$ if we define the extension of the Jacobi ring over $\ovcM_{X_n}$ as follows (see \cite{CCIT:toric_stacks_MS, Iritani:discrepant}): let $A\subset \bC[q_1,q_2,x_1^{\pm1},\dots,x_n^{\pm1}, y_2^{\pm1}]$ be the $\bC[q_1,q_2]$-subalgebra generated by all  monomials appearing in \eqref{eq:LG_polynomial} and let $I\subset A$ be the ideal generated by $x_i \partial f_{q_1,q_2}/\partial x_i$ ($i=1,\dots,n$) and $y_2 \partial f_{q_1,q_2}/\partial y_2$; then the Jacobi ring over $\ovcM_{X_n}$ is defined as $A/I$, which is finite flat as a $\bC[q_1,q_2]$-module.
Understanding the extension in this way, we calculate its restriction to boundary points in $\cC_\varphi = \text{($q_1$-axis)}$ and $\cC_\pi =\text{($q_2$-axis)}$ as follows:
\begin{align*}
\Jac(f_{q_1,0}) &\cong \bC[x_1,y_2]/(x_1^{n+1}-q_1 y_2^n, y_2(y_2+n  x_1)), \\
\Jac(f_{0,q_2}) & \cong \bC[x_1, y_2]/(x_1^{n+1}, y_2(y_2+nx_1)-q_2).
\end{align*}
They are isomorphic to the $\varphi$- and $\pi$-exceptional quantum cohomology. The spectrum of $\Jac(f_{q_1,0})$ for $q_1\neq 0$ consists of two points (one `fat' point of multiplicity $2n+1$ and one reduced point) and the spectrum of $\Jac(f_{0,q_2})$ for $q_2\neq 0$ consists of two `fat' points of multiplicity $n+1$. In the next section we observe how $\Spec(\Jac(f_{q_1,q_2}))$ --- which consists of $2n+2$ points for generic $q_1,q_2$ --- clusters into these spectra, in terms of critical values.

As discussed in \cite{Iritani:discrepant}, the family of rings $\Jac(f_{q_1,q_2})$ also contains the big quantum cohomology of $Y_n$. To see this, we add another boundary $\{q_1=\infty\}$ to $\ovcM_{X_n}$. Introduce new parameters $\frq_1 = q_1^{-1/n}$ and $\frq_2 = q_1^{1/n} q_2$. Under the change of the variables $y_2=q_2/w$, the Landau-Ginzburg mirror can be rewritten as
\[
f_{q_1,q_2} = \frf_{\frq_1,\frq_2} :=x_1+\cdots+x_n+ \frac{\frq_2^n}{x_1\cdots x_n w^n} + w + \frac{\frq_1 \frq_2}{w}.
\]
Along $\{\frq_1 = 0\}=\{q_1=\infty\}$, this restricts to the Landau-Ginzburg potential $\frf_{0,\frq_2}$ mirror to the small (orbifold) quantum cohomology of $Y_n$, i.e.~
\[
QH(Y_n)\bigr|_{\frq_2} \cong \Jac(\frf_{0,\frq_2}).
\]
By gluing the two families $\{f_{q_1,q_2}\}$, $\{\frf_{\frq_1,\frq_2}\}$ of Landau-Ginzburg potentials, we obtain a global mirror family over the toric orbifold $\ovcM_{X_n} \cup \{\frq_1=0\}$ that is associated with the fan as in Figure \ref{fig:divisor_diagram}. See Figure \ref{fig:global_Kaehler_moduli}.
Note that the Jacobi ring $\Jac(\frf_{0,\frq_2})$ is of rank $2n+1$ as one of the critical points of $f_{q_1,q_2}$ goes to infinity in this limit. Near the boundary locus $\{\frq_1=0\}$, therefore, the Jacobi ring $\Jac(f_{q_1,q_2})$ contains a $(2n+1)$-dimensional subring as a direct summand. We can identify this subring with the big quantum cohomology deformation of $QH(Y_n)$, where $\frq_1$ corresponds approximately to the coordinate dual to the twisted sector $\frac{1}{n}(1,\dots,1)$ (supported at the origin of $[\bC^{n+1}/\mu_n] \subset Y_n$ with $\mu_n$ denoting the cyclic group of order $n$).

\begin{figure}[htbp]
\centering
\begin{tikzpicture}
\draw[dashed] (0,0) -- (0.5,2);
\draw (0.5,2) -- (4,2) -- (4,0);
\fill[opacity=0.2,green]  (0,0) -- (0.5,2) -- (4,2) -- (4,0);
\filldraw (0.5,2) circle [radius=0.05];
\filldraw (4,2) circle [radius =0.05];
\draw (0.3,2.25) node {\small $Y_n$-cusp};
\draw (4.2,2.25) node {\small $X_n$-cusp};
\draw[->, thick] (0.5,2) -- (0.325,1.3);
\draw[->, thick] (0.5,2) -- (1.3,2);
\draw[->, thick] (4,2) -- (3.2,2);
\draw[->, thick] (4,2) -- (4,1.3);
\draw (0.63,1.3) node {$\frq_2$};
\draw (1.3,1.75) node {$\frq_1$};
\draw (3.2,1.75) node {$q_1$};
\draw (3.75,1.3) node {$q_2$};
\draw (2.25,2.25) node {$\cC_\varphi$};
\draw (4.25,1) node {$\cC_\pi$};
\draw (-0.4,1) node {\small $\{\frq_1=0\}$};

\draw (2.25, 0.7) node {$\ovcM_{X_n}$};

\end{tikzpicture}
\caption{The partially compactified K\"ahler moduli space $\ovcM_{X_n}$ together with the (dashed) infinity line $\{\frq_1=0\}=\{q_1=\infty\}$ which is identified with $\ovcM_{Y_n}$. }
\label{fig:global_Kaehler_moduli}
\end{figure}
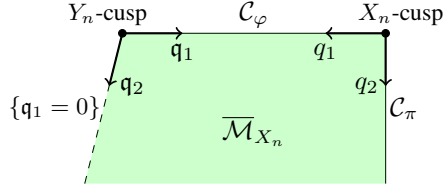

\subsection{Tropical study of critical points}
\label{subsec:tropical_crit}
We study the limiting behaviour of the quantum cohomology ring of $X_n$ (or equivalently, the Jacobi ring $\Jac(f_{q_1,q_2})$) near the boundaries $\cC_\pi$ and $\cC_\varphi$.
More specifically, we study the behaviour of critical values the Landau-Ginzburg potential $f_{q_1,q_2}$ above.
We start by noting that there is a conformal symmetry (given by the Euler vector field):
\begin{equation}
\label{eq:conformal_symmetry}
x_1 \to e^s x_1,\quad \dots \quad x_n \to e^s x_n, \quad y_2 \to e^s y_2, \quad q_1 \to e^s q_1, \quad q_2 \to e^{2s} q_2
\end{equation}
under which the Landau-Ginzburg potential \eqref{eq:LG_polynomial} is scaled by $e^s$. Hence it suffices to consider a family where $q_1$ is fixed and only $q_2$ changes (the essential parameter is $q_2/q_1^2$).
Let $0<\sfT \ll 1$ be a small tropical parameter and consider the following family:
\[
f_{q_1,q_2 \sfT^\lambda}(x_1,\dots,x_n,y_2) = x_1 + \cdots + x_n + q_1 \frac{y_2^n}{x_1\cdots x_n} + y_2+ \frac{q_2 \sfT^\lambda}{y_2}
\]
where $\lambda \in \bR$. This Laurent polynomial represents a point close to the $\pi$-boundary when $\lambda<0$ and a point close to the $\varphi$-boundary when $\lambda>0$. Following the method in Section \ref{subsec:Example_Xn}, we find that critical points of $f_{q_1,q_2 \sfT^\lambda}$ are given as
\begin{equation*}
x_1 = \cdots = x_n = q_1^{\frac{1}{n+1}} t^{n}, \quad y_2 = t^{n+1}
\end{equation*}
for $t$ satisfying the algebraic equation:
\begin{equation}
\label{eq:t_equation}
t^{2n+2} + n q_1^{\frac{1}{n+1}} t^{2n+1} - q_2 \sfT^\lambda =0.
\end{equation}
The corresponding critical value $u$ of $f_{q_1,q_2 \sfT^\lambda}$ is
\begin{equation}
\label{eq:u_equation}
u = 2 t^{n+1} + (2n+1) q_1^{\frac{1}{n+1}} t^n.
\end{equation}
We solve for $t$ and $u$ as Puiseux series in $\sfT$. The lowest order exponent $e$ of $\sfT$ in $t$ (so that $t = a \sfT^e + \text{(higher order terms)}$ with $a\neq 0$) is such that the minimum
\[
\min\{ (2n+2) e, (2n+1) e, \lambda \}
\]
is attained by more than one terms. We find that such an $e$ is given as
\[
e = \begin{cases}
0 \ \text{ or } \ \lambda/(2n+1) & \text{for $\lambda \ge 0$;}  \\
\lambda/(2n+2) & \text{for $\lambda\le 0$.}
\end{cases}
\]
This shows a bifurcation of critical points at $\lambda=0$: see Figure \ref{fig:tropical_bifurcation}.

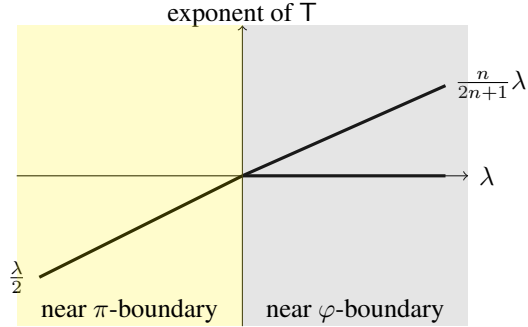
\begin{figure}[htbp]
\centering
\begin{tikzpicture}
\draw[->] (-3,0) -- (3,0);
\draw[->] (0,-2) -- (0,2);
\draw[very thick] (0,0) -- (2.7,0);
\draw[very thick] (0,0) -- (2.7,1.2);
\draw[very thick] (0,0) -- (-2.7,-1.35);

\draw (-3,-1.35) node {$\frac{\lambda}{2}$};
\draw (3.3,1.2) node {$\frac{n}{2n+1} \lambda$};

\draw (3.25,0) node {$\lambda$};
\draw (0,2.15) node {exponent of $\sfT$};

\fill[opacity =0.2, gray] (0,2) -- (3,2) -- (3,-2) -- (0,-2);
\fill[opacity =0.2, yellow] (0,2) -- (-3,2) -- (-3,-2) -- (0,-2);

\draw (-1.5, -1.8) node {near $\pi$-boundary};
\draw (1.5,-1.8) node {near $\varphi$-boundary};

\end{tikzpicture}
\caption{Bifurcation of the leading exponent (valuation) of the critical values $u$ (for $n=4$).}
\label{fig:tropical_bifurcation}
\end{figure}

The Puiseux expansions of the critical values $u$ with respect to $\sfT$ start as follows: for $\lambda>0$, we have
\begin{align}
\nonumber
u & \sim
\begin{cases}
(2n+1) n^{-\frac{n}{2n+1}} (q_1 q_2^n)^{\frac{1}{2n+1}} \sfT^{\frac{n}{2n+1} \lambda} & \text{($2n+1$ solutions)} \\
(-n)^n q_1 & \text{(one solution)}
\end{cases}
\intertext{and for $\lambda<0$, we have }
\label{eq:crit_values_pi-boundary}
u & \sim
2 q_2^{\frac{1}{2}} \sfT^{\frac{\lambda}{2}} + (n+1) (q_1 q_2^{\frac{n}{2}})^{\frac{1}{n+1}}\sfT^{\frac{n}{2n+2} \lambda}  \quad \ \text{($2n+2$ solutions)}.
\end{align}
They give two pictures for the critical values (or the eigenvalues of $(c_1(X_n)\star_\bq)$) near the boundaries $\cC_\pi$, $\cC_\varphi$. See Figures \ref{fig:critical_values_even} and \ref{fig:critical_values_odd} for these pictures when $n$ is even/odd.

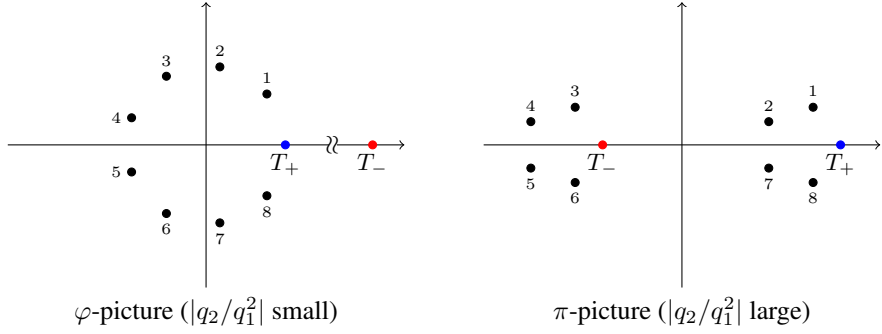
\begin{figure}[htbp]
\centering
\begin{tikzpicture}[x=30pt,y=30pt]
\draw (-4,0) -- (0.05,0);
\draw[->] (0.15,0) -- (1,0);
\draw[->] (-1.5,-1.8) -- (-1.5,1.8);
\draw[->] (2,0) -- (7,0);
\draw[->] (4.5,-1.8) -- (4.5,1.8);
\draw (-1.5,-2.1) node {$\varphi$-picture ($|q_2/q_1^2|$ small or equal to $1$)};
\draw (4.5,-2.1) node {$\pi$-picture ($|q_2/q_1^2|$ large)};

\filldraw[color=blue] (-0.5,0) circle [radius=0.05];
\filldraw (-0.733,0.642) circle [radius =0.05];
\filldraw (-1.326,0.984) circle [radius=0.05];
\filldraw (-2,0.866) circle [radius =0.05];
\filldraw (-2.439,0.342) circle [radius =0.05];
\filldraw (-0.733,-0.642) circle [radius =0.05];
\filldraw (-1.326,-0.984) circle [radius=0.05];
\filldraw (-2,-0.866) circle [radius =0.05];
\filldraw (-2.439,-0.342) circle [radius =0.05];

\draw (-0.733,0.84) node {\tiny $1$};
\draw (-1.326,1.18) node {\tiny $2$};
\draw (-2,1.066) node {\tiny $3$};
\draw (-2.63,0.342) node {\tiny $4$};
\draw (-2.63,-0.342) node {\tiny $5$};
\draw (-2,-1.066) node {\tiny $6$};
\draw (-1.326,-1.18) node {\tiny $7$};
\draw (-0.733,-0.84) node {\tiny $8$};

\draw (0.1,0) node {\rotatebox{90}{$\approx$}};
\filldraw[color=red] (0.6,0) circle [radius =0.05];

\draw (0.6,-0.25) node {\small $T_-$};
\draw (-0.5,-0.25) node {\small $T_+$};

\filldraw[color=red] (3.5,0) circle [radius =0.05];
\filldraw (3.154,0.475) circle [radius =0.05];
\filldraw (2.595,0.293) circle [radius =0.05];
\filldraw (3.154,-0.475) circle [radius =0.05];
\filldraw (2.595,-0.293) circle [radius =0.05];

\draw (3.154,0.675) node {\tiny $3$};
\draw (2.595, 0.493) node {\tiny $4$};
\draw (2.595, -0.493) node {\tiny $5$};
\draw (3.154, -0.675) node {\tiny $6$};

\filldraw[color=blue] (6.5,0) circle [radius =0.05];
\filldraw (6.154,0.475) circle [radius =0.05];
\filldraw (5.595,0.293) circle [radius =0.05];
\filldraw (6.154,-0.475) circle [radius =0.05];
\filldraw (5.595,-0.293) circle [radius =0.05];

\draw (6.154,0.675) node {\tiny $1$};
\draw (5.595,0.493) node {\tiny $2$};
\draw (5.595,-0.493) node {\tiny $7$};
\draw (6.154,-0.675) node {\tiny $8$};

\draw (6.5,-0.25) node {\small $T_+$};
\draw (3.5,-0.25) node {\small $T_-$};

\end{tikzpicture}
\caption{Critical values of $f_{q_1,q_2}$ for $n=4$. We let $q_1,q_2$ be positive real numbers and vary $q_2/q_1^2$; the critical values marked by the same number correspond to each other.}
\label{fig:critical_values_even}
\end{figure}

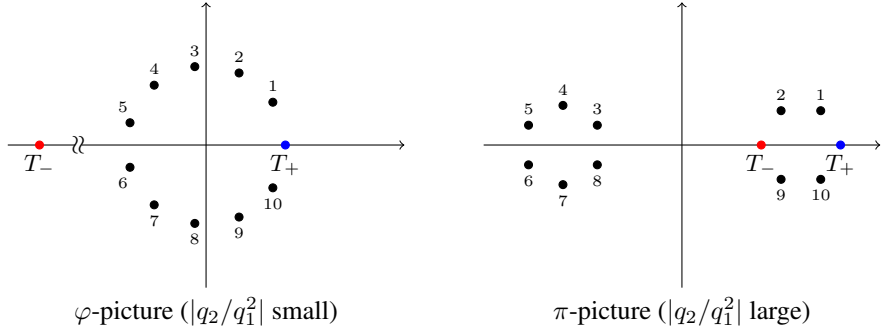
\begin{figure}[htbp]
\centering
\begin{tikzpicture}[x=30pt,y=30pt]
\draw (-4,0) -- (-3.15,0);
\draw[->] (-3.07,0) -- (1,0);
\draw[->] (-1.5,-1.8) -- (-1.5,1.8);
\draw[->] (2,0) -- (7,0);
\draw[->] (4.5,-1.8) -- (4.5,1.8);
\draw (-1.5,-2.1) node {$\varphi$-picture ($|q_2/q_1^2|$ small or equal to $1$)};
\draw (4.5,-2.1) node {$\pi$-picture ($|q_2/q_1^2|$ large)};

\filldraw[color=blue] (-0.5,0) circle [radius=0.05];
\filldraw (-0.658, 0.54) circle [radius =0.05];
\filldraw (-1.084,0.909) circle [radius=0.05];
\filldraw (-1.642,0.989) circle [radius=0.05];
\filldraw (-2.154,0.755) circle [radius=0.05];
\filldraw (-2.459,0.281) circle [radius=0.05];
\filldraw (-0.658, -0.54) circle [radius =0.05];
\filldraw (-1.084,-0.909) circle [radius=0.05];
\filldraw (-1.642,-0.989) circle [radius=0.05];
\filldraw (-2.154,-0.755) circle [radius=0.05];
\filldraw (-2.459,-0.281) circle [radius=0.05];


\draw (-0.658,0.74) node {\tiny $1$};
\draw (-1.084,1.109) node {\tiny $2$};
\draw (-1.642,1.189) node {\tiny $3$};
\draw (-2.154, 0.955) node {\tiny $4$};
\draw (-2.549,0.481) node {\tiny $5$};
\draw (-0.658,-0.74) node {\tiny $10$};
\draw (-1.084,-1.109) node {\tiny $9$};
\draw (-1.642,-1.189) node {\tiny $8$};
\draw (-2.154, -0.955) node {\tiny $7$};
\draw (-2.549,-0.481) node {\tiny $6$};

\draw (-3.1,0) node {\rotatebox{90}{$\approx$}};
\filldraw[color=red] (-3.6,0) circle [radius =0.05];

\draw (-3.6,-0.25) node {\small $T_-$};
\draw (-0.5,-0.25) node {\small $T_+$};

\filldraw (3.433,0.25) circle [radius =0.05];
\filldraw (3,0.5) circle [radius =0.05];
\filldraw (2.566,0.25) circle [radius =0.05];
\filldraw (2.566,-0.25) circle [radius =0.05];
\filldraw (3.433,-0.25) circle [radius =0.05];
\filldraw (3,-0.5) circle [radius =0.05];

\draw (3.433,0.45) node {\tiny $3$};
\draw (3,0.7) node {\tiny $4$};
\draw (2.566,0.45) node {\tiny $5$};
\draw (3.433,-0.45) node {\tiny $8$};
\draw (3,-0.7) node {\tiny $7$};
\draw (2.566,-0.45) node {\tiny $6$};

\filldraw[color=blue] (6.5,0) circle [radius =0.05];
\filldraw (6.25,0.433) circle [radius =0.05];
\filldraw (5.75,0.433) circle [radius =0.05];
\filldraw[color=red] (5.5,0) circle [radius =0.05];
\filldraw (6.25,-0.433) circle [radius =0.05];
\filldraw (5.75,-0.433) circle [radius =0.05];

\draw (6.25,0.633) node {\tiny $1$};
\draw (5.75,0.633) node {\tiny $2$};
\draw (6.25,-0.633) node {\tiny $10$};
\draw (5.75,-0.633) node {\tiny $9$};

\draw (6.5,-0.25) node {\small $T_+$};
\draw (5.5,-0.25) node {\small $T_-$};

\end{tikzpicture}
\caption{Two pictures of critical values of $f_{q_1,q_2}$ for $n=5$.}
\label{fig:critical_values_odd}
\end{figure}

Suppose that $q_1,q_2$ are positive real. The same argument as in Section \ref{subsec:Example_Xn} shows that $f_{q_1,q_2}$ has precisely two \emph{real} critical points, and one of them is the conifold point (whose coordinates are all positive real). Let $T_-, T_+$ denote the critical values of the two real critical points, where $T_+=T_{\text{con}}$ corresponds to the conifold point. These values move on the real line and their behaviour (as $q_2/q_1^2$ varies) depends on the parity of $n$; when $n$ is even, they must collide for some value of  $q_2/q_1^2$ (see Figures \ref{fig:critical_values_even}, \ref{fig:critical_values_odd}). We observe that the other critical values never cross the real line as far as $q_1,q_2$ are positive real, i.e.,~

\begin{lemma}
\label{lem:real_critical_values}
When $q_1$ and $q_2$ are positive real, $T_\pm$ are the only real critical values of $f_{q_1,q_2}$.
\end{lemma}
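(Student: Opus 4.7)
The plan is to reduce the claim to an elementary trigonometric identity that admits no nontrivial solutions. In fact I would prove the stronger statement that for every non-real $s\in\bC$ with $h(s)\in\bR$, one has $\check g(s)\notin\bR$, so the positivity hypothesis on $\tilde q := q_2/q_1^2$ is not even needed.

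By the conformal symmetry \eqref{eq:conformal_symmetry}, I may first normalize to $q_1=1$, so that critical points of $f_{1,q_2}$ become the roots $s$ of $h(s):=s^{2n+2}+ns^{2n+1}=q_2$, with associated critical values $\check g(s):=2s^{n+1}+(2n+1)s^n$; compare \eqref{eq:t_equation}--\eqref{eq:u_equation}. Writing $s=re^{i\theta}$ with $r>0$, extracting imaginary parts from $h(s)\in\bR$ and $\check g(s)\in\bR$ gives
\begin{align*}
\text{(II)}\quad & r\sin((2n+2)\theta)+n\sin((2n+1)\theta)=0,\\
\text{(I$'$)}\quad & 2r\sin((n+1)\theta)+(2n+1)\sin(n\theta)=0.
\end{align*}

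The key step will be to expand (II) via the double-angle $\sin((2n+2)\theta)=2\sin((n+1)\theta)\cos((n+1)\theta)$ together with the addition formula for $\sin((2n+1)\theta)$, then substitute $2r\sin((n+1)\theta)=-(2n+1)\sin(n\theta)$ from (I$'$). I want to emphasise that this manipulation is entirely algebraic, needing no division by potentially vanishing quantities; after invoking $\sin((n+1)\theta-n\theta)=\sin\theta$, (II) collapses to
\[
n\sin\theta = \cos((n+1)\theta)\sin(n\theta),
\]
which by product-to-sum is equivalent to $\sin((2n+1)\theta)=(2n+1)\sin\theta$.

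The endgame is the sharp inequality $|\sin(N\theta)|\le N|\sin\theta|$ for $N\in\bZ_{\ge 1}$ (equivalently, the Chebyshev bound $|U_{2n}(\cos\theta)|\le 2n+1$ on $[-1,1]$ with equality only at $\cos\theta=\pm1$), which forces $\sin\theta=0$ and hence $s\in\bR$. The main subtlety will be verifying that the trigonometric reduction is genuinely division-free, so that no separate case analysis arises for $\sin((n+1)\theta)=0$ or $\sin((2n+2)\theta)=0$; once that is confirmed the Chebyshev step closes the argument cleanly and in fact yields more than the lemma requires.
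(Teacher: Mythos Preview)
Your proof is correct and follows essentially the same route as the paper: both reduce to the identity $(2n+1)\sin\theta=\sin((2n+1)\theta)$ and then invoke the Chebyshev-type bound $|\sin(m\theta)|<m|\sin\theta|$ for $\theta\notin\pi\bZ$. Your direct substitution of (I$'$) into the double-angle expansion of (II) is a mild streamlining of the paper's elimination-of-$r$ step, as it indeed renders the separate treatment of the case $\sin((n+1)\theta)=0$ unnecessary.
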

\begin{proof}
By the conformal symmetry \eqref{eq:conformal_symmetry}, it suffices to consider the case where $q_1=1$.
As discussed in Section \ref{subsec:Example_Xn}, the equation \eqref{eq:t_equation} has precisely two real solutions $t_\pm$ and they give rise to the two real critical points. We need to show that non-real solutions of \eqref{eq:t_equation} gives rise to a non-real critical value $u$ given by \eqref{eq:u_equation}.
Suppose on the contrary that we have a non-real solution $t$ of \eqref{eq:t_equation} (with $\lambda=0$) such that the corresponding $u$ is real. We may assume that $t$ is in the upper-half plane and set $t = r e^{i \theta}$ with $r>0$, $\theta \in (0,\pi)$. We have the equalities
\begin{gather*}
2 t^{n+1} + (2 n+1) t^n  = u =  2 \overline{t}^{n+1} + (2n+1) \overline{t}^n, \\
t^{2n+2} + n t^{2n+1}  = q_2 = \overline{t}^{2n+2} +n \overline{t}^{2n+1}
\end{gather*}
that imply that
\begin{equation}
\label{eq:sin_contiguity}
\sin((n+1) \theta) = -\frac{2n+1}{2r} \sin (n\theta), \quad
\sin((2n+2)\theta) = -\frac{n}{r} \sin ((2n+1) \theta).
\end{equation}
Eliminating $r$, we obtain:
\[
\sin((n+1) \theta) \sin((2n+1)\theta) = \frac{2n+1}{2n} \sin (n\theta) \sin ((2n+2) \theta).
\]
If $\sin((n+1)\theta)=0$, $\sin (n\theta) =0$ by the equation \eqref{eq:sin_contiguity}; $\sin((n+1)\theta) = \sin \theta \cos (n\theta) + \cos \theta \sin (n \theta)$ implies that $\cos(n\theta)=0$, which contradicts $\sin(n\theta)=0$. Hence $\sin ((n+1) \theta) \neq 0$ and we get
\[
\sin((2n+1) \theta) = \frac{2n+1}{n} \sin (n \theta) \cos ((n+1)\theta).
\]
We claim that this cannot hold for $\theta \in (0,\pi)$.
For general $\theta$, we have
\[
\sin((2n+1) \theta) - \frac{2n+1}{n} \sin (n \theta) \cos ((n+1)\theta)
= \frac{(2n+1) \sin \theta - \sin((2n+1)\theta)}{2n}
\]
by a product-to-sum formula. It suffices to show that $g_m(\theta) := \sin (m\theta)/\sin \theta$ has the absolute value less than $m$ for $\theta \in (0,\pi)$ and $m=2,3,4,\dots$. We can easily show this by induction on $m$ using $g_{m+1}(\theta) = \cos(m \theta) + g_m(\theta) \cos \theta$.
\end{proof}

\begin{remark}
\label{rem:eigenvalues_blowup}
Recall that $\varphi$ is a blowup at a unique singular point of $\bP(1^{n+1},n)$.
For a blowup $f\colon \widetilde{X}\to X$ of a smooth variety $X$ along a smooth subvariety $Z$ of codimension $r$, the spectrum of $(c_1(\widetilde{X})\star_q)$ near the $f$-boundary has a similar picture: there is a `convergent' group (corresponding to $QH(X)$) surrounded by $(r-1)$ many `divergent' satellite groups (each of which corresponds to $QH(Z)$), see Example \ref{exa:blowup}, \cite[Figures 10, 16]{Iritani:discrepant} and \cite[Figure 5]{Iritani:monoidal}.
However, in the smooth blowup case (unlike $\varphi \colon X_n \to \overline{Y}_n$ with even $n$), the satellite groups are centered around $C (-1)^{\frac{1}{r-1}}$ for a $C>0$ and do not contain a positive real eigenvalue for a positive real $\bq$.
\end{remark}

\begin{remark}
We studied a configuration of critical values of the mirror Landau-Ginzburg potential $f_{q_1,q_2}$, or equivalently, eigenvalues of $(c_1(X_n)\star_\bq)$ as $\bq$ approaches the boundary of $\cM_{X_n}$. For a general Fano manifold $X$, the spectrum of $(c_1(X)\star_\bq)$ up to overall $\bR_{>0}$-scaling is parametrized by a point in the reduced K\"ahler moduli space:
\[
\cM_X/\text{(real Euler flow)} \cong (H^2(X,\bR)/\bR c_1(X)) \times H^2(X,\iu \bR)/H^2(X,2\pi \iu \bZ).
\]
We can compactify it by adding a sphere $S = S(H^2(X,\bR)/\bR c_1(X))$ at infinity on the real part; the compactified space is homeomorphic to $D^{r-1} \times (S^1)^r$ for $r = \dim H^2(X)$. It is interesting to understand the limiting behaviour of the spectrum near the boundary: $S$ is decomposed into cells corresponding to extremal rays and the spectrum near each cell is approximated by that of the corresponding exceptional quantum cohomology. In the example at hand, $H^2(X_n,\bR)/\bR c_1(X_n) \cong \bR$ has two boundary points corresponding to the extremal contractions $\pi$ and $\varphi$.
\end{remark}

\subsection{$\hGamma$-integral flat sections and Lefschetz thimbles}
\label{subsec:Gamma-flat-sections_thimbles}
Before identifying the principal asymptotic class, we take a brief detour into toric mirror symmetry. We recall the correspondence between $\hGamma$-integral flat sections for quantum connection of $X_n$ and Lefschetz thimbles of the Landaug-Ginzburg mirror $f_{q_1,q_2}$, as demonstrated in \cite{Iri1}. Although our attention will be focused on the target spaces $X_n$, similar results hold true for weak-Fano toric DM stacks and their mirrors.

To simplify notation (and to emphasize that the results in this section hold for more general toric varieties), we write $X$ for the toric variety $X_n=\bP_{\bP^n}(\cO\oplus \cO(n))$ and $f_\bq = f_{q_1,q_2}$ for the Laurent polynomial mirror \eqref{eq:LG_polynomial} parametrized by $\bq\in \cM_X$. Let $N$ be the complex dimension of $X$. Then $f_\bq$ is a function on $(\bC^\times)^N$. We write $x_1,\dots,x_N$ for the variables of $f_\bq$ (which were denoted by $x_1,\dots,x_n,y_2$ in \eqref{eq:LG_polynomial}). The mirror of the quantum connection is given by the \emph{Brieskorn lattice}
\[
\Bri(f_{\bq}) := H^{N}\left(\Omega_{(\bC^\times)^N}^\bullet [z], z d + df_{\bq}\wedge \right)
= \frac{\Omega^N_{(\bC^\times)^N}[z]}{(zd + df_\bq \wedge) \Omega^{N-1}_{(\bC^\times)^N}[z]}
\]
equipped with the connection $\nabla^{\text{B}}_{z^2\partial_z}\colon \Bri(f_\bq) \to \Bri(f_\bq)$
\[
\nabla^{\text{B}}_{z^2 \partial_z} \left[\phi \frac{d\bx}{\bx}\right]  = \left[ \left( z^2 \parfrac{\phi}{z} -  f_\bq \phi - \frac{N}{2} z \phi \right) \frac{d\bx}{\bx} \right]
\]
where $\phi \in \bC[x_1^{\pm1},\dots,x_N^{\pm1},z]$ and $\frac{d\bx}{\bx} := \frac{dx_1\cdots dx_N}{x_1\cdots x_N}$. We have a mirror isomorphism of $\bC[z]$-modules with meromorphic connections:
\begin{equation}
\label{eq:mirror_isomorphism}
\Mir \colon (\Bri(f_\bq), \nabla^{\text{B}}_{z^2 \partial_z}) \cong (H^*(X)[z], \nabla^{\text{A}}_{z^2\partial_z}|_\bq)
\end{equation}
sending $\left[\frac{d\bx}{\bx}\right]$ to the identity class $1$, where $\nabla^{\text{A}}|_\bq$ stands for the quantum connection \eqref{eq:quantum_connection_q} at $\bq$. 
This isomorphism also preserves a connection in the $\bq$-direction.

The function $f_\bq \colon (\bC^\times)^N \to \bC$ defines a locally trivial fibration of $C^\infty$ manifolds away from critical values of $f_\bq$; the symplectic connection associated with the complete K\"ahler metric
 $\frac{\iu}{2} \sum_{i=1}^N d \log x_i \wedge d \log \overline{x_i}$ on $(\bC^\times)^N$ defines a well-defined parallel translate between fibres $f_\bq^{-1}(u)$ along any path on $\bC$ avoiding critical values.
Let $p \in (\bC^\times)^N$ be a non-degenerate critical point of $f_\bq$ and let $\gamma \colon [0,\infty) \to \bC$ be an injective path starting from the critical value $\gamma(0)=f_\bq(p)$ and avoiding any other critical values. We assume that $\gamma(t)$ is linear near $t=0$ and $t=\infty$. When $\gamma'(t) \in \bR_{>0} e^{\iu\theta}$ near $t=\infty$, we call such a path $\gamma$ an (infinite) \emph{vanishing path in the direction $e^{\iu\theta}$}.
To the critical point $p$ and the vanishing path $\gamma$, we associate a non-compact cycle $\Gamma\cong \bR^N$
\[
\Gamma = \Gamma_{p,\gamma} = \bigcup_{t\in [0,\infty)} C_t
\]
where $C_t\subset f_\bq^{-1}(\gamma(t))$ is the \emph{vanishing cycle} consisting of points that converge to $p$ by parallel translate along $\gamma(t)$ as $t\to +0$ (and $C_0 = \{p\}$).
When $\gamma$ is a vanishing path in the direction $e^{\iu\theta}$, we call $\Gamma_{p,\gamma}$ an (infinite) \emph{Lefschetz thimble in the direction $e^{\iu\theta}$}.
When the image of $\gamma$ is the straight ray $f_\bq(p) + \bR_{\ge 0} e^{\iu\theta}$, $\Gamma$ is the stable manifold of $p$ with respect to the downward gradient flow of the function $\Re(e^{-\iu\theta} f_\bq)$.
The infinite Lefschetz thimble $\Gamma$ gives rise to a solution of $\Bri(f_\bq)$
\begin{equation}
\label{eq:oscillatory_integral}
\Bri(f_\bq) \ni \left[\Omega \right] \longmapsto (-2\pi z)^{-N/2}\int_\Gamma e^{f_\bq(\bx)/z} \Omega
\end{equation}
that intertwines $\nabla^{\text{B}}_{z^2 \partial_z}$ with $z^2 \partial_z$. This oscillatory integral converges when $\Re(z e^{-\iu\theta})<0$ where $e^{\iu \theta}$ is the direction of $\Gamma$.
A Lefschetz thimble in the direction $e^{\iu\theta}$ gives an element of the relative homology group
\[
\Lef_{\bq,e^{\iu\theta}} := H_N((\bC^\times)^N, \{\bx : \Re(f_\bq(\bx) e^{-\iu\theta}) \ge M\};\bZ)
\]
where $M\gg 1$ is a sufficiently large number (the right-hand side does not depend on a sufficiently large $M$).
These groups $\Lef_{\bq,e^{\iu\theta}}$ form a local system over $\cM_X \times S^1$.
For a generic $\bq$, $f_\bq$ has only non-degenerate critical points $p_1,\dots,p_s$. In this case, we set $u_i = f_\bq(p_i)$ and take mutually non-intersecting vanishing paths $\gamma_i \colon [0,\infty) \to \bC$ starting from $u_i$ and extending in the direction of $e^{\iu \theta}$ (we assume $\gamma_i = \gamma_j$ when $u_i = u_j$). Such a system of paths $\gamma_1,\dots,\gamma_s$ is called a \emph{distinguished basis of vanishing paths} when they are ordered in such a way that
\[
\Im(\gamma_1(t) e^{-\iu\theta})\ge \Im(\gamma_2(t) e^{-\iu\theta})\ge \cdots \ge \Im(\gamma_s(t) e^{-\iu\theta}) \quad \text{for $t\gg 1$.}
\]
The corresponding Lefschetz thimbles $\Gamma_{p_1,\gamma_1},\dots,\Gamma_{p_s,\gamma_s}$ form a basis of $\Lef_{\bq,e^{\iu\theta}}$.

\begin{example}
\label{exa:positive_real_thimble}
Suppose that the parameter $\bq$ is positive real, i.e.~
\[
\bq\in \cM_{X,\bR} :=\Hom(H_2(X,\bZ),\bR_{>0}) \subset \cM_X.
\]
The Lefschetz thimble associated with the conifold point $\bx_{\text{con}} \in (\bR_{>0})^N$, a unique point where $f_\bq|_{(\bR_{>0})^N}$ attains a minimum, and the vanishing path $f_\bq(\bx_{\text{con}}) + \bR_{\ge 0}$ is given by $\Gamma_\bR = (\bR_{>0})^N$. We call it the \emph{positive-real Lefschetz thimble}. Note that this is well-defined (i.e.~does not contain other critical points in the closure) even when $f_\bq(\bx_{\text{con}})+\bR_{\ge 0}$ contains other critical values.
\end{example}

The main result of \cite{Iri1} concerns a relationship between oscillatory integrals  \eqref{eq:oscillatory_integral} associated with Lefschetz thimbles $\Gamma$ and $\hGamma$-integral flat sections for the quantum connection. Let $K^0(X)$ denote the $K$-group of topological $\bC$-vector bundles on $X$. The \emph{$\hGamma$-integral flat section} $\frs_V$ associated with $V\in K^0(X)$ is defined to be
(\cite[Definition 2.9]{Iri1})
\[
\frs_V(\tau,z) := (2\pi)^{-N/2} S(\tau,z) z^{-\mu} z^{c_1(X)} \left( \hGamma_X \Ch(V) \right)
\]
where $S(\tau,z) z^{-\mu} z^{c_1(X)}$ is the fundamental solution in the previous section \S\ref{subsec:A_general_q} and $\Ch(V)=\sum_{k\ge 0} (2\pi \iu)^k \ch_k(V)$ is the modified Chern character. The map $V \mapsto \frs_V(\tau,z)$ intertwines the Euler pairing with the Poincar\'e pairing $\<\cdot,\cdot\>^X$ as follows \cite[Proposition 2.10]{Iri1}:
\[
\chi(V_1,V_2) = \left\<\frs_{V_1}(\tau,e^{-\iu\pi}z), \frs_{V_2}(\tau,z)\right\>^X
\]
where $\chi(V_1,V_2)$ is given by $\sum_{k=0}^N (-1)^k \dim \Ext^k(V_1,V_2)$ if  $V_1,V_2$ are holomorphic vector bundles; otherwise, it is the $K$-theoretic push-forward of $V_1^*\otimes V_2$ to a point.
The lattice $\Lef_{\bq,1}$ on the mirror side also has a pairing. The perfect intersection pairing $\# \colon \Lef_{\bq,-1}\times \Lef_{\bq,1} \to \bZ$ induces a pairing
\[
\#(e^{-\pi \iu} \Gamma_1 \cdot \Gamma_2) = \# (\Gamma_1 \cdot e^{\pi \iu} \Gamma_2)
\]
of $\Gamma_1, \Gamma_2 \in \Lef_{\bq,1}$, where $e^{\pm \pi \iu} \Gamma \in \Lef_{\bq,-1}$ is the parallel translate of $\Gamma \in \Lef_{\bq,1}$ along the semicircle $[0,\pi]\ni \theta \mapsto (\bq,e^{\pm\iu\theta}) \in \cM_X\times S^1$.

\begin{thm}[{\cite[\S 4.3]{Iri1}}]
\label{thm:mirror_isom_lattice}
For a positive real parameter $\bq =e^\tau \in \cM_{X,\bR}$ with $\tau \in H^2(X,\bR)$, we have an isomorphism of lattices
\begin{equation}
\label{eq:mirror_isom_lattice}
\Lef_{\bq,1} \cong K^0(X), \quad \Gamma \mapsto V(\Gamma)
\end{equation}
which is locally constant in $\bq\in \cM_{X,\bR}$ such that for $z>0$,
\begin{equation}
\label{eq:oscillatory_integral_flat_section}
(2\pi z)^{-N/2} \int_{\Gamma} e^{-f_\bq(\bx)/z} \Omega_{-z}
= \left\<\iota^* (\Mir[\Omega]), \frs_{V(\Gamma)}(\tau,z) \right\>^X
\end{equation}
for $[\Omega] = [\Omega_z] \in \Bri(f_\bq)$, where $\iota^* \colon H^*(X)[z] \to H^*(X)[z]$ is the map sending $f(z)$ to $f(-z)$.
Moreover, we have
\begin{align}
\label{eq:Euler_intersection_pairings}
\begin{split}
 V(\Gamma_\bR) & = \cO_X \\
\chi(V(\Gamma_1), V(\Gamma_2)) &= (-1)^{N(N-1)/2}\#(e^{-\pi \iu}\Gamma_1 \cdot \Gamma_2)
\end{split}
\end{align}
for $\Gamma_1,\Gamma_2 \in \Lef_{\bq,1}$, where $\Gamma_\bR$ is the positive-real Lefschetz thimble in Example \ref{exa:positive_real_thimble}.
\end{thm}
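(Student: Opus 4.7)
The plan is to follow the strategy of \cite{Iri1}, transporting oscillatory integrals on the B-side to flat sections on the A-side through the mirror isomorphism \eqref{eq:mirror_isomorphism}, then identifying each such flat section with a $\hGamma$-integral flat section $\frs_V$. Concretely, for each $\Gamma \in \Lef_{\bq,1}$ the left-hand side of \eqref{eq:oscillatory_integral_flat_section} is a $\bC$-linear functional on $\Bri(f_\bq)$ that is flat in $z$ (in the sense that it intertwines $\nabla^{\text{B}}_{z^2\partial_z}$ with $z^2\partial_z$). Composing with $\Mir^{-1}$ and dualizing via the Poincar\'e pairing, one obtains a $\nabla^{\text{A}}$-flat section $\frs_\Gamma(\tau,z)$ of $(H^\bullet(X)[z])^\vee$. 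The task is then to identify $\frs_\Gamma$ with $\frs_{V(\Gamma)}$ for some $V(\Gamma)\in K^0(X)$, with $V$ locally constant in $\bq\in\cM_{X,\bR}$.

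The anchor of the identification is the positive-real Lefschetz thimble $\Gamma_\bR=(\bR_{>0})^N$ of Example \ref{exa:positive_real_thimble}. First I would evaluate $\frs_{\Gamma_\bR}$ explicitly by applying $\Mir$ to the cohomological image of $1\in H^\bullet(X)$, which is $[\frac{d\bx}{\bx}]$. This reduces the problem to computing the oscillatory integral $(2\pi z)^{-N/2}\int_{(\bR_{>0})^N} e^{-f_\bq(\bx)/z}\frac{d\bx}{\bx}$. Using the standard Mellin--Barnes presentation of such a toric integral (writing each exponential $e^{-x_i/z}$ as an inverse Mellin transform of $\Gamma(-s)z^{-s}x_i^{-s}$ and integrating out the torus variables against the relation lattice of $\Sigma$), the integral assembles into a product of Gamma functions times a hypergeometric series. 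Matching this presentation with the known hypergeometric expression for $\<1,\frs_{\cO_X}(\tau,z)\>^X$ obtained from Givental's $I$-function then yields $\frs_{\Gamma_\bR}=\frs_{\cO_X}$, i.e.\ $V(\Gamma_\bR)=\cO_X$. I expect this Mellin-Barnes/Gamma-function matching to be the main obstacle: the algebraic bookkeeping relating the Gamma-class $\hGamma_X$ to the product $\prod_i\Gamma(1-D_i/z)$ appearing in the integral is exactly where the Gamma conjecture for toric mirror symmetry lives.

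Having fixed one class, I would extend the assignment $\Gamma\mapsto V(\Gamma)$ to all of $\Lef_{\bq,1}$ in two complementary ways. First, using local constancy in $\bq$ and compatibility with the Gauss--Manin connection in $\bq$ on both sides, $V$ is determined away from discriminant loci by parallel transport; the monodromy of $\Lef_{\bq,1}$ around loops corresponding to $D\in H^2(X,\bZ)$ matches the action $V\mapsto V\otimes\cO(-D)$ on $K^0(X)$, because on the B-side such loops rescale coordinates by roots of unity while on the A-side they act by $S(\tau+2\pi\iu D,z) = S(\tau,z)e^{-2\pi \iu D/z}$ (cf.\ Remark \ref{rem:AX_choice_of_lift}). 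Second, rotating the direction $e^{\iu\theta}$ in which the thimble extends realizes mutation/Picard-Lefschetz operators on $\Lef_{\bq,\pm1}$ that mirror the Seidel/Serre functor $V\mapsto V\otimes\omega_X^{-1}[\dim X]$ on the A-side via $z\mapsto e^{2\pi\iu}z$. Together these operations act transitively (modulo a free part) on the respective lattices starting from $\Gamma_\bR\leftrightarrow\cO_X$, which pins down $V(\Gamma)$ uniquely and integrally.

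Finally, for the pairing statements, I would deduce \eqref{eq:Euler_intersection_pairings} from a Stokes/Poincar\'e duality argument: the intersection pairing $\#(e^{-\pi\iu}\Gamma_1\cdot\Gamma_2)$ is computed by pairing the oscillatory integrals for $\Gamma_1$ (at direction $-1$, i.e.\ $z<0$) and $\Gamma_2$ (at direction $1$, i.e.\ $z>0$) through a steepest-descent contour deformation, which on the A-side becomes the Poincar\'e pairing of the corresponding flat sections up to the sign $(-1)^{N(N-1)/2}$ from reversing orientation. Combined with the identity $\chi(V_1,V_2)=\<\frs_{V_1}(\tau,e^{-\iu\pi}z),\frs_{V_2}(\tau,z)\>^X$ from \cite[Proposition 2.10]{Iri1}, this yields the second equation of \eqref{eq:Euler_intersection_pairings}. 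Non-degeneracy of both sides then upgrades $\Gamma\mapsto V(\Gamma)$ to an isomorphism of lattices, concluding the proof.
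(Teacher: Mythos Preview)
The paper does not give its own proof of this theorem: it is stated as a citation of \cite[\S 4.3]{Iri1}, and the only supplementary information is Remark \ref{rem:Kouchnirenko}, which points to \cite[\S 3.3.1--3.3.2, Proposition 4.8]{Iri1} for the pairing compatibility and to \cite[footnote (16)]{Iri2} for the sign correction $(-1)^{N(N-1)/2}$. So there is nothing in the present paper to compare your argument against line by line.

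That said, your sketch is a fair outline of the strategy in \cite{Iri1}: the Mellin--Barnes computation identifying the oscillatory integral over $\Gamma_\bR$ with the $\hGamma$-integral flat section $\frs_{\cO_X}$ is indeed the core step (this is the content of \cite[Theorem 4.14]{Iri1}, already invoked in the proof of Theorem \ref{thm-gammaclassandconifoldpoint}), and the extension to general thimbles via monodromy in $\bq$ matching tensoring by line bundles is exactly how the paper later uses the theorem (see \eqref{eq:flatsection_monodromy}). One small caution: your second mechanism for extending $V$ --- rotating the direction $e^{\iu\theta}$ and invoking the Serre functor --- is not the route actually taken; the monodromy in $\bq$ alone already moves $\Gamma_\bR$ to a basis of thimbles (since the monodromy group acts via the full Picard lattice and line bundles generate $K^0(X)$ for a toric variety), so the Serre-functor argument is redundant here and would need more care to make precise. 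For the pairing statement, the paper's Remark \ref{rem:Kouchnirenko}(4) indicates the intended route is through the pairing-preservation of the mirror isomorphism $\Mir$ itself rather than a direct steepest-descent computation, though both lead to the same place.
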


The correspondence \eqref{eq:mirror_isom_lattice} between Lefschetz thimbles and $K$-classes of vector bundles is expected to be induced by homological mirror symmetry: an equivalence between the Fukaya-Seidel category of $f_\bq$ and the derived category of coherent sheaves on $X$. The recent works \cite{Fang:centralcharges, Fang-Zhou:GammaII} confirmed this expectation for a version of the Fukaya-Seidel category for toric Fano manifolds $X$, thereby showing the Gamma conjecture II for them. Note that a Lefschetz thimble is an exceptional object in the Fukaya-Seidel category; moreover, a distinguished system of vanishing paths gives rise to a full exceptional collection of Lefschetz thimbles.

\begin{remark}
\label{rem:Kouchnirenko}
We collect several references and facts omitted in the above expositions.

(1) The Brieskorn lattice for a tame function on an affine variety has been studied in \cite{Sabbah:hypergeometric, Douai-Sabbah:I}.
The Brieskorn lattice of a convenient and non-degenerate Laurent polynomial $f$ (in the sense of Kouchnirenko \cite[Definitions 1.5, 1.19]{Kouchnirenko:Newton}) is free of rank equal to $\dim(\Jac(f))$.
The Landau-Ginzburg potential $f_\bq$ mirror to a toric Fano manifold is non-degenerate for all $\bq\in \cM_X$.


(2) The mirror isomorphism \eqref{eq:mirror_isomorphism} follows from Givental's mirror theorem \cite{Gi2} for weak-Fano toric manifolds (generalized to toric DM stacks in \cite{CCIT:toric_stacks_MS}). A detailed comparison of connections (together with pairings) is given in \cite[Proposition 4.8]{Iri1}, \cite[Proposition 4.10]{Reichelt-Sevenheck:logarithmic}, \cite[Proposition 6.16]{Iritani:discrepant} for weak-Fano\footnote{In the weak-Fano case, in general, the mirror isomorphism holds for $\bq$ in a small neighbourhood of the origin $\bq=0$ in $\ovcM_X$; we also need a non-trivial mirror map between the parameter $\bq$ of the Landau-Ginzburg potentials $f_\bq$ and the K\"ahler parameter $\bq\in \cM_X$. For toric Fano manifolds, the mirror map is the identity map and the mirror isomorphism holds globally. } toric DM stacks/manifolds.

(3) The symplectic connection defines a well-defined $C^\infty$ parallel translate between regular fibres $f_\bq^{-1}(u)$ when $f_\bq$ is a convenient and non-degenerate Laurent polynomial. This follows from the proof of \cite[Corollary 7.21]{Iritani:discrepant}.

(4) That the map $\Gamma \mapsto V(\Gamma)$ preserves the pairing follows from the fact that the mirror isomorphism preserves the pairing \cite[\S 3.3.1-3.3.2, Proposition 4.8]{Iri1}. The sign factor $(-1)^{N(N-1)/2}$ was missing in \cite{Iri1} and corrected in \cite[footnote (16)]{Iri2}.
\end{remark}

Since $H^2(X,\bC)$ is a universal covering of $\cM_X$, we can uniquely extend the isomorphism \eqref{eq:mirror_isom_lattice} to any $\tau\in H^2(X,\bC)$
\begin{equation}
\label{eq:mirror_isom_lattice_extended}
\Lef_{e^\tau,1} \cong K^0(X), \quad \Gamma \mapsto V(\Gamma)
\end{equation}
so that it is locally constant in $\tau$.

\begin{thm}
\label{thm:Lefschetz_A}
Suppose that $(c_1(X)\star_\bq)$ has a simple rightmost eigenvalue $u$ for some $\bq =e^\tau \in \cM_X$. Then $u$ equals $f_\bq(p)$ for a unique non-degenerate critical point $p$ of $f_\bq$ and $\gamma\colon [0,\infty) \to \bC$, $\gamma(t) = u+ t$ gives a vanishing path. The principal asymptotic class at $\tau$ is given by
\[
A_X(\tau) = \hGamma_X \Ch(V(\Gamma_{p,\gamma}))
\]
where $V(\Gamma_{p,\gamma})$ is given by the isomorphism \eqref{eq:mirror_isom_lattice_extended} extended to $\tau\in H^2(X,\bC)$. In particular, if $\tau\in H^2(X,\bR)$ and $p$ is the conifold point (such that $u=f_\bq(p)$ is a simple rightmost eigenvalue), we have $A_X(\tau) = \hGamma_X$.
\end{thm}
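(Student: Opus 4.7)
The plan is to identify $A_X(\tau)$ directly via the asymptotic behaviour of the $\hGamma$-integral flat section $\frs_V$ associated with the Lefschetz thimble in Theorem \ref{thm:mirror_isom_lattice}, using the oscillatory-integral formula \eqref{eq:oscillatory_integral_flat_section}.

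First, I would establish (1) and (2). By Proposition \ref{eigncrit} and its extension to general $\bq$, the eigenvalues of $(c_1(X)\star_\bq)$ (with multiplicities) coincide with the critical values of $f_\bq$ (with multiplicities in the Milnor sense). Since $u$ is a simple eigenvalue, it arises as $f_\bq(p)$ for a unique critical point $p$ whose local Milnor algebra is one-dimensional, i.e.~$p$ is non-degenerate. Since $u$ is the rightmost eigenvalue and all other eigenvalues $u'$ satisfy $\Re(u')<\Re(u)$, the straight ray $\gamma(t)=u+t$ avoids every other critical value of $f_\bq$, hence qualifies as a vanishing path in the direction $e^{\iu\cdot 0}=1$, and the Lefschetz thimble $\Gamma:=\Gamma_{p,\gamma}$ is a well-defined element of $\Lef_{\bq,1}$.

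Next, I would show that $\hGamma_X\Ch(V(\Gamma))$ has the smallest asymptotic behaviour among nonzero flat sections. Fix $z>0$. Pick a basis $[\Omega^{(1)}],\dots,[\Omega^{(r)}]$ of $\Bri(f_\bq)$ and let $\phi_i:=\iota^*\Mir[\Omega^{(i)}]$. By \eqref{eq:oscillatory_integral_flat_section},
\begin{equation*}
\left\langle \phi_i,\, \frs_{V(\Gamma)}(\tau,z)\right\rangle^X \;=\;(2\pi z)^{-N/2}\int_{\Gamma} e^{-f_\bq(\bx)/z}\Omega^{(i)}_{-z}.
\end{equation*}
Because $\Gamma$ is the descending Morse manifold for $\Re(f_\bq)$ based at the non-degenerate critical point $p$ with respect to the vanishing path $\gamma(t)=u+t$, the function $\Re(f_\bq)-\Re(u)$ attains a unique non-degenerate minimum $0$ on $\Gamma$ at $p$ and grows along $\Gamma$. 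Hence the Laplace method (stationary-phase in a Morse-theoretic form) applies on the real axis $z\to+0$ and yields an asymptotic expansion
\begin{equation*}
(2\pi z)^{-N/2}\int_{\Gamma} e^{-f_\bq(\bx)/z}\Omega^{(i)}_{-z} \;\sim\; e^{-u/z}\bigl(a^{(i)}_0+a^{(i)}_1 z+a^{(i)}_2 z^2+\cdots\bigr)
\end{equation*}
with $(a^{(0)}_0,\dots,a^{(r)}_0)\neq 0$ (choosing $\Omega^{(1)}=d\bx/\bx$ gives a non-vanishing leading coefficient, by the same Laplace computation used in the proof of Theorem \ref{thm-gammaclassandconifoldpoint}). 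Multiplying by $e^{u/z}$ and running over all $i$ shows that $e^{u/z}\frs_{V(\Gamma)}(\tau,z)$ is of moderate growth (in fact has a power-series asymptotic) and is nonzero. By the definition of $\frs_V$ and Proposition/Definition \ref{pdasymp}, this forces
\begin{equation*}
[A_X(\tau)]\;=\;[\hGamma_X\Ch(V(\Gamma))] \quad\text{in }\bP(H^*(X)).
\end{equation*}

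Finally, for $\tau\in H^2(X,\bR)$ and $p=\bx_{\mathrm{con}}$, the Lefschetz thimble $\Gamma$ coincides with the positive-real thimble $\Gamma_\bR$ of Example \ref{exa:positive_real_thimble}, since both are the stable manifold of the gradient flow of $\Re(f_\bq)$ at the conifold point. Then \eqref{eq:Euler_intersection_pairings} gives $V(\Gamma_\bR)=\cO_X$, so $\Ch(V(\Gamma_\bR))=1$ and $A_X(\tau)=\hGamma_X$.

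The main technical subtlety will be justifying the Laplace asymptotic expansion on the (non-compact) Lefschetz thimble uniformly in the basis $\{\Omega^{(i)}\}$, and ensuring that the leading coefficient $(a^{(i)}_0)_i$ does not vanish as a vector in $H^*(X)$. Both points follow from the standard Morse/Laplace theory applied to a non-degenerate critical point (using that the thimble is a Lagrangian descending manifold, and that $\Omega^{(1)}=d\bx/\bx$ gives a Gaussian integral with nonzero leading Hessian factor), but these are where the argument must be made careful.
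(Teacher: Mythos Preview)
Your proposal is correct and follows essentially the same route as the paper: identify eigenvalues of $(c_1(X)\star_\bq)$ with critical values of $f_\bq$, apply the oscillatory-integral identity \eqref{eq:oscillatory_integral_flat_section} to the Lefschetz thimble $\Gamma_{p,\gamma}$, use the Laplace method at the non-degenerate critical point $p$ to obtain the $e^{-u/z}$ asymptotic, and conclude via Proposition/Definition \ref{pdasymp}. One small point the paper makes explicit that you leave implicit: the identity \eqref{eq:oscillatory_integral_flat_section} in Theorem \ref{thm:mirror_isom_lattice} is stated only for positive real $\bq$, so for general $\tau\in H^2(X,\bC)$ one must invoke analytic continuation in $\tau$ (both sides are holomorphic in $\tau$ and agree on $H^2(X,\bR)$) before applying it; this is what justifies using the extended correspondence \eqref{eq:mirror_isom_lattice_extended}.
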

\begin{proof}
The statement about $p$ and $\gamma$ follows from the isomorphism $QH(X)|_\bq \cong \Jac(f_\bq)$ sending $c_1(X)$ to $[f_\bq]$.
By analytic continuation in $\tau$, we obtain from \eqref{eq:oscillatory_integral_flat_section} that
\[
\left\<\iota^* (\Mir[\Omega]), S(\tau,z) z^{-\mu} z^{c_1(X)} \hGamma_X \Ch(V(\Gamma_{p,\gamma})) \right\>^X = z^{-N/2} \int_{\Gamma_{p,\gamma}} e^{-f_\bq(\bx)/z} \Omega_{-z}
\]
holds for all $\Omega \in \Omega^N_{(\bC^\times)^N}[z]$ where $\bq= e^\tau$.
The right-hand side has the asymptotic expansion of the form $e^{-u/z}(a_0+a_1 z + \cdots)$ as $z\to +0$ and therefore the flat section
\[
e^{u/z} S(\tau,z) z^{-\mu} z^{c_1(X)} \hGamma_X \Ch(V(\Gamma_{p,\gamma}))
\]
is of moderate growth as $z\to +0$ (see the proof of Theorem \ref{thm-gammaclassandconifoldpoint}).
The conclusion follows from the definition of the principal asymptotic class.
\end{proof}

\subsection{Identification of the principal asymptotic class via mutation}
\label{subsec:identification_A}
Finally we determine the principal asymptotic class of $X_n$ in certain regions of the K\"ahler moduli space.

The exponential map defines an isomorphism $H^2(X,\bR) \xrightarrow{\cong} \cM_{X,\bR}$ onto the positive real locus. We write $\bq\mapsto \log \bq\in H^2(X,\bR)$ for the inverse map defined for a positive real $\bq \in \cM_{X,\bR}$.

Recall from \S\ref{subsec:tropical_crit} that the critical value $T_+=T_{\text{con}}$ associated with the conifold point gives a simple rightmost eigenvalue of $(c_1(X)\star_\bq)$ when $q_2/q_1^2\gg 1$ i.e.~near the $\pi$-boundary. When $n$ is odd, the same holds also when $0<q_2/q_1^2 \ll 1$, i.e.~near the $\varphi$-boundary. Theorem \ref{thm:Lefschetz_A} immediately implies:
\begin{thm}\label{XnasympGamma}
Suppose that $q_1,q_2$ are positive real. We have $A_{X_n}(\log \bq)= \hGamma_{X_n}$ when $q_2/q_1^2$ is sufficiently large and $n\ge 1$ is arbitrary, or when $q_2/q_1^2$ is sufficiently small and $n\ge 1$ is odd.
\end{thm}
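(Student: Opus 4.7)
The plan is to invoke Theorem \ref{thm:Lefschetz_A} directly. That theorem reduces the identification of $A_{X_n}(\log \bq)$ to verifying that the conifold critical value $T_+ = f_\bq(\bx_{\text{con}})$ of the Landau-Ginzburg potential $f_\bq$ is a simple rightmost eigenvalue of $(c_1(X_n)\star_\bq)$. Once this is established, Theorem \ref{thm:Lefschetz_A} gives $A_{X_n}(\log \bq) = \hGamma_{X_n}\Ch(V(\Gamma_\bR)) = \hGamma_{X_n}$, where $\Gamma_\bR$ is the positive-real Lefschetz thimble and $V(\Gamma_\bR) = \cO_{X_n}$ by \eqref{eq:Euler_intersection_pairings}. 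Via the mirror ring isomorphism $QH(X_n)|_\bq \cong \Jac(f_\bq)$, eigenvalues of $(c_1(X_n)\star_\bq)$ coincide with critical values of $f_\bq$, so the task is purely to control the real parts of those critical values.

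By Lemma \ref{lem:real_critical_values}, for positive real $\bq$ the only real critical values of $f_\bq$ are $T_\pm$, with $T_+ > 0$; so what remains is a bookkeeping comparison based on the tropical expansions of Section \ref{subsec:tropical_crit}. For $q_2/q_1^2 \gg 1$ (near $\cC_\pi$), the expansion \eqref{eq:crit_values_pi-boundary} groups the $2n+2$ critical values into two clusters of $n+1$ points centered asymptotically at $\pm 2\sqrt{q_2}/q_1$; inside the positive cluster the values differ in the subleading term by a factor of an $(n+1)$-th root of unity $\zeta$, and the real part is strictly maximized at $\zeta=1$, which must equal $T_+$ by Lemma \ref{lem:real_critical_values}. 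The opposite cluster has real part close to $-2\sqrt{q_2}/q_1 < T_+$, so $T_+$ is the simple rightmost eigenvalue once $q_2/q_1^2$ is large enough for these asymptotic comparisons to become genuine strict inequalities.

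For $q_2/q_1^2 \ll 1$ (near $\cC_\varphi$), the Puiseux analysis of \eqref{eq:t_equation}--\eqref{eq:u_equation} yields one isolated critical value asymptotic to $(-n)^n q_1$, together with a cluster of $2n+1$ values of the form $c\,\zeta$ with $\zeta^{2n+1}=1$ and a small $c>0$. Within the cluster the real part is again uniquely maximized at $\zeta=1$, giving $T_+$. When $n$ is odd, the isolated value $(-n)^n q_1 = -n^n q_1$ is negative, and is therefore automatically dominated by $T_+ > 0$; hence $T_+$ is the simple rightmost eigenvalue, and Theorem \ref{thm:Lefschetz_A} applies. The main (and only) technical point is promoting the asymptotic comparisons of real parts to genuine strict inequalities in a one-sided neighborhood of the corresponding boundary, but the explicit leading orders of the Puiseux expansions make this immediate. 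As a sanity check, the argument visibly fails for even $n$ near $\cC_\varphi$: there the isolated value $(-n)^n q_1 = +n^n q_1$ is positive and dominates the shrinking cluster containing $T_+$, which is exactly the phenomenon responsible for Theorem \ref{thmGC1nothold} and explains the parity hypothesis in the second assertion.
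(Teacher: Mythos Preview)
Your proof is correct and follows exactly the paper's approach: the paper simply records that the tropical analysis of Section~\ref{subsec:tropical_crit} shows $T_+=T_{\text{con}}$ is a simple rightmost eigenvalue in the stated regimes, and then invokes Theorem~\ref{thm:Lefschetz_A}. (One harmless slip: the leading term of the critical values near the $\pi$-boundary is $\pm 2\sqrt{q_2\sfT^{\lambda}}$, with no factor of $q_1$ in the denominator, but this does not affect your comparison of real parts.)
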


This proposition is a consequence of the fact that the positive real Lefschetz thimble $\Gamma_\bR$ corresponds to the structure sheaf (Theorem \ref{thm:mirror_isom_lattice}), i.e.~
\begin{equation}
\label{eq:Gamma_R_O}
(2\pi z)^{-N/2} \int_{\Gamma_\bR} e^{-f_\bq(\bx)/z} \Omega_{-z}
= \left\<\iota^* (\Mir[\Omega]), \frs_{\cO}(\log \bq,z) \right\>^{X_n}
\end{equation}
for $\bq \in \cM_{X_n,\bR}$ and $z>0$.
Using monodromy transformations, we can determine $V(\Gamma)$ for other Lefschetz thimbles in the region $q_2/q_1^2 \gg 1$.
Suppose now that $n$ is even and write $n=2m$. We write $H:=D_{n+2}$; $H$ is the pull-back of the ample class $\cO(n)$ on $\overline{Y}_n=\bP(1^{n+1},n)$.
It is easy to see that the monodromy transformations for $\hGamma$-integral flat sections along the paths $[0,2\pi] \ni \theta \mapsto ( e^{\iu\theta}q_1, q_2)$, $[0,2\pi] \ni \theta \mapsto (q_1,e^{\iu\theta}q_2)$ in $\cM_{X_n}$ correspond respectively to tensoring $\cO(-D_1)$ and $\cO(-H)$ on the $K$-group:
\begin{align}
\label{eq:flatsection_monodromy}
\begin{split}
\frs_V(\log q_1 + 2\pi \iu,\log q_2,z) & = \frs_{V\otimes \cO(-D_1)}(\log q_1,\log q_2,z), \\
\frs_V(\log q_1, \log q_2 + 2\pi \iu, z) & = \frs_{V\otimes \cO(-H)}(\log q_1,\log q_2,z).
\end{split}
\end{align}
Let $\Gamma_k\in \Lef_{\bq,1}$, $k=1,\dots,m$ be the monodromy transforms of $\Gamma_\bR \in \Lef_{\bq,1}$ along the path $[0,2\pi]\ni \theta \mapsto (e^{\iu k \theta} q_1, q_2)$; the Puiseux expansion \eqref{eq:crit_values_pi-boundary} shows that they correspond to the vanishing paths starting from the $k$th node depicted in Figure \ref{fig:vanishing_paths}.
By applying monodromy transformations to \eqref{eq:Gamma_R_O} and using \eqref{eq:flatsection_monodromy}, we have
\[
(2\pi z)^{-N/2} \int_{\Gamma_k} e^{-f_\bq(x)/z} \Omega_{-z}
= \left\<\iota^* (\Mir[\Omega]), \frs_{\cO(-k D_1)}(\log \bq,z) \right\>^{X_n}
\]
for all $\Omega\in \Omega^N_{(\bC^\times)^N}[z]$; it follows that $V(\Gamma_k) = \cO(-kD_1)$ under the isomorphism \eqref{eq:mirror_isom_lattice}.
Similarly, we consider the monodromy transform of $\Gamma_\bR$ along the path $[0,2\pi] \ni \theta \mapsto (e^{-\iu m \theta} q_1, e^{\iu \theta} q_2)$; it is a Lefschetz thimble $\tGamma_-$ associated with the `greatly bent' vanishing path starting from $T_-$ as in Figure \ref{fig:vanishing_paths}. We find that $V(\tGamma_-) = \cO(-H + m D_1)$.

\begin{figure}[htbp]
\centering
\begin{tikzpicture}[x=30pt,y=30pt]
\draw (0,0) node {\phantom{A}};





\filldraw[color=red] (3.5,0) circle [radius =0.05];
\filldraw (3.154,0.475) circle [radius =0.05];
\filldraw (2.595,0.293) circle [radius =0.05];
\filldraw (3.154,-0.475) circle [radius =0.05];
\filldraw (2.595,-0.293) circle [radius =0.05];

\draw (3.154,0.675) node {\tiny $3$};
\draw (2.595, 0.493) node {\tiny $4$};
\draw (2.595, -0.493) node {\tiny $5$};
\draw (3.154, -0.675) node {\tiny $6$};

\filldraw[color=blue] (6.5,0) circle [radius =0.05];
\filldraw (6.154,0.475) circle [radius =0.05];
\filldraw (5.595,0.293) circle [radius =0.05];
\filldraw (6.154,-0.475) circle [radius =0.05];
\filldraw (5.595,-0.293) circle [radius =0.05];

\draw (6.154,0.675) node {\tiny $1$};
\draw (5.395,0.293) node {\tiny $2$};
\draw (5.595,-0.493) node {\tiny $7$};
\draw (6.154,-0.675) node {\tiny $8$};

\draw (6.5,-0.25) node {\small $T_+$};
\draw (3.5,-0.25) node {\small $T_-$};

\draw (6.5,0) -- (9,0);
\draw (6.154,0.475) -- (9,0.475);
\draw (5.595,0.293) .. controls (5.595,1) and (6,1) .. (9,1);
\draw (3.5,0) .. controls (3.5,1.5) and (5,1.5) .. (9,1.5);
\draw[color=red,dashed] (3.5,0) .. controls (6.5,0.2) and (8,0.2) .. (10.5,0.2);

\draw (4.2,1.4) node {$\tGamma_-$};
\draw[color=red] (4.4,-0.2) node {$\Gamma_-$};
\draw (8,-0.2) node {$\Gamma_\bR$};
\draw (9.2,0) node {$\cO$};
\draw (9.65,0.475) node {$\cO(-D_1)$};
\draw (9.75,1) node {$\cO(-2D_1)$};
\draw (10.2,1.5) node {$\cO(-H+mD_1)$};

\end{tikzpicture}

\caption{Vanishing paths and corresponding coherent sheaves (when $q_2/q_1^2$ large and $n=4$):  thimbles of solid paths are all given by monodromy transformations from the positive-real Lefschetz thimble $\Gamma_\bR$.}
\label{fig:vanishing_paths}
\end{figure}
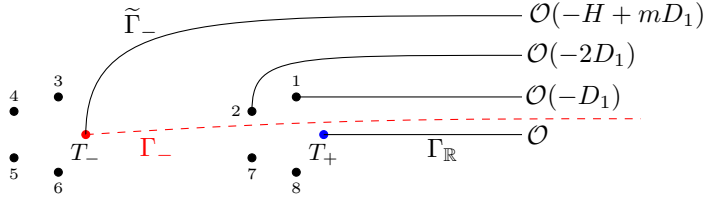

We want to find the class of the Lefschetz thimble $\Gamma_-$ corresponding to the red dashed vanishing path from $T_-$ (straight ray in the direction $e^{\iu\varepsilon}$ with $0<\varepsilon \ll 1$) in Figure \ref{fig:vanishing_paths} using mutation (or the Picard-Lefschetz transformation).
Let us first recall the rudiments of mutation for Lefschetz thimbles or (complexes of) coherent sheaves.
Figure \ref{fig:right_mutation} shows an example of the right mutation $R_\Delta \Gamma$ of a Lefschetz thimble $\Gamma$ with respect to another one $\Delta$. By the Picard-Lefschetz monodromy formula (see e.g.~\cite{Arnold-Gusein-Zade-VarchenkoII}), we find that the relative homology class of $\Gamma$ equals the sum of $k \Delta$ and $R_\Delta \Gamma$, where $k$ is given by the intersection number of vanishing cycles $C_\Gamma$, $C_\Delta$ associated with $\Gamma, \Delta$:
\[
k =(-1)^{(N-1)(N-2)/2} \#(C_\Gamma \cdot C_\Delta) =(-1)^{N(N-1)/2} \#(e^{-\pi \iu} \Gamma \cdot \Delta).
\]
Therefore we have
\[
[R_\Delta \Gamma] = [\Gamma] - (-1)^{N(N-1)/2} \#(e^{-\pi\iu} \Gamma \cdot \Delta) [\Delta].
\]
When we write $e, f, R_e f\in K^0(X)$ for the $K$-classes corresponding to $\Delta, \Gamma, R_\Delta \Gamma$ under the isomorphism \eqref{eq:mirror_isom_lattice},  we have
\[
R_e f = f - \chi(f,e) e
\]
by \eqref{eq:Euler_intersection_pairings}. We define the right mutation $R_e f$ of $f\in K^0(X)$ with respect to $e\in K^0(X)$ by this formula. This operation can be lifted to the derived category. The right mutation of an object $\cF\in D^b(X)$ with respect to $\cE \in D^b(X)$ is defined to be
\[
R_\cE \cF = \Cone(\cF \to \Hom^\bullet(\cF,\cE)^\vee \otimes \cE) [-1]
\]
where $\Hom^\bullet(\cF,\cE)^\vee \otimes \cE = \bigoplus_k \Hom^k(\cF,\cE)^\vee \otimes \cE[k]$ and $\cF \to \Hom^\bullet(\cF,\cE)^\vee \otimes \cE$ is the co-evaluation map.
This operation preserves exceptional collections, i.e.~if $(\cE_1,\dots,\cE_i, \cE_{i+1}, \dots, \cE_n)$ is an exceptional collection in $D^b(X)$, then so is $(\cE_1,\dots,\cE_{i+1}, R_{\cE_{i+1}}\cE_i, \dots, \cE_n)$. We refer the reader to \cite{Bondal-Polishchuk} for the details.

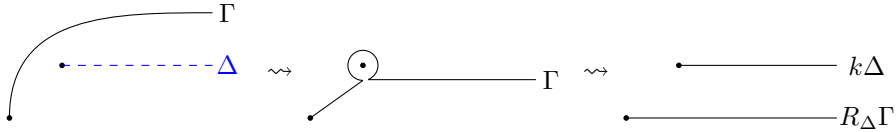
\begin{figure}[htbp]
\centering
\begin{tikzpicture}
\filldraw (0,0) circle[radius=0.03];
\filldraw (-0.7,-0.7) circle[radius=0.03];

\draw[color=blue, dashed] (0,0)--(2,0);
\draw (-0.7,-0.7) .. controls (-0.7,0.7) and (0.7,0.7) .. (2,0.7);
\draw[color=blue] (2.2,0) node {$\Delta$};
\draw (2.2,0.7) node {$\Gamma$};

\draw (2.9,-0.1) node {$\rightsquigarrow$};

\filldraw (4,0) circle[radius =0.03];
\filldraw (3.3,-0.7) circle[radius=0.03];
\draw (3.3,-0.7) -- (4,-0.2);
\draw (4,-0.2) arc[radius=0.2, start angle = 270, end angle = -70];
\draw (4.07,-0.19) -- (6.3,-0.19);
\draw (6.5,-0.19) node {$\Gamma$};

\draw (7.1,-0.1) node {$\rightsquigarrow$};

\filldraw (8.2,0) circle[radius=0.03];
\filldraw (7.5,-0.7) circle[radius=0.03];

\draw (8.2,0)--(10.3,0);
\draw (7.5,-0.7) -- (10.3,-0.7);
\draw (10.7,0) node {$k \Delta$};
\draw (10.7,-0.7) node {$R_{\Delta}\Gamma$};

\end{tikzpicture}
\caption{Right mutation of Lefschetz thimbles.}
\label{fig:right_mutation}
\end{figure}

\begin{lemma}
Suppose that $n = 2m$ is even. In the derived category $D^b(X_n)$ of coherent sheaves, we have
\[
R_{\cO(-D_1)} R_{\cO(-2D_1)} \cdots R_{\cO(-mD_1)} \cO(-H + mD_1) =  \cO_E(-m)[-1]
\]
where $E := D_{n+3}\cong \bP^n$ is the exceptional divisor of $\varphi \colon X_n\to \overline{Y}_n$.
Moreover, this object is orthogonal\footnote{Two objects $A,B\in D^b(X)$ are said to be orthogonal if  $\Hom^\bullet(A,B) = \Hom^\bullet(B,A) = 0$.} to $\cO(-jD_1)$ when $|j| \le m-1$.
\end{lemma}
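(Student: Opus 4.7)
The plan is to observe that the composition of mutations collapses after the first step: only $R_{\cO(-mD_1)}$ will act non-trivially, producing $\cO_E(-m)[-1]$ via a Koszul resolution of $E$, and each subsequent mutation will leave this object unchanged by a single uniform $\Hom$-vanishing from which the orthogonality assertion will also follow.

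First I would fix the geometric data for $E$. Using the Cox description from the weight matrix, $E=D_{n+3}$ is cut out by $z_{n+3}$, the linear relation $D_{n+2}-D_{n+3}=nD_1$ gives $\cO(E)\cong \cO(H-nD_1)$, and restriction to $E$ yields $\cO(H)|_E\cong \cO_E$ and $\cO(D_1)|_E\cong \cO_{\bP^n}(1)$, hence $\cN_{E/X_n}=\cO_E(-n)$. Since $n=2m$, multiplication by $z_{n+3}$ is a map $\cO(-H+mD_1)\to \cO(-mD_1)$ with cokernel $\cO_E(-m)$. From $\pi_*\cO(H)=\cO\oplus \cO(n)$ on $\bP^n$ I record the cohomology formula
\[
H^\bullet\bigl(X_n,\cO(H+aD_1)\bigr)=H^\bullet\bigl(\bP^n,\cO(a)\oplus \cO(a+n)\bigr),
\]
which is the single tool needed for all later computations.

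Next I would execute the first mutation. The sequence $0\to \cO\to \cO(E)\to \cO_E(-n)\to 0$ together with $H^\bullet(\bP^n,\cO(-n))=0$ (Bott) gives $\Hom^\bullet(\cO(-H+mD_1),\cO(-mD_1))=R\Gamma(\cO(E))=\bC$ concentrated in degree $0$. The evaluation map is then, up to scalar, multiplication by $z_{n+3}$, and the Koszul sequence $0\to \cO(-H+mD_1)\xrightarrow{z_{n+3}} \cO(-mD_1)\to \cO_E(-m)\to 0$ identifies $R_{\cO(-mD_1)}\cO(-H+mD_1)=\cO_E(-m)[-1]$.

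Finally, for the remaining mutations and the orthogonality statement, I would establish the single uniform vanishing $R\Hom(\cO_E(-m),\cO(-jD_1))=0=R\Hom(\cO(-jD_1),\cO_E(-m))$ for $|j|\le m-1$. The second equals $R\Gamma(\bP^n,\cO(j-m))$, which vanishes because $j-m\in\{-(n-1),\dots,-1\}$ lies in the Bott range. For the first, the standard resolution of $\cO_E$ presents $R\Hom(\cO_E(-m),\cO(-jD_1))$ as the hypercohomology of $[\cO((m-j)D_1)\to \cO(H+(m-j-n)D_1)]$ in degrees $[0,1]$; applying the Step-1 formula shows that both terms have cohomology concentrated in degree $0$ with equal dimension $\binom{m-j+n}{n}$, and the map is again multiplication by $z_{n+3}$, which is a bijection on monomial bases. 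The main obstacle I foresee is simply keeping conventions straight: whether $\cO(H)$ is $\cO_\pi(1)$ for $\bP(V)$ or for $\bP(V^\vee)$ decides whether $\pi_*\cO(H)=\cO\oplus \cO(n)$ or $\cO\oplus \cO(-n)$, and only the former makes the vanishing ranges above exactly those required. Pinning this down by a direct monomial count in the Cox ring at the outset renders the rest of the argument a short sequence of Koszul-type calculations.
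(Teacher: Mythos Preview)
Your proposal is correct and follows essentially the same strategy as the paper: compute the first mutation via the short exact sequence $0\to\cO(-H+mD_1)\to\cO(-mD_1)\to\cO_E(-m)\to 0$ after showing $\Hom^\bullet=\bC$, then establish the orthogonality $R\Hom(\cO_E(-m),\cO(-jD_1))=R\Hom(\cO(-jD_1),\cO_E(-m))=0$ for $|j|\le m-1$ so that the remaining mutations act trivially. The only cosmetic difference is that the paper phrases the vanishing via Kodaira vanishing on $\bP^n$ and the long exact sequence of $0\to\cO\to\cO(E)\to\cO_E(E)\to 0$, whereas you use the projection formula $\pi_*\cO(H)=\cO\oplus\cO(n)$ and an explicit monomial count; these are the same computation in different clothing.
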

\begin{proof}
We begin with computing $R_{\cO(-mD_1)} \cO(-H + mD_1)$. We have
\[
\Hom^\bullet(\cO(-H+mD_1), \cO(-mD_1)) = H^\bullet(X_n,\cO(-2m D_1+H)) = H^\bullet(X_n,\cO(E))
\]
by $E = D_{n+3} = D_{n+2} - n D_1$. Using the long exact sequence associated with
\begin{equation}
\label{eq:short_exact_E}
0 \to \cO \to \cO(E) \to \cO_E(E) \cong \cO_{\bP^n}(-n) \to 0
\end{equation}
and the Kodaira vanishing for $\cO_{\bP^n}(-n) = \cO_{\bP^n}(1) \otimes K_{\bP^n}$, we get $H^\bullet(X_n, \cO(E)) = H^\bullet(X_n, \cO) = \bC$.
Hence
\begin{align*}
R_{\cO(-mD_1)} \cO(-H+mD_1) & = \Cone(\cO(-H+mD_1) \to \cO(-mD_1))[-1] \\
& \cong \cO(-mD_1) \otimes \Cone(\cO(-E) \to \cO)[-1] \\
& \cong \cO(-mD_1) \otimes \cO_E[-1] \cong \cO_E(-m)[-1].
\end{align*}
We denote this object by $\cE$. We claim that $\cE$ is orthogonal to $\cO(-jD_1)$ when $|j| \le m-1$.
Applying $\Hom^\bullet(-,\cO(-jD_1))$ to the triangle
\[
\cO(-E-mD_1) \to \cO(-mD_1) \to \cE[1],
\]
we obtain the long exact sequence
\begin{equation}
\label{eq:Hom_E_linebundles}
\Hom^\bullet(\cE[1],\cO(-jD_1)) \to H^\bullet(X_n, \cO((m-j) D_1)) \to H^\bullet(X_n, \cO(E+(m-j) D_1)) \to \cdots.
\end{equation}
Using the long exact sequence associated with \eqref{eq:short_exact_E} tensored by $\cO((m-j) D_1)$ and the Kodaira vanishing for $\cO_E(E+(m-j) D_1) \cong \cO_{\bP^n}(-(m+j)) \cong \cO_{\bP^n}(m-j+1) \otimes K_{\bP^n}$, we find that the map
\[
H^\bullet(X_n, \cO((m-j)D_1)) \to H^\bullet(X_n, \cO((m-j)D_1+ E))
\]
induced by the natural inclusion $\cO \hookrightarrow \cO(E)$ is an isomorphism. Therefore $\Hom^\bullet(\cE[1], \cO(-jD_1)) = 0$ by \eqref{eq:Hom_E_linebundles}.
Also we have
\begin{align*}
\Hom^\bullet(\cO(-jD_1),\cE)
& \cong H^{\bullet-1}(\bP^n, \cO_{\bP^n}(-(m-j))) =0
\end{align*}
again by the Kodaira vanishing. The claim follows. It implies that
\[
R_{\cO(-D_1)} \cdots R_{\cO(-(m-1)D_1)} \cE = \cE
\]
and the conclusion follows.
\end{proof}

Because the mutation of Lefschetz thimbles corresponds to the mutation of $K$-classes,
the above lemma shows that $V(\Gamma_-) = -[\cO_E(-m)]$ when $q_2/q_1^2$ is large.
Furthermore, since this class is orthogonal to $\cO$, the classes of the Lefschetz thimbles associated with the vanishing paths $T_- + e^{\iu\epsilon} \bR_{\ge 0}$ and $T_- + e^{-\iu\epsilon} \bR_{\ge 0}$ are the same for a sufficiently small $\epsilon>0$.

Now we vary $\bq$ within the positive real locus and pass from the $\pi$-picture ($q_2/q_1^2$ large) to the $\varphi$-picture ($q_2/q_1^2$ small).  Due to orthogonality, there is no interaction (mutation) between the thimbles $\Gamma_-$ and $\Gamma_\bR$ associated with $T_-$ and $T_+=T_{\text{con}}$ respectively, even though $T_\pm$ collide. Lemma \ref{lem:real_critical_values} guarantees no interactions with other critical points either. Therefore we have $V(\Gamma_-) = -[\cO_E(-m)]$ for all $q_1,q_2>0$ and conclude the following by Theorem \ref{thm:Lefschetz_A}:
\begin{thm}
\label{thm:A_for_Twrong}
Let $n\ge 2$ be even. Suppose that $q_1,q_2$ are positive real and that $T_-$ is a simple rightmost eigenvalue of $(c_1(X_n)\star_\bq)$; this happens when $q_2/q_1^2$ is sufficiently small or when $n\ge 4$ and $q_2/q_1^2 =1$.
Then the principal asymptotic class is given by
\[
A_{X_n}(\log \bq) = \hGamma_{X_n} \Ch(\cO_E(-m)).
\]
\end{thm}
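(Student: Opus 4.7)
The plan is to exploit the framework set up in Sections \ref{subsec:Gamma-flat-sections_thimbles}--\ref{subsec:identification_A}: the principal asymptotic class is controlled by the Lefschetz thimble $\Gamma_-$ associated with the straight vanishing path $T_- + \bR_{\ge 0}$, and one must identify the $K$-class $V(\Gamma_-)$ under the mirror isomorphism \eqref{eq:mirror_isom_lattice_extended}. By Theorem \ref{thm:Lefschetz_A}, once we show $V(\Gamma_-) = -[\cO_E(-m)]$ at the given parameter $\bq$, we obtain $A_{X_n}(\log\bq)=\hGamma_{X_n}\Ch(\cO_E(-m))$ (the overall sign is absorbed in the scalar freedom in $A_{X_n}$).

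First I would anchor the computation in the $\pi$-picture, i.e.~for positive real $\bq$ with $q_2/q_1^2 \gg 1$, where by Theorem \ref{XnasympGamma} the situation is well understood and $V(\Gamma_\bR)=\cO_{X_n}$. Using the monodromy formulas \eqref{eq:flatsection_monodromy}, I would construct the thimbles $\Gamma_1,\dots,\Gamma_m$ (rotating $q_1$) and the `greatly bent' thimble $\tGamma_-$ (rotating $q_2$ while compensating in $q_1$), and identify their $K$-classes as $\cO(-kD_1)$ and $\cO(-H+mD_1)$ respectively. These are precisely the thimbles depicted in Figure \ref{fig:vanishing_paths}, and they share the same critical point $T_-$ with $\Gamma_-$ up to mutation along the `straight' path going over the nodes $T_+, \Gamma_1,\dots,\Gamma_m$. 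Applying the Picard-Lefschetz formula therefore yields $V(\Gamma_-)$ as the iterated right mutation $R_{\cO(-D_1)}\cdots R_{\cO(-mD_1)}\cO(-H+mD_1)$ in $K^0(X_n)$, lifted along the mutation operation in $D^b(X_n)$.

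The heart of the proof is then the derived-category computation of this iterated mutation. I would start with the outermost mutation: the identity $R_{\cO(-mD_1)}\cO(-H+mD_1)=\cO_E(-m)[-1]$ follows by writing $\Hom^\bullet(\cO(-H+mD_1),\cO(-mD_1))=H^\bullet(X_n,\cO(E))$, using the short exact sequence $0\to\cO\to\cO(E)\to\cO_{\bP^n}(-n)\to 0$ and the Kodaira vanishing for $\cO_{\bP^n}(-n)\cong\cO_{\bP^n}(1)\otimes K_{\bP^n}$ to conclude $H^\bullet(X_n,\cO(E))=\bC$. Next, I would prove the orthogonality $\Hom^\bullet(\cO_E(-m),\cO(-jD_1))=\Hom^\bullet(\cO(-jD_1),\cO_E(-m))=0$ for $|j|\le m-1$, again via Kodaira vanishing applied to $\cO_{\bP^n}(-(m\pm j))$. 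This orthogonality collapses the remaining mutations $R_{\cO(-D_1)}\cdots R_{\cO(-(m-1)D_1)}$ into the identity, giving $V(\Gamma_-)=-[\cO_E(-m)]$ in the $\pi$-regime.

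Finally I would propagate this identification to the full positive-real regime where $T_-$ is a simple rightmost eigenvalue. The orthogonality $\chi(\cO,\cO_E(-m))=0$ shows that the thimbles $\Gamma_\bR$ and $\Gamma_-$ do not mutate against each other, and Lemma \ref{lem:real_critical_values} guarantees that no other critical values cross the real segment $[T_-,T_+]$ as $\bq$ varies through $\cM_{X_n,\bR}$. Hence $V(\Gamma_-)$ is locally constant on the real locus, and the identification persists through the collision of $T_\pm$ that occurs when $n$ is even. Applying Theorem \ref{thm:Lefschetz_A} at the endpoint yields the desired formula. The main obstacle is the control of the deformation argument across the wall where $T_+=T_-$: one must verify that the absence of nontrivial intersection pairing of vanishing cycles ($\chi(\cO,\cO_E(-m))=0$) really does imply no wall-crossing of $\hGamma$-flat sections, and that Lemma \ref{lem:real_critical_values} truly excludes any interaction with the non-real critical values during the deformation.
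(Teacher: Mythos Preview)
Your proposal is essentially identical to the paper's own argument: anchor in the $\pi$-picture via monodromy to obtain $V(\Gamma_k)=\cO(-kD_1)$ and $V(\tGamma_-)=\cO(-H+mD_1)$, compute the iterated right mutation using the lemma (via $H^\bullet(X_n,\cO(E))=\bC$ and Kodaira vanishing), use orthogonality to $\cO(-jD_1)$ for $|j|\le m-1$ to collapse the remaining mutations and to handle the crossing with $\Gamma_\bR$, then invoke Lemma~\ref{lem:real_critical_values} to propagate $V(\Gamma_-)=-[\cO_E(-m)]$ across the positive-real locus and conclude by Theorem~\ref{thm:Lefschetz_A}. One small clarification: in the paper $\Gamma_-$ is first taken along the ray $T_-+e^{\iu\varepsilon}\bR_{\ge 0}$ (so it passes \emph{above} $T_+$), which is why the iterated mutation is $R_{\cO(-D_1)}\cdots R_{\cO(-mD_1)}$ without an $R_{\cO}$ factor; the orthogonality to $\cO$ is then used separately to show the $e^{\pm\iu\varepsilon}$ thimbles agree.
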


\begin{remark}
The principal asymptotic class in Theorem \ref{thm:A_for_Twrong} is a purely imaginary class, confirming the latter half of Corollary \ref{cor:A_easy_properties}. The limit
\[
 \lim_{t\to\infty} \frac{J_X(c_1 \log t, 1)}{\<J_X(c_1\log t,1),[\pt]\>^X}
\]
appearing in the original Gamma conjecture I \cite[Corollary 3.6.9]{GGI} does not exist in this case, as $A_{X_n}$ is supported on the exceptional divisor and $\<A_{X_n},[\pt]\>^{X_n}$ vanishes. Instead, we need to consider the limit in the projective space $\bP(H^*(X))$ as in Proposition \ref{prop:limit_formula_general_q}.
\end{remark}

Using monodromy in the $\pi$-picture and passing to the $\varphi$-picture, we can determine the $K$-classes corresponding to a distinguished basis of vanishing paths. See Figure \ref{fig:distinguished_basis} for the $n=4$ case.

\begin{figure}[htbp]
 \centering

\begin{tikzpicture}[x=30pt,y=30pt]
\draw (-5,0) node {\phantom{a}};

\filldraw[color=blue] (-0.5,0) circle [radius=0.05];
\filldraw (-0.733,0.642) circle [radius =0.05];
\filldraw (-1.326,0.984) circle [radius=0.05];
\filldraw (-2,0.866) circle [radius =0.05];
\filldraw (-2.439,0.342) circle [radius =0.05];
\filldraw (-0.733,-0.642) circle [radius =0.05];
\filldraw (-1.326,-0.984) circle [radius=0.05];
\filldraw (-2,-0.866) circle [radius =0.05];
\filldraw (-2.439,-0.342) circle [radius =0.05];

\draw (-0.733,0.84) node {\tiny $1$};
\draw (-1.326,1.18) node {\tiny $2$};
\draw (-2,1.066) node {\tiny $3$};
\draw (-2.63,0.342) node {\tiny $4$};
\draw (-2.63,-0.342) node {\tiny $5$};
\draw (-2,-1.066) node {\tiny $6$};
\draw (-1.326,-1.18) node {\tiny $7$};
\draw (-0.733,-0.84) node {\tiny $8$};

\filldraw[color=red] (2,0) circle [radius =0.05];

\draw (2,-0.25) node {\small $T_-$};
\draw (-0.5,-0.25) node {\small $T_+$};

\draw[color=red] (2,0) -- (3,0);
\draw[color=blue] (-0.5,0) .. controls (-0.5,-0.5) and (2,-0.5) .. (3,-0.5);
\draw (-0.733,0.642) -- (3,0.642);
\draw (-1.326,0.984) -- (3,0.984);
\draw (-2,0.866) .. controls (-1.8,1.4) and (-1,1.4) .. (3,1.4);
\draw (-2.439,0.342) .. controls (-2.4,1.8) and (-2,1.8) .. (3,1.8);
\draw (-0.733,-0.642) .. controls (1,-0.9) ..  (3,-0.9);
\draw (-1.326,-0.984) .. controls (1,-1.3).. (3,-1.3);
\draw (-2,-0.866) .. controls (-1.5,-1.7) .. (3,-1.7);
\draw (-2.439,-0.342) .. controls (-2.439,-2.1) and (0,-2.1) .. (3,-2.1);

\draw (3.0,1.8) node[right] {\small $\cO(-H)$};
\draw (3.0,1.4) node[right] {\small $\cO(-H-D_1)$};
\draw (3,0.984) node[right] {\small $\cO(-2D_1)$};
\draw (3,0.642) node[right] {\small $\cO(-D_1)$};
\draw (3,0) node[right] {\small $\cO_E(-m)[-1]$};
\draw (3,-0.5) node[right] {\small $\cO$};
\draw (3,-0.9) node[right] {\small $\cO(D_1)$};
\draw (3,-1.3) node[right] {\small $\cO(2D_1)$};
\draw (3,-1.7) node[right] {\small $\cO(H+D_1)$};
\draw (3,-2.1) node[right] {\small $\cO(H)$};

\end{tikzpicture}
\caption{A distinguished basis of vanishing paths and the corresponding objects in $D^b(X_n)$, when $q_2/q_1^2$ is small or equal to $1$ and $n=4$.}
\label{fig:distinguished_basis}
\end{figure}
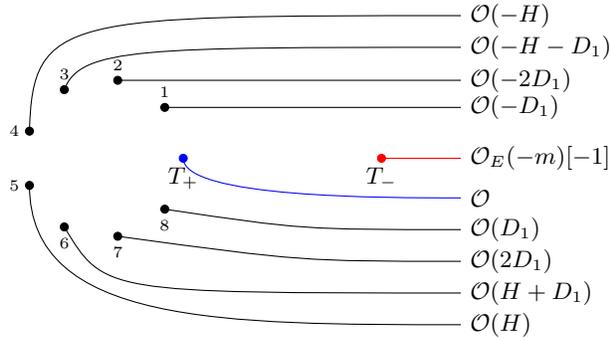

\appendix
\section{Conjecture $\mathcal{O}$ for $\mathbb{P}_{\mathbb{P}^n}(\mathcal{O}\oplus \mathcal{O}(n-1))$.}
In the appendix, we investigate Conjecture $\mathcal{O}$ for  $X_\Delta=\mathbb{P}_{\mathbb{P}^n}(\mathcal{O}\oplus \mathcal{O}(n-1))$,
which is again a  toric Fano manifold of Picard number two and of    Fano index two.

When $n=1$, $X_\Delta=\mathbb{P}^1\times \mathbb{P}^1$. When $n=2$, $X=Bl_{\mathrm{pt}}\mathbb{P}^3$ is the blow-up of $\mathbb{P}^3$ of a point, and  has been studied in \cite{Yang}. Now we assume $n\geq 3$. The arguments are similar to and slightly more involved than that for $X_n$. We include the details for two reasons: (a) Property $\mathcal{O}$ for $X_\Delta$ of higher Fano index can also be addressed with this method; (b) the eigenvalues of $\hat c_1$ of maximum modulo could all belong to $\mathbb{C}\setminus \mathbb{R}$.

The mirror superpotential $f$ of $X_\Delta$ is given by
\begin{equation} f(\textbf{x})=x_1+x_2+\cdots+x_n+x_{n+1}+\frac{(x_{n+1})^{n-1}}{x_1x_2\cdots x_{n}}+\frac{1}{x_{n+1}}.
\end{equation}
By solving $\frac{\partial f}{\partial x_i}=0$,  we have $x:=x_1=\cdots=x_n$, $y:=x_{n+1}$, $x,y\in \mathbb{C}^\times$,
  $$y^{n-1}=x^{n+1},\quad
        1+(n-1)\frac{y^{n-2}}{x^n}-\frac{1}{y^2}=0.$$
\subsubsection*{Case: $n$ is odd, with
$m:=\frac{n-1}{2}\geq 1$} We have $(x^{m+1}+y^m)(x^{m+1}-y^m)=0$, resulting in the parameterization $(x, y)=\pm(t^m,t^{m+1})$ together with the constraint
\[h_1(t):=t^{2m+2}+(2m)t^{2m+1}=1.\]
At   every root $t$ of $h_1(t)-1$, we have $g_1(t)=\Tilde{g}_1(t)=\check{g}_1(t)$, where
\begin{equation*}
 g_1(t):=(2m+1)t^m+t^{m+1} +t^{m}+\frac{1}{t^{m+1}} ,\quad  \Tilde{g}_1(t):=2t^m+\frac{2}{t^{m+1}},\quad \check{g}_1(t):=2t^{m+1}+(4m+2)t^{m}.
\end{equation*}
Moreover, all the $2n+2=2(2m+2)$ critical values of $f$ (with multiplicity counted) are precisely given by $\pm g_1(t)$ at these roots. That is, it suffices to  consider the following optimization problem in nonlinear programming.

\begin{prob}\label{NLP2}
 $\textnormal{Maximize}\quad   \big| g_1(t)\big|   \quad \textnormal{subject to}\quad h_1(t)=1,\quad t\in \mathbb{C}^\times.$
  \end{prob}
 With similar arguments to $h(t)$ for $X_n$, we see that every root of $h_1(t)-1$ is of multiplicity one, and $h_1(t)-1$ has exactly two simple real roots $a_{1\pm}$ with
\[a_{1+}\in (0,1), \qquad a_{1-}\in(-2m-1,-2m).\]
Noting the critical points of $f$ are of the form $\pm(t^m,\cdots,t^m,t^{m+1})$, for $f$ we have
\begin{prop}
    $\mathbf{x}_{\rm con}=(a_{1+}^m,\cdots,a_{1+}^m,a_{1+}^{m+1})$, hence $T_{\rm con}=g_1(a_{1+})$.
\end{prop}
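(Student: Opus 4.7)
The plan is to identify the conifold point by intersecting the parametrized family of critical points with the positive real locus $(\mathbb{R}_{>0})^{n+1}$, and then invoke the uniqueness of the conifold point established in general for toric Fano manifolds.

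First, I would recall from the derivation preceding the proposition that every critical point of $f$ takes the form $\pm(t^m,\ldots,t^m,t^{m+1})$ for some $t\in\bC^\times$ satisfying $h_1(t)=1$. For such a point to lie in $(\mathbb{R}_{>0})^{n+1}$, the last coordinate $\pm t^{m+1}$ and the first coordinate $\pm t^m$ must both be positive real. Since $\gcd(m,m+1)=1$, this forces $t\in\mathbb{R}_{>0}$ and the sign to be $+$. Hence the positive-real critical points of $f$ are in bijection with the positive real roots of $h_1(t)-1$.

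Next, I would use the earlier analysis showing that $h_1(t)-1$ has a unique positive real root, namely $a_{1+}\in(0,1)$. Combining with the previous step, the unique critical point of $f$ lying in $(\mathbb{R}_{>0})^{n+1}$ is exactly $(a_{1+}^m,\ldots,a_{1+}^m,a_{1+}^{m+1})$.

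Finally, I would apply the general fact (recalled in the paragraph preceding Theorem~\ref{thm-gammaclassandconifoldpoint}, due to Galkin) that for any toric Fano manifold the restriction $f|_{(\bR_{>0})^{n+1}}$ is proper, bounded below, and strictly convex in logarithmic coordinates, so it attains its global minimum at a unique point $\mathbf{x}_{\rm con}$, which is necessarily a critical point of $f$. Since we have already identified the unique positive-real critical point, it must coincide with $\mathbf{x}_{\rm con}$, giving $T_{\rm con}=f(\mathbf{x}_{\rm con})=g_1(a_{1+})$ by the computation $g_1(t)=f(t^m,\ldots,t^m,t^{m+1})$ whenever $h_1(t)=1$. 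There is no real obstacle here; the only point requiring care is the parity/coprimality argument ensuring that $t$ itself must be positive real, which is immediate since $m$ and $m+1$ have opposite parities.
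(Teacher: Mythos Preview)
Your proposal is correct and follows the same implicit reasoning as the paper, which states the proposition without proof immediately after noting that the critical points are of the form $\pm(t^m,\ldots,t^m,t^{m+1})$. One tiny simplification: the coprimality argument is unnecessary, since the ratio of the last coordinate to the first is $t$, and a quotient of two positive reals is positive real; the sign then follows at once from $t^m>0$.
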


\begin{prop}\label{ann2}
   All the roots of $h_1(t)-1$ but $a_{1-}$ are in the annulus $N_{[a_{1+},1]}$. Moreover, we have $\Tilde{g}_1(a_{1+})=\max_{t\in N_{[a_{1+},1]}}|\Tilde{g}_1(t)|.$

\end{prop}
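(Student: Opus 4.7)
The plan is to follow the exact strategy used to prove Corollary \ref{cordisroot} and Lemma \ref{maxonann1} in the $X_n$ case, adapting each step to $h_1, \Tilde g_1$. First I would establish a counterpart of Proposition \ref{diskofroots} asserting that $h_1(t)-1$ has exactly $2m+1$ roots (with multiplicity) in $D_1$. Writing $h_1(t)-1=f_1(t)+f_2(t)$ with $f_1(t)=2m\,t^{2m+1}$ and $f_2(t)=t^{2m+2}-1$, on $\partial D_1$ one has $|f_2(t)|\le 2$ and $|f_1(t)|=2m$, so Rouch\'e's theorem applies directly when $m\ge 2$. For the boundary case $m=1$ (i.e.\ $n=3$) the inflation trick from the $n=2$ part of Proposition \ref{diskofroots} works verbatim: verify $(1+\epsilon)^{2m+2}+1<2m(1+\epsilon)^{2m+1}$ on $\partial D_{1+\epsilon}$ for small $\epsilon>0$ and then let $\epsilon\to 0^+$.

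Next I would exploit nonnegativity. Since $h_1$ is a Laurent polynomial with nonnegative coefficients and $h_1(a_{1+})=1$, Lemma \ref{easy} gives $\max_{t\in\partial D_{a_{1+}}}|h_1(t)|=1$; the maximum modulus principle then forces $|h_1(t)|<1$ on $\mathring D_{a_{1+}}$, so $h_1(t)-1$ has no roots there. Since $\deg(h_1-1)=2m+2$, the earlier step yields $2m+1$ roots in $D_1$, and the only real root outside $D_1$ is $a_{1-}\in(-2m-1,-2m)$. Combining these facts, every root of $h_1(t)-1$ other than $a_{1-}$ must lie in the closed annulus $N_{[a_{1+},1]}$.

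For the second assertion, $\Tilde g_1(t)=2t^m+2t^{-(m+1)}$ is holomorphic on $N_{[a_{1+},1]}$, so the maximum modulus principle combined with Lemma \ref{easy} (applied to $\Tilde g_1\in \mathbb{R}_{\ge 0}[t,t^{-1}]$) yields
\[
\max_{t\in N_{[a_{1+},1]}}|\Tilde g_1(t)|=\max\{\Tilde g_1(a_{1+}),\,\Tilde g_1(1)\}.
\]
Viewing $\Tilde g_1$ as a real function on $(0,1)$, one computes $\Tilde g_1'(x)=2mx^{m-1}-2(m+1)x^{-(m+2)}$, and $\Tilde g_1'(x)=0$ rearranges to $x^{2m+1}=(m+1)/m>1$. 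Hence $\Tilde g_1'<0$ on $(0,1)$, so $\Tilde g_1$ is strictly decreasing there, giving $\Tilde g_1(a_{1+})>\Tilde g_1(1)$ as required.

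The only mildly delicate point is the small-$m$ Rouch\'e estimate, where the strict inequality on $\partial D_1$ degenerates to equality; but this is handled by the same inflation argument already used in the parallel Proposition \ref{diskofroots}. Everything else is a direct transcription of the $X_n$ argument with $h,\Tilde g$ replaced by $h_1,\Tilde g_1$.
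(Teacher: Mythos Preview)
Your proposal is correct and follows essentially the same approach as the paper: apply Rouch\'e's theorem with dominant term $2m\,t^{2m+1}$ to count $2m+1$ roots in $D_1$, use the maximum modulus principle together with Lemma \ref{easy} to exclude $\mathring D_{a_{1+}}$, and then transcribe the proof of Lemma \ref{maxonann1} for the second statement. The paper's own proof is terse and does not separately treat the $m=1$ case for Rouch\'e; your inflation argument for this boundary case is a correct and careful addition.
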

\begin{proof}
By comparing with $(2m)t^{2m+1}$ and using Rouch\'e's theorem,   we conclude that $h_1(t)-1$ has $2m+1$ zeros in $D_1$. By the maximum modulus principle,   $\mathring D_{a_{1+}}$ contains no zeros of $h_1(t)-1$. The first statement follows by noting $a_{1-}\in (-2m-1,-2m)$.  The argument for the second statement is the same as that for Lemma \ref{maxonann1}.
\end{proof}

The nonlinear programming Problem \ref{NLP2} can be answered   by the following.
\begin{prop} \label{estimate2} Let $n$ be odd with $m=\frac{n-1}{2}\geq 1$. Take $\alpha\in \mathbb{C}\setminus\{a_{1+}, a_{1-}\}$ with $h_1(\alpha)=1$. We have
 \begin{enumerate}
 \item $|g_1(\alpha)|<g_1(a_{1+})<4m+4$.
    \item $2(2m)^m-1<(-1)^mg_1(a_{1-})<2(2m)^m$.
    \item If $m\ge 2$,  then $|g_1(a_{1-})|>g_1(a_{1+})$; if  $m=1$,   $g_1(a_{1+})>|g_1(a_{1-})|$.
\end{enumerate}
\end{prop}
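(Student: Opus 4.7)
\medskip

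The plan is to mirror the three-step argument used for Proposition \ref{estimate1}, with $h_1, g_1, \tilde g_1, \check g_1$ replacing $h, g, \tilde g, \check g$, and with Proposition \ref{ann2} replacing the combination of Corollary \ref{cordisroot} and Lemma \ref{maxonann1}. The first step is to read off from Proposition \ref{ann2} that any root $\alpha$ of $h_1(t)-1$ other than $a_{1-}$ lies in $N_{[a_{1+},1]}$, hence $|g_1(\alpha)|=|\tilde g_1(\alpha)|\le \tilde g_1(a_{1+})=\check g_1(a_{1+})=g_1(a_{1+})$. To upgrade this to a strict inequality for $\alpha\neq a_{1+}$, I will track the equality case in Lemma \ref{easy} applied to $\check g_1(t)=2t^{m+1}+(4m+2)t^m$: equality forces $|2\alpha+(4m+2)|=2|\alpha|+(4m+2)$, which forces $\alpha\in\mathbb{R}_{>0}$ and hence $\alpha=a_{1+}$. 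Finally, since $\check g_1$ is strictly increasing on $\mathbb{R}_{>0}$ and $a_{1+}\in(0,1)$, we get $g_1(a_{1+})=\check g_1(a_{1+})<\check g_1(1)=2+(4m+2)=4m+4$. This handles statement (1).

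For statement (2), I will view $h_1,\check g_1$ as real functions on $[-2m-1,-2m]$ and use the key identity $h_1'(x)=x^{m+1}\check g_1'(x)$, which one checks from $h_1'(x)=2x^{2m}\bigl((m+1)x+m(2m+1)\bigr)$ and $\check g_1'(x)=2x^{m-1}\bigl((m+1)x+m(2m+1)\bigr)$. On $[-2m-1,-2m]$ the factor $(m+1)x+m(2m+1)$ is strictly negative, so $\check g_1'$ has constant sign $(-1)^m$ and $|x^{m+1}|\ge (2m)^{m+1}>1$. Therefore
\[
|\check g_1(a_{1-})-\check g_1(-2m)|=\left|\int_{-2m}^{a_{1-}}\check g_1'(x)\,dx\right|<\left|\int_{-2m}^{a_{1-}}h_1'(x)\,dx\right|=|h_1(a_{1-})-h_1(-2m)|=1.
\]
Since $\check g_1(-2m)=2(-2m)^{m+1}+(4m+2)(-2m)^m=2(-1)^m(2m)^m$ and the sign of the integral $\int_{-2m}^{a_{1-}}\check g_1'(x)\,dx$ is $(-1)^{m+1}$ (because the direction of integration reverses the sign of $(-1)^m$), we conclude that $(-1)^m g_1(a_{1-})-2(2m)^m\in(-1,0)$, which is exactly the claim.

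For statement (3), I will combine (1) and (2) numerically. When $m\ge 2$, statement (2) yields $|g_1(a_{1-})|>2(2m)^m-1\ge 8m^2-1$, while statement (1) gives $g_1(a_{1+})<4m+4$; the inequality $8m^2-1>4m+4$ holds for all $m\ge 2$. For $m=1$ one instead checks $|g_1(a_{1-})|<4$ directly from (2), while $a_{1+}>2/3$ (since $h_1(2/3)=64/81<1$) implies $g_1(a_{1+})=\check g_1(a_{1+})>\check g_1(2/3)=8/9+4>4$, giving $g_1(a_{1+})>|g_1(a_{1-})|$. The only step with any real substance is the derivative comparison $h_1'(x)=x^{m+1}\check g_1'(x)$ together with the sign bookkeeping needed to turn the absolute value bound into the one-sided estimate $2(2m)^m-1<(-1)^m g_1(a_{1-})<2(2m)^m$; everything else is a direct transcription of the $X_n$ argument.
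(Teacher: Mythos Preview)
Your proposal is correct and follows essentially the same route as the paper's proof: Proposition \ref{ann2} plus the equality analysis via $\check g_1$ for part (1), the derivative comparison $0<(-1)^m\check g_1'(x)<-h_1'(x)$ on $[-2m-1,-2m]$ for part (2), and the numerical comparison for part (3). Your observation that $h_1'(x)=x^{m+1}\check g_1'(x)$ packages the paper's direct inequality more cleanly, and your choice of $a_{1+}>2/3$ (instead of $7/10$) for the $m=1$ case works equally well; these are cosmetic differences only.
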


\begin{proof}
Statement (1) follows by  the same arguments as in Proposition \ref{estimate1}. \par

   Viewing $h_1, \check{g}_1$ as   real functions in   $[-2m-1,-2m]$, we have  \[0<(-1)^m\check g_1'(x)=(-1)^m((2m+2)x+4m^2+2m)x^{m-1}<-((2m+2)x+4m^2+2m)x^{2m}=-h_1'(x).\]
  Hence,  $(-1)^mg_1(a_{1-})=(-1)^m\check g_1(a_{1-})<(-1)^m\check g_1(-2m)= 2(2m)^m$ and  $(-1)^mg_1(a_{1-})>(-1)^m\check g_1(-2m)-1=2(2m)^m-1$, by noting
  \[(-1)^m(\check g_1(-2m)-\check g_1(a_{1-}))=\int_{a_{1-}}^{-2m}(-1)^m\check g_1'(x) dx<\int_{a_{1-}}^{-2m}-h_1'(x) dx=h_1(a_{1-})-h_1(-2m)=1.\]
  Thus statement (2) holds. \par

   For $m\geq 2$,  $|g_1(a_{1-})|>2(2m)^m-1>4m+4>g_1(a_{1-})$.  For $m=1$, we have $a_{1+}>{7\over 10}$ and
   $$g_1(a_{1+})=\check g_1(a_{1+})>\check g_1({7\over 10})=2\cdot ({7\over 10})^2+6\cdot {7\over 10}
   >2\cdot2^1>|g_1(a_{1-})|>2\cdot2^1-1=3.$$
   Therefore statement (3) holds.
\end{proof}

\subsubsection*{Case: $n$ is even} $n-1$ and $n+1$ are coprime, so    $k_1(n-1)+k_2(n+1)=1$ for some integers $k_1, k_2$. We can take the parameterization $(x, y) =(t^{n-1}, t^{n+1})$ (by setting $t=y^{k_2}x^{k_1}$), resulting in the constraint
\begin{equation*}
    h_0(t):=t^{2n+2}+(n-1)t^{2n}=1.
\end{equation*} \par
Every root $\alpha$ of $h_0(t)-1$ is of multiplicity one, and   $g_0(\alpha)=\Tilde{g}_0(\alpha)=\check{g}_0(\alpha)$, where
\begin{equation*}
    g_0(t):=n t^{n-1}+t^{n+1}+t^{n-1}+\frac{1}{t^{n+1}}, \quad \Tilde{g}_0(t):=2t^{n-1}+\frac{2}{t^{n+1}},\quad \check{g}_0(t):=2t^{n+1}+(2n)t^{n-1}.
\end{equation*}
Moreover, all the $(2n+2)$   critical values of $f$, with multiplicity counted, are given by $g_0(t)$ at these   roots.
 It suffices to  consider the following optimization problem.

\begin{prob}\label{NLP3}$\textnormal{Maximize}\quad   \big| g_0(t)\big|   \quad \textnormal{subject to}\quad h_0(t)=1,\quad t\in \mathbb{C}^\times.$
  \end{prob}
With similar arguments to $h_1(t)-1$, we see that $u^{n+1}+(n-1)u^n-1$ consists of two real roots, which are both of multiplicity one and belong to $(0, 1)$ and $(-n+1, -n+{3\over 2})$ respectively.  Hence,
$h_0(t)-1$  has  four roots $\pm a_{0+}$  and $\pm a_{\rm I}$, with
  $$a_{0+}\in (0,1),\qquad  a_{\rm I}\in \iu\mathbb{R}\quad\mbox{and}\quad \mathrm{Im}(a_{\rm I}) \in (\sqrt{n-\frac{3}{2}},\sqrt{n-1}). $$
Moreover, $\pm a_{0+}$ are the only  real roots of $h_0(t)-1$.
Noting the critical points of $f$ are of the form $(t^{n-1},\cdots,t^{n-1},t^{n+1})$, we have
\begin{prop}
    $\mathbf{x}_{\rm con}=(a_{0+}^{n-1},\cdots,a_{0+}^{n-1} ,a_{0+}^{n+1})$, hence $T_{\rm con}=g_0(a_{0+})$.
\end{prop}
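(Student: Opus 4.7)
The plan is to invoke the characterization of $\mathbf{x}_{\rm con}$ as the unique critical point of $f$ on the positive real locus $(\mathbb{R}_{>0})^{n+1}$, and then identify which value of the parameter $t$ in the parametrization $(x_1,\dots,x_n,x_{n+1})=(t^{n-1},\dots,t^{n-1},t^{n+1})$ gives rise to that critical point.

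First, I would recall that by Galkin's result (cited in Section~\ref{subsec:MS_Fano_toric}), the restriction $f|_{(\mathbb{R}_{>0})^{n+1}}$ is strictly convex in logarithmic coordinates, proper and bounded below, and attains its global minimum at a unique critical point $\mathbf{x}_{\rm con}\in(\mathbb{R}_{>0})^{n+1}$. Thus it suffices to locate the real root $t$ of $h_0(t)-1=0$ for which the corresponding critical point $(t^{n-1},\dots,t^{n-1},t^{n+1})$ lies in $(\mathbb{R}_{>0})^{n+1}$.

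Next, I would use the parity hypothesis: since $n$ is even, both exponents $n-1$ and $n+1$ are odd, so $t^{n-1}>0$ and $t^{n+1}>0$ if and only if $t>0$. From the discussion preceding the statement, the real roots of $h_0(t)-1$ are exactly $\pm a_{0+}$ with $a_{0+}\in(0,1)$ (the purely imaginary roots $\pm a_{\rm I}$ do not produce real critical points). Therefore the unique positive real root is $a_{0+}$, which forces $\mathbf{x}_{\rm con}=(a_{0+}^{n-1},\dots,a_{0+}^{n-1},a_{0+}^{n+1})$.

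Finally, evaluating $f$ at this point and using the parametrization yields $T_{\rm con}=f(\mathbf{x}_{\rm con})=g_0(a_{0+})$ directly from the definition of $g_0$. No step of the argument is substantive; the only mild subtlety is the use of the parity of $n$ to guarantee that $t>0$ is equivalent to positivity of all coordinates, which is why the odd $n$ case (treated via $(x,y)=\pm(t^m,t^{m+1})$) needed a sign choice whereas here no ambiguity arises.
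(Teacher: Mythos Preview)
Your argument is correct and matches the paper's approach: the paper states this proposition without an explicit proof, relying on exactly the observation you make---that the conifold point is the unique critical point with all positive coordinates, and since $n-1$ and $n+1$ are odd the only root of $h_0(t)-1$ yielding such a point is the positive real root $a_{0+}$.
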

The following  is  parallel to Proposition \ref{ann2}, obtained  the same arguments.
\begin{prop}
     All the roots of $h_0(t)-1$ but $\pm a_{\rm I}$ are in the annulus $N_{[a_{0+},1]}$. Moreover,  we have
     $\Tilde{g}_0(a_{0+})=\max_{t\in N_{[a_{0+},1]}}|\Tilde{g}_0(t)|.$
\end{prop}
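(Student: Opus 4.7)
The plan is to mirror the argument of Proposition \ref{ann2} (the odd-$n$ analogue) almost verbatim, using Rouch\'e's theorem, the maximum modulus principle, and Lemma \ref{easy}. Both statements of the proposition are of the same shape as those in Corollary \ref{cordisroot} and Lemma \ref{maxonann1}, so the only real work is verifying that the numerical inequalities needed to apply Rouch\'e and the monotonicity of $\tilde g_0$ go through for even $n\ge 4$ (recall that the case $n=2$ is excluded, so throughout we have $n\ge 4$ even).

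For the root location, I would first apply Rouch\'e's theorem on $D_1$ to the decomposition
\[
h_0(t)-1 = f_1(t)+f_2(t),\qquad f_1(t)=(n-1)t^{2n},\qquad f_2(t)=t^{2n+2}-1.
\]
On $\partial D_1$ we have $|f_2(t)|\le |t|^{2n+2}+1 = 2$ and $|f_1(t)|=n-1$. Since $n\ge 4$, we get $|f_2|<|f_1|$ on $\partial D_1$, so $h_0(t)-1$ and $f_1(t)$ have the same number of zeros in $D_1$, namely $2n$. Next, Lemma \ref{easy} applied to the Laurent polynomial $h_0\in \mathbb{R}_{\ge 0}[t]$ gives $\max_{t\in\partial D_{a_{0+}}}|h_0(t)|=h_0(a_{0+})=1$, so by the maximum modulus principle $|h_0(t)|<1$ on $\mathring D_{a_{0+}}$; hence $h_0(t)-1$ has no zeros inside $\mathring D_{a_{0+}}$. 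These two facts imply that all $2n$ zeros of $h_0(t)-1$ inside $D_1$ lie in $N_{[a_{0+},1]}$. Since the total number of zeros is $2n+2$ and $\pm a_{\rm I}$ satisfy $|a_{\rm I}|>\sqrt{n-3/2}>1$ (for $n\ge 3$), they account for precisely the remaining two zeros outside $D_1$. This yields the first statement.

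For the maximum statement, $\tilde g_0(t)=2t^{n-1}+2t^{-n-1}$ is holomorphic on the closed annulus $N_{[a_{0+},1]}\subset\mathbb{C}^\times$, so by the maximum modulus principle
\[
\max_{t\in N_{[a_{0+},1]}}|\tilde g_0(t)|=\max_{t\in\partial D_{a_{0+}}\cup\partial D_1}|\tilde g_0(t)|.
\]
Since $\tilde g_0$ has nonnegative coefficients as a Laurent polynomial, Lemma \ref{easy} gives $\max_{t\in \partial D_r}|\tilde g_0(t)|=\tilde g_0(r)$ for $r\in\{a_{0+},1\}$, so the maximum equals $\max\{\tilde g_0(a_{0+}),\tilde g_0(1)\}$. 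A direct computation gives $\tilde g_0'(x)=2(n-1)x^{n-2}-2(n+1)x^{-n-2}$, which is negative on $(0,1]$ because $(n-1)x^{2n}\le n-1<n+1$ there; hence $\tilde g_0$ is strictly decreasing on $(0,1]$ and $\tilde g_0(a_{0+})>\tilde g_0(1)$, proving the maximum is attained at $a_{0+}$.

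The only mildly delicate point is the Rouch\'e step: one must be careful that the strict inequality $|f_2|<|f_1|$ holds on $\partial D_1$. For the analogous $h(t)-1$ and $h_1(t)-1$ in earlier propositions this was the step that required a separate treatment of the smallest $n$ via a disc of radius $1+\epsilon$ (compare Proposition \ref{diskofroots} for $n=2$). Here the hypothesis $n\ge 4$ makes the inequality $n-1>2$ strict on the nose, so no such workaround is needed and the argument is completely uniform. I do not anticipate any further obstacle.
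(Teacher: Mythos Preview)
Your proposal is correct and follows precisely the approach the paper intends: the paper's own proof merely states that the argument is ``parallel to Proposition \ref{ann2}, obtained [by] the same arguments,'' and your Rouch\'e comparison against $(n-1)t^{2n}$, the maximum modulus argument excluding $\mathring D_{a_{0+}}$, and the Lemma \ref{easy}/monotonicity argument for $\tilde g_0$ are exactly what that reference unpacks to. Your observation that $n\ge 4$ makes the Rouch\'e inequality on $\partial D_1$ clean (so no $1+\epsilon$ workaround is needed) is a helpful addition.
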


\begin{prop} \label{estimate3} Let $n\geq 4$ be even. Take any $\alpha\in \mathbb{C}\setminus\{\pm a_{0 +},\pm a_{ \rm I}\}$ with $h_0(\alpha)=1$. We have
 \begin{enumerate}
 \item $|g_0(\alpha)|<g_0(a_{0+})=-g_0(-a_{0+})<2n+2$.
    \item $2(n-1)^{\frac{n-1}{2}}<\iu^{n+1}g(a_{\rm I})=\iu^{n-1}g(-a_{\rm I})<2(n-1)^{\frac{n-1}{2}}+1$.
\end{enumerate}
\end{prop}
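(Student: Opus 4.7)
My plan is to follow the templates of Propositions~\ref{estimate1} and~\ref{estimate2}, treating Part (1) as essentially routine and focusing the novel work on Part (2), where the purely imaginary root $a_{\rm I}$ enters.

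For Part (1), the preceding proposition places every root $\alpha\neq\pm a_{\rm I}$ of $h_0(t)-1$ in the annulus $N_{[a_{0+},1]}$, and $g_0=\tilde{g}_0=\check{g}_0$ at such roots. The maximum modulus principle together with Lemma~\ref{easy} applied to the nonnegative-coefficient Laurent polynomial $\tilde{g}_0$ yields $|g_0(\alpha)|\leq\tilde{g}_0(a_{0+})=g_0(a_{0+})$. Strictness for $\alpha\neq\pm a_{0+}$ is extracted exactly as in the proof of Proposition~\ref{estimate1}(1): equality in $|\check{g}_0(\alpha)|=\check{g}_0(a_{0+})$ with $|\alpha|=a_{0+}$ forces $2\alpha^{n+1}$ and $2n\alpha^{n-1}$ to be positive real, so both $(n-1)\arg(\alpha)$ and $(n+1)\arg(\alpha)$ are multiples of $2\pi$, leaving $\alpha=\pm a_{0+}$. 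The identity $g_0(a_{0+})=-g_0(-a_{0+})$ is immediate from $g_0$ being odd when $n$ is even, and $g_0(a_{0+})=\check{g}_0(a_{0+})<\check{g}_0(1)=2n+2$ follows from the strict monotonicity of $\check{g}_0$ on $\mathbb{R}_{>0}$ together with $a_{0+}\in(0,1)$.

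For Part (2), I will restrict to the imaginary axis $t=\iu y$, $y>0$. Because $n$ is even, the constraint $h_0(\iu y)=1$ reduces to the real equation $H(y):=-y^{2n+2}+(n-1)y^{2n}=1$; a parallel computation using $\iu^{n-1}=-\iu^{n+1}$ gives $\iu^{n+1}\check{g}_0(\iu y)=2\tilde{G}(y)$ with $\tilde{G}(y):=y^{n-1}(n-y^2)$, and the identity $\iu^{n+1}g_0(a_{\rm I})=\iu^{n-1}g_0(-a_{\rm I})$ in the statement follows from oddness of $g_0$ together with $\iu^{n-1}=-\iu^{n+1}$. Using $H(y_1)=1$ to eliminate $n-y_1^2$ one also obtains $\tilde{G}(y_1)=y_1^{n-1}+y_1^{-n-1}$, which exhibits positivity. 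Both $H$ and $\tilde{G}$ are therefore real functions of $y$, with $y_1:=\Im(a_{\rm I})\in(\sqrt{n-3/2},\sqrt{n-1})$, and the target inequalities reduce to $(n-1)^{(n-1)/2}<\tilde{G}(y_1)<(n-1)^{(n-1)/2}+\tfrac{1}{2}$.

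The key step is a derivative-ratio comparison on the interval $[y_1,\sqrt{n-1}]$, at whose right endpoint $H=0$ and $\tilde{G}=(n-1)^{(n-1)/2}$. Direct differentiation gives
\[
-\tilde{G}'(y)=y^{n-2}\bigl[(n+1)y^2-n(n-1)\bigr],\qquad -H'(y)=2y^{2n-1}\bigl[(n+1)y^2-n(n-1)\bigr],
\]
so the quotient is exactly $-H'(y)/(-\tilde{G}'(y))=2y^{n+1}$. The factor $(n+1)y^2-n(n-1)$ is positive on the interval because $y_1^2>n-3/2>n(n-1)/(n+1)$ for $n\geq 4$ (the latter reduces to $(n-3)/(2(n+1))>0$); consequently both $-\tilde{G}'$ and $-H'$ are strictly positive, so $H$ and $\tilde{G}$ strictly decrease on $[y_1,\sqrt{n-1}]$, which already gives the lower bound $\tilde{G}(y_1)>(n-1)^{(n-1)/2}$. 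For the upper bound I will use the strict inequality $-\tilde{G}'(y)<\tfrac{1}{2}(-H'(y))$, equivalent to $y^{n+1}>1$ and hence to $y>1$, which holds because $y>y_1>\sqrt{n-3/2}>1$ for $n\geq 4$; integrating this estimate over $[y_1,\sqrt{n-1}]$ yields $\tilde{G}(y_1)-(n-1)^{(n-1)/2}<\tfrac{1}{2}\bigl(H(y_1)-H(\sqrt{n-1})\bigr)=\tfrac{1}{2}$. The delicate point of the whole argument is that the sharp constant $\tfrac{1}{2}$ comes out of the exact ratio $2y^{n+1}$ together with $y>1$, so everything hinges on the numerical margin $n-3/2>\max\{1,\,n(n-1)/(n+1)\}$, valid precisely for $n\geq 4$; this also explains why the small cases $n\in\{1,2\}$ have to be handled separately as $\mathbb{P}^1\times\mathbb{P}^1$ and $\mathrm{Bl}_{\mathrm{pt}}\mathbb{P}^3$.
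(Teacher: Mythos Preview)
Your proof is correct and follows essentially the same route as the paper, including the derivative comparison on the imaginary axis for Part~(2); your formulation via $H(y)$ and $\tilde G(y)$ with the exact ratio $-H'/(-\tilde G')=2y^{n+1}$ is in fact a cleaner repackaging of the paper's inequality $0>\iu^{n+1}\frac{d}{dv}\check g_0(v\iu)>\frac{d}{dv}h_0(v\iu)$.

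One small slip in Part~(1): equality in $|\check g_0(\alpha)|=\check g_0(a_{0+})$ with $|\alpha|=a_{0+}$ only forces the two monomials $2\alpha^{n+1}$ and $2n\alpha^{n-1}$ to share the \emph{same argument}, not to be positive real. That already gives $\alpha^2\in\mathbb{R}_{>0}$, hence $\alpha\in\{\pm a_{0+}\}$, which is exactly your stated conclusion; but your intermediate claim that $(n\pm 1)\arg(\alpha)\in 2\pi\mathbb{Z}$ would (since $\gcd(n-1,n+1)=1$ for even $n$) yield only $\alpha=a_{0+}$, inconsistent with the $\pm$ you wrote. Replace ``positive real'' by ``same argument'' and the argument is clean.
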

\begin{proof}
    It is evident that $g_0(-t)=-g_0(t)$, thus statement (1) follows by  the same arguments as in Proposition \ref{estimate2} (1).\par
     Consider $h_0(t), \check{g}_0(t)$ as    functions on the imaginary axis $\iu\mathbb{R}$ with $\mathrm{Im}(t)\in [\sqrt{n-\frac{3}{2}},\sqrt{n-1}]$. Denote $v:=\mathrm{Im}(t)$. Taking derivative with respect to $v$, we  have
     \begin{equation*}
     \begin{aligned}
          0&>\iu^{n+1}\check g_0'(v\iu)=\iu^{2n}(2n(n-1)-(2n+2)v^2)v^{n-2}\\
     &>\iu^{2n}(2n(n-1)-(2n+2)v^2)v^{2n-1}=h_0'(v \iu).
     \end{aligned}
     \end{equation*}

  Hence,  $\iu^{n+1}g_0(a_{\rm I})=\iu^{n+1}\check{g}_0(a_{\rm I})>\iu^{n+1}\check g_0(\sqrt{-n+1})= 2(n-1)^{\frac{n-1}{2}}$    and  $\iu^{n+1}g_0(a_{\rm I})<\iu^{n+1}\check g_0(\sqrt{-n+1})+1=2(n-1)^{\frac{n-1}{2}}+1$, by noting
  \begin{equation*}
      \begin{aligned}
        &  \iu^{n+1} (\check g_0(a_{\rm I})-\check g_0(\sqrt{-n+1}))\\
  =&-\int^{\sqrt{n-1}}_{\mathrm{Im}(a_{\rm I})} \iu^{n+1} \check g_0'(v\iu) dv\\
  <&-\int^{\sqrt{n-1}}_{\mathrm{Im}(a_{\rm I})} h_0'(v\iu) dv
  =h_0(a_{\rm I})-h_0(\sqrt{-n+1})=1.
      \end{aligned}
  \end{equation*}
 Thus statement (2) holds.
\end{proof}

\begin{thm}\label{thmconjO2}
  Conjecture $\mathcal{O}$ holds for  $\mathbb{P}_{\mathbb{P}^n}(\mathcal{O}\oplus \mathcal{O}(n-1))$ if and only if either $n$ is odd or $n=2$.
\end{thm}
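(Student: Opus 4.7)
The plan is to emulate the strategy that succeeded for $X_n$ in Theorem \ref{thmconjO1}, exploiting the mirror-symmetric identification from Proposition \ref{eigncrit} between eigenvalues of $\hat{c}_1$ and critical values of $f$. The cases $n=1$ (where $X_\Delta = \mathbb{P}^1 \times \mathbb{P}^1$, so Property $\mathcal{O}$ is a trivial product computation) and $n=2$ (handled in \cite{Yang}) are already disposed of, so the core of the argument concerns $n \geq 3$ and splits naturally by the parity of $n$, using the bookkeeping already set up in the appendix.

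For $n$ odd with $m = (n-1)/2 \geq 1$, the $2n+2$ critical values of $f$ are $\pm g_1(t)$ as $t$ ranges over the $2m+2$ simple roots of $h_1(t)-1$. By Proposition \ref{estimate2}, the maximum of $|g_1(\alpha)|$ over these roots is attained uniquely at $\alpha = a_{1+}$ if $m=1$, and uniquely at $\alpha = a_{1-}$ if $m \geq 2$; in both cases the corresponding positive real number is a simple eigenvalue of $\hat{c}_1$. Owing to the $\pm$ symmetry from the parameterization $(x,y) = \pm(t^m,t^{m+1})$, the only other eigenvalue of the same modulus is its negative, and since the Fano index of $X_\Delta$ equals $2$, Property $\mathcal{O}$(2) is automatic. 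Thus Property $\mathcal{O}$ holds for every odd $n \geq 3$.

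For even $n \geq 4$, the $2n+2$ critical values are $g_0(t)$ over the $2n+2$ simple roots of $h_0(t)-1$. Proposition \ref{estimate3} gives $|g_0(\alpha)| < g_0(a_{0+}) < 2n+2$ for all $\alpha \notin \{\pm a_{0+}, \pm a_{\rm I}\}$, and $|g_0(a_{\rm I})| > 2(n-1)^{(n-1)/2}$. A short verification that $2(n-1)^{(n-1)/2} > 2n+2$ for even $n \geq 4$ (checked at $n=4$ and propagated by monotonicity) identifies the spectral radius of $\hat{c}_1$ with $|g_0(a_{\rm I})|$, attained precisely at the four critical points indexed by $\pm a_{\rm I}$. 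Because $a_{\rm I} \in \iu \mathbb{R}$ and $g_0(-t) = -g_0(t)$ when $n$ is even, these four critical values collapse to a purely imaginary pair $\pm \iu c$; in particular, the positive real number $\rho$ is not itself an eigenvalue of $\hat{c}_1$, so Property $\mathcal{O}$(1) fails. The only mildly delicate point is the numerical comparison $2(n-1)^{(n-1)/2} > 2n+2$; the remaining steps are purely organizational, assembling Propositions \ref{eigncrit}, \ref{estimate2} and \ref{estimate3} into the stated equivalence.
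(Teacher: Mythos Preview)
Your proposal is correct and follows essentially the same approach as the paper's proof, which simply invokes Propositions \ref{eigncrit}, \ref{estimate2} and \ref{estimate3} together with the fact that the Fano index is two. One minor slip: for even $n\ge 4$ the roots $\pm a_{\rm I}$ index \emph{two} critical points (not four), with critical values $g_0(\pm a_{\rm I})=\pm g_0(a_{\rm I})$; this does not affect your conclusion that the eigenvalues of maximal modulus form the purely imaginary pair $\pm\iu c$ and hence Property~$\mathcal{O}$(1) fails.
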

\begin{proof}
    If $n=1$, then  Conjecture $\mathcal{O}$ holds by \cite{HKLY}.  If $n=2$, then $\mathbb{P}_{\mathbb{P}^n}(\mathcal{O}\oplus \mathcal{O}(1))=Bl_{\rm pt}{\mathbb{P}^3}$, Conjecture $\mathcal{O}$ holds by \cite{Yang}.
 Assume $n\geq 3$ now. By Proposition \ref{eigncrit} and Propositions \ref{estimate2}, \ref{estimate3}, Property $\mathcal{O}$ (1) holds if and only if $n$ is odd.  So does Property $\mathcal{O}$ (2) by noting that  $\mathbb{P}_{\mathbb{P}^n}(\mathcal{O}\oplus \mathcal{O}(n-1))$ is  {Fano index two}.
\end{proof}

\begin{remark} \label{rmkmoreeg}
  Our analysis for the   superpotential works for more examples.  Toric Fano manifolds of Picard number two are classified in \cite{Klei}, and are the following projectivization of splitting bundles over projective spaces,    $\mathbb{P}_{\mathbb{P}^{s}}\big(\mathcal{O}\oplus\bigoplus_{i=1}^r\mathcal{O}(a_i)\big),$
where $r, s\geq 1, 0\leq a_1\leq \cdots \leq a_r$, and $\sum_{i=1}^ra_i\leq s$.
 Our method can   put the analysis for the special case $Bl_{\mathbb{P}^{r-1}}\mathbb{P}^n=\mathbb{P}_{\mathbb{P}^{n-r}}(\mathcal{O}(1)\oplus\bigoplus_{i=1}^r\mathcal{O})$ in \cite{Yang} uniformly into the framework of nonlinear programming, and works for instance for the toric Fano 4-fold with ID 71 in \cite{Obro2}, which  of Picard number five.
\end{remark}

\end{document}